\documentclass[11pt,a4paper]{article}

\usepackage[british]{babel}

\usepackage[T1]{fontenc}

\usepackage[utf8]{inputenc}

\usepackage{csquotes}
\usepackage{dirtytalk}

\usepackage[backend=biber, maxbibnames=10, maxcitenames=4, maxalphanames=4, bibencoding=utf8, style=alphabetic, isbn=false, url=false, doi=true, giveninits=true]{biblatex}

\renewbibmacro{in:}{}
\renewbibmacro*{doi+eprint+url}{%
	\printfield{doi}%
	\newunit\newblock%
	\iftoggle{bbx:eprint}{%
		\usebibmacro{eprint}%
	}{}%
	\newunit\newblock%
	\iffieldundef{doi}{%
		\usebibmacro{url+urldate}}%
	{}%
}
\AtEveryBibitem{\clearfield{month}}
\AtEveryBibitem{\clearfield{note}}

\usepackage{setspace}

\usepackage{lmodern}

\usepackage[indentafter]{titlesec}
\titleformat{name=\section}{}{\thesection.}{0.8em}{\centering\scshape}
\titleformat{name=\subsection}[runin]{}{\thesubsection.}{0.5em}{\bfseries}[.]
\titleformat{name=\subsubsection}[runin]{}{\thesubsubsection.}{0.5em}{\itshape}[.]
\titleformat{name=\paragraph,numberless}[runin]{}{}{0em}{}[.]
\titlespacing{\paragraph}{0em}{0em}{0.5em}
\titleformat{name=\subparagraph,numberless}[runin]{}{}{0em}{}[.]
\titlespacing{\subparagraph}{0em}{0em}{0.5em}

\usepackage{xurl}

\usepackage[dvipsnames]{xcolor}

\usepackage{mathtools}
\usepackage{nicematrix}

\DeclarePairedDelimiter\abs{\lvert}{\rvert}%
\DeclarePairedDelimiter\norm{\lVert}{\rVert}%

\makeatletter
\let\oldabs\abs
\def\abs{\@ifstar{\oldabs}{\oldabs*}}
\let\oldnorm\norm
\def\norm{\@ifstar{\oldnorm}{\oldnorm*}}
\makeatother

\usepackage{amssymb}

\usepackage{amsthm}

\usepackage{mathrsfs}

\usepackage{dsfont}

\usepackage{enumitem}
\setlist[enumerate]{noitemsep, partopsep=0pt, topsep=0pt, parsep=0pt, itemsep=0pt}
\setlist[itemize]{noitemsep, partopsep=0pt, topsep=0pt, parsep=0pt, itemsep=0pt}

\usepackage{interval}

\intervalconfig{soft open fences}

\usepackage[stretch=10]{microtype}

\usepackage[affil-it, noblocks]{authblk}

\usepackage[draft=false]{hyperref}
\hypersetup{
	colorlinks = true, 
	urlcolor   = blue, 
	linkcolor  = blue, 
	citecolor  = ForestGreen 
}
\newsavebox{\linkunderlinebox}
\newcommand{\linkunderline}[1]{%
	\begingroup
	\sbox{\linkunderlinebox}{#1}%
	\leavevmode
	\rlap{\raisebox{-1.6pt}[0pt][0pt]{\rule{\wd\linkunderlinebox}{0.35pt}}}%
	\usebox{\linkunderlinebox}%
\endgroup
}
\let\templatehref\href
\renewcommand{\href}[2]{\templatehref{#1}{\linkunderline{#2}}}
\ExplSyntaxOn
\NewDocumentCommand{\bibbreakablehref}{mm}{%
	\tl_map_inline:nn {#2} {%
		\templatehref{#1}{\linkunderline{##1}}%
		\penalty\UrlBreakPenalty
	}%
}
\ExplSyntaxOff
\DeclareFieldFormat{doi}{%
	\mkbibacro{DOI}\addcolon\space
	\bibbreakablehref{https://doi.org/#1}{#1}}
\DeclareFieldFormat{url}{%
	\mkbibacro{URL}\addcolon\space
	\bibbreakablehref{#1}{#1}}
\renewbibmacro*{cite}{%
	\printtext[bibhyperref]{%
		\linkunderline{%
			\textbf{%
				\printfield{labelprefix}%
				\printfield{labelalpha}%
				\printfield{extraalpha}%
				\ifbool{bbx:subentry}
				{\printfield{entrysetcount}}
				{}}}}}

\usepackage{aliascnt}
\usepackage[nameinlink,noabbrev,capitalize]{cleveref}
\crefname{equation}{}{}

\usepackage[plain]{fullpage}

\newlist{theoenum}{enumerate}{1} 
\setlist[theoenum]{label=\normalfont(\roman*), ref=\theproposition~\normalfont(\roman*), noitemsep, partopsep=0pt, topsep=0pt, parsep=0pt, itemsep=0pt}
\crefalias{theoenumi}{theorem}

\usepackage{tocloft}

\usepackage{titlefoot}

\theoremstyle{plain}
\newtheorem{theorem}{Theorem}[section]
\newtheorem*{theorem*}{Theorem}

\newaliascnt{proposition}{theorem}
\newtheorem{proposition}[proposition]{Proposition}
\aliascntresetthe{proposition}
\crefname{proposition}{Proposition}{Propositions}
\Crefname{proposition}{Proposition}{Propositions}

\newaliascnt{corollary}{theorem}
\newtheorem{corollary}[corollary]{Corollary}
\aliascntresetthe{corollary}
\crefname{corollary}{Corollary}{Corollaries}
\Crefname{corollary}{Corollary}{Corollaries}

\newaliascnt{conjecture}{theorem}

\aliascntresetthe{conjecture}
\crefname{conjecture}{Conjecture}{Conjectures}
\Crefname{conjecture}{Conjecture}{Conjectures}

\newaliascnt{lemma}{theorem}
\newtheorem{lemma}[lemma]{Lemma}
\aliascntresetthe{lemma}
\crefname{lemma}{Lemma}{Lemmas}
\Crefname{lemma}{Lemma}{Lemmas}
\newtheorem*{lemma*}{Lemma}

\theoremstyle{definition}
\newaliascnt{definition}{theorem}
\newtheorem{definition}[definition]{Definition}
\aliascntresetthe{definition}
\crefname{definition}{Definition}{Definitions}
\Crefname{definition}{Definition}{Definitions}

\theoremstyle{remark}
\newaliascnt{example}{theorem}

\aliascntresetthe{example}
\crefname{example}{Example}{Examples}
\Crefname{example}{Example}{Examples}

\newaliascnt{remark}{theorem}
\newtheorem{remark}[remark]{Remark}
\aliascntresetthe{remark}
\crefname{remark}{Remark}{Remarks}
\Crefname{remark}{Remark}{Remarks}
\newtheorem*{remark*}{Remark}

\crefname{theorem}{Theorem}{Theorems}
\Crefname{theorem}{Theorem}{Theorems}

\newaliascnt{appsec}{section}
\aliascntresetthe{appsec}
\crefname{appsec}{Appendix}{Appendices}
\Crefname{appsec}{Appendix}{Appendices}

\newcommand*\diff{\mathop{}\!\mathrm{d}}
\newcommand{\R}{\mathbb{R}}
\newcommand{\heis}{\mathbb{H}}

\newcommand{\uptau}{\tau}

\renewcommand{\exp}{\mathrm{exp}}
\renewcommand{\epsilon}{\varepsilon}
\renewcommand{\subset}{\subseteq}

\DeclareMathOperator{\T}{\mathrm{T}}

\DeclareMathOperator{\sign}{sgn}
\DeclareMathOperator{\arcsinh}{arcsinh}
\DeclareMathOperator{\arccosh}{arccosh}

\let\originalleft\left
\let\originalright\right
\renewcommand{\left}{\mathopen{}\mathclose\bgroup\originalleft}
\renewcommand{\right}{\aftergroup\egroup\originalright}

\title{
  {\large\bfseries\MakeUppercase{Constant mean curvature surfaces in the sub-Lorentzian Heisenberg group}}
}

\author{
  Samu\"el Borza
  \protect\footnotemark[1]
  \protect\footnotemark[2]
  \and
  Andrea Pinamonti
  \protect\footnotemark[3]
  \protect\footnotemark[5]
  \and
  Omar Zoghlami
  \protect\footnotemark[1]
  \protect\footnotemark[4]
}

\date{}

\makeatletter
\def\@maketitle{
  \newpage
  {\centering
    \@title\par
    \vskip 1.5em
      {\large
        \lineskip .5em
        \begin{tabular}[t]{c}
          \@author
        \end{tabular}\par}
    \ifx\@date\@empty\else
      \vskip 1em
        {\large \@date}
    \fi
    \par}
  \vskip 1.5em
}

\let\template@maketitle\maketitle
\renewcommand{\maketitle}{
  \begingroup
  \renewcommand{\thefootnote}{\fnsymbol{footnote}}
  \long\def\@makefntext##1{
    \parindent 0pt
    \noindent\makebox[0pt][r]{\@makefnmark\,}##1
  }
  \template@maketitle
  \let\orig@makefnmark\@makefnmark   
  \insert\footins{
    \reset@font\footnotesize
    \interlinepenalty\interfootnotelinepenalty
    \splittopskip\footnotesep
    \splitmaxdepth\dp\strutbox
    \floatingpenalty\@MM
    \hsize\columnwidth
    \@parboxrestore
    \parindent 0pt
    \noindent Date: \today. \par
  }
  \let\@makefnmark\orig@makefnmark
  \footnotetext[1]{Faculty of Mathematics, University of Vienna, Oskar-Morgenstern-Platz 1, 1090 Vienna, Austria}
  \footnotetext[3]{Department of Mathematics, University of Trento, Via Sommarive 14, 38123 Povo (Trento), Italy}
  \def\@makefnmark{}                 
  \footnotetext[2]{\textit{E-mails}: 
    \orig@makefnmark\href{mailto:samuel.borza@univie.ac.at}{\nolinkurl{samuel.borza@univie.ac.at}};\,
    {\let\@makefnmark\orig@makefnmark \footnotemark[5]}
    \href{andrea.pinamonti@unitn.it}{\nolinkurl{andrea.pinamonti@unitn.it}};\,
    {\let\@makefnmark\orig@makefnmark \footnotemark[4]}
    \href{mailto:omar.zoghlami@univie.ac.at}{\nolinkurl{omar.zoghlami@univie.ac.at}}}
  \endgroup
}
\makeatother

\begin{document}

\maketitle							

\providecommand{\keywords}[1]
{
	\par\noindent\textbf{\textit{Keywords---}} #1\par
}

\providecommand{\msc}[1]
{
	\noindent\textbf{\textit{MSC (2020)---}} #1\par
}

\begin{abstract}
	We study constant horizontal mean curvature surfaces in the sub-Lorentzian
	Heisenberg group. We derive the first-variation formula for horizontal area
	under volume-preserving radial variations and show that smooth isoperimetric
	candidates have constant horizontal mean curvature away from the characteristic
	set. We then give a complete classification of smooth boost-symmetric constant mean
	curvature surfaces: their characteristic sets, causal behaviour, and
	ambient sub-Lorentzian isometry classes. From this classification, we single out a family of smooth, acausal, boost-symmetric surfaces with nonzero constant mean curvature. Written as a two-sheeted graph over the exterior of a future hyperbola, this family is a natural sub-Lorentzian analogue of the Pansu bubbles and leads us to conjecture that it gives the isoperimetric maximisers in the sub-Lorentzian Heisenberg group.
\end{abstract}

\keywords{Sub-Lorentzian Heisenberg group, constant mean curvature, isoperimetric problem}

\msc{53C50, 53C17, 53A10, 49Q05}

{\renewcommand{\contentsname}{\large Contents}
  \small
  \tableofcontents
}
\section{Introduction}

The sub-Lorentzian Heisenberg group is the Lorentzian counterpart to the sub-Riemannian Heisenberg group. Motion is constrained to the usual horizontal directions, but the horizontal distribution is equipped with a Lorentzian metric instead of a positive definite one. Horizontal vectors may be timelike, spacelike, or null, and admissible curves are horizontal curves with causal velocity.
Compared with sub-Riemannian geometry, sub-Lorentzian geometry is still less developed. It nevertheless fits naturally within the growing framework of synthetic Lorentzian geometry \cite{kunzingersaemann2018,cavallettimondino2024}. The causal and geodesic structures of the sub-Lorentzian Heisenberg group were studied in \cite{Grochowski2006,sachkovsachkova23}, optimal transport in \cite{borza2025}, and synthetic curvature-dimension conditions in \cite{hausdorffSLHeis}. In this work, we investigate surfaces of constant mean curvature in the sub-Lorentzian Heisenberg group.

Constant mean curvature equations are fundamental in geometry and analysis. They arise in isoperimetric problems, where a natural strategy is to search for optimizer candidates among constant mean curvature surfaces, since optimizers have this property. In the sub-Riemannian Heisenberg group, the longstanding Pansu conjecture predicts that isoperimetric regions are bounded by special rotationally symmetric surfaces of constant horizontal mean curvature \cite{PansuConjecture,Capogna2007}, while metric balls are known not to be isoperimetric \cite{BallNotIsop}. There has been substantial work on this conjecture. It has been resolved within the class of \(C^{1,1}\) sets \cite{RitoreRosales}, generalized to the sub-Finsler Heisenberg group in \cite{HeisIsopFinsler}, and studied in the axially symmetric \cite{MontiAxial} and convex \cite{MontiConvex} classes, to name only a few. To date, the only fully resolved sub-Riemannian isoperimetric problem is that of the \(\alpha\)-Grushin plane \cite{MontiGrushin}. Nearly all of these results use the constant mean curvature equation.

The variational and analytic background is closely related to the foundational work of Garofalo--Nhieu and Franchi--Serapioni--Serra Cassano on \(X\)-perimeter, isoperimetric inequalities, and minimal surfaces in Carnot--Carath\'eodory spaces \cite{MR1871966,GarofaloNhieu1996}, as well as to the sub-Riemannian surface calculus of \cite{DanielliGarofaloNhieu2007}. Constant mean curvature equations are also tied to Bernstein-type rigidity and minimal surface theory, both extensively studied in the Heisenberg group: stability results for minimal surfaces \cite{DanielliGarofaloNhieuPauls2009,DanielliGarofaloNhieuPauls2010}, \(p\)-mean curvature and the associated degenerate PDEs \cite{MR2165405}, and Codazzi-type equations and \(C^1\) regularity results \cite{MR2983199}, and Euler--Lagrange equations for minimal and isoperimetric surfaces in more general sub-Riemannian manifolds \cite{HladkyPauls2008}. It is worth mentioning several recent and relevant works on prescribed and constant mean curvature equations: the prescribed mean curvature equation for \(t\)-graphs in the sub-Finsler Heisenberg group \cite{GiovannardiPinamontiPozueloVerzellesi2024}, existence and uniqueness for \(t\)-graphs of prescribed mean curvature in Riemannian Heisenberg groups, with sub-Riemannian solutions obtained via an approximation argument \cite{PozueloVerzellesi2026}, regularity and Bernstein-type questions for the mean curvature equation in the sub-Finsler Heisenberg group \cite{GiovannardiRitore2021,GiovannardiRitore2024}, horizontally totally geodesic hypersurfaces in higher-dimensional Heisenberg groups \cite{PinamontiVerzellesi2025}, and curvature estimates for stable minimal hypersurfaces \cite{GiovannardiPinamontiVerzellesi2024}.

The present work is also a step toward a contribution to Lorentzian isoperimetric theory. Bahn and Ehrlich solved a first isoperimetric problem in Minkowski space \cite{BahnEhrlich}, while Cavalletti and Mondino extended L\'evy--Gromov's isoperimetric inequality to Lorentzian length spaces satisfying a timelike curvature-dimension condition \(\mathsf{TCD}(K,N)\) \cite{MondinoIsopLor}, see also \cite{LangePeteranderl2025} for recent quantitative versions of both results and \cite{Lambert2021} for an isoperimetric inequality in Robertson--Walker spacetimes. No isoperimetric result is known in sub-Lorentzian geometry.

Constant mean curvature hypersurfaces play an important role in general relativity. Spacelike Cauchy hypersurfaces provide initial data for Einstein's equations, but this initial data must satisfy the nonlinear Einstein constraint equations. The presence of a Cauchy hypersurface with constant mean curvature, or even better, a spacetime region foliated by constant mean curvature hypersurfaces, greatly simplifies the analysis. The question of whether CMC hypersurfaces exist under general physically motivated assumptions is one of the central problems in this area of research, see \cite{Marsden1980,Gerhardt1983,Bartnik1988,Ecker1991,Ling2024,Ling2025}.

With this background in mind, we investigate constant mean curvature hypersurfaces in the sub-Lorentzian Heisenberg group. We review the needed basics of sub-Lorentzian geometry in \cref{sec:sub_lorentzian_geometry}, and fully classify the sub-Lorentzian isometries of the Heisenberg group in \cref{thm:sub_lorentzian_isometries}. The isometry group is identified as \(\mathbb H \rtimes O(1,1)\) and splits into four components: time-preserving boosts, time-inverting boosts, time-preserving rotations, and time-inverting rotations. We then prove that surfaces area-stationary under volume-preserving radial variations have constant mean curvature, see \cite{RitoreRosales} for the parallel sub-Riemannian result. Our proof is intrinsically sub-Lorentzian and does not use Lorentzian approximants. We also show in \cref{thm:pseudo-sphere-not-CMC} that pseudo-spheres are not
solutions to Lorentzian isoperimetric problems in the sub-Lorentzian
Heisenberg group, in parallel with the classical result that
Carnot--Carath\'eodory balls are not isoperimetric in the sub-Riemannian
Heisenberg group \cite{BallNotIsop}.

In \cref{section:boost-symmetric-surfaces}, we study boost-symmetric hypersurfaces of constant mean curvature. This symmetry assumption is the Lorentzian analogue of rotational symmetry in the sub-Riemannian Heisenberg group, see \cite{RitoreRosales2006}. As shown in \cref{subsec:boost-symmetric-surfaces}, such a surface is always obtained by horizontally lifting a constant-curvature $k$ curve in the Minkowski plane and applying the boost action. These profile curves are straight lines for $k = 0$ and hyperbolas for $k \neq 0$. The remainder of \cref{section:boost-symmetric-surfaces} classifies these
surfaces up to ambient sub-Lorentzian isometry and studies their causal
properties. We summarise the results of that section here.

In the case \(k>0\), the profiles are determined by \(k>0\), where \(1/k\) is the radius of the hyperbola, by \(\varepsilon\in\{\pm1\}\), which selects the branch, and by \((c,d)\in\mathbb R^2\), which translates the hyperbola in the Minkowski plane. The additive constant in the lift is removed by a time-preserving boost, and the case \(\varepsilon=-1\) is carried to the case \(\varepsilon=1\) by
a time-inverting boost. Finally, after a Heisenberg dilation and a reparametrisation, one may set \(k=1\), reducing the corresponding surfaces to a family which we denote by \(S_{(c,d)}\). The classification of the normalised nonzero-curvature family \(S_{(c,d)}\)
is then organised according to the causal character of
\((c,d)\) in the Minkowski plane: the cases
\(c^2-d^2<0\), \(c^2-d^2>0\), and \(c^2-d^2=0\), which we call surfaces of
spacelike, timelike, and null parameters, respectively.

Consider two such surfaces \(S_{(c,d)}\) and \(S_{(c',d')}\), and write
\(\Delta=c^2-d^2\) and \(\Delta'=(c')^2-(d')^2\). In the nonzero null case,
we write \((c,d)=(c,\omega c)\) and
\((c',d')=(c',\omega'c')\), with \(\omega,\omega'\in\{-1,1\}\).
The isometry classification is summarised in \cref{tab:nonzero_cmc_isometry_types}: each entry gives the necessary and sufficient condition for an isometry of the
specified type to exist between \(S_{(c,d)}\) and \(S_{(c',d')}\).

\begin{table}[ht]
    \centering
    \begin{tabular}{|c|c|c|}
        \hline
        Parameter type
         &
        Time-preserving boost
         &
        Time-preserving rotation
        \\
        \hline
        \(\Delta<0\)
         &
        \(\Delta=\Delta'\), \(dd'>0\)
         &
        \(\Delta=\Delta'\), \(dd'<0\)
        \\
        \hline
        \(\Delta>0\)
         &
        \(\Delta=\Delta'\), \(cc'>0\)
         &
        \(\Delta=\Delta'\), \(cc'>0\)
        \\
        \hline
        \(\Delta=0\), \((c,d)\neq(0,0)\)
         &
        \(cc'>0\), \(\omega=\omega'\)
         &
        \(cc'>0\), \(\omega=-\omega'\)
        \\
        \hline
        \((c,d)=(0,0)\)
         &
        \((c',d')=(0,0)\)
         &
        \((c',d')=(0,0)\)
        \\
        \hline
    \end{tabular}
    \caption{Time-preserving isometry types for the normalised nonzero-curvature
        family \(S_{(c,d)}\). Time-inverting isometries do not occur in this family.}
    \label{tab:nonzero_cmc_isometry_types}
\end{table}
The hypersurfaces \(S_{(c,d)}\) are smooth precisely when
\(c^2-d^2\neq1\) or \(c\geq0\). In the case
\(c^2-d^2=1\) with \(c<0\), there is  a genuine singularity at the origin. Spacelike-parameter surfaces are
not achronal. Timelike-parameter surfaces are acausal for \(c>0\), not achronal for \(c<0\) in the regular case, and acausal in the singular case. Null-parameter
surfaces are acausal for \(c\geq0\) and not achronal for \(c<0\).

In the case \(k=0\), the profiles are determined by \(\varepsilon\in\{\pm1\}\), which selects the sign of the \(y\)-component of the direction of the line, by \((\alpha,\alpha')\in\mathbb R^2\), which translates the line in the Minkowski plane, and by \(\beta\in\mathbb R\), which determines its direction. The additive constant in the lift is removed by a time-preserving boost, and the parameter \(\alpha'\) is removed by reparametrising the line, reducing the corresponding surfaces to a family which we denote by \(S_{(\varepsilon,\alpha,\beta)}\).

For the maximal family, the sign \(\varepsilon\) does not affect the isometry
class. For two maximal surfaces \(S_{(\varepsilon,\alpha,\beta)}\) and
\(S_{(\varepsilon',\alpha',\beta')}\), the isometry classification is summarised in
\cref{tab:maximal_isometry_types}.

\begin{table}[ht]
    \centering
    \begin{tabular}{|c|c|}
        \hline
        Time-preserving boost
         &
        \(\abs{\alpha}\cosh\beta=\abs{\alpha'}\cosh\beta'\), \(\alpha\alpha'>0\)
        \\
        \hline
        Time-preserving rotation
         &
        \(\abs{\alpha}\cosh\beta=\abs{\alpha'}\cosh\beta'\), \(\alpha\alpha'>0\)
        \\
        \hline
        Time-inverting boost
         &
        \(\abs{\alpha}\cosh\beta=\abs{\alpha'}\cosh\beta'\), \(\alpha\alpha'<0\)
        \\
        \hline
        Time-inverting rotation
         &
        \(\abs{\alpha}\cosh\beta=\abs{\alpha'}\cosh\beta'\), \(\alpha\alpha'<0\)
        \\
        \hline
    \end{tabular}
    \caption{Isometry types for regular maximal surfaces
        \(S_{(\varepsilon,\alpha,\beta)}\) and
        \(S_{(\varepsilon',\alpha',\beta')}\). The case \(\alpha=0\) is singular and gives a single image, independent of
        \(\varepsilon\) and \(\beta\). This singular maximal surface is not isometric to
        any regular maximal surface.
    }
    \label{tab:maximal_isometry_types}
\end{table}
The maximal hypersurfaces \(S_{(\varepsilon,\alpha,\beta)}\) are smooth
precisely when \(\alpha\neq0\). When \(\alpha=0\), the surface is independent of
\(\varepsilon\) and \(\beta\) and has a genuine singularity at the origin. Every regular maximal surface \(S_{(\varepsilon,\alpha,\beta)}\) with \(\alpha\neq0\) is not achronal, whereas the singular maximal surface corresponding to \(\alpha=0\) is acausal. Interestingly, all the maximal surfaces are ruled by straight horizontal lines, which is an important property to study the Bernstein problem in the Heisenberg group \cite{Young2022}.

The classifications in
\cref{tab:nonzero_cmc_isometry_types,tab:maximal_isometry_types} are exhaustive:
no further ambient sub-Lorentzian isometries occur among the surfaces
\(S_{(c,d)}\) and \(S_{(\varepsilon,\alpha,\beta)}\). In particular, surfaces
lying in different rows of \cref{tab:nonzero_cmc_isometry_types} are not
isometric, and surfaces with different curvature parameters \(k\) are not
isometric. The proof combines the explicit form of the ambient isometry group, geometric invariants, and the complete description of the characteristic set of each surface.

We end this introduction by conjecturing that sub-Lorentzian isoperimetric
maximisers should belong to the timelike-parameter family \(S_{(c,d)}\) with
\(c^2-d^2>0\) and \(c>0\). This is a natural candidate family singled out by
the boost-symmetric classification: the other nonzero-CMC families are ruled
out by the causal constraints, since spacelike-parameter surfaces are not
achronal, regular timelike-parameter surfaces with \(c<0\) are not achronal,
null-parameter surfaces with \(c<0\) are not achronal and those with \(c>0\) can be seen as limits of timelike ones. Up to a
time-preserving boost, the conjectured isoperimetric maximiser is \(S_{(C,0)}\)
with \(C>0\), and it can be written as the set of points \((x,y,z)\) such that
\(x>0\), \(x^2-y^2\geq(C+1)^2\), and
\[
    \abs{z}
    =
    \frac12\left(
    \arccosh\left(\frac{x^2-y^2-C^2-1}{2C}\right)
    +
    \sqrt{\left(\frac{x^2-y^2-C^2-1}{2}\right)^2-C^2}
    \right).
\]
This expression is strongly reminiscent of the Pansu bubble in the
sub-Riemannian Heisenberg isoperimetric problem, see \cite{PansuConjecture} and \cite[Section~8.1, p.~152]{Capogna2007}. This candidate surface is illustrated in \cref{fig:timelikesurface}.

\begin{figure}[h]
    \centering
    \includegraphics[scale = 0.4]{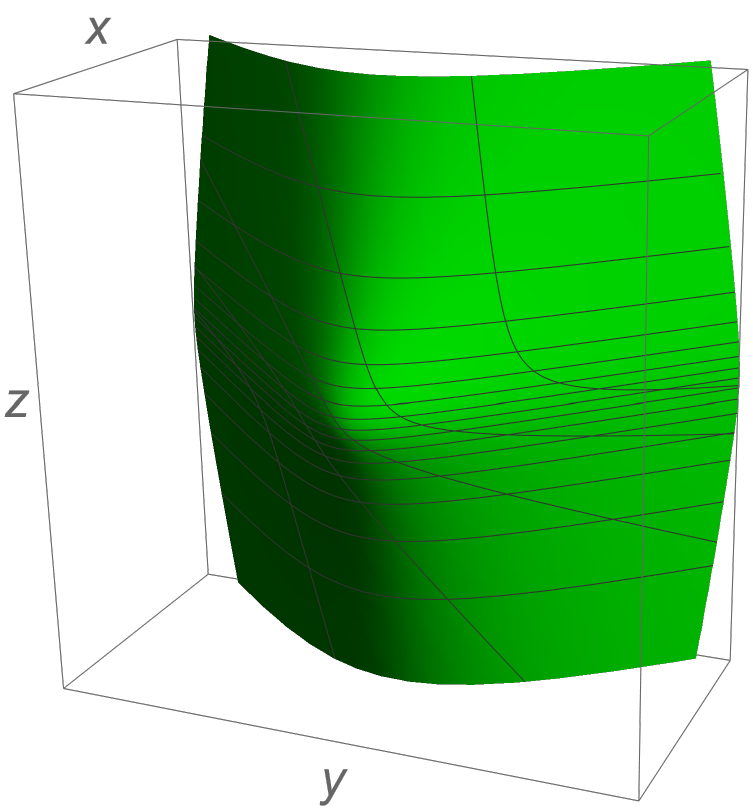}
    \caption{A timelike-parameter surface \(S_{(C,0)}\) with \(C>0\).}
    \label{fig:timelikesurface}
\end{figure}

\section*{Acknowledgements}

This research was funded by the Austrian Science Fund (FWF) [Grant DOI 10.55776/EFP6]. For open access
purposes, the authors have applied a CC BY public copyright license to any author
accepted manuscript version arising from this submission. S.B. is supported by SUBLOR, a research project funded by the European Union under the Horizon Europe programme's Marie Skłodowska-Curie Actions Postdoctoral Fellowships, grant agreement No. 101282277. A.P. is supported by the University of Trento. A. P. received funding through INdAM-GNAMPA 2026 Project \emph{Variational, Geometric, and Analytic Perspectives on Regularity}, CUP E53C25002010001.
The authors are thankful to Chiara Rigoni for early conversations on this work
and to Davide Carazzato for helpful discussions related to
\Cref{sec:volume_preserving_variations}.

\section{Sub-Lorentzian and horizontal geometry in the Heisenberg group}

\subsection{The sub-Lorentzian Heisenberg group}

\label{sec:sub_lorentzian_geometry}

The Heisenberg group $\heis$ has been studied extensively when equipped with its natural sub-Riemannian structure; see, for instance, \cite{Capogna2007,Agrachev2020}. For the purposes of this work, we review the sub-Lorentzian structure of the Heisenberg group, in line with works such as \cite{sachkovsachkova23, Huang2012, Grochowski2006, Grochowski2004, borza2025, hausdorffSLHeis}.

The Heisenberg group $\heis$ is the Lie group that can be identified with the Euclidean space $\R^3$ equipped with the non-Abelian group law
\[
    (x,y,z) \ast (x',y',z') \coloneqq \left( x+x',y + y', z + z' +\frac{1}{2}(x y'- x' y) \right),
\]
for all $(x, y, z), (x', y', z') \in \R^3$. The identity element is the origin $e = (0, 0, 0)$ of $\R^3$, while $(x, y, z)^{-1} = (-x, -y, -z)$. The left translation by $p \in \heis$ is the automorphism $L_p : \heis \to \heis$ given by $L_p(q) = p \ast q$, and a vector field $V$ on $\heis$ is said to be \emph{left-invariant} if $\diff_q L_p [V(q)] = V(p \ast q)$ for all $p, q \in \heis$. A left-invariant vector field is uniquely determined by its value at the group identity element, and those that coincide with $\partial_x, \partial_y, \partial_z$ at $e$, respectively, are
\[
    X = \partial_x - \frac{y}{2} \partial_z,\quad Y = \partial_y + \frac{x}{2} \partial_z, \quad \text{and} \quad Z = \partial_z.
\]

The sub-bundle $\Delta$ of the tangent bundle $\T(\heis)$ of $\heis$, defined by
\[
    \Delta \coloneqq \operatorname{span} \left\{ X,Y \right\} \subseteq \T(\heis),
\]
is referred to as the {\em horizontal distribution} of $\heis$ and, at any point $p \in \heis$, vectors $v \in \Delta_p$ are called {\em horizontal vectors}.
Importantly, the distribution $\Delta$ is {\em not} involutive; indeed, we have $[X,Y] = \partial_z = Z \notin \Delta$. A vector field $V$ on $\heis$ is called \emph{horizontal} if it is a smooth section of $\Delta$. A curve $\gamma : I \to \heis$ is said to be \emph{horizontal} if $\gamma$ is absolutely continuous and there exist $u, v \in L^{\infty}(I, \R)$ such that
\begin{equation}
    \label{eq:horizontalcurveinH}
    \dot \gamma (t) = u(t) X(\gamma(t)) + v(t) Y(\gamma(t)), \qquad \text{for almost every } t \in I.
\end{equation}
For any $p \in \heis$ and any horizontal curve $\gamma$, the translated curve $p \ast \gamma \coloneqq L_p \circ \gamma$ is also horizontal. Note that a curve \(\gamma(t)=(x(t),y(t),z(t))\) is horizontal if and only if
\begin{equation}
    \label{eq:horizontallift}
    \dot z(t)=\frac12\bigl(x(t)\dot y(t)-y(t)\dot x(t)\bigr)
    , \qquad \text{for almost every } t \in I.
\end{equation}

The sub-Lorentzian structure on $\heis$ is induced by considering the Lorentzian metric $g$ on $\Delta$ uniquely determined by the conditions
\[
    g(X,X)=-1, \quad g(X,Y) = 0, \quad g(Y,Y) = 1.
\]
The structure $(\heis, \Delta, g)$, which is preserved under left translations, is what we call the {\em sub-Lorentzian Heisenberg group}. For any $p \in \heis$, we say that a horizontal vector $v \in \Delta_p$ is
\[
    \begin{cases}
        \text{causal}   \\
        \text{timelike} \\
        \text{null}     \\
        \text{spacelike}
    \end{cases}
    \quad \text{if} \quad g_p(v, v) \;
    \begin{cases}
        \leq 0 \text{ and } v \neq 0 \\
        < 0                          \\
        = 0 \text{ and } v \neq 0    \\
        > 0 \text{ or } v = 0
    \end{cases}.
\]
A causal vector is said to be \emph{future-directed} if $g_p(v, X_p) < 0$. Accordingly, we say that a horizontal curve $\gamma$ is {\em causal} (resp. {\em timelike}, {\em null}, {\em future-directed}) if the horizontal vector $\dot{\gamma}(t)$ is {\em causal} (resp. {\em timelike}, {\em null}, {\em future-directed}) for almost every $t$. The \emph{horizontal gradient} of $f$ is defined by the condition
\[
    g\bigl(\operatorname{grad}_{\heis} f,V\bigr)=V(f)
    \qquad
    \text{for every } V\in\Gamma(\Delta).
\]

For any two points $p,q \in \heis$, we say that $p$ {\em causally precedes} $q$, denoted by $p \leq q$, if $p=q$ or there exists a future-directed causal curve $\gamma$ joining $p$ to $q$. Analogously, we say that $p$ {\em chronologically precedes} $q$, denoted by $p \ll q$, if there exists a future-directed timelike curve joining $p$ to $q$. The {\em causal} and {\em chronological futures} of $A \subseteq \heis$ are defined, respectively, by
\begin{equation}
    \label{eq:causalchronologicalfuturedefinition}
    J^+(A) \coloneqq \{ y \in \heis \mid \exists x \in A,\ x \leq y \}, \quad \text{and} \quad I^+(A) \coloneqq \{ y \in \heis \mid \exists x \in A,\ x \ll y \}.
\end{equation}
Similarly, one can define the causal and chronological pasts $J^-(A)$ and $I^-(A)$. The {\em causal} and {\em chronological diamonds} of two sets $A, B \subseteq \heis$ are given by
\begin{equation}
    \label{eq:causalchronologicaldiamonddefinition}
    J(A, B) \coloneqq J^+(A) \cap J^-(B), \quad \text{and} \quad I(A, B) \coloneqq I^+(A) \cap I^-(B).
\end{equation}
By left-invariance, we have $J^+(p) = p \ast J^+(e)$ and $I^+(p) = p \ast I^+(e)$, and similarly for the pasts. Moreover, it is known that
\begin{equation}\label{eq:future_origin_heisenberg}
    J^+(e) = \{x\geq 0\} \cap \{-x^2+y^2+4\abs{z} \leq 0\}, \quad
    I^+(e) = \{x>0\} \cap \{-x^2+y^2+4\abs{z} < 0\}.
\end{equation}

The {\em sub-Lorentzian length} of a future-directed causal curve $\gamma : I \to \heis$ is given by
\[
    L(\gamma) \coloneqq \int_I \sqrt{-g(\dot{\gamma}(t),\dot{\gamma}(t))} \, \diff t.
\]
The {\em time-separation function} $\uptau$ between two points $p,q \in \heis$ is then defined by
\[
    \uptau(p,q) \coloneqq \sup \{ L(\gamma) \mid \gamma \ \text{is future-directed causal and joins $p$ to $q$}\} \quad \text{if } p \leq q,
\]
and $\uptau(p,q) \coloneqq 0$ if $p$ does not causally precede $q$. It is easy to show that the time-separation function $\uptau$ satisfies the {\em reverse triangle inequality}, that is,
\[
    \uptau(q_1, q_3) \geq \uptau(q_1, q_2) + \uptau(q_2, q_3) \qquad \text{if } q_1 \leq q_2 \leq q_3 \text{ in } \heis.
\]

A future-directed causal curve $\gamma$ from $p$ to $q$ is a {\em maximizing geodesic} if $L(\gamma) = \uptau(p,q)$. The \emph{exponential map} from $e$ of the sub-Lorentzian Heisenberg group is given by
\begin{equation}
    \label{eq:exponentialmap}
    \exp_{e}(u, v, w) \coloneqq
    \begin{pmatrix}
        \dfrac{v(\cosh(w)-1) +u\sinh(w)}{w} \\[1em]
        \dfrac{v\sinh(w) +u(\cosh(w)-1)}{w} \\[1em]
        \dfrac{u^2-v^2}{2}\cdot\dfrac{\sinh(w)-w}{w^2}
    \end{pmatrix}
\end{equation}
for $u > \abs{v}$ and $w \neq 0$. It is extended smoothly to $\exp_e(u, v, w) = (u, v, 0)$ when $w = 0$. The exponential map describes all constant speed timelike geodesics $\gamma(t)$ from the origin, i.e. $\gamma(t) =  \exp_e(tu, tv, tw)$ for some $u, v, w$, see \cite{sachkovsachkova23,borza2025,hausdorffSLHeis}. By left translation, all other timelike geodesics can be recovered and the exponential map $\exp_p$ from any $p$ generating timelike geodesics starting from $p$ can a be considered.
The map $\tau(p, \cdot) \coloneq q \mapsto \tau(p, q)$ is analytic on $\{p \ll q\}$, see \cite{sachkovsachkova23}, and satisfies the eikonal equation
\begin{equation}
    \label{eq:eikonaleq1}
    g\bigl(\operatorname{grad}_{\heis}\tau(p, \cdot),
    \operatorname{grad}_{\heis}\tau(p, \cdot)\bigr)=-1,
\end{equation}
and we also have
\begin{equation}
    \label{eq:eikonaleq2}
    \frac{d}{dt}\exp_p(t\lambda)
    =
    -\tau\bigl(p, \exp_p(t \lambda)\bigr)\,
    \operatorname{grad}_{\heis}\tau\bigl(p, \exp_p(t\lambda)\bigr).
\end{equation}

The following theorem describes the structure of smooth isometries of $\heis$, that is, diffeomorphisms $F$ for which, for every $p \in \heis$, it holds $d_pF(\Delta_p) = \Delta_{F(p)}$ and $F^*g = g$.
\begin{theorem}\label{thm:sub_lorentzian_isometries}
    A map $F \colon \heis \to \heis$ is a smooth isometry if and only if
    \[
        F(x,y,z) \coloneqq
        q \ast
        \begin{pNiceArray}{w{c}{0.5cm}c|c}[margin]
            \Block{2-2}<\Large>{\Lambda} & & 0\\
            & & 0 \\
            \hline
            0 & 0 & \det(\Lambda)
        \end{pNiceArray}
        \begin{pmatrix}
            x \\
            y \\
            z
        \end{pmatrix}
    \]
    for some $q \in \heis$ and some $\Lambda \in O(1,1)$.
\end{theorem}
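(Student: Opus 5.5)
The proof has two directions, and the converse is the substantial one.

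\emph{Sufficiency.} Left translations $L_q$ preserve $\Delta$ and $g$ by construction, so it is enough to treat the linear map $F_\Lambda(x,y,z)=(\Lambda(x,y),\det(\Lambda)z)$ with $\Lambda=\begin{pmatrix} a & b\\ c& d\end{pmatrix}\in O(1,1)$. The first step is to show that $F_\Lambda$ is a group automorphism. The bracket $\tfrac12(xy'-x'y)$ is the determinant of the pair $((x,y),(x',y'))$, so it is multiplied by $\det\Lambda$ under $\Lambda$, and this matches the scaling of the $z$-coordinate. For a Lie group automorphism, the differential maps left-invariant fields to left-invariant fields. A direct check gives
\[
dF_\Lambda(X)=aX+cY \quad\text{and}\quad dF_\Lambda(Y)=bX+dY.
\]
Hence $\Delta$ is preserved, and the matrix of $g$ on $\Delta$ transforms by $\Lambda^{T}\operatorname{diag}(-1,1)\Lambda=\operatorname{diag}(-1,1)$. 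This is exactly the defining condition of $O(1,1)$.

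\emph{Necessity.} Let $F$ be a smooth isometry. Set $q=F(e)$ and replace $F$ by $L_{q}^{-1}\circ F$, so that we may assume $F(e)=e$. The plan is the standard contact-geometry rigidity argument.

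\emph{Step 1 (contact map).} Since $dF$ preserves $\Delta=\ker\theta$, where $\theta=dz-\tfrac12(x\,dy-y\,dx)$, we have $F^*\theta=\lambda\theta$ for some nonvanishing smooth function $\lambda$.

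\emph{Step 2 ($\lambda$ is constant).} Write $dF(X)=\alpha X+\gamma Y$ and $dF(Y)=\beta X+\delta Y$, with coefficients that are functions on $\heis$. Isometry means the matrix $M=\begin{pmatrix}\alpha&\beta\\ \gamma&\delta\end{pmatrix}$ lies in $O(1,1)$ pointwise, so $\det M=\pm1$. Applying $d\theta=-dx\wedge dy$ together with $[X,Y]=Z$ gives $\lambda=\det M$, via $\theta(dF[X,Y])=\theta([dF X,dF Y])$. A nonvanishing continuous function with values in $\{\pm1\}$ on the connected space $\heis$ is constant, so $\lambda\equiv\pm1$.

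\emph{Step 3 ($M$ is constant).} This is the main obstacle. Write $M=\pm\begin{pmatrix}\cosh s&\sinh s\\ \sinh s&\cosh s\end{pmatrix}$, possibly composed with a constant reflection, where $s$ is a smooth function. I would first compose with a constant $F_{\Lambda_0}$ so that $M(e)=I$. Next, use that $dF$ must map $Z$ to $\lambda Z$ modulo $\Delta$, and impose the bracket relations
\[
dF[X,Z]=[dFX,dFZ]=0 \quad\text{and}\quad dF[Y,Z]=[dFY,dFZ]=0.
\]
To make these usable, decompose $dF(Z)=\lambda Z+\mu X+\nu Y$ and compute the bracket $[dF X,dF Y]=dF Z$. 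Its horizontal part gives $\mu$ and $\nu$ in terms of $X(s)$ and $Y(s)$. Feeding these into the relations with $Z$ should give the first-order system $X(s)=Y(s)=0$, and the bracket-generating property then yields $Z(s)=0$. I expect this bookkeeping to be the delicate part.

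An alternative for Step 3, if the computation becomes messy, is to use the geometry already available. An isometry preserving $e$ maps the future cone $J^+(e)$, described in \eqref{eq:future_origin_heisenberg}, either to itself or to $J^-(e)$. It also maps timelike geodesics to timelike geodesics, so it commutes with $\exp_e$ in \eqref{eq:exponentialmap} through its differential at $e$. Because $\exp_e$ is onto $I^+(e)$, the map $F$ is determined on $I^+(e)$ by $d_eF$. The same holds on $I^-(e)$, and on the spacelike region by analyticity or by composing with translations. By Step 2, $d_eF$ is of the form $\operatorname{diag}(\Lambda,\det\Lambda)$, and $F_\Lambda$ has the same differential at $e$. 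Uniqueness then gives $F=F_\Lambda$.

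In either route, the uniqueness principle can be stated as follows: an isometry fixing $e$ whose differential at $e$ is the identity is the identity. Proving this principle is the heart of the matter.
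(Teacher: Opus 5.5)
Your proposal is correct. In its key step it takes a different route from the paper. After getting $\lambda\equiv\det\Lambda_p=\varepsilon$ (your Step 2, done there by comparing $F^*d\alpha|_\Delta$ with $X^\flat\wedge Y^\flat$), the paper uses that $Z$ is the Reeb field of $\alpha$ to get $dF[Z]=\varepsilon Z$. Hence $F$ descends to a map $f$ of the Minkowski plane with $df\in O(1,1)$. It then quotes the rigidity of local isometries of flat $\mathbb{R}^{1,1}$ to make $\Lambda$ constant, and integrates $F^*\alpha=\varepsilon\alpha$ to obtain $F_3=\varepsilon z$.

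You instead stay on $\heis$ and read everything off the pushed-forward structure equations, and the bookkeeping you worry about is short. Write $F_*X=\cosh\tilde s\,X+\sinh\tilde s\,Y$ and $F_*Y=\sinh\tilde s\,X+\cosh\tilde s\,Y$, where $\tilde s=s\circ F^{-1}$ (the coefficients must be transported to the target), and $F_*Z=\lambda Z+\mu X+\nu Y$. Since $\lambda$ is constant by Step 2, the $Z$-components of $[F_*X,F_*Z]=0$ and $[F_*Y,F_*Z]=0$ read $\cosh\tilde s\,\nu-\sinh\tilde s\,\mu=0$ and $\sinh\tilde s\,\nu-\cosh\tilde s\,\mu=0$. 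These force $\mu=\nu=0$, which is the paper's Reeb step in bracket form. The horizontal part of $[F_*X,F_*Y]$ is exactly $X(\tilde s)X-Y(\tilde s)Y$, so $X(\tilde s)=Y(\tilde s)=0$, hence $Z(\tilde s)=0$ and $s\equiv s(e)=0$.

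You should then say explicitly why this proves your uniqueness principle. After normalisation $F$ preserves $X$, $Y$, $Z$ and fixes $e$, so it commutes with right translations by $(t,0,0)$ and $(0,t,0)$. These generate $\heis$, whence $F=\mathrm{id}$.

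Your route is self-contained. The Minkowski rigidity fact the paper quotes is itself an equality-of-mixed-partials computation, the abelian shadow of the bracket identity you use. You need neither the projection nor the integration of $F_3$. The paper's route, in turn, gives explicit coordinate formulas at every stage. Your sufficiency argument is also more explicit than the paper's ``straightforward computation''.

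The exponential-map alternative is not needed and, as sketched, is loose:
\begin{itemize}
\item $\exp_e$ is parametrised by initial covectors, so $F$ intertwines it through the cotangent lift of $d_eF$, not through $d_eF$ itself.
\item A merely smooth $F$ is not known to be analytic, so only the translation argument could reach the spacelike region.
\item ``By Step 2, $d_eF=\operatorname{diag}(\Lambda,\det\Lambda)$'' needs $\mu=\nu=0$, which Step 2 as written does not give.
\end{itemize}
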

\begin{proof}
    The \say{if} part of the statement is a straightforward computation. To prove the converse implication, by replacing $F$ with $F(0)^{-1} \ast F$ it is enough to assume that $F(0)=0$ and prove that, in this case, $F$ is of the above form with $q=0$.
    We consider the $1$-forms $X^\flat, Y^\flat$ on $\Delta$, dual to $X$ and $Y$ via the metric $g$, that is, for any $V \in \Delta_p$,
    \[
        X^\flat(V) \coloneqq g(X,V), \quad Y^\flat(V) \coloneqq g(Y,V).
    \]
    One immediately sees that $X^\flat = -dx_{\vert \Delta}$ and $Y^\flat = dy_{\vert \Delta}$. In particular, it holds that
    \[
        X^\flat \wedge Y^\flat = (-dx \wedge dy)_{\vert \Delta}.
    \]

    Since $F$ is an isometry, the map $d_pF : \Delta_p \to \Delta_{F(p)}$ is a linear isometry between two-dimensional Lorentzian vector spaces. With respect to the bases $\{X_p,Y_p\}$ and $\{X_{F(p)},Y_{F(p)}\}$, we denote its matrix by $\Lambda_p$ so that
    \begin{equation}\label{eq:pullback_preserves_volume}
        F^*(X_{F(p)}^\flat \wedge Y^\flat_{F(p)}) = \det(\Lambda_p) X^\flat_p \wedge Y^\flat_p = \det(\Lambda_p)(-dx \wedge dy)_{\vert \Delta}.
    \end{equation}
    Since $X,Y$ are pseudo-orthonormal and $F$ is an isometry, we have $\Lambda_p \in O(1,1)$. In particular, $\epsilon := \det(\Lambda_p)\in\{-1,1\}$, and, by the continuity of $p \mapsto \det(\Lambda_p)$ and the connectedness of $\heis$, this sign is constant.

    We now consider the contact form $\alpha = dz -\frac{1}{2}(xdy-ydx)$ on $\heis$. Since $\Delta = \ker(\alpha)$ and $F$ preserves the horizontal distribution, we must have
    \[
        F^*\alpha = \lambda \alpha
    \]
    for some $\lambda \in C^{\infty}(\heis)$. Taking the exterior derivative of both sides, we obtain
    \[
        F^*d\alpha = d\lambda \wedge \alpha + \lambda d\alpha, \quad \text{ and thus } \quad F^*(d\alpha_{\vert \Delta}) = \lambda d\alpha_{\vert \Delta}.
    \]
    As $d\alpha = -dx \wedge dy$, from \eqref{eq:pullback_preserves_volume} we obtain
    \[
        \varepsilon (X^\flat \wedge Y^\flat)
        = \lambda d\alpha_{\vert \Delta}
        = \lambda(-dx \wedge dy)_{\vert \Delta}
        = \lambda(X^\flat \wedge Y^\flat),
    \]
    which yields $\lambda \equiv \varepsilon$. In particular, $d\lambda = 0$, and therefore the identity $F^*d\alpha = \varepsilon d\alpha$ holds everywhere. If we decompose $dF[Z] = aX+bY+cZ$ for some functions $a,b,c \in C^\infty(\heis)$, then, for any horizontal vector field $U$, we have
    \[
        \begin{split}
            0
             & = -\varepsilon (dx \wedge dy)(Z,U)
            = \varepsilon d\alpha(Z,U)
            = F^*(d\alpha)(Z,U)  = d\alpha(aX+bY+cZ, dF[U]) \\
             &
            = d\alpha(aX+bY, dF[U]) = (X^{\flat}\wedge Y^\flat)(aX+bY, dF[U]).
        \end{split}
    \]
    Since $dF[U]$ ranges over the horizontal distribution and $X^\flat \wedge Y^\flat$ is non-degenerate on $\Delta$, this forces $aX+bY=0$. The identites $\alpha(Z) = 1$ and $F^*\alpha = \varepsilon \alpha$ imply
    \[
        c = \alpha(cZ) = (F^*\alpha)(Z) = \varepsilon \alpha(Z) = \varepsilon,
    \]
    and therefore $dF[Z] = \varepsilon Z$. Writing $F=(F_1,F_2,F_3)$, this means  that $\partial_z F_1 = \partial_z F_2 = 0$.

    Define a map on the $(x,y)$-plane by $f(x,y) \coloneqq \pi(F(x,y,0))$, where $\pi(x,y,z)=(x,y)$. Thus, $f$ is the map given by the first two components of $F$, restricted to the plane $z=0$. By what we have just proved, $\pi(F(x,y,z))=\pi(F(x,y,0))$ for all $(x,y,z) \in \heis$. For any $a,b \in \R$, setting $c \coloneqq \frac{ay-bx}{2}$ we have, at the point $(x,y,0)$,
    \begin{align*}
        d\pi \circ dF(a\partial_x+b\partial_y)
         & =
        d\pi \circ dF(aX+bY+cZ) =
        d\pi\bigl(\Lambda_{(x,y,0)}(aX+bY)+\varepsilon cZ\bigr) \\
         & =
        d\pi\bigl(\Lambda_{(x,y,0)}(aX+bY)\bigr) =
        \Lambda_{(x,y,0)}(a\partial_x+b\partial_y),
    \end{align*}
    where we used $dF[Z] = \varepsilon Z$ and the fact that $dF$ acts as an isometry on the horizontal distribution. Hence $df_{(x,y)}=\Lambda_{(x,y,0)} \in O(1, 1)$. At this stage, $\Lambda_{(x,y,0)}$ could, in principle, depend on $(x,y)$. However, this cannot actually happen, because a smooth map of the Minkowski plane whose differential lies everywhere in $O(1,1)$ must be affine. Therefore $\Lambda \coloneqq \Lambda_{(x,y,0)}$ is constant. Since $F(0)=0$, we have $f(0,0)=\pi(F(0,0,0))=(0,0)$, and $f(x,y) = \pi(F(x, y, z))=\Lambda(x, y)$.

    From the previous steps, the first two components of $F$ are given by $F_1=\Lambda_{11}x+\Lambda_{12}y$ and $F_2=\Lambda_{21}x+\Lambda_{22}y$, while $\partial_z F_3=\varepsilon$, where $\varepsilon=\det(\Lambda)$. Writing explicitly $F^*\alpha=\varepsilon\alpha$ and rearranging we get
    \begin{align*}
        dF_3
         & =
        \frac{1}{2}(F_1\,dF_2-F_2\,dF_1)
        + \varepsilon\left(dz-\frac{1}{2}(x\,dy-y\,dx)\right) \\
         & =
        \frac{\det(\Lambda)}{2}(x\,dy-y\,dx)
        + \varepsilon\left(dz-\frac{1}{2}(x\,dy-y\,dx)\right) =
        \varepsilon dz.
    \end{align*}
    Hence $\partial_xF_3=\partial_yF_3=0$ and $\partial_zF_3=\varepsilon$. Since $F(0)=0$, we conclude that $F_3(x,y,z)=\varepsilon z=\det(\Lambda)z$, which completes the proof.
\end{proof}

In view of the previous proposition, an isometry is completely characterised by a point $p \in \heis$, which we call the {\em translation part} of the isometry, and an element $\Lambda \in O(1,1)$, which we call the {\em Lorentz part} of the isometry.

\begin{definition}
    \label{def:HeisenbergIsometries}
    An isometry of the sub-Lorentzian Heisenberg group is called a
    \begin{enumerate}[label=\normalfont(\roman*)]
        \item {\em time-preserving boost} if $\Lambda \in O^+(1,1)$, that is, $\det(\Lambda) =1$ and $\Lambda_{11} > 0$;
        \item {\em time-inverting boost} if $\det(\Lambda) = 1$ and $\Lambda_{11}<0$;
        \item {\em time-preserving rotation} if $\det(\Lambda) = -1$ and $\Lambda_{11} > 0$;
        \item {\em time-inverting rotation} if $\det(\Lambda) = -1$ and $\Lambda_{11}<0$.
    \end{enumerate}
    In the case of a time-preserving boost with $p=0$, we simply call it a {\em boost}. In the case of a time-preserving rotation with $p=0$, we call it a {\em timed rotation}.
\end{definition}

We use the following general component decomposition of \(O(1,1)\) repeatedly. Let $A(x,y,z)=(-x,y,-z)$ and $T(x,y,z)=(x,-y,-z)$, and note that their Lorentz parts on the \((x,y)\)-plane are
\[
    A_0=
    \begin{pmatrix}
        -1 & 0 \\
        0  & 1
    \end{pmatrix},
    \qquad
    T_0=
    \begin{pmatrix}
        1 & 0  \\
        0 & -1
    \end{pmatrix}.
\]
In general, a time-preserving boost has Lorentz part
\[
    B_u=
    \begin{pmatrix}
        \cosh u & \sinh u \\
        \sinh u & \cosh u
    \end{pmatrix},
\]
for some $u \in \mathbb R$. Then, the four components of \(O(1,1)\) are parametrized by
\[
    B_u,\qquad
    A_0B_u,\qquad
    T_0B_u,\qquad
    A_0T_0B_u,
\]
as $u \in \mathbb{R}$. In particular, \(B_u\) is a time-preserving boost, \(T_0B_u\) a time-preserving rotation, \(A_0B_u\) a time-inverting rotation, and \(A_0T_0B_u\) a time-inverting boost.
\subsection{Horizontal geometry of hypersurfaces}

We review the geometry of hypersurfaces in the sub-Lorentzian Heisenberg group, including the notion of sub-Lorentzian mean curvature.

\begin{definition}[Horizontal tangent bundle]
    Let $\Sigma \subset \heis$ be an embedded $C^{1}$-surface. At each point $p \in \Sigma$, we define $\mathrm{H}_p(\Sigma) \coloneqq \Delta_p \cap \T_p(\Sigma)$. The {\em characteristic set} of $\Sigma$ is defined as
    \[
        \mathrm S(\Sigma) \coloneqq \{p \in \Sigma \ \colon \ \mathrm{H}_p(\Sigma) = \T_p(\Sigma)\}.
    \]
    This set is closed in $\Sigma$ and, if $\Sigma$ is $C^{1, 1}$, is negligible by \cite[Proposition 13]{Prandi2019}. On the manifold $\Sigma \setminus \mathrm S(\Sigma)$, we define the vector bundle
    \[
        \mathrm H(\Sigma) \coloneqq \bigsqcup_{p \in \Sigma \setminus \mathrm S(\Sigma)} \mathrm H_p (\Sigma),
    \]
    called the {\em horizontal tangent bundle} of $\Sigma$. It naturally carries the structure of a rank-one vector bundle over $\Sigma \setminus \mathrm S(\Sigma)$, since, for every $p \in \Sigma \setminus \mathrm S(\Sigma)$, we have
    \[
        \dim(\mathrm H_p (\Sigma)) = \dim(\Delta_p) + \dim(\T_p(\Sigma)) - \dim(\Delta_p \oplus \T_p(\Sigma)) = 2 +2 -3=1.
    \]
\end{definition}
The sub-Lorentzian metric $g$ restricts naturally to a quadratic form on the horizontal tangent bundle $\mathrm H(\Sigma)$ over $\Sigma \setminus \mathrm S(\Sigma)$.
\begin{definition}
    We say that an embedded $C^{1}$-surface $\Sigma$ is
    \begin{enumerate}[label=\normalfont(\roman*)]
        \item {\em spacelike} if the restriction of $g$ to $\mathrm H(\Sigma)$ is positive definite;
        \item {\em timelike} if the restriction of $g$ to $\mathrm H(\Sigma)$ is non-degenerate with a negative eigenvalue;
        \item {\em null} or {\em lightlike} if the restriction of $g$ to $\mathrm H(\Sigma)$ is degenerate.
    \end{enumerate}
\end{definition}

Notice that $\mathrm H_p(\Sigma)$ is a hyperplane in $\Delta_p$ for every
$p \in \Sigma \setminus \mathrm S(\Sigma)$. In the first two cases, taking its $g$-orthogonal complement
inside $\Delta_p$ gives a non-degenerate line. If $\Sigma$ is spacelike (resp. timelike), this orthogonal line is timelike (resp. spacelike), and the fixed time orientation (resp. a choice of orientation) selects its future-directed unit vector (resp. its unit vector).

\begin{definition}[Horizontal unit normal]
    The {\em horizontal unit normal} of $\Sigma$ is the vector field $\nu_\heis \in C^0\bigl(\Sigma \setminus \mathrm S(\Sigma), \Delta|_{\Sigma \setminus \mathrm S(\Sigma)}\bigr)$ uniquely determined by
    \[
        g_p(\nu_\heis(p),v)=0 \quad \text{for every } v \in \mathrm H_p(\Sigma),
        \qquad
        g_p(\nu_\heis(p),\nu_\heis(p))
        =
        \begin{cases}
            -1, & \text{if } \Sigma \text{ is spacelike}, \\
            1,  & \text{if } \Sigma \text{ is timelike},
        \end{cases}
    \]
    together with the sign convention that $\nu_\heis$ is future-directed when $\Sigma$ is spacelike and agrees with a chosen orientation of $\Sigma$ when it is timelike. If one wants to regard $\nu_\heis$ as defined on all of $\Sigma$, we set $\nu_\heis(p)=0$ for $p \in \mathrm S(\Sigma)$.
\end{definition}

A canonical horizontal connection determined by the left-invariant horizontal frame $X,Y$ can then be introduced.

\begin{definition}[Horizontal connection]
    Let $\Delta=\operatorname{span}\{X,Y\}$. For a horizontal vector field
    $V=aX+bY$ and any smooth vector field $U$, define
    \[
        \nabla_UV:=U(a)X+U(b)Y.
    \]
    In other words, $\nabla$ is the unique connection on $\Delta$ for which the left-invariant horizontal frame $X,Y$ is parallel. In particular, $\nabla$ is metric compatible:
    \[
        U(g(V,W))=g(\nabla_UV,W)+g(V,\nabla_UW)
    \]
    for all horizontal vector fields $V,W$.
\end{definition}

A horizontal vector field along $\Sigma$ is a smooth section of $\Delta|_\Sigma$. On $\Sigma \setminus \mathrm S(\Sigma)$, since $\Delta|_{\Sigma \setminus \mathrm S(\Sigma)} = \mathrm H(\Sigma) \oplus \operatorname{span}\{\nu_\heis\}$, every horizontal vector field $V$ along $\Sigma$ decomposes uniquely into its horizontal tangential and horizontal normal components, which we denote by $V^\parallel$ and $V^\perp$, respectively.

\begin{definition}[Horizontal tangent connection and second fundamental form]
    Let $\Sigma \subset \heis$ be an embedded $C^1$ spacelike or timelike surface, and $\nu_\heis$ denote its horizontal unit normal. We define the {\em horizontal tangent connection} by
    \begin{equation}
        \label{eq:HorTConnection}
        \nabla^\Sigma_U V
        \coloneqq
        \bigl(\nabla_U V\bigr)^\parallel
    \end{equation}
    for all horizontal vector field $U$ and $V$ along $\Sigma \setminus \mathrm S(\Sigma)$. The {\em scalar horizontal second fundamental form} is the bilinear form
    \[
        h_\Sigma(U,V)
        \coloneqq
        g(\nabla_U V,\nu_\heis).
    \]
\end{definition}

\begin{remark}
    If $V\in\Gamma(\mathrm H(\Sigma))$ is regarded as a vector field only along $\Sigma \setminus \mathrm S(\Sigma)$, then $\nabla_UV$ in \cref{eq:HorTConnection} is computed by choosing any local horizontal extension of $V$. The value along $\Sigma \setminus \mathrm S(\Sigma)$ is independent of the chosen extension, because $U\in\Gamma(\mathrm H(\Sigma))$ is tangent to $\Sigma$.
\end{remark}

Since $\mathrm H(\Sigma)$ has rank one, we may locally choose a generator $T\in\Gamma_{\mathrm{loc}}(\mathrm H(\Sigma))$ normalized with respect to $g$, so that $g(T,T)=1$ in the spacelike case and $g(T,T)=-1$ in the timelike case. If $T$ be a local non-vanishing generator of $\mathrm H(\Sigma)$, then any $U,V\in\Gamma(\mathrm H(\Sigma))$ can be written as $U=uT$ and $V=vT$. First notice that $h_\Sigma$ is $C^\infty(\Sigma)$-bilinear. For instance,
\[
    h_\Sigma(U,fV)
    =
    g(\nabla_U(fV),\nu_\heis)
    =
    U(f)g(V,\nu_\heis)+f h_\Sigma(U,V)
    =
    f h_\Sigma(U,V),
\]
since $V\in \Gamma(\mathrm H(\Sigma))$ and $g(V,\nu_\heis)=0$. Linearity in the first slot follows from $\nabla_{fU}V=f\nabla_UV$. Hence
\[
    h_\Sigma(U,V)
    =
    h_\Sigma(uT,vT)
    =
    uv\,h_\Sigma(T,T)
    =
    h_\Sigma(vT,uT)
    =
    h_\Sigma(V,U).
\]
Thus the horizontal second fundamental form $h_\Sigma$ is symmetric.

\begin{definition}
    Let $\Sigma \subset \heis$ be an embedded $C^1$ spacelike or timelike surface, and $\nu_\heis$ denote its horizontal unit normal. The trace of scalar horizontal second fundamental form with respect to the restriction of $g$ to $\mathrm H(\Sigma)$ is the {\em horizontal mean curvature}
    \[
        H
        \coloneqq
        \operatorname{tr}_{g|_{\mathrm H(\Sigma)}} h_\Sigma.
    \]
    Equivalently, if $\Sigma$ is $C^2$ and $T$ is a local $g$-unit generator of $\mathrm H(\Sigma)$, then
    \[
        H=g(T,T)\,g(\nabla_TT,\nu_\heis).
    \]
\end{definition}

The following proposition computes $H$ in terms of the integral curves of $T$.

\begin{proposition}\label{prop:mean_curvature_via_minkowski}
    Let $\Sigma \subset \heis$ be an embedded $C^2$ spacelike or timelike surface, and work on $\Sigma \setminus \mathrm S(\Sigma)$. Choose a local generator $T$ of $\mathrm H(\Sigma)$ normalized with respect to $g$. For $p\in\Sigma \setminus \mathrm S(\Sigma)$, denote by $\gamma$ the integral curve of $T$ with $\gamma(0)=p$, and set $\sigma\coloneqq\pi\circ\gamma$, where $\pi(x,y,z)=(x,y)$. Then $H(p)$ is the signed curvature of $\sigma$ in the Minkowski plane $(\R^2,-dx^2+dy^2)$, namely
    \[
        H(p)=g(T,T)
        \left\langle \ddot\sigma(0),n(0)\right\rangle_{\mathbb M},
    \]
    where $\nu_\heis=n_1X+n_2Y$ and $n=(n_1,n_2)$ is its projection to the Minkowski plane.
\end{proposition}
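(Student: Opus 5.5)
The argument is a direct computation in the frame $X,Y$, using that $\nabla$ is the flat connection for which $X,Y$ are parallel.

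\emph{Coordinates of $T$.} Write $T=aX+bY$ locally on $\Sigma\setminus\mathrm S(\Sigma)$, with $a,b$ of class $C^1$ since $\Sigma$ is $C^2$. Let $\gamma$ be the integral curve of $T$ through $p$. From the expressions of $X$ and $Y$ in coordinates, $\dot\gamma=a\,\partial_x+b\,\partial_y+(\cdots)\partial_z$. Hence $\sigma=\pi\circ\gamma$ satisfies $\dot\sigma(t)=(a(\gamma(t)),b(\gamma(t)))$. Since $g(X,X)=-1$, $g(Y,Y)=1$ and $g(X,Y)=0$, the map $aX+bY\mapsto(a,b)$ is an isometry from $(\Delta_q,g_q)$ onto the Minkowski plane $\mathbb M=(\R^2,-dx^2+dy^2)$ at every point $q$. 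In particular $\langle\dot\sigma,\dot\sigma\rangle_{\mathbb M}=g(T,T)=\pm1$, so $\sigma$ is parametrised by Minkowski arclength.

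\emph{Covariant derivative.} By definition of $\nabla$,
\[
\nabla_TT=T(a)X+T(b)Y .
\]
Along $\gamma$ we have $T(a)(\gamma(t))=\tfrac{d}{dt}a(\gamma(t))$, and similarly for $b$. Therefore $\nabla_TT$ at $\gamma(t)$ corresponds under the above isometry to $\ddot\sigma(t)$. Since $\nu_\heis=n_1X+n_2Y$ corresponds to $n=(n_1,n_2)$, the same isometry gives
\[
g(\nabla_TT,\nu_\heis)(\gamma(t))=\langle\ddot\sigma(t),n(t)\rangle_{\mathbb M}.
\]
Multiplying by $g(T,T)$ and evaluating at $t=0$ yields the claimed formula via the definition $H=g(T,T)\,g(\nabla_TT,\nu_\heis)$.

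\emph{Identification with the signed curvature.} I would also check that this expression is indeed the signed Minkowski curvature of $\sigma$. Since $g(\nu_\heis,T)=0$, the vector $n$ is Minkowski-orthogonal to $\dot\sigma$, and $n$ is unit with causal character opposite to that of $\dot\sigma$. Differentiating $\langle\dot\sigma,\dot\sigma\rangle_{\mathbb M}=\pm1$ gives $\ddot\sigma\perp\dot\sigma$, so $\ddot\sigma=\kappa n$. Using $\langle n,n\rangle_{\mathbb M}=-g(T,T)$, we get $\kappa=-g(T,T)\langle\ddot\sigma,n\rangle_{\mathbb M}$. The factor $g(T,T)$ in the statement thus matches the standard definition of signed curvature relative to the normal $n$, up to the overall sign convention fixed by this normalisation.

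\emph{Main obstacle.} The only delicate point is regularity and well-definedness. $T$ is only $C^1$ and defined only on $\Sigma$, so $T(a)$ must be read as a derivative along $\Sigma$. The preceding remark guarantees that $\nabla_TT$ is independent of the extension because $T$ is tangent to $\Sigma$.
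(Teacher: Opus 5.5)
Your proof is correct and follows essentially the paper's argument: both write $T$ in the frame $\{X,Y\}$, identify $\dot\sigma$ and $\ddot\sigma$ with the coefficients of $T$ and of $\nabla_TT$, and pair with $\nu_\heis$. The only difference is that you invoke the pointwise isometry $aX+bY\mapsto(a,b)$, whereas the paper writes $\nu_\heis=\eta(hX+fY)$ explicitly; your side check that $\ddot\sigma=\kappa n$ gives $\kappa=-H$ is also right, and the paper sidesteps this sign by taking the displayed formula as its definition of signed curvature.
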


\begin{proof}
    Write $T=fX+hY$. Since $X$ and $Y$ project respectively to $\partial_x$ and $\partial_y$, and since $\dot\gamma=T_{\gamma}$, we have
    \[
        \dot\sigma=(f\circ\gamma,h\circ\gamma),
        \qquad
        \ddot\sigma(0)=(T(f)_p,T(h)_p).
    \]
    The definition of the horizontal connection gives
    \[
        \nabla_TT=T(f)X+T(h)Y.
    \]
    For some $\eta\in\{-1,1\}$ fixed by the chosen convention, the horizontal unit normal is
    \[
        \nu_\heis=\eta(hX+fY),
    \]
    and its projection to the Minkowski plane is $n=\eta(h,f)$, which coincides with the normal to \(\sigma\) in Minkowski plane. Therefore
    \[
        \left\langle \ddot\sigma(0),n(0)\right\rangle_{\mathbb M}
        =
        \eta\bigl(-T(f)_p h(p)+T(h)_p f(p)\bigr)
        =
        g(\nabla_TT,\nu_\heis)_p.
    \]
    Since $H=g(T,T)\,g(\nabla_TT,\nu_\heis)$, the claim follows.
\end{proof}

\subsection{The area functional and first variation formula}

The next step is to introduce a surface measure. To do so, a reference volume measure on the sub-Lorentzian Heisenberg group must first be fixed. There are several natural choices: the three-dimensional Lebesgue measure $\mathcal L^3$, which is a Haar measure; the Hausdorff measure induced by the sub-Riemannian distance; and the Lorentzian Hausdorff measure introduced for Lorentzian length spaces in \cite{McCann2022}. In a previous work, it was shown that these measures coincide in the sub-Lorentzian Heisenberg group; see \cite[Theorem~1.2]{hausdorffSLHeis}. We therefore take as reference volume form the standard Haar volume $\Omega \coloneqq dx\wedge dy\wedge dz$.

\begin{definition}[Induced area]
    \label{def:area}
    Let $\Sigma \subset \heis$ be an embedded $C^1$ spacelike or timelike surface, equipped with a choice of horizontal unit normal $\nu_\heis$ on $\Sigma \setminus \mathrm S(\Sigma)$. We define the induced horizontal area form on $\Sigma \setminus \mathrm S(\Sigma)$ by
    \[
        d\sigma \coloneqq \iota_{\nu_\heis}(\Omega)_{\vert \Sigma}.
    \]
    If $\Sigma$ is such that $\mathrm S(\Sigma)$ is negligible, for instance if $\Sigma$ is $C^{1,1}$, we define its horizontal area by
    \[
        A(\Sigma) \coloneqq \int_{\Sigma\setminus \mathrm S(\Sigma)} d\sigma.
    \]
\end{definition}

The \emph{divergence} associated with the volume form $\Omega$ is the function $\operatorname{div}_{\Omega}(W)$ defined by
\[
    \mathcal L_W \Omega = \operatorname{div}_{\Omega}(W)\,\Omega
\]
for every smooth vector field $W$. If $W=aX+bY+cZ$, then, since $X,Y,Z$ are divergence-free with respect to $\Omega$, we have $\operatorname{div}_{\Omega}(W)=X(a)+Y(b)+Z(c)$. In particular, for a horizontal vector field $W=aX+bY$, we have $\operatorname{div}_{\Omega}(W)=X(a)+Y(b)$. Equivalently, if $\{E_0,E_1\}$ is a local horizontal pseudo-orthonormal frame with $g(E_0,E_0)=-1$ and $g(E_1,E_1)=1$, then for every horizontal vector field $W$,
\[
    \operatorname{div}_{\Omega}(W)
    =
    -g(\nabla_{E_0}W,E_0)+g(\nabla_{E_1}W,E_1).
\]

\begin{lemma}\label{lemma:horizontal_normal_divergence_well_defined}
    Let $\Sigma \subset \heis$ be an embedded $C^2$ spacelike or timelike surface, and let $p\in\Sigma\setminus \mathrm S(\Sigma)$. If $\widetilde\nu$ is a local horizontal extension of $\nu_\heis$ near $p$ such that $g(\widetilde\nu,\widetilde\nu)$ is constant and equal to $g(\nu_\heis(p),\nu_\heis(p))$, then $\operatorname{div}_{\Omega}(\widetilde\nu)(p)$ is independent of the choice of such an extension.
\end{lemma}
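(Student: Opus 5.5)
The plan is to take two admissible extensions $\widetilde\nu_1,\widetilde\nu_2$ and show that $\operatorname{div}_{\Omega}$ of their difference $W\coloneqq\widetilde\nu_1-\widetilde\nu_2$ vanishes at $p$. Since $\operatorname{div}_\Omega$ is linear, this gives the claim. The difference $W$ is a horizontal vector field near $p$ which vanishes identically on $\Sigma$ near $p$. We write $\epsilon_0\coloneqq g(\nu_\heis(p),\nu_\heis(p))\in\{-1,1\}$.

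We compute the divergence in a convenient pseudo-orthonormal frame at $p$, namely $\{T(p),\nu_\heis(p)\}$, where $T$ is a $g$-unit generator of $\mathrm H(\Sigma)$. Up to ordering, this is a frame of the type $\{E_0,E_1\}$, which gives
\[
\operatorname{div}_\Omega(W)(p)=g(T,T)\,g(\nabla_T W,T)(p)+\epsilon_0\, g(\nabla_{\nu_\heis}W,\nu_\heis)(p).
\]
Here we use that the frame formula for the divergence holds pointwise for any pseudo-orthonormal frame at $p$. This is true because $\nabla W$ evaluated at $p$ is a tensor, and the expression is its trace against $g^{-1}$ on $\Delta_p$. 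Writing $W=aX+bY$, we have $\operatorname{div}_\Omega W=X(a)+Y(b)$, which is exactly this trace.

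\textbf{The tangential term.} Since $T(p)$ is tangent to $\Sigma$ and $W\equiv 0$ on $\Sigma$, the coefficients $a,b$ vanish on $\Sigma$. Hence $T(a)=T(b)=0$ at $p$, so $\nabla_T W(p)=0$.

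\textbf{The normal term.} This is the main point, since $\nu_\heis(p)$ is transverse to $\Sigma$ and $\nabla_{\nu_\heis}W(p)$ need not vanish. Here we use the normalisation. Both extensions satisfy $g(\widetilde\nu_i,\widetilde\nu_i)\equiv\epsilon_0$, so metric compatibility gives $g(\nabla_U\widetilde\nu_i,\widetilde\nu_i)=0$ for every vector $U$. Taking $U=\nu_\heis(p)$ and using $\widetilde\nu_1(p)=\widetilde\nu_2(p)=\nu_\heis(p)$, we get
\[
g(\nabla_{\nu_\heis}W,\nu_\heis)(p)=g(\nabla_{\nu_\heis}\widetilde\nu_1,\widetilde\nu_1)(p)-g(\nabla_{\nu_\heis}\widetilde\nu_2,\widetilde\nu_2)(p)=0 .
\]

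Both terms vanish, so $\operatorname{div}_\Omega(\widetilde\nu_1)(p)=\operatorname{div}_\Omega(\widetilde\nu_2)(p)$. Moreover, $\operatorname{div}_\Omega(\widetilde\nu)(p)=g(T,T)\,g(\nabla_T\nu_\heis,T)(p)$ depends only on $\nu_\heis$ along $\Sigma$; this is the formula one would later relate to $-H$. The only care needed is to justify the frame-trace identity at a single point, which follows from the coordinate expression $X(a)+Y(b)$. Indeed, $\nabla W=da\otimes X+db\otimes Y$, and contracting the horizontal part with the inverse metric yields $X(a)+Y(b)$ in any pseudo-orthonormal basis of $\Delta_p$.
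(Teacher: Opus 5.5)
Your proof is correct and is essentially the paper's argument: you take the difference $W=\widetilde\nu_1-\widetilde\nu_2$ and evaluate $\operatorname{div}_\Omega(W)(p)$ in the pseudo-orthonormal frame $\{T,\nu_\heis\}$. The tangential term vanishes because $W\equiv0$ on $\Sigma$, and the normal term vanishes by differentiating the constant norms $g(\widetilde\nu_i,\widetilde\nu_i)$. Your extra check that the frame-trace formula holds pointwise in any pseudo-orthonormal basis of $\Delta_p$, reducing to $X(a)+Y(b)$, makes explicit a step the paper leaves implicit.
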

\begin{proof}
    Let $\widetilde\nu_1$ and $\widetilde\nu_2$ be two such extensions and set $W\coloneqq \widetilde\nu_1-\widetilde\nu_2$, so that $W=0$ along $\Sigma$. Write $W=aX+bY$ and let $T$ be a local $g$-unit generator of $\mathrm H(\Sigma)$. Since $a$ and $b$ vanish on $\Sigma$, their derivatives in the tangent direction $T$ and $\nabla_TW=T(a)X+T(b)Y=0$ at $p$.

    The extensions also have the same constant $g$-norm, equal to $g(\nu_\heis(p),\nu_\heis(p))$. Thus differentiating the constant function $g(\widetilde\nu_i,\widetilde\nu_i)$ in the direction $\widetilde\nu_i$ gives $g(\nabla_{\widetilde\nu_i}\widetilde\nu_i,\widetilde\nu_i)=0$. Evaluating at $p$ and using $\widetilde\nu_i(p)=\nu_\heis(p)$, we get $g(\nabla_{\nu_\heis}\widetilde\nu_i,\nu_\heis)(p)=0$ for $i=1,2$, and therefore $g(\nabla_{\nu_\heis}W,\nu_\heis)(p)=0$.

    The horizontal frame $\{\nu_\heis,T\}$ is pseudo-orthonormal up to order. The two trace terms of $\operatorname{div}_{\Omega}(W)(p)$ in this frame therefore vanish by the previous two observations, so $\operatorname{div}_{\Omega}(W)(p)=0$, and thus $\operatorname{div}_{\Omega}(\widetilde\nu_1)(p)=\operatorname{div}_{\Omega}(\widetilde\nu_2)(p)$.
\end{proof}

Thus the restriction $\operatorname{div}_{\Omega}(\widetilde\nu)|_{\Sigma\setminus \mathrm S(\Sigma)}$ is intrinsically determined by $\Sigma$ and its chosen horizontal unit normal; we denote it simply by $\operatorname{div}_{\Omega}(\nu_\heis)$.

\begin{proposition}[Divergence formula for the horizontal mean curvature]
    \label{prop:mean_curvature_divergence}
    Let $\Sigma \subset \heis$ be an embedded $C^2$ spacelike or timelike surface, and work on $\Sigma\setminus \mathrm S(\Sigma)$. Let $\nu_\heis$ be its chosen horizontal unit normal and let $T$ be a local $g$-unit generator of $\mathrm H(\Sigma)$. Then
    \[
        H(p)=-\operatorname{div}_{\Omega}(\nu_\heis)(p),
        \qquad p\in \Sigma\setminus \mathrm S(\Sigma).
    \]
\end{proposition}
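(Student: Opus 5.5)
We prove the identity at a fixed point $p\in\Sigma\setminus\mathrm S(\Sigma)$. First we build a convenient extension of the normal, then compute its divergence in the frame $\{\nu_\heis,T\}$ and compare with $H=g(T,T)\,g(\nabla_TT,\nu_\heis)$.

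\emph{Choice of extension.} By \cref{lemma:horizontal_normal_divergence_well_defined}, any local horizontal extension $\widetilde\nu$ of $\nu_\heis$ with constant norm $g(\widetilde\nu,\widetilde\nu)=g(\nu_\heis,\nu_\heis)\eqqcolon\delta\in\{\pm1\}$ gives the same divergence at $p$. To get one, extend $T$ to a horizontal field $\widetilde T=fX+hY$ with $g(\widetilde T,\widetilde T)=g(T,T)=-\delta$ near $p$. For instance, extend the coefficients $(f,h)$ to a neighbourhood and renormalise. Then set $\widetilde\nu\coloneqq\eta(hX+fY)$, with $\eta$ as in the proof of \cref{prop:mean_curvature_via_minkowski}. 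Since $g(hX+fY,hX+fY)=-h^2+f^2=-g(\widetilde T,\widetilde T)$, this field has constant norm $\delta$. It is $g$-orthogonal to $\widetilde T$ everywhere, and it agrees with $\nu_\heis$ on $\Sigma$. So it is an admissible extension.

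\emph{Computing the divergence.} Use the pseudo-orthonormal frame $\{\widetilde\nu,\widetilde T\}$ in the trace formula:
\[
\operatorname{div}_\Omega(\widetilde\nu)=\delta\,g(\nabla_{\widetilde\nu}\widetilde\nu,\widetilde\nu)+g(\widetilde T,\widetilde T)\,g(\nabla_{\widetilde T}\widetilde\nu,\widetilde T).
\]
The first term vanishes because $g(\widetilde\nu,\widetilde\nu)$ is constant and $\nabla$ is metric. For the second term, differentiate $g(\widetilde\nu,\widetilde T)=0$ along $\widetilde T$ to get
\[
g(\nabla_{\widetilde T}\widetilde\nu,\widetilde T)=-g(\widetilde\nu,\nabla_{\widetilde T}\widetilde T).
\]
At $p$, $\widetilde T=T$ is tangent to $\Sigma$, so $\nabla_{\widetilde T}\widetilde T$ equals $\nabla_TT$ there. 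Combining,
\[
\operatorname{div}_\Omega(\nu_\heis)(p)=-g(T,T)\,g(\nabla_TT,\nu_\heis)(p)=-H(p).
\]

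\emph{Expected obstacle.} The main point needing care is the sign bookkeeping in the trace formula for a general pseudo-orthonormal frame $\{E_0,E_1\}$. One must check that the formula stated for $\{X,Y\}$ is frame-independent: it is the metric trace of $W\mapsto\nabla W$, and metric traces do not depend on the frame. One must also match its coefficients $\pm1$ with $\delta$ and $g(T,T)$ in both the spacelike and the timelike case.

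A cruder check is also available. With $W=aX+bY$, the divergence is $X(a)+Y(b)$, which can be compared directly with $\eta(-T(f)h+T(h)f)$ using $f^2-h^2=\pm1$. This confirms the sign without relying on the frame argument.
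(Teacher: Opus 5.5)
Your proof is correct and is essentially the paper's argument. Both compute $\operatorname{div}_\Omega(\widetilde\nu)$ in the pseudo-orthonormal frame formed by a constant-norm extension of $\nu_\heis$ and an orthogonal unit extension of $T$, drop the $\widetilde\nu$-term because the norm is constant, and rewrite the $\widetilde T$-term by differentiating $g(\widetilde\nu,\widetilde T)=0$ along $\widetilde T$. The only difference is the order of the extensions: you extend $T$ first and set $\widetilde\nu=\eta(hX+fY)$, which makes orthogonality and the constant norm explicit, whereas the paper extends $\nu_\heis$ first and then chooses $\widetilde T$.
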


\begin{proof}
    Choose a local horizontal extension $\widetilde\nu$ of $\nu_\heis$ with constant $g$-norm equal to $g(\nu_\heis(p),\nu_\heis(p))$. By \Cref{lemma:horizontal_normal_divergence_well_defined}, $\operatorname{div}_{\Omega}(\nu_\heis)$ is the restriction of $\operatorname{div}_{\Omega}(\widetilde\nu)$ to $\Sigma\setminus \mathrm S(\Sigma)$. Extend $T$ locally to a horizontal vector field $\widetilde T$ such that $g(\widetilde T,\widetilde T)=g(T,T)$, $g(\widetilde T,\widetilde\nu)=0$, and $\widetilde T=T$ along $\Sigma$. Using the frame $\{\widetilde T,\widetilde\nu\}$ in the divergence formula and then restricting to $\Sigma$ gives
    \[
        \operatorname{div}_{\Omega}(\nu_\heis)
        =
        g(T,T)g(\nabla_T\widetilde\nu,T)
        + g(\nu_\heis,\nu_\heis)g(\nabla_{\widetilde\nu}\widetilde\nu,\widetilde\nu)
        \qquad \text{on } \Sigma\setminus \mathrm S(\Sigma).
    \]
    The second term is zero because $\widetilde\nu$ has constant $g$-norm. Moreover, differentiating the identity $g(\widetilde T,\widetilde\nu)=0$ in the direction $\widetilde T$ and then restricting to $\Sigma$ gives $g(\nabla_TT,\nu_\heis)+g(T,\nabla_T\widetilde\nu)=0$. Hence
    \[
        \operatorname{div}_{\Omega}(\nu_\heis)
        =
        -g(T,T)g(\nabla_TT,\nu_\heis)
        =
        -H,
    \]
    because $H=g(T,T)g(\nabla_TT,\nu_\heis)$.
\end{proof}

Let $\Sigma \subset \heis$ be an embedded $C^2$ spacelike surface. A variation of $\Sigma$ is a smooth map
\[
    \Gamma:(-\varepsilon,\varepsilon)\times \Sigma\to \heis
\]
such that $\Gamma_0=\operatorname{id}_\Sigma$ and, for each $s$ sufficiently small, $\Gamma_s\coloneqq \Gamma(s,\cdot)$ is an embedding. We write $\Sigma_s\coloneqq \Gamma_s(\Sigma)$, denote by $d\sigma_s$ the horizontal area form on $\Sigma_s$, and denote by $\nu_s$ the horizontal unit normal along $\Sigma_s$. The initial velocity of the variation is the vector field along $\Sigma$ defined by
\[
    V(x)\coloneqq\left.\partial_s\Gamma(s,x)\right|_{s=0}.
\]
We say that the variation has \emph{horizontal initial velocity} if its initial velocity satisfies $V\in\Gamma(\Delta|_\Sigma)$. The variation is \emph{compactly supported away from the characteristic set} if the support of $V$ is compact and contained in $\Sigma\setminus \mathrm S(\Sigma)$.

\begin{proposition}[First variation of horizontal area]
    \label{prop:first_variation_formula}
    Let $\Sigma \subset \heis$ be an embedded $C^2$ spacelike surface, and let $\Gamma:(-\varepsilon,\varepsilon)\times \Sigma\to \heis$ be a variation of $\Sigma$ with horizontal initial velocity $V$. Assume that $V$ is compactly supported in $\Sigma\setminus \mathrm S(\Sigma)$. Then
    \[
        \left.\frac{d}{ds}\right|_{s=0}A(\Sigma_s)
        =
        \int_{\Sigma\setminus \mathrm S(\Sigma)}
        H\,g(\nu_\heis,V)\,d\sigma .
    \]
\end{proposition}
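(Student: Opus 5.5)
Since \(V\) is compactly supported in \(\Sigma\setminus\mathrm S(\Sigma)\) and \(\mathrm S(\Sigma)\) is closed, for \(s\) small the surfaces \(\Sigma_s\) agree with \(\Sigma\) outside a compact set \(K\subset\Sigma\setminus\mathrm S(\Sigma)\), and a neighbourhood of \(\Gamma_s(K)\) still avoids \(\mathrm S(\Sigma_s)\) and remains spacelike. So the derivative only involves the noncharacteristic part, where \(d\sigma_s=\iota_{\nu_s}\Omega|_{\Sigma_s}\) is smooth in \(s\). I would write
\[
A(\Sigma_s)-A(\Sigma)=\int_K \Gamma_s^*\bigl(\iota_{\nu_s}\Omega\bigr)-\iota_{\nu_\heis}\Omega ,
\]
extend \(\nu_s\) to a smooth family of horizontal unit timelike fields \(\widetilde\nu_s\) on a neighbourhood of the support (constant norm \(-1\)), and extend \(V\) to a compactly supported vector field \(\widetilde V\) on \(\heis\) whose flow \(\phi_s\) agrees with \(\Gamma_s\) to first order on \(\Sigma\). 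The \(s\)-derivative at \(0\) then splits into two terms: the Lie derivative term \(\int_\Sigma \mathcal L_{\widetilde V}(\iota_{\widetilde\nu}\Omega)\), and the term from varying the normal, \(\int_\Sigma \iota_{\dot\nu}\Omega\) with \(\dot\nu=\partial_s\widetilde\nu_s|_{s=0}\).

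For the first term, use Cartan's formula: \(\mathcal L_{\widetilde V}\iota_{\widetilde\nu}\Omega=d\iota_{\widetilde V}\iota_{\widetilde\nu}\Omega+\iota_{\widetilde V}d\iota_{\widetilde\nu}\Omega\). The exact piece integrates to zero by Stokes since the support is compact in the open surface. The second piece equals \(\iota_{\widetilde V}(\operatorname{div}_\Omega(\widetilde\nu)\Omega)=\operatorname{div}_\Omega(\widetilde\nu)\,\iota_{\widetilde V}\Omega\). By \cref{lemma:horizontal_normal_divergence_well_defined} and \cref{prop:mean_curvature_divergence} this is \(-H\,\iota_V\Omega|_\Sigma\). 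Finally I express \(\iota_V\Omega|_\Sigma\) via \(d\sigma\): decompose \(V=V^\parallel+aZ\)-free horizontal part, i.e. \(V=uT+w\nu_\heis\) with \(T\) tangent. Then \(\iota_T\Omega|_\Sigma=0\), since \(T\) is tangent to the \(2\)-dimensional \(\Sigma\) (\(\Omega(T,e_1,e_2)=0\) for tangent \(e_i\)). Also \(\iota_{\nu_\heis}\Omega|_\Sigma=d\sigma\), and \(w=-g(\nu_\heis,V)\) because \(g(\nu_\heis,\nu_\heis)=-1\). This gives \(\iota_V\Omega|_\Sigma=-g(\nu_\heis,V)\,d\sigma\), so the first term contributes \(\int H\,g(\nu_\heis,V)\,d\sigma\).

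The step I expect to be the main obstacle is showing the second term vanishes. Since \(g(\widetilde\nu_s,\widetilde\nu_s)=-1\), we get \(g(\dot\nu,\nu_\heis)=0\), so \(\dot\nu\) is horizontal and \(g\)-orthogonal to \(\nu_\heis\). Away from \(\mathrm S(\Sigma)\) the orthogonal complement of \(\nu_\heis\) in \(\Delta\) is exactly \(\mathrm H(\Sigma)\), so \(\dot\nu\) is tangent to \(\Sigma\) and \(\iota_{\dot\nu}\Omega|_\Sigma=0\). The care needed is in justifying the splitting of the derivative. Concretely, \(\Gamma_s^*(\iota_{\nu_s}\Omega)=\Gamma_s^*(\iota_{\widetilde\nu_s}\Omega)\), and differentiating gives \(\partial_s\Gamma_s^*(\iota_{\widetilde\nu_0}\Omega)+\iota_{\dot\nu}\Omega\) at \(s=0\). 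The first summand depends only on the first-order jet of \(\Gamma\) along \(\Sigma\), hence equals the Lie derivative along any extension \(\widetilde V\) of \(V\). I would state this via the standard homotopy formula \(\partial_s\Gamma_s^*\beta|_{s=0}=\iota_V d\beta+d(\iota_V\beta)\) restricted to \(\Sigma\), which avoids choosing a flow at all.
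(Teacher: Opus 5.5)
Your argument is correct and is essentially the paper's proof. It uses the same split into a normal-variation term $\iota_{\dot\nu}\Omega|_\Sigma$, which vanishes because $g(\dot\nu,\nu_\heis)=0$, plus a Lie-derivative term evaluated through $\operatorname{div}_\Omega(\nu_\heis)=-H$ and Stokes; the only difference is that you apply the homotopy formula to all of $V$ and decompose only at the end via $\iota_V\Omega|_\Sigma=-g(\nu_\heis,V)\,d\sigma$, whereas the paper first writes $V=g(V,T)T-g(V,\nu_\heis)\nu_\heis$ and treats the two Lie derivatives separately. One caveat: $\Sigma_s=\Sigma$ off a compact set does not follow from $V$ alone having compact support, since higher-order terms in $s$ of $\Gamma$ can move every point, so this is an extra standing assumption rather than a consequence of the hypotheses, and the paper's proof tacitly makes the same assumption.
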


\begin{proof}
    Whenever necessary, we choose local extensions of the objects defined along $\Sigma$: the initial velocity $V$ of the variation is extended to a vector field near $\Sigma$, $\nu_\heis$ is extended to a horizontal vector field with constant $g$-norm, and the local generator $T$ of $\mathrm H(\Sigma)$ is extended to a horizontal vector field. For simplicity, we keep the same symbols $V$, $\nu_\heis$, and $T$ for these extensions.

    By \cref{def:area}, $A(\Sigma_s)=\int_{\Sigma_s}d\sigma_s$, where the area form on $\Sigma_s$ is $\sigma_s=\iota_{\nu_s}\Omega|_{\Sigma_s}$. Therefore,
    \[
        \left.\frac{d}{ds}\right|_{s=0}
        A(\Sigma_s)
        =
        \left.\frac{d}{ds}\right|_{s=0}
        \int_{\Sigma_s}\iota_{\nu_s}\Omega.
    \]
    By the Reynolds Transport Theorem, we have that
    \[
        \left.\frac{d}{ds}\right|_{s=0}
        \int_{\Sigma_s} \iota_{\nu_s}\Omega
        =
        \int_{\Sigma\setminus \mathrm S(\Sigma)}
        \left(
        \left.\partial_s\right|_{s=0}\iota_{\nu_s}\Omega
        +
        \mathcal L_{V}(\iota_{\nu_\heis}\Omega)
        \right).
    \]
    Along the variation, we set
    \[
        \nu_s(\Gamma_s(x))
        =
        \alpha_s(x)X_{\Gamma_s(x)}
        +
        \beta_s(x)Y_{\Gamma_s(x)}, \qquad \dot\nu(x)
        \coloneq
        \left.\partial_s\alpha_s(x)\right|_{s=0}X_x
        +
        \left.\partial_s\beta_s(x)\right|_{s=0}Y_x
    \]
    Then, using that $X$, $Y$, and $\Omega$ are fixed,
    \begin{align*}
        \left.\partial_s\right|_{s=0}\iota_{\nu_s}\Omega
         & =
        \left.\partial_s\right|_{s=0}
        \bigl(\alpha_s\,\iota_X\Omega+\beta_s\,\iota_Y\Omega\bigr) =
        \left.\partial_s\alpha_s\right|_{s=0}\iota_X\Omega
        +
        \left.\partial_s\beta_s\right|_{s=0}\iota_Y\Omega =
        \iota_{\dot\nu}\Omega .
    \end{align*}
    Differentiating the identity $g(\nu_s,\nu_s)=-1$ at $s=0$ gives $g(\dot\nu,\nu_\heis)=0$. Since $\dot\nu$ is horizontal and $T \in \mathrm H(\Sigma)$, we have $\Omega(\dot\nu,T,W)=0$ for every vector field $W$ tangent to $\Sigma$, and thus $\iota_{\dot\nu}\Omega|_\Sigma=0$.

    We now simplify the Lie derivative term using the following two identities. First, for every smooth function $f$, we have
    \begin{align*}
        \mathcal L_{f\nu_\heis}(\iota_{\nu_\heis}\Omega)
         & =
        \iota_{\nu_\heis}\mathcal L_{f\nu_\heis}\Omega
        +
        \iota_{[f\nu_\heis,\nu_\heis]}\Omega =
        f\iota_{\nu_\heis}( \mathcal{L}_{\nu_\heis} \Omega) + \iota_{\nu_\heis}(df \wedge \iota_{\nu_\heis} \Omega) +f \iota_{[\nu_\heis,\nu_\heis]} \Omega -\nu_\heis[f] \iota_{\nu_\heis} \Omega
        \\& =
        f\operatorname{div}_{\Omega}(\nu_\heis)\iota_{\nu_\heis}\Omega + \nu_\heis[f] \iota_{\nu_\heis}\Omega - \nu_\heis[f] \iota_{\nu_\heis}\Omega =  f\operatorname{div}_{\Omega}(\nu_\heis)\iota_{\nu_\heis}\Omega.
    \end{align*}
    Because $fT$ is tangent to $\Sigma$, we must have that $(\mathcal L_{fT}(d \sigma))|_\Sigma
        =
        \mathcal L_{fT}^{\Sigma}(d\sigma)$, where $\mathcal L^{\Sigma}$ the intrinsic Lie derivative of $\Sigma$. Then, Cartan's formula (on $\Sigma$), gives
    \[
        \mathcal L_{fT}^{\Sigma}d\sigma
        =
        d_\Sigma\bigl(\iota_{fT}d\sigma\bigr)
        +
        \iota_{fT}d_\Sigma(d\sigma)
        =
        d_\Sigma\bigl(\iota_{fT}d\sigma\bigr),
    \]
    where $d_\Sigma$ denotes the intrinsic exterior derivative on $\Sigma$, and where we used that on the surface $\Sigma\setminus \mathrm S(\Sigma)$, $d\sigma$ is a top-degree $2$-form and therefore its exterior derivative is zero, i.e. $d_\Sigma(d\sigma)=0$.

    Since $\Sigma$ is spacelike, we can write $V=g(V,T)T-g(V,\nu_\heis)\nu_\heis$. Therefore,
    \begin{align*}
        \mathcal L_V(\iota_{\nu_\heis}\Omega)\big|_\Sigma
         & =
        \mathcal L_{g(V,T)T-g(V,\nu_\heis)\nu_\heis}d\sigma =
        \mathcal L_{g(V,T)T}d\sigma
        -
        \mathcal L_{g(V,\nu_\heis)\nu_\heis}d\sigma                   \\
         & =
        d\bigl(\iota_{g(V,T)T}d\sigma\bigr)
        -
        g(V,\nu_\heis)\operatorname{div}_{\Omega}(\nu_\heis)\,d\sigma \\
         & =
        d\bigl(\iota_{g(V,T)T}d\sigma\bigr)
        +
        H\,g(\nu_\heis,V)\,d\sigma .
    \end{align*}

    Since $\iota_{g(V,T)T}d\sigma$ is compactly supported in $\Sigma\setminus \mathrm S(\Sigma)$, Stokes' theorem gives
    \[
        \int_{\Sigma\setminus \mathrm S(\Sigma)}
        d\bigl(\iota_{g(V,T)T}d\sigma\bigr)
        =
        0,
    \]
    which concludes the proof.
\end{proof}

\subsection{Lorentzian isoperimetric problem and volume-preserving variations}\label{sec:volume_preserving_variations}

In a metric measure space $(X,\mathsf d,\mathfrak m)$, the \emph{isoperimetric problem}
consists in determining, for each prescribed volume $v>0$, the sets with minimal
surface area among all sets of measure $v$. More precisely, one studies minimisers of
\begin{equation}
    \label{eq:isopmms}
    \inf\{A(E) \mid E \subseteq X \text{ measurable, } \mathfrak m(E)=v\},
\end{equation}
where $A(E)$ denotes an appropriate notion of surface area, such as the
De Giorgi perimeter in the abstract sense of \cite{AmbrosioPer}. In the Lorentzian setting, the isoperimetric problem is reformulated by replacing the infimum in \cref{eq:isopmms} with a supremum and the perimeter with a suitable timelike Lorentzian perimeter, and by restricting the set of competitors to achronal hypersurfaces whose cone from a fixed point, the \emph{pole}, has prescribed volume. Here, a subset \(A\) of a Lorentzian space
is called \emph{achronal} if no two points of \(A\) are chronologically related;
that is, for every \(p,q\in A\), one has \(p\not\ll q\).

In the sub-Lorentzian Heisenberg group, the formulation is as follows.
For \(p\in\heis\) and \(q\in I^+(p)\), let \(\gamma_{p,q}\) denote the
future-directed maximizing geodesic from \(p\) to \(q\), parametrized on
\([0,1]\) with constant speed. Given a set \(\Sigma\subset I^+(p)\), we define the
\emph{cone over \(\Sigma\) with vertex \(p\)} by
\[
    C(\Sigma,p) \coloneqq \bigcup_{q\in \Sigma}\gamma_{p,q}([0,1]).
\]
When \(p=e\), we simply write \(C(\Sigma)\) instead of \(C(\Sigma,e)\).
By left-invariance, the pole can always be fixed at the identity \(e\), and one seeks
to find the maximisers of
\begin{equation}
    \label{eq:LoreIsop}
    \max \left\{
    A(\Sigma)
    \mid
    \text{\(C^2\) spacelike compact hypersurface } \Sigma \subset I^+(e)\subset \heis,
    \mathcal{L}^3(C(\Sigma))=v
    \right\}.
\end{equation}

A fully general formulation would not require the surface \(\Sigma\) to be regular, and would instead use a non-smooth notion of surface area, such as the \emph{timelike Minkowski content} studied in \cite{MondinoIsopLor}. Furthermore, it is not difficult to see that \cref{eq:LoreIsop} is infinite unless one imposes an additional constraint on the class of competitors. At present, two such constraints are studied in the literature. In \cite{BahnEhrlich}, the competitor is required to have the ``solid angle'', or aperture, of its cone bounded by a fixed constant. In other words, the cone may have prescribed volume, but it cannot spread over arbitrarily many future directions. In \cite{MondinoIsopLor}, the authors instead imposes a lower bound \(\inf_{q\in\Sigma}\uptau(p,q)\geq \uptau_0>0\) on the time separation from the pole \(p\). Thus, the cone may open in many directions, but the surface cannot dip arbitrarily close to the pole.

The main point is that, once the additional constraint has been fixed, smooth maximisers can be studied through admissible volume-preserving variations, which are a focus of this work. We start by defining variations that are adapted to the conical nature of the problem.

\begin{definition}[Radial variation]
    Let $p\in\heis$ and let $\Sigma\subset I^+(p)$ be a $C^1$ hypersurface, possibly with boundary. For every $x\in\Sigma$ and given $\lambda_x:=(\exp_p)^{-1}(x)$, the curve $\gamma_x(t):=\exp_p(t\lambda_x)$ is the unique maximising geodesic parametrised by constant speed on $[0, 1]$ that satisfies
    $\gamma_x(0)=p$ and $\gamma_x(1)=x$. A \emph{radial variation} of $\Sigma$ is a family of the form
    \[
        \Gamma_s(x):=\exp_p(r_s(x) \lambda_x),
    \]
    where $r: \ointerval{-\epsilon}{\epsilon} \times \Sigma\to(0,\infty)$ is smooth in $(s,x)$ and satisfies
    $r_0\equiv 1$. We write $\Sigma_s \coloneqq \Gamma_s(\Sigma)$ and $\dot r(x) \coloneqq\partial_s r_s(x)|_{s=0}$.
    The \emph{cone} over $\Sigma_s$ is
    \[
        C_s
        \coloneqq
        \{
        \exp_p(\rho\lambda_x)
        \mid
        x\in\Sigma,\ 0\leq \rho\leq r_s(x)
        \} = F_s(\Sigma\times[0,1]),
    \]
    where we set $F_s:\Sigma\times[0,1]\to\heis : (x,t) \mapsto \exp_p(t r_s(x)\lambda_x)$. The \emph{cone velocity} is the vector field defined by $X_0(\exp_p(t\lambda_x))
        :=
        \left.\partial_s\right|_{s=0}\exp_p(t r_s(x)\lambda_x)$ for every $\exp_p(t\lambda_x)\in
        C_0$.

    We say the radial variation is \emph{compactly supported} if $\operatorname{supp}\dot r$ is compactly contained in the interior of $\Sigma\setminus \mathrm S(\Sigma)$.
\end{definition}

The next lemma computes the infinitesimal velocity produced by a radial variation.

\begin{lemma}[Velocity of a radial cone variation]
    \label{lemma:velocity_radial_cone_variation}
    Let $\Gamma_s$ be a radial variation. At every point $\exp_p(t\lambda_x)\in
        C_0$ of the initial cone, the \emph{cone velocity} is given by
    \begin{equation}
        \label{eq:X0}
        X_0(\exp_p(t\lambda_x)) =
        -t\,\dot r(x)\,\tau(p,x)\,
        \bigl(\operatorname{grad}_{\heis}\tau(p,\cdot)\bigr)_{\exp_p(t\lambda_x)}
    \end{equation}
    In particular, the initial velocity of the variation $\Sigma_s$ is
    \begin{equation}
        \label{eq:V(x)}
        V(x)=X_0(x)=
        -\dot r(x)\,\tau(p,x)\,\bigl(\operatorname{grad}_{\heis}\tau(p,\cdot)\bigr)_x .
    \end{equation}
\end{lemma}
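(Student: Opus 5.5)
The computation is a chain rule applied along the one-parameter family $s\mapsto t r_s(x)\lambda_x$ in the domain of $\exp_p$, followed by the identity \eqref{eq:eikonaleq2}, which expresses radial derivatives of $\exp_p$ through the horizontal gradient of $\tau(p,\cdot)$.

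Fix $x\in\Sigma$ and $t\in[0,1]$, and set $\mu:=t\lambda_x$. Since $r_s(x)$ is a scalar, the curve $s\mapsto \exp_p(t r_s(x)\lambda_x)$ equals $s\mapsto \exp_p(r_s(x)\mu)$. It therefore moves only in the radial direction of the fixed covector $\mu$. Define $\phi(\rho):=\exp_p(\rho\mu)$ for $\rho$ near $1$. The chain rule gives
\[
X_0(\exp_p(t\lambda_x))=\dot r(x)\,\phi'(1),
\]
where $\phi'(1)=\frac{d}{d\rho}\big|_{\rho=1}\exp_p(\rho\, t\lambda_x)$. Writing $\rho t=\theta$, this equals $\frac{1}{t}\cdot t\,\frac{d}{d\theta}\big|_{\theta=t}\ldots$. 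It is cleaner to write $\frac{d}{d\rho}\exp_p(\rho t\lambda_x)=t\,\frac{d}{d\theta}\exp_p(\theta\lambda_x)\big|_{\theta=\rho t}$. Applying \eqref{eq:eikonaleq2} with $\lambda=\lambda_x$ at $\theta=t$ then gives
\[
X_0(\exp_p(t\lambda_x))=-t\,\dot r(x)\,\tau\bigl(p,\exp_p(t\lambda_x)\bigr)\,\bigl(\operatorname{grad}_{\heis}\tau(p,\cdot)\bigr)_{\exp_p(t\lambda_x)}.
\]

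It remains to replace $\tau(p,\exp_p(t\lambda_x))$ in this expression by $\tau(p,x)$. At first sight this looks like a discrepancy, since along the geodesic $\gamma_x$ one has $\tau(p,\gamma_x(t))=t\,\tau(p,x)$. The resolution is that \eqref{eq:eikonaleq2} must be read as stated: $\frac{d}{dt}\exp_p(t\lambda)$ has constant speed $L(\gamma_x)=\tau(p,x)$. So the unit vector $-\operatorname{grad}_{\heis}\tau$ must be multiplied by the speed, which is $\tau(p,x)$. I would check this directly from \eqref{eq:eikonaleq1}. The tangent $\dot\gamma_x(t)$ is parallel to $-\operatorname{grad}_{\heis}\tau$, and $\operatorname{grad}_{\heis}\tau$ has $g$-norm $-1$. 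The speed of $\gamma_x$ is constant and equal to $\tau(p,x)$, because $\gamma_x$ is maximizing on $[0,1]$. Hence $\dot\gamma_x(t)=-\tau(p,x)\operatorname{grad}_{\heis}\tau(p,\cdot)$ at $\gamma_x(t)$. Using this correct scaling (constant speed $\tau(p,x)$) in the chain rule yields exactly \eqref{eq:X0}. The main obstacle is precisely this point: pinning down the normalisation of \eqref{eq:eikonaleq2} and confirming that the scalar factor is the speed $\tau(p,x)$ rather than $\tau$ evaluated at the intermediate point. I would settle it via the eikonal equation and the constant-speed parametrisation, as above.

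Finally, setting $t=1$ gives $\exp_p(\lambda_x)=x$ and $\Gamma_s(x)=F_s(x,1)$. Therefore $V(x)=\partial_s|_{s=0}\Gamma_s(x)=X_0(x)$, which is \eqref{eq:V(x)}. Smoothness of $r$ and of $\exp_p$ on the timelike region justifies differentiating under $s$ and applying the chain rule.
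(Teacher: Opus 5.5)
Your argument is correct and follows the paper's route: the chain rule in the radial parameter, then \eqref{eq:eikonaleq2} to express $\dot\gamma_x(t)$ through $\operatorname{grad}_{\heis}\tau(p,\cdot)$, then $t=1$. Your worry about the prefactor is justified, and the paper's one-line proof passes over it: read literally at intermediate times, \eqref{eq:eikonaleq2} gives the factor $\tau(p,\exp_p(t\lambda_x))=t\,\tau(p,x)$, and hence $t^2$ instead of $t$. So you are not reading \eqref{eq:eikonaleq2} ``as stated''; you are correcting its prefactor to the constant speed $\tau(p,x)$, which you justify properly from \eqref{eq:eikonaleq1} and the constant-speed parametrisation. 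Equivalently, you could apply \eqref{eq:eikonaleq2} only at parameter $1$ to your curve $\phi(\rho)=\exp_p(\rho\mu)$ with $\mu=t\lambda_x$, which gives $\phi'(1)=-t\,\tau(p,x)\operatorname{grad}_{\heis}\tau(p,\cdot)$ directly.
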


\begin{proof}
    Since $\lambda_x$ is fixed when differentiating with respect to $s$, the
    chain rule gives
    \[
        X_0(\exp_p(t\lambda_x))
        =
        t\,\dot r(x)\,\frac{d}{dt}\exp_p(t\lambda_x).
    \]
    Thus \cref{eq:X0} follows from \cref{eq:eikonaleq2}. Finally, since
    $F_0(x,1)=x$, the cone velocity restricts on the top surface to the initial
    velocity $V$ of the variation $\Gamma_s$. Hence $X_0(x)=V(x)$ for every
    $x\in\Sigma$, and \cref{eq:V(x)} follows by setting $t=1$ in \cref{eq:X0}.
\end{proof}

We next study how the volume of the cone changes under a compactly supported radial variation. The formula expresses the first variation entirely as a boundary integral over the moving top surface.

\begin{proposition}[First variation of the cone volume]
    \label{prop:first_variation_cone_volume}
    Let $\Gamma_s$ be a compactly supported radial variation of a $C^2$ spacelike hypersurface $\Sigma$. Then
    \[
        \left.\frac{d}{ds}\right|_{s=0}\operatorname{Vol}(C_s)
        =
        \int_{\Sigma\setminus \mathrm S(\Sigma)}
        \dot r(x)\tau(p,x)\,
        g(\operatorname{grad}_{\heis}\tau(p,\cdot),\nu_\heis)\,d\sigma(x).
    \]
\end{proposition}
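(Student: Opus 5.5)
The strategy is to compute the derivative of the cone volume via a transport/divergence argument, and then identify the boundary term on the top surface using \cref{lemma:velocity_radial_cone_variation}. Since the cone is \(C_s=F_s(\Sigma\times[0,1])\), we write
\[
\operatorname{Vol}(C_s)=\int_{\Sigma\times[0,1]}F_s^*\Omega,
\]
up to a sign fixed by orientation. This holds as long as \(F_s\) is injective away from a negligible set; that is true near \(s=0\) because the radial geodesics from \(p\) to distinct points of \(\Sigma\subset I^+(p)\) are the unique maximisers and do not cross. Differentiating under the integral gives
\[
\left.\tfrac{d}{ds}\right|_{s=0}F_s^*\Omega=F_0^*(\mathcal L_{X_0}\Omega),
\]
where \(X_0\) is the cone velocity. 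More concretely, \(\partial_s F_s^*\Omega\) equals \(d(F_0^*\iota_{X_0}\Omega)\) pulled back, via the homotopy formula \(\partial_sF_s^*\Omega=F_s^*(d\iota_{\partial_sF}\Omega)\), using \(d\Omega=0\). One must be careful here because \(X_0\) is only a vector field along the map \(F_0\). I would therefore phrase everything on \(\Sigma\times[0,1]\): set \(\eta\coloneqq\iota_{\partial_sF_s}\Omega\) as a form along \(F\). Then \(\partial_s(F_s^*\Omega)=d(F_s^*\eta)\), which is the standard homotopy formula for forms pulled back by a family of maps.

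By Stokes on \(\Sigma\times[0,1]\), the derivative becomes a sum of boundary contributions. These come from \(\Sigma\times\{0\}\), from \(\Sigma\times\{1\}\), and from \(\partial\Sigma\times[0,1]\).
\begin{itemize}
\item On \(\Sigma\times\{0\}\), \(F_0(x,0)=p\) for all \(x\), so the pullback of any 2-form vanishes there. Also \(X_0=0\) at \(t=0\) by \cref{eq:X0}.
\item On \(\partial\Sigma\times[0,1]\), \(\dot r=0\) near \(\partial\Sigma\), and \(\dot r\) is compactly supported away from \(\mathrm S(\Sigma)\). Hence \(X_0\) vanishes there by \cref{eq:X0}.
\end{itemize}
Only the top remains:
\[
\left.\tfrac{d}{ds}\right|_{s=0}\operatorname{Vol}(C_s)=\pm\int_\Sigma \iota_V\Omega,
\qquad V=X_0|_\Sigma=-\dot r\,\tau(p,\cdot)\operatorname{grad}_\heis\tau(p,\cdot)
\]
by \cref{eq:V(x)}. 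The sign is fixed by the orientation of \(\Sigma\) compatible with the outward direction of the cone.

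The last step is to identify \(\iota_V\Omega|_\Sigma\) with a multiple of \(d\sigma=\iota_{\nu_\heis}\Omega|_\Sigma\) on \(\Sigma\setminus\mathrm S(\Sigma)\). Since \(V\) is horizontal and \(\Sigma\) is spacelike, decompose \(V=g(V,T)T-g(V,\nu_\heis)\nu_\heis\). The tangential part contributes nothing, because \(\iota_T\Omega\) restricted to \(\Sigma\) vanishes (\(T\) is tangent). Hence
\[
\iota_V\Omega|_\Sigma=-g(V,\nu_\heis)\,d\sigma=\dot r\,\tau(p,\cdot)\,g(\operatorname{grad}_\heis\tau(p,\cdot),\nu_\heis)\,d\sigma.
\]
This is exactly the integrand in the statement.

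The main obstacle I expect is the orientation bookkeeping, namely checking that the overall sign is \(+\). The check uses the future-directed convention for \(\nu_\heis\) and the past-directed character of \(-\operatorname{grad}_\heis\tau\), with \(g(\operatorname{grad}\tau,\nu_\heis)>0\) for two future/past causal vectors appropriately. I would verify the sign on a simple case: take \(\dot r>0\), so the cone grows. The volume must then increase. Here \(g(\operatorname{grad}_\heis\tau,\nu_\heis)\) is positive, since \(\operatorname{grad}_\heis\tau\) is past-directed timelike by \cref{eq:eikonaleq2} and \(\nu_\heis\) is future-directed timelike. This fixes the orientation so that the formula holds with a plus sign. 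A secondary point is justifying the injectivity of \(F_s\) and differentiability under the integral. For this I would rely on the analyticity of \(\tau(p,\cdot)\) and of \(\exp_p\) on \(\{p\ll q\}\), together with the compact support of \(\dot r\).
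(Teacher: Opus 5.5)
Your proposal is correct and is essentially the paper's argument: Reynolds transport with Cartan's formula reduces the derivative to \(\int_{\partial C_0}\iota_{X_0}\Omega\), the vertex and lateral contributions vanish by \cref{eq:X0} and the support of \(\dot r\), and on the top the \(\mathrm H(\Sigma)\)-component of \(V\) drops out, leaving \(-g(V,\nu_\heis)\,d\sigma\); your homotopy formula on \(\Sigma\times[0,1]\) is a cleaner rendering of the first step, since \(X_0\) is only defined along \(F_0\). One caveat: your injectivity justification does not hold as stated, because the fact that distinct radial geodesics do not cross does not prevent a single radial geodesic from meeting a non-achronal spacelike \(\Sigma\) twice, and then the volume of the union is not \(\int F_s^*\Omega\); what is actually needed is that each radial geodesic through \(\operatorname{supp}\dot r\) meets \(\Sigma\) only once (true, for instance, when \(\Sigma\) is achronal), and the paper's proof also relies on this tacitly.
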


\begin{proof}
    By definition, $\operatorname{Vol}(C_s)=\int_{C_s}\Omega$. Applying the
    Reynolds Transport Theorem gives
    \[
        \left.\frac{d}{ds}\right|_{s=0}\operatorname{Vol}(C_s)
        =
        \int_{C_0}\mathcal L_{X_0}\Omega =
        \int_{\partial C_0}\iota_{X_0}\Omega,
    \]
    where we use Cartan's formula on the last equality
    \[
        \mathcal L_{X_0}\Omega
        =
        d(\iota_{X_0}\Omega)+\iota_{X_0}(d\Omega)
        =
        d(\iota_{X_0}\Omega),
    \]
    together with Stokes theorem.
    Indeed, $\Omega$ is the ambient volume form on $\heis$, hence a top-degree
    form on a $3$-dimensional manifold. Therefore $d\Omega$ is a $4$-form, so
    $d\Omega=0$.

    The boundary of the initial cone decomposes as $\partial C_0=\Sigma\cup C(\partial\Sigma)$, where $\Sigma=F_0(\Sigma\times\{1\})$, $C(\partial\Sigma)=F_0(\partial\Sigma\times[0,1])$, and $\{p\}=F_0(\Sigma\times\{0\})$. Therefore
    \[
        \int_{\partial C_0}\iota_{X_0}\Omega
        =
        \int_{\Sigma}\iota_{X_0}\Omega
        +
        \int_{C(\partial\Sigma)}\iota_{X_0}\Omega.
    \]
    The compact support assumption gives $\dot r=0$ near $\partial\Sigma$ and thus, by \cref{eq:X0}, we have $X_0=0$ on $C(\partial\Sigma)$ and $\int_{C(\partial\Sigma)}\iota_{X_0}\Omega=0$.
    On $\Sigma=F_0(\Sigma\times\{1\})$, we have $X_0|_\Sigma=V$, where $V$ is the
    initial velocity of the variation $\Gamma_s$. Therefore
    \[
        \int_{\partial C_0}\iota_{X_0}\Omega
        =
        \int_{\Sigma\setminus \mathrm S(\Sigma)}\iota_V\Omega.
    \]
    Using $d\sigma=\iota_{\nu_\heis}\Omega|_\Sigma$ and writing $V=V^\parallel-g(V,\nu_\heis)\nu_\heis$ with $V^\parallel\in \mathrm H(\Sigma)$, we obtain
    \[
        \iota_V\Omega|_\Sigma
        =
        \iota_{-g(V,\nu_\heis)\nu_\heis}\Omega|_\Sigma
        =
        -g(V,\nu_\heis)\,\iota_{\nu_\heis}\Omega|_\Sigma
        =
        -g(V,\nu_\heis)d\sigma.
    \]

    We conclude by \cref{lemma:velocity_radial_cone_variation} and \cref{eq:V(x)}, noting that
    \[
        \int_{\Sigma\setminus \mathrm S(\Sigma)}\iota_V\Omega = -\int_{\Sigma\setminus \mathrm S(\Sigma)}g(V,\nu_\heis)d\sigma =
        \int_{\Sigma\setminus \mathrm S(\Sigma)}
        \dot r(x)\,\tau(p,x)\,
        g(\operatorname{grad}_{\heis}\tau(p,\cdot),\nu_\heis)\,d\sigma(x),
    \]
    and the proof is complete.
\end{proof}

Combining the first variation of horizontal area with the first variation of cone volume gives the expected Euler--Lagrange condition for radial volume-constrained critical points. Namely, such surfaces have constant horizontal mean curvature away from the characteristic set.

\begin{proposition}[Radial volume-preserving stationarity implies CMC]
    \label{prop:CMC}
    Let $\Sigma\subset I^+(p)$ be a connected embedded $C^2$ spacelike surface.
    Assume that, for every compactly supported radial variation $\Gamma_s$ of
    $\Sigma$ whose associated cones satisfy
    \[
        \operatorname{Vol}(C_s)=\operatorname{Vol}(C_0)
        \qquad\text{for all sufficiently small }s,
    \]
    one has
    \[
        \left.\frac{d}{ds}\right|_{s=0}A(\Sigma_s)=0.
    \]
    Then the horizontal mean curvature $H$ is constant on
    $\Sigma\setminus \mathrm S(\Sigma)$.
\end{proposition}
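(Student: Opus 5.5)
The plan follows the classical Lagrange multiplier argument, with the admissible test functions being the radial speeds $\dot r$. Set $\phi(x)\coloneqq \tau(p,x)\,g(\operatorname{grad}_{\heis}\tau(p,\cdot),\nu_\heis)(x)$ on $\Sigma\setminus \mathrm S(\Sigma)$.

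\textbf{Step 1: both first variations in terms of $\dot r$.} By \cref{lemma:velocity_radial_cone_variation} the initial velocity is $V=-\dot r\,\tau(p,\cdot)\operatorname{grad}_{\heis}\tau(p,\cdot)$. This field is horizontal and supported in $\operatorname{supp}\dot r\Subset\Sigma\setminus\mathrm S(\Sigma)$, so \cref{prop:first_variation_formula} applies and gives
\[
\left.\tfrac{d}{ds}\right|_{s=0}A(\Sigma_s)=-\int_{\Sigma\setminus \mathrm S(\Sigma)} H\,\dot r\,\phi\,d\sigma .
\]
By \cref{prop:first_variation_cone_volume},
\[
\left.\tfrac{d}{ds}\right|_{s=0}\operatorname{Vol}(C_s)=\int \dot r\,\phi\,d\sigma .
\]

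\textbf{Step 2: $\phi$ has a sign.} Both $\operatorname{grad}_{\heis}\tau(p,\cdot)$ and $\nu_\heis$ are unit timelike vectors, by \eqref{eq:eikonaleq1} and spacelikeness of $\Sigma$. Two timelike vectors have nonzero inner product, so $\phi$ vanishes nowhere on $\Sigma\setminus\mathrm S(\Sigma)$. One can also note $\operatorname{grad}\tau$ is past-directed and $\nu_\heis$ future-directed, so $\phi>0$, but nonvanishing is all we need.

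\textbf{Step 3: admissible variations from arbitrary pairs of bumps.} Take any $\psi\in C^\infty_c(\Sigma\setminus\mathrm S(\Sigma))$. Fix one auxiliary $\psi_0\in C_c^\infty$ with $\int\psi_0\phi\,d\sigma=1$; it exists since $\phi\neq0$. For $(s,t)$ near $0$, consider the two-parameter radial variation with $r_{s,t}=1+s\psi+t\psi_0$, which stays positive for small parameters. Let $v(s,t)=\operatorname{Vol}(C_{s,t})$. It is $C^1$ by the Reynolds argument, and $\partial_t v(0,0)=1$. The implicit function theorem then gives a smooth $t(s)$ with $t(0)=0$ and $v(s,t(s))=v(0,0)$. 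The map $\tilde r_s=1+s\psi+t(s)\psi_0$ defines a compactly supported radial variation preserving cone volume, with $\dot{\tilde r}=\psi+t'(0)\psi_0$. Differentiating the constraint gives $t'(0)=-\int\psi\phi\,d\sigma$.

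\textbf{Step 4: conclude.} By hypothesis,
\[
0=\int H\phi\bigl(\psi+t'(0)\psi_0\bigr)\,d\sigma .
\]
Setting $c\coloneqq\int H\phi\psi_0\,d\sigma$, this reads $\int (H-c)\phi\psi\,d\sigma=0$ for every test function $\psi$. Hence $(H-c)\phi\equiv0$. Since $\phi\neq0$ and $H$ is continuous, $H\equiv c$ on $\Sigma\setminus\mathrm S(\Sigma)$.

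The constant $c$ is global: $\psi_0$ is fixed once and $\psi$ ranges over all test functions, so connectedness of $\Sigma\setminus\mathrm S(\Sigma)$ is not needed, even though removing $\mathrm S(\Sigma)$ might disconnect $\Sigma$.

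The main technical obstacle is Step 3. One must justify that $(s,t)\mapsto\operatorname{Vol}(C_{s,t})$ is $C^1$, with partial derivatives given by \cref{prop:first_variation_cone_volume} applied at the base surface. This needs smooth dependence of $\exp_p$ and $\exp_p^{-1}$ on $I^+(p)$, which holds since $\tau(p,\cdot)$ is analytic there. It also needs $\Sigma_{s,t}$ to remain embedded inside $I^+(p)$ for small parameters, which follows from compact support and openness of $I^+(p)$.
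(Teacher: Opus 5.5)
Your proposal is correct and follows essentially the same argument as the paper. Both use the same two first-variation formulas, an auxiliary bump (your $\psi_0$, the paper's $\eta$) with nonzero integral against the nowhere-vanishing weight $\tau(p,\cdot)\,g(\operatorname{grad}_{\heis}\tau(p,\cdot),\nu_\heis)$, and the implicit function theorem applied to the two-parameter family $1+s\psi+t\psi_0$. The only difference is that the paper runs the construction for test functions with vanishing volume derivative (so $a'(0)=0$) and then cites the fundamental lemma, while you apply it to arbitrary $\psi$ and extract the global multiplier $c=\int H\phi\psi_0\,d\sigma$ explicitly; this makes the last step fully explicit and shows that connectedness is not needed.
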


\begin{proof}
    Since $\Sigma$ is spacelike, $\nu_\heis$ is timelike. Moreover,
    $\operatorname{grad}_{\heis}\tau(p,\cdot)$ is timelike on $I^+(p)$, and therefore
    $g(\operatorname{grad}_{\heis}\tau(p,\cdot),\nu_\heis)$ does not vanish on
    $\Sigma\setminus \mathrm S(\Sigma)$. Let $\varphi\in C_c^\infty(\Sigma\setminus \mathrm S(\Sigma))$ and choose the radial variation defined by $r_s=1+s\varphi$, so that $\dot r=\varphi$. \cref{lemma:velocity_radial_cone_variation}
    and \cref{prop:first_variation_formula} give
    \begin{equation}
        \label{eq:areastationary}
        \left.\frac{d}{ds}\right|_{s=0}A(\Sigma_s)
        =
        -\int_{\Sigma\setminus \mathrm S(\Sigma)}
        H(x)\varphi(x)\,\tau(p,x)\,g(\operatorname{grad}_{\heis}\tau(p,\cdot),\nu_\heis)\,d\sigma(x).
    \end{equation}
    On the other hand, \cref{prop:first_variation_cone_volume} gives
    \begin{equation}
        \label{eq:volumestationary}
        \left.\frac{d}{ds}\right|_{s=0}\operatorname{Vol}(C_s)
        =
        \int_{\Sigma\setminus \mathrm S(\Sigma)}
        \varphi(x)\,\tau(p,x)\,g(\operatorname{grad}_{\heis}\tau(p,\cdot),\nu_\heis)\,d\sigma(x).
    \end{equation}
    We wish to argue that, if \cref{eq:volumestationary} is zero, then \cref{eq:areastationary} vanishes as well. This would imply, by the fundamental lemma of the calculus of variations, that $H$ must be constant on $\Sigma\setminus \mathrm S(\Sigma)$.

    Choose
    $\eta\in C_c^\infty(\Sigma\setminus \mathrm S(\Sigma))$ such that
    \begin{equation}
        \label{eq:etachosen}
        \int_{\Sigma\setminus \mathrm S(\Sigma)}
        \eta(x)\tau(p,x)g(\operatorname{grad}_{\heis}\tau(p,\cdot),\nu_\heis)\,d\sigma(x)\neq 0.
    \end{equation}
    This is always possible because    \(\tau(p,x)g(\operatorname{grad}_{\heis}\tau(p,\cdot),\nu_\heis)\)
    is continuous and nowhere vanishing on \(\Sigma\setminus \mathrm S(\Sigma)\).
    For small $(s,a)$, consider the radial variations with $r_{s,a}=1+s\varphi+a\eta$, and note that \(\partial_a
    \operatorname{Vol}(C_{s,a})|_{(0,0)}\neq 0\) because of \cref{eq:etachosen}.
    By the implicit function theorem, there is a smooth function
    $a=a(s)$, with $a(0)=0$, such that $\operatorname{Vol}(C_{s,a(s)})=
        \operatorname{Vol}(C_0)$ for all small $s$. If \cref{eq:volumestationary} is zero, then $a'(0)=0$ because, by the chain rule \(0
    =
    \left.\partial_s\right|_{(0,0)}\operatorname{Vol}(C_{s,a})
    +
    a'(0)
    \left.\partial_a\right|_{(0,0)}\operatorname{Vol}(C_{s,a})\) and therefore
    \[
        a'(0)
        =
        -
        \frac{
            \displaystyle
            \int_{\Sigma\setminus \mathrm S(\Sigma)}
            \varphi(x)\tau(p,x)
            g(\operatorname{grad}_{\heis}\tau(p,\cdot),\nu_\heis)\,d\sigma(x)
        }{
            \displaystyle
            \int_{\Sigma\setminus \mathrm S(\Sigma)}
            \eta(x)\tau(p,x)
            g(\operatorname{grad}_{\heis}\tau(p,\cdot),\nu_\heis)\,d\sigma(x)
        }.
    \]
    The denominator is not zero because of \cref{eq:etachosen} while the numerator is zero because we assume that \cref{eq:volumestationary} is zero. This implies that
    \[
        \left.\frac{d}{ds}\right|_{s=0} r_{s,a(s)}(x)
        =
        \left.\frac{d}{ds}\right|_{s=0}\bigl(1+s\varphi(x)+a(s)\eta(x)\bigr)
        =
        \varphi(x)+a'(0)\eta(x)
        =
        \varphi(x) = \left.\frac{d}{ds}\right|_{s=0} r_{s}(x)
    \]
    and therefore, by \cref{lemma:velocity_radial_cone_variation}
    and \cref{prop:first_variation_formula} again,
    \[
        \left.\frac{d}{ds}\right|_{s=0}A(\Sigma_s)
        =
        \left.\frac{d}{ds}\right|_{s=0}A(\Sigma_{s,a(s)}) = 0,
    \]
    concluding the proof.
\end{proof}

Following the terminology in the literature, see \cite{RitoreRosales}, we say that a surface $\Sigma$ satisfying the hypotheses of \cref{prop:CMC} is \emph{area-stationary under volume-preserving radial variations}. In particular, any smooth isoperimetric maximiser satisfying the aperture constraint of \cite{BahnEhrlich} or the time-separation constraint of \cite{MondinoIsopLor} must satisfy this stationarity condition for all admissible compactly supported radial variations and have constant mean curvature.

A striking result in the sub-Riemannian Heisenberg group is that metric balls are not isoperimetric minimisers \cite{BallNotIsop}, see also \cite[Example~4.13]{Capogna2007}. We prove the analogous statement in the sub-Lorentzian Heisenberg group by showing that sub-Lorentzian pseudo-spheres do not have constant horizontal mean curvature.

\begin{theorem}
    \label{thm:pseudo-sphere-not-CMC}
    Let
    \[
        \Sigma \coloneqq \{q\in I^+(0):\tau(0,q)=1\}
    \]
    be the unit sub-Lorentzian pseudo-sphere. Then no nonempty relatively open
    smooth patch of $\Sigma$ is area-stationary under volume-preserving radial
    variations. In particular, such a patch cannot be a smooth isoperimetric
    maximiser for either the aperture-constrained problem of \cite{BahnEhrlich}
    or the lower-time-separation constrained problem of \cite{MondinoIsopLor}.
\end{theorem}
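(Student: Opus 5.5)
By \cref{prop:CMC}, it suffices to show that the horizontal mean curvature $H$ of $\Sigma$ is not constant on any nonempty relatively open subset of $\Sigma\setminus\mathrm S(\Sigma)$. Then no such patch can be area-stationary under volume-preserving radial variations, and the isoperimetric statement follows from the remark after \cref{prop:CMC}. Since $\tau(0,\cdot)$ is analytic on $I^+(0)$, the level set $\Sigma$ is an analytic surface. Moreover, $H$ is real-analytic on the open set where $\Sigma$ is non-characteristic. Hence, on any connected non-characteristic patch, $H$ is locally constant if and only if it is constant on the whole connected component of $\Sigma\setminus\mathrm S(\Sigma)$. It therefore suffices to show that $H$ is nonconstant on each component, which I would do by computing $H$ at a few explicit points.

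To compute $H$, I would parametrise $\Sigma$ by the exponential map. Normalising $u^2-v^2=1$, write $u=\cosh\theta$, $v=\sinh\theta$, so that $\Sigma=\{\exp_e(\cosh\theta,\sinh\theta,w)\}$, with $w$ ranging over the values for which the geodesic is still maximising up to time $1$. By \cref{eq:eikonaleq2} with $t=1$ and $\tau=1$, the velocity of the geodesic at its endpoint equals $-\operatorname{grad}_{\heis}\tau$, which is $g$-orthogonal to $\mathrm H(\Sigma)$ inside $\Delta$, since $\tau$ is constant on $\Sigma$. Hence $\nu_\heis$ is, up to sign, the endpoint velocity of the geodesic. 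By the standard formula for geodesics in $\heis$, this velocity is the initial covector $(\cosh\theta,\sinh\theta)$ boosted by rapidity $w$, that is, $\nu_\heis=\pm\bigl(\cosh(\theta+w)X+\sinh(\theta+w)Y\bigr)$. I would verify this by differentiating \cref{eq:exponentialmap}. By \cref{prop:mean_curvature_divergence}, $H=-\operatorname{div}_\Omega\nu_\heis$. Alternatively, by \cref{prop:mean_curvature_via_minkowski}, $H$ is the Minkowski curvature of the projection of the horizontal curve $T$ orthogonal to $\nu_\heis$.

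Write $\phi=\theta+w$. Then $\nu_\heis=\cosh\phi\,X+\sinh\phi\,Y$ extends off $\Sigma$ as a horizontal field of constant norm, giving
\[
H=-\bigl(\sinh\phi\,X\phi+\cosh\phi\,Y\phi\bigr)=-T'(\phi),
\]
where $T'=\sinh\phi\,X+\cosh\phi\,Y$ is the spacelike unit horizontal tangent. So $H$ is, up to sign, the derivative of $\phi=\theta+w$ along $T'$. To compute it, I would push $T'$ forward through the inverse of $d\exp_e$ restricted to the $(\theta,w)$-chart. This requires solving a $3\times 3$ linear system, using that $T'$ is tangent to $\Sigma$, and yields an explicit function $H(\theta,w)$.

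By boost invariance, which is an isometry fixing $e$, the function $H$ depends only on $w$. I then expect an expression of the form
\[
H(w)=\frac{w(\ldots)}{\ldots},
\]
built from $\cosh w$, $\sinh w$ and $w$. A two-term Taylor expansion near $w=0$, or its behaviour as $w$ approaches the cut or boundary value, should show that $H(w)$ is nonconstant. For instance, I expect $H(0)=0$ by the $z\mapsto -z$ symmetry, whereas $H\neq 0$ for small $w\neq 0$. The main obstacle is this Jacobian computation: inverting $d\exp_e$ cleanly and identifying the characteristic points, which I expect to occur where $T'$ fails to be transverse, i.e.\ along a curve in $w$. I would first try the symmetric slice $\theta=0$ together with boost invariance to reduce to a one-variable computation in $w$.
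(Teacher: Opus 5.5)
Your route is the paper's: parametrise \(\Sigma\) by \(\exp_0\) on \(\{u^2-v^2=1\}\), and identify \(\nu_\heis\), up to sign, with the endpoint velocity \(\cosh(\theta+w)X+\sinh(\theta+w)Y\). Then use \(H=-\operatorname{div}_\Omega\nu_\heis\) to get an explicit function of \(w\) alone, and conclude with \cref{prop:CMC}. Your rewriting \(\operatorname{div}_\Omega(\cosh\phi\,X+\sinh\phi\,Y)=T'(\phi)\) genuinely simplifies the paper's computation. The paper obtains \(X(a)\) and \(Y(b)\) by inverting the full differential of \(\exp_0\). Since \(T'\) is tangent to \(\Sigma\), you only need to write \(T'=a\,\partial_\theta+b\,\partial_w\) in the two-dimensional chart, and then \(H=\pm(a+b)\). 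Carrying this out at \(\theta=0\) gives exactly the paper's formula, up to the sign fixed by the normal:
\[
H(w)=\frac{w^2\cosh w-w\sinh w}{w\sinh w-2\cosh w+2}.
\]

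However, the concrete non-constancy test you suggest would fail, because \(H(0)\neq0\). With the paper's sign, \(H(w)=4+\tfrac{2}{15}w^2+O(w^4)\). The symmetry \((x,y,z)\mapsto(x,-y,-z)\) is a time-preserving isometry fixing \(e\) that sends \(\exp_0(u,v,w)\) to \(\exp_0(u,-v,-w)\). It preserves \(\nabla\), since every isometry maps the parallel frame \(X,Y\) to a constant combination of \(X,Y\). Being time-preserving, it also preserves the future-directed normal. Hence it preserves \(H\) rather than reversing its sign, and yields only \(H(-w)=H(w)\), not \(H(0)=0\). Non-constancy should instead be read off from the \(w^2\)-coefficient above, or from \(H(w)\sim\abs{w}\) as \(\abs{w}\to\infty\).

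Two smaller corrections. First, \(\Sigma\) has no characteristic points at all. The eikonal equation gives \(\operatorname{grad}_\heis\tau\neq0\), so \(d\tau\) does not vanish on \(\Delta_p\) and \(\T_p(\Sigma)\neq\Delta_p\). Second, there is no cut or boundary value: \(w\) ranges over all of \(\mathbb R\).
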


\begin{proof}
    By the description of the exponential map in \cref{eq:exponentialmap}, the
    unit pseudo-sphere satisfies
    \[
        \Sigma
        =
        \left\{
        \exp_0(u,v,w)
        \;\middle|\;
        u^2-v^2=1,\ u>\abs{v},\ w\in\mathbb R
        \right\}.
    \]
    Since $\Sigma$ is the level set $\tau(0,\cdot)=1$, every horizontal tangent vector $V\in \mathrm H(\Sigma)$ satisfies
    \[
        g\bigl(\operatorname{grad}_{\heis}\tau(0,\cdot),V\bigr)
        =
        V\bigl(\tau(0,\cdot)\bigr)
        =
        0.
    \]
    Thus $\operatorname{grad}_{\heis}\tau(0,\cdot)$ is horizontally normal to $\Sigma$, and since it is unit, see \cref{eq:eikonaleq1}, we get $\nu_\heis=\operatorname{grad}_{\heis}\tau(0,\cdot)$, with the chosen orientation.

    Differentiating the geodesic
    \(\gamma(t)=\exp_0(tu,tv,tw)\) given by \cref{eq:exponentialmap}, we get
    \[
        \dot\gamma(1)
        =
        \bigl(u\cosh w+v\sinh w\bigr)X
        +
        \bigl(v\cosh w+u\sinh w\bigr)Y.
    \]
    Since \(u^2-v^2=1\) on \(\Sigma\), this curve has unit speed. Therefore,
    by the relation between unit speed maximising geodesics and the time
    separation gradient,
    \[
        \nu_\heis(\exp_0(u,v,w))
        =
        -\bigl(u\cosh w+v\sinh w\bigr)X
        -
        \bigl(v\cosh w+u\sinh w\bigr)Y.
    \]
    The divergence formula gives $\operatorname{div}_{\Omega}(\nu_\heis) = X(a)+Y(b)$, where the functions $a,b$ are defined by
    \[
        a\circ\exp_0(u,v,w)
        =
        -u\cosh w-v\sinh w,
    \]
    and
    \[
        b\circ\exp_0(u,v,w)
        =
        -v\cosh w-u\sinh w.
    \]
    On $\Sigma$, we have $u=\sqrt{1+v^2}$. The derivatives $X(a)$ and $Y(b)$ are computed by pulling $X$ and $Y$ back through the differential of $\exp_0$:
    \[
        (X(a))(\exp_0(u,v,w))
        =
        d(a\circ\exp_0)_{(u,v,w)}
        \left[
            (d\exp_0)_{(u,v,w)}^{-1}
            \bigl(X_{\exp_0(u,v,w)}\bigr)
            \right],
    \]
    and
    \[
        (Y(b))(\exp_0(u,v,w))
        =
        d(b\circ\exp_0)_{(u,v,w)}
        \left[
            (d\exp_0)_{(u,v,w)}^{-1}
            \bigl(Y_{\exp_0(u,v,w)}\bigr)
            \right].
    \]
    Using the explicit formula for $\exp_0$ from \cref{eq:exponentialmap} and
    then restricting to $u^2-v^2=1$, this gives
    \[
        H(v,w) = - \operatorname{div}_{\Omega}(\nu_\heis)
        =
        \frac{w^2\cosh w-w\sinh w}
        {w\sinh w-2\cosh w+2}.
    \]
    Here we have used \cref{prop:mean_curvature_divergence} for the first equality. This function is clearly not constant on any neighbourhood.
\end{proof}

\section{Characterisation of constant mean curvature symmetric surfaces}
\label{section:boost-symmetric-surfaces}

\subsection{Boost-symmetric constant mean curvature surfaces}

\label{subsec:boost-symmetric-surfaces}

The previous section showed that smooth isoperimetric candidates must have
constant horizontal mean curvature. Since such candidates are also expected to
possess some symmetry, we restrict our attention to \emph{boost-symmetric
    surfaces}, namely surfaces invariant under the action of \(O^+(1,1)\) on the
sub-Lorentzian Heisenberg group, as described in
\cref{thm:sub_lorentzian_isometries}. More concretely, for each \(u\in\mathbb R\), let
\[
    B_u(x,y,z)
    =
    (x\cosh u+y\sinh u,\; x\sinh u+y\cosh u,\; z).
\]
Then \(\Sigma\) is boost-symmetric if $B_u(\Sigma)=\Sigma$ for every $u\in\mathbb R$.

Locally around each non-characteristic point \(p\), choose a unit generator
\(T\) of \(\mathrm H(\Sigma)\). The corresponding (horizontal) integral curve
\(\gamma:\ointerval{-\epsilon}{\epsilon}\to\heis\) projects to the
\((x,y)\)-plane as a curve \(\sigma\) whose signed curvature at \(t=0\) equals
\(H(p)\), by \cref{prop:mean_curvature_via_minkowski}. When \(\Sigma\) is boost-symmetric, applying the boosts to this profile gives a
local parametrisation of \(\Sigma\) away from the \(z\)-axis, namely
\(S(t,u)=B_u(\gamma(t))\). In particular, \(H\) is constant whenever the
projected curve \(\sigma\) has constant signed curvature.

Conversely, if \(\sigma\) is a curve in the Minkowski plane with constant
signed curvature, then any horizontal lift \(\gamma\) of \(\sigma\) to \(\heis\)
can be boosted to produce a surface \(S(t,u)=B_u(\gamma(t))\). Wherever this
map is an immersed spacelike surface, its image is boost-symmetric and has
constant horizontal mean curvature, by
\cref{prop:mean_curvature_via_minkowski} and the fact that boosts are
isometries. Since we are interested in achronal spacelike surfaces, the
projected horizontal profiles relevant to the construction must be spacelike.

The classification of constant-curvature curves in the Minkowski plane is
classical. We recall the relevant statement next, and refer the reader to the
discussion preceding \cite[Theorem~2.10]{Lopez2014}.

\begin{proposition}\label{prop:planar_constant_curvature_curves}
    Up to the action of the full Lorentz isometry group
    \(O(1,1)\ltimes\mathbb R^2\) and reparametrisation, the spacelike curves in the
    Minkowski plane with constant curvature \(k\geq 0\) are precisely the straight
    lines when \(k=0\), and, when \(k>0\), the hyperbolas
    \[
        \gamma(t)=\frac{1}{k}(\cosh(kt),\sinh(kt)).
    \]
\end{proposition}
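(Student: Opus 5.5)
The plan is to reduce the statement to an ODE for the tangent direction of the curve, exactly as in the Euclidean classification. Let \(\sigma\colon I\to\mathbb R^2\) be a spacelike curve of class \(C^2\) in the Minkowski plane \((\R^2,-dx^2+dy^2)\). I would first reparametrise \(\sigma\) by Minkowski arclength, which is possible because \(\langle\dot\sigma,\dot\sigma\rangle_{\mathbb M}>0\). Then \(\dot\sigma\) is a unit spacelike vector, and by connectedness of \(I\) it stays in one of the two branches of the unit hyperbola \(\{-a^2+b^2=1\}\). Applying \(A_0\) (or \(T_0\)) if necessary, I may assume \(b>0\) throughout, so that \(\dot\sigma(t)=(\sinh\theta(t),\cosh\theta(t))\) for a unique \(C^1\) function \(\theta\), the hyperbolic angle.

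Next I take as unit normal \(n=(\cosh\theta,\sinh\theta)\). It is timelike and future-directed, and it is orthogonal to \(\dot\sigma\) since \(-\sinh\theta\cosh\theta+\cosh\theta\sinh\theta=0\). Differentiating gives \(\ddot\sigma=\dot\theta\,n\), so the signed curvature is
\[
    \kappa=\langle\ddot\sigma,n\rangle_{\mathbb M}=-\dot\theta .
\]
With this normalisation I define curvature \(k\) to mean \(\dot\theta=\mp k\), the sign being absorbed by the orientation choice in the convention of \cref{prop:mean_curvature_via_minkowski}.

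\emph{Case \(k=0\).} Here \(\theta\) is constant, so \(\dot\sigma\) is constant and \(\sigma\) is a spacelike straight line. A boost \(B_u\) maps it to a line parallel to the \(y\)-axis, and a translation then fixes its position.

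\emph{Case \(k>0\).} Reversing the parameter \(t\mapsto -t\) if needed, I may take \(\theta(t)=kt+\theta_0\). Integrating,
\[
    \sigma(t)=\sigma(0)+\frac1k\bigl(\cosh(kt+\theta_0),\,\sinh(kt+\theta_0)\bigr)-\frac1k(\cosh\theta_0,\sinh\theta_0).
\]
A translation removes the constant terms, and the boost \(B_{-\theta_0}\) removes \(\theta_0\); this uses \(B_{u}(\cosh s,\sinh s)=(\cosh(s+u),\sinh(s+u))\). What remains is \(\tfrac1k(\cosh kt,\sinh kt)\). Conversely, a direct computation shows that lines and these hyperbolas are spacelike with constant curvature \(0\) and \(k\), respectively, and that \(O(1,1)\ltimes\R^2\) preserves \(|\kappa|\). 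This gives the ``precisely'' part of the statement.

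The only delicate step is bookkeeping the signs. I have to check that the discrete symmetries \(A_0\), \(T_0\) and parameter reversal are enough to normalise both the branch of \(\dot\sigma\) and the sign of \(\kappa\), so that only \(k\ge0\) needs to be treated and the hyperbola lands on the stated branch with \(x>0\). I would settle this by tracking how each of these operations acts on the pair \((\dot\sigma,n)\).
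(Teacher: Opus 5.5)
The paper gives no proof of this proposition. It recalls the statement as the classical classification of constant-curvature curves in the Lorentz--Minkowski plane and refers to the discussion preceding \cite[Theorem~2.10]{Lopez2014}. Your argument is a correct, self-contained proof of that fact, and it is essentially the Frenet-type argument behind the cited result:
\begin{itemize}
\item Writing the unit tangent as \((\sinh\theta,\cosh\theta)\) reduces constant curvature to \(\dot\theta=\pm k\). The sign is constant by continuity, since \(\abs{\dot\theta}=k\).
\item This equation integrates at once.
\end{itemize}
The citation buys brevity. Your derivation makes explicit which data are quotiented out: the branch of \(\dot\sigma\), the hyperbolic angle and the planar translation. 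In particular, the translation parameters \((c,d)\) and \((\alpha,\alpha')\) are exactly what the paper has to reinstate afterwards, because planar translations do not commute with the boost action used to build the surfaces.

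One step of the sign bookkeeping you flagged is wrong as written. \(A_0\) does not change the branch of \(\dot\sigma\); it acts by \(\theta\mapsto-\theta\). The branch is switched by \(T_0\), by \(-\mathrm{id}=A_0T_0\), or by reversing the parameter. For the same reason, in the case \(k>0\), reversing \(t\mapsto-t\) on its own destroys the normalisation \(b>0\) on which \(\theta\) is defined. To turn \(\dot\theta=-k\) into \(\dot\theta=k\), apply \(A_0\) instead, or reverse the parameter and then apply \(-\mathrm{id}\). With this correction the argument is complete, and the resulting hyperbola automatically lies on the branch \(x>0\).
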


We now carry out the construction explicitly. Starting from the spacelike
constant-curvature curves in the Minkowski plane classified in
\cref{prop:planar_constant_curvature_curves}, we horizontally lift them to
\(\heis\) and then apply the boosts to the lifted curves to obtain surfaces.

We start with the nonzero curvature case \(k>0\). Choosing the arclength
parametrisation and allowing translations in the Minkowski plane, as well as
the two choices of branch, a representative can be written as
\begin{equation}
    \label{eq:genericMinkowkik>0}
    (x(t),y(t))
    =
    \left(
    \frac{\varepsilon}{k}\cosh(kt)+c,\;
    \frac{\varepsilon}{k}\sinh(kt)+d
    \right),
    \qquad \varepsilon\in\{-1,1\},\ c,d\in\mathbb R.
\end{equation}
Using \cref{eq:horizontallift}, the horizontal lift of
\cref{eq:genericMinkowkik>0} with \(z(t_0)=0\) is given by
\[
    z(t)
    =
    \frac{1}{2k}
    \left(
    t-t_0
    +c\varepsilon\sinh(kt)-c\varepsilon\sinh(kt_0)
    -d\varepsilon\cosh(kt)+d\varepsilon\cosh(kt_0)
    \right).
\]
We can then boost the lifted curve $\gamma(t) = (x(t),y(t),z(t))$ according to \(S(t,u)=B_u(\gamma(t))\). We now remove the inessential constants, namely those which can be absorbed by
ambient isometries, dilations, or reparametrisation.

The choice of \(t_0\) only fixes the additive constant in the horizontal lift.
Since vertical translations are sub-Lorentzian isometries and commute with
boosts, we may absorb this constant by a vertical translation. Thus, up to
isometry, the profile curve may be written as
\begin{equation}
    \label{eq:profilenot0}
    \left(
    \frac{\varepsilon}{k}\cosh(kt)+c,\;
    \frac{\varepsilon}{k}\sinh(kt)+d,\;
    \frac{1}{2k}
    \left(
    t+c\varepsilon\sinh(kt)-d\varepsilon\cosh(kt)+d\varepsilon
    \right)
    \right).
\end{equation}

The sign \(\varepsilon\) can also be removed: the isometry
\((x,y,z)\mapsto(-x,-y,z)\) is a time-inverting boost, which commutes with boosts, changes
\((\varepsilon,k,c,d)\) into \((-\varepsilon,k,-c,-d)\). Since \(c,d\) range
over all real values, the same family is obtained with \(\varepsilon=1\).

Finally, the parameter \(k>0\) can be absorbed by Heisenberg dilations $\delta_k(x,y,z)=(kx,ky,k^2z)$. Indeed, applying \(\delta_k\) to \cref{eq:profilenot0} with $\epsilon = 1$ gives
\[
    \left(
    \cosh(kt)+kc,\;
    \sinh(kt)+kd,\;
    \frac{1}{2}\bigl(kt+kc\sinh(kt)-kd\cosh(kt)+kd\bigr)
    \right).
\]
Since \(c,d\) range over all real values, we may rename \(kc\) and \(kd\) as
\(c\) and \(d\), and then replace \(kt\) by \(t\). Thus, up to Heisenberg
dilation and reparametrisation, it is enough to consider the case \(k=1\).
The dilation \(\delta_\lambda\) preserves boost symmetry and do not alter the causal character of \((c,d)\) in the Minkowski plane. Dilations do not however preserve the numerical value of the
mean curvature. It sends a surface with constant mean curvature \(H\)
to a surface with constant mean curvature \(\lambda^{-1}H\).

Thus, up to vertical translations, the isometry
\((x,y,z)\mapsto(-x,-y,z)\), Heisenberg dilations, and reparametrisation, the
nonzero-curvature case is represented by
\begin{equation}\label{eq:general_expression_CMC_surface}
    \begin{split}
        S_{(c,d)}(t,u)
         & =
        B_u\underbrace{\left(
               \cosh(t)+c,\;
               \sinh(t)+d,\;
               \frac{1}{2}\left(t+c\sinh(t)-d\cosh(t)+d\right)
               \right)}_{=: \gamma_{(c, d)}(t)} \\
         & =
        \begin{pmatrix}
            \cosh(t+u) +c\cosh(u) +d\sinh(u) \\
            \sinh(t+u) +c\sinh(u) +d\cosh(u) \\
            \dfrac{1}{2} \left( t +c\sinh(t) -d\cosh(t) +d \right)
        \end{pmatrix}.
    \end{split}
\end{equation}
We say that such a surface has \emph{timelike}, \emph{spacelike}, or
\emph{null} parameter according to the causal character of the vector
\((c,d)\) in the Minkowski plane. We shall classify these surfaces according
to this vector below. We often refer to the surface associated with the lift of
the \((c,d)\)-translated hyperbola as the \emph{\((c,d)\)-surface}.

We now turn to the case \(k=0\). Choosing an arclength parametrisation and
allowing translations in the Minkowski plane, a spacelike line can be written as
\[
    (x(t),y(t))
    =
    \bigl(\sinh(\beta)t+\alpha,\;
    \varepsilon\cosh(\beta)t+\alpha'\bigr),
    \qquad
    \alpha,\alpha',\beta\in\mathbb R,\quad \varepsilon\in\{-1,1\}.
\]
Translating the parameter \(t\) only changes the point of the line chosen as
origin. Hence, up to reparametrisation, we may assume \(\alpha'=0\). As above,
the additive constant in the horizontal lift only produces a vertical
translation of the boosted surface, so we choose the lift with \(z(0)=0\).

Using \cref{eq:horizontallift}, the horizontal lift is given by $z(t)=
    \frac{\alpha\varepsilon}{2}\cosh(\beta)t$.
Boosting this horizontal lift, we obtain
\begin{equation}\label{eq:maximal_surface_parametrization}
    \begin{split}
        S_{(\epsilon,\alpha, \beta)}(t,u)
         & =
        B_u\underbrace{\left(
               \sinh(\beta)t+\alpha,\;
               \varepsilon\cosh(\beta)t,\;
               \frac{\alpha\varepsilon}{2}\cosh(\beta)t
               \right)}_{=: \gamma_{(\epsilon,\alpha, \beta)}(t)}           \\
         & = \begin{pmatrix}
                 \varepsilon t \sinh(u+\varepsilon\beta) + \alpha\cosh(u)   \\
                 \varepsilon t \cosh(u +\varepsilon\beta) + \alpha \sinh(u) \\
                 \dfrac{\alpha \varepsilon}{2} \cosh(\beta) t
             \end{pmatrix}.
    \end{split}
\end{equation}
These are the boost-symmetric surfaces with zero horizontal mean curvature,
which we shall refer to as the \emph{maximal surfaces}.

Although the rest of this work is devoted to a case-by-case classification of
these surfaces up to the sub-Lorentzian isometries described in
\cref{thm:sub_lorentzian_isometries,def:HeisenbergIsometries}, the following
proposition records a basic symmetry of the parameter space that will be used
repeatedly later.

\begin{proposition}\label{prop:timed_rotation_behaviour}
    $S_{(c,d)}$ is isometric to $S_{(c,-d)}$ by any timed rotation.
\end{proposition}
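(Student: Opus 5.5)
The plan is a direct computation with the explicit parametrisation \cref{eq:general_expression_CMC_surface}. By \cref{def:HeisenbergIsometries}, a timed rotation has trivial translation part and Lorentz part $T_0B_v$ for some $v\in\mathbb R$. By \cref{thm:sub_lorentzian_isometries} it therefore acts on $\heis$ as $T\circ B_v$, where $T(x,y,z)=(x,-y,-z)$ and $B_v$ is the boost of \cref{subsec:boost-symmetric-surfaces}.

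\emph{Reduction to $T$.} The surface $S_{(c,d)}$ is boost-symmetric by construction: $B_v(S_{(c,d)}(t,u))=S_{(c,d)}(t,u+v)$. Hence $B_v(S_{(c,d)})=S_{(c,d)}$, and so $T\circ B_v(S_{(c,d)})=T(S_{(c,d)})$. It therefore suffices to prove
\[
T(S_{(c,d)})=S_{(c,-d)}.
\]

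\emph{Key step.} I will show the pointwise identity
\[
T\bigl(S_{(c,d)}(t,u)\bigr)=S_{(c,-d)}(-t,-u),
\]
using the explicit components in \cref{eq:general_expression_CMC_surface}.
\begin{itemize}
\item First component: $S_{(c,-d)}(-t,-u)$ has $x$-component $\cosh(-t-u)+c\cosh(-u)-d\sinh(-u)=\cosh(t+u)+c\cosh u+d\sinh u$. This is exactly the $x$-component of $S_{(c,d)}(t,u)$, which $T$ leaves unchanged.
\item Second component: it equals $\sinh(-t-u)+c\sinh(-u)-d\cosh(-u)=-\bigl(\sinh(t+u)+c\sinh u+d\cosh u\bigr)$. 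This is minus the $y$-component of $S_{(c,d)}(t,u)$, as $T$ requires.
\item Third component: it equals $\tfrac12\bigl(-t-c\sinh t+d\cosh t-d\bigr)=-\tfrac12\bigl(t+c\sinh t-d\cosh t+d\bigr)$. This is minus the $z$-component of $S_{(c,d)}(t,u)$.
\end{itemize}

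\emph{Conclusion.} The map $(t,u)\mapsto(-t,-u)$ is a bijection of the parameter domain, so $T(S_{(c,d)})=S_{(c,-d)}$ as sets, and the statement follows for every timed rotation. No step is a genuine obstacle; the only care needed is to pick the reparametrisation $(t,u)\mapsto(-t,-u)$ so that the reversed orientation of the hyperbola and the sign flip of the boost parameter match simultaneously.
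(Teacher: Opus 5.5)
Your proof is correct and follows the paper's argument: you reduce to $T(x,y,z)=(x,-y,-z)$ using boost-invariance of $S_{(c,d)}$, then identify $T\bigl(S_{(c,d)}(t,u)\bigr)$ with $S_{(c,-d)}(-t,-u)$. The only difference is in how that identity is obtained. The paper combines $T\circ B_u=B_{-u}\circ T$ with $T\gamma_{(c,d)}(t)=\gamma_{(c,-d)}(-t)$, whereas you check it componentwise on the full parametrisation; the two verifications are equivalent.
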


\begin{proof}
    Any timed rotation is of the form \(A\circ B_v\), where \(B_v\) is a boost and
    \(A(x,y,z)=(x,-y,-z)\).  Since \(S_{(c,d)}\) is
    boost-symmetric, it is enough to apply \(A\). Moreover
    \(A\circ B_u=B_{-u}\circ A\), and a direct computation gives
    \(A\gamma_{(c,d)}(t)=\gamma_{(c,-d)}(-t)\). Hence
    \(A(S_{(c,d)}(t,u))=B_{-u}\gamma_{(c,-d)}(-t)\), which parametrises
    \(S_{(c,-d)}\) after the change of variables \((t,u)\mapsto(-t,-u)\).
\end{proof}

We conclude this section by determining when the parametrisations introduced in \cref{eq:general_expression_CMC_surface,eq:maximal_surface_parametrization} are
regular. In \cref{prop:real_singularity}, we shall see that in the non-regular nonzero-curvature case the corresponding
image is genuinely singular, and it will be studied separately.

\begin{proposition}\label{prop:non_degeneracy_surfaces}
    The regularity of the parametrisations \cref{eq:general_expression_CMC_surface,eq:maximal_surface_parametrization} is as follows:
    \begin{enumerate}[label=\normalfont(\roman*)]
        \item \(S_{(c,d)}(t,u)=B_u\gamma_{(c,d)}(t)\)  is regular if and only if
              \(c^2-d^2\neq 1\) or \(c\geq 0\);
        \item \(S_{(\varepsilon,\alpha,\beta)}(t,u)=B_u\gamma_{(\varepsilon,\alpha,\beta)}(t)\)
              is regular if and only if
              \(\alpha\neq0\). Moreover, when \(\alpha=0\), the the image of the parametrisation is the set $\{(x,y,0):\abs{y}>\abs{x}\}\cup\{(0,0,0)\}$.
    \end{enumerate}
\end{proposition}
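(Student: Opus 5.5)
Both parts come down to computing the two partial derivatives of the parametrisation and finding exactly where they are linearly dependent. Since $B_u$ is a linear isometry that commutes with the vertical direction, $\partial_u S(t,u) = B_u\bigl(J\gamma(t)\bigr)$, where $J(x,y,z)=(y,x,0)$ is the infinitesimal boost. Also $\partial_t S = B_u\dot\gamma(t)$. As $B_u$ is invertible, regularity at $(t,u)$ is equivalent to linear independence of $\dot\gamma(t)$ and $J\gamma(t)=(y(t),x(t),0)$ in $\R^3$. So we only need to test when the cross product of these two vectors vanishes, which is a condition on $t$ alone.

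For (i), take $\gamma=\gamma_{(c,d)}$. Then
\[
\dot\gamma=\Bigl(\sinh t,\ \cosh t,\ \tfrac12\bigl(1+c\cosh t-d\sinh t\bigr)\Bigr),
\qquad
J\gamma=(\sinh t+d,\ \cosh t+c,\ 0).
\]
Write $\dot z=\tfrac12(1+c\cosh t-d\sinh t)$.

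The third component of the cross product is $\sinh t(\cosh t+c)-\cosh t(\sinh t+d)=c\sinh t-d\cosh t$. The other two components are $\pm\dot z$ times the first two entries of $J\gamma$. Linear dependence therefore forces one of two things. Either $J\gamma$ vanishes, i.e. $\sinh t=-d$ and $\cosh t=-c$. Or $\dot z=0$ and $c\sinh t=d\cosh t$.

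In the first case, $\cosh t=-c$ requires $c<0$ and $c^2-d^2=1$, and then some $t$ exists.

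In the second case, $c\sinh t=d\cosh t$ means $(c,d)$ is proportional to $(\cosh t,\sinh t)$, say $(c,d)=\lambda(\cosh t,\sinh t)$. Then $\dot z=\tfrac12(1+\lambda)=0$ gives $\lambda=-1$, which is the same point as in the first case. (If $c=d=0$ we get $\dot z=\tfrac12\neq0$, so there is no degeneracy.)

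Hence the parametrisation is singular exactly when $c^2-d^2=1$ and $c<0$, i.e. $(c,d)=-(\cosh t_0,\sinh t_0)$. This matches the claim.

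For (ii), take $\gamma=\gamma_{(\varepsilon,\alpha,\beta)}$. Then
\[
\dot\gamma=\Bigl(\sinh\beta,\ \varepsilon\cosh\beta,\ \tfrac{\alpha\varepsilon}{2}\cosh\beta\Bigr),
\qquad
J\gamma=(\varepsilon t\cosh\beta,\ t\sinh\beta+\alpha,\ 0).
\]
If $\alpha\neq0$, the nonzero third component of $\dot\gamma$ means dependence needs $J\gamma=0$. That forces $t=0$ and $\alpha=0$, a contradiction, so the map is regular. If $\alpha=0$, then at $t=0$ we have $J\gamma=0$, so the map is not regular.

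For the image when $\alpha=0$, the formula gives $S(t,u)=\varepsilon t\,(\sinh(u+\varepsilon\beta),\cosh(u+\varepsilon\beta),0)$. As $u$ ranges over $\R$ and $t$ over $\R$, the vector $(\sinh s,\cosh s)$ sweeps the upper branch of the hyperbola $y^2-x^2=1$. Multiplying by $\varepsilon t$, positive $t$ fills one of the cones $\pm y>\abs{x}$ and negative $t$ fills the other, while $t=0$ gives the origin. So the image is $\{(x,y,0):\abs{y}>\abs{x}\}\cup\{0\}$, independent of $\varepsilon$ and $\beta$.

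The only delicate step is the case analysis in (i): one must check that the $\dot z=0$ branch adds no singular points beyond those where $J\gamma=0$. The proportionality argument above settles this.
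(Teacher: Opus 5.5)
Your proof is correct and follows essentially the same route as the paper: using that \(B_u\) is linear and invertible (equivalent to the paper's reduction to \(u=0\)), you test linear dependence of \(\dot\gamma(t)\) and \(\partial_uS(t,0)=(y(t),x(t),0)\) and solve the resulting system, and your part (ii), including the description of the image for \(\alpha=0\), matches the paper's argument. The only differences are cosmetic: you use the Euclidean cross product rather than coordinates in the frame \(\{X,Y,Z\}\), and your proportionality step \((c,d)=\lambda(\cosh t,\sinh t)\) treats the null case \(c^2=d^2\) uniformly, whereas the paper divides by \(c^2-d^2\) and leaves that case implicit.
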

\begin{proof}
    Since boosts are diffeomorphisms, it is enough to check regularity at
    \(u=0\). For the first family, \(\partial_tS_{(c,d)}(t,0)=\gamma'_{(c,d)}(t)\)
    is horizontal and nonzero. Also,
    \(\partial_uS_{(c,d)}(t,0)=(\sinh t+d,\cosh t+c,0)\). With respect to the
    basis \(\{X,Y,Z\}\), using \(Z^*=dz+\frac y2dx-\frac x2dy\), these two
    vectors have coordinate matrix
    \[
        \begin{pmatrix}
            \sinh t & \sinh t+d                                            \\
            \cosh t & \cosh t+c                                            \\
            0       & \dfrac12\left(-1+d^2-c^2+2(d\sinh t-c\cosh t)\right)
        \end{pmatrix}.
    \]
    Hence the parametrisation is singular at \((t,0)\) if and only if
    \begin{equation}\label{eq:singular_rank_drop_condition}
        c\sinh t-d\cosh t=0,
        \qquad
        -1+d^2-c^2+2(d\sinh t-c\cosh t)=0.
    \end{equation}
    If \(c^2-d^2\neq0\), this system is
    equivalent to
    \[
        \begin{pmatrix}
            \cosh t \\
            \sinh t
        \end{pmatrix}
        =
        \frac{-1-c^2+d^2}{2(c^2-d^2)}
        \begin{pmatrix}
            c \\
            d
        \end{pmatrix}.
    \]
    The identity \(\cosh^2t-\sinh^2t=1\) implies \(c^2-d^2=1\), and then the condition
    \(\cosh t>0\) gives \(c<0\). Conversely, if \(c^2-d^2=1\) and \(c<0\), the
    displayed formula gives \(\cosh t=-c>0\) and \(\sinh t=-d\), so the rank
    drops. Thus \(S_{(c,d)}\) is regular exactly when \(c^2-d^2\neq1\) or
    \(c\geq0\).

    For the maximal family, again at \(u=0\),
    \(\partial_tS_{(\varepsilon,\alpha,\beta)}
    =(\sinh\beta,\varepsilon\cosh\beta,\frac{\varepsilon\alpha}{2}\cosh\beta)\)
    and
    \(\partial_uS_{(\varepsilon,\alpha,\beta)}
    =(\varepsilon t\cosh\beta,t\sinh\beta+\alpha,0)\). If \(\alpha\neq0\),
    these vectors are clearly linearly independent, while if \(\alpha=0\) the vector
    \(\partial_uS_{(\varepsilon,\alpha,\beta)}\) vanishes at \(t=0\). Therefore
    the parametrisation is regular if and only if \(\alpha\neq0\). When \(\alpha=0\), the parametrisation reduces to
    \(S(t,u)=(\varepsilon t\sinh(u+\varepsilon\beta),
    \varepsilon t\cosh(u+\varepsilon\beta),0)\). Thus \(x^2-y^2=-t^2\), so for
    \(t\neq0\) the image is the region \(\abs{y}>\abs{x}\) in the plane \(z=0\),
    while \(t=0\) gives only the origin.
\end{proof}

\subsection{Surfaces of spacelike parameters}
In this section, we study the surfaces $S_{(c,d)}$ associated with spacelike vectors $(c,d)$ in the Minkowski plane, that is, vectors satisfying $c^2-d^2<0$. By \cref{prop:non_degeneracy_surfaces}, these surfaces are already known to be regular. We illustrate such a surface in \cref{fig:spacelikesurface}. Although no regularity issue arises in the classical sense, these surfaces still exhibit a form of \say{non-regularity} from the sub-Lorentzian point of view: their set of characteristic points is non-empty. This is described in the following proposition.

\begin{figure}[ht]
    \centering
    \includegraphics[scale = 0.6]{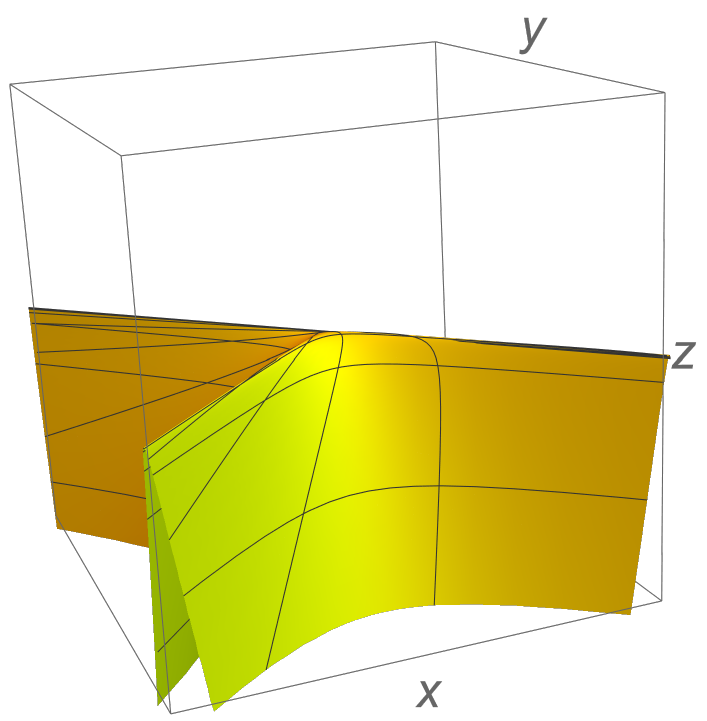}
    \caption{A surface \(S_{(c,d)}\) with spacelike parameter.}
    \label{fig:spacelikesurface}
\end{figure}

\begin{proposition}\label{prop:spacelike_characteristic_set}
    Let \(S_{(c,d)}\) be a \((c,d)\)-surface with \(c^2-d^2<0\), and set
    \[
        t^*=-\sign(d)\log(\abs d-c),
        \qquad
        \omega=\frac12\left(t^*+c\sinh(t^*)-d\cosh(t^*)+d\right).
    \]
    Then the characteristic set of \(S_{(c,d)}\) is the open half-line
    \[
        \{(s,\sign(d)s,\omega):s>0\}.
    \]
    In particular, it is a horizontal null curve, and therefore the surface is
    not acausal.
\end{proposition}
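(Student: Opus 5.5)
The plan is to use boost invariance to reduce to the profile curve \(u=0\). Characteristic points are exactly the points where the tangent plane is horizontal, and the characteristic set of a boost-symmetric surface is itself boost-invariant. So it suffices to find all \(t\) for which \(S_{(c,d)}(t,0)=\gamma_{(c,d)}(t)\) is characteristic, and then to sweep the resulting points by \(B_u\).

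\emph{Characteristic condition on the profile.} By \cref{prop:non_degeneracy_surfaces}, the surface is regular when \(c^2-d^2<0\). In the proof of that proposition, \(\partial_t S_{(c,d)}(t,0)=\gamma_{(c,d)}'(t)\) is horizontal and nonzero. The coordinates of \(\partial_u S_{(c,d)}(t,0)\) in the frame \(\{X,Y,Z\}\) are \((\sinh t+d,\cosh t+c,\tfrac12(-1+d^2-c^2+2(d\sinh t-c\cosh t)))\). Since the two tangent vectors are independent, the tangent plane at \((t,0)\) equals \(\Delta\) if and only if this \(Z\)-component vanishes, that is,
\[
d\sinh t-c\cosh t=\tfrac12\bigl(1+c^2-d^2\bigr).
\]
Setting \(E=e^t>0\) and multiplying by \(2E\) gives the quadratic \((d-c)E^2-(1+c^2-d^2)E-(d+c)=0\). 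This factors as
\[
\bigl((d-c)E-1\bigr)\bigl(E+d+c\bigr)=0,
\]
so the roots are \(E=1/(d-c)\) and \(E=-(d+c)\). Since \(\abs d>\abs c\), the quantities \(d-c\) and \(d+c\) have the sign of \(d\). Hence exactly one root is positive: \(E=1/(d-c)\) if \(d>0\), and \(E=\abs d-c\) if \(d<0\). In both cases \(t^*=-\sign(d)\log(\abs d-c)\), and this is the unique solution.

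\emph{Locating the point and sweeping.} Next I compute \(\gamma_{(c,d)}(t^*)=(\cosh t^*+c,\sinh t^*+d,\omega)\), where \(\omega\) is read off directly from the definition of \(\gamma_{(c,d)}\). Take \(d>0\) and write \(E=1/(d-c)\). Then
\[
x-y=e^{-t^*}+c-d=(d-c)+c-d=0,
\]
and \(x=\tfrac12\bigl(1/(d-c)+d+c\bigr)>0\) because \(d+c>0\). The case \(d<0\) is symmetric and gives \(y=-x\) with \(x>0\). So \(\gamma_{(c,d)}(t^*)=(s_0,\sign(d)s_0,\omega)\) with \(s_0>0\). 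The boost \(B_u\) fixes \(z\) and acts on the null line \(y=\sign(d)x\) by multiplication by \(e^{\sign(d)u}\). Letting \(u\) range over \(\mathbb R\), the orbit is exactly \(\{(s,\sign(d)s,\omega):s>0\}\).

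\emph{Causal conclusion.} The half-line has direction \((1,\sign(d),0)\). At \((s,\sign(d)s,\omega)\), the horizontality condition \cref{eq:horizontallift} reads \(\dot z=\tfrac12(x\dot y-y\dot x)=\tfrac12(s\,\sign(d)-\sign(d)s)=0\), so the curve is horizontal. Its velocity \(X+\sign(d)Y\) satisfies \(g=-1+1=0\), so it is null, and it is future-directed when parametrised by increasing \(s\). Two distinct points on it are therefore causally related, so \(S_{(c,d)}\) is not acausal.

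The only step requiring care is the root selection for the quadratic and the check that \(x>0\), which uses \(\abs d>\abs c\). The rest is direct substitution.
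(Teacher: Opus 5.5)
Your proof is correct and follows the paper's argument: reduce to \(u=0\) by boost invariance, impose horizontality of \(\partial_uS_{(c,d)}\), solve for the unique \(t^*\), and sweep the resulting null point by boosts. Your factorisation \(\bigl((d-c)e^t-1\bigr)\bigl(e^t+d+c\bigr)=0\) is the paper's splitting of \((\sinh t+d)^2=(\cosh t+c)^2\) into the branches \(\sinh t+d=\pm(\cosh t+c)\). The same sign argument from \(\abs d>\abs c\) selects the branch.
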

\begin{proof}
    By \cref{prop:non_degeneracy_surfaces}, the vectors
    \(\partial_tS_{(c,d)}\) and \(\partial_uS_{(c,d)}\) span the tangent plane
    everywhere. Since \(\partial_tS_{(c,d)}\) is horizontal by construction, a
    point is characteristic precisely when \(\partial_uS_{(c,d)}\) is also
    horizontal. Boosts preserve the horizontal distribution, so it is enough to
    work at \(u=0\).

    At \(u=0\), the boost direction is
    \(\partial_uS_{(c,d)}(t,0)=(\sinh t+d,\cosh t+c,0)\). Therefore
    \[
        Z^*(\partial_uS_{(c,d)}(t,0))
        =
        \frac12\left((\sinh t+d)^2-(\cosh t+c)^2\right),
    \]
    where, as before, \(Z^*\) is the dual one-form to \(Z\) with respect to the frame \(\{X,Y,Z\}\).
    Thus the characteristic condition is
    \begin{equation}\label{eq:general_characteristic_equation}
        \bigl(\sinh t+d\bigr)^2-\bigl(\cosh t+c\bigr)^2=0
        \quad\Longleftrightarrow\quad
        1+c^2-d^2+2(c\cosh t-d\sinh t)=0.
    \end{equation}

    Since \(c^2-d^2<0\), we have \(\abs d>\abs c\), and hence \(d\neq0\), so that the only possible sign in \cref{eq:general_characteristic_equation} is
    \begin{equation}
        \label{eq:definingeq}
        \sinh t+d=\sign(d)(\cosh t+c).
    \end{equation}
    If \(d>0\) this is the equation
    \(e^{-t}=d-c\), while if \(d<0\) it is the equation
    \(e^t=-d-c\). In both cases the unique solution is
    \[
        t^*=-\sign(d)\log(\abs d-c).
    \]

    At this value of \(t\), set \(\eta=\cosh(t^*)+c\). Equation \cref{eq:definingeq}
    gives \(\sinh(t^*)+d=\sign(d)\eta\). Moreover \(\eta>0\): if \(d>0\), then
    \(\eta=(1+d^2-c^2)/(2(d-c))\), while if \(d<0\), then
    \(\eta=(1+d^2-c^2)/(2(-d-c))\). Therefore, denoting
    the third component of \(\gamma_{(c,d)}(t^*)\) by \(\omega\), the
    characteristic points are
    \[
        B_u(\eta,\sign(d)\eta,\omega)
        =
        \bigl(\eta e^{\sign(d)u},\sign(d)\eta e^{\sign(d)u},\omega\bigr).
    \]
    Since \(s=\eta e^{\sign(d)u}\) ranges over \((0,+\infty)\), the
    characteristic set is exactly \(\{(s,\sign(d)s,\omega):s>0\}\).

    Finally, the curve \(s\mapsto(s,\sign(d)s,\omega)\) is horizontal and its horizontal velocity
    \((1,\sign(d))\) is null in the Minkowski plane. Hence the characteristic set
    is a horizontal null curve contained in the surface, so the surface is not
    acausal.
\end{proof}

\begin{corollary}\label{coro:no_time_inverting_isometry_spacelike_surfaces}
    No two surfaces of spacelike parameters are related by a time-inverting
    isometry.
\end{corollary}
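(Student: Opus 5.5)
The plan is to use the characteristic set, which ambient isometries must preserve, and to show that a time-inverting isometry sends the characteristic half-line of one surface to a curve that cannot lie in the characteristic half-line of another.

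\emph{Step 1: isometries preserve characteristic sets.} Suppose \(F\) is a time-inverting isometry with \(F(S_{(c,d)})=S_{(c',d')}\), where \(c^2-d^2<0\) and \((c')^2-(d')^2<0\). Since \(F\) is a diffeomorphism, \(d_pF(\T_p S_{(c,d)})=\T_{F(p)}S_{(c',d')}\). By definition of an isometry, \(d_pF(\Delta_p)=\Delta_{F(p)}\). Hence \(\T_pS_{(c,d)}=\Delta_p\) holds if and only if \(\T_{F(p)}S_{(c',d')}=\Delta_{F(p)}\), so \(F(\mathrm S(S_{(c,d)}))=\mathrm S(S_{(c',d')})\). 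By \cref{prop:spacelike_characteristic_set}, this means \(F\) maps the half-line \(\ell=\{(s,\sigma s,\omega):s>0\}\), with \(\sigma=\sign(d)\), onto a half-line \(\ell'=\{(s,\sigma' s,\omega'):s>0\}\). In particular, every point of \(F(\ell)\) has positive \(x\)-coordinate.

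\emph{Step 2: compute \(F(\ell)\).} By \cref{thm:sub_lorentzian_isometries}, we can write \(F(p)=q\ast(\Lambda\pi(p),\det(\Lambda)z)\), where \(\Lambda\in O(1,1)\) has \(\Lambda_{11}<0\). Since the left translation by \(q\) adds \(q_x\) to the first coordinate, the \(x\)-coordinate of \(F(s,\sigma s,\omega)\) equals \(q_x+s\,(\Lambda(1,\sigma))_1\).

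\emph{Step 3: the key sign check.} I claim that \((\Lambda(1,\sigma))_1<0\), i.e. a time-inverting Lorentz map sends the future null vector \((1,\sigma)\) to a past null vector. Using the decomposition \(\Lambda=A_0B_u\) or \(\Lambda=A_0T_0B_u\), the boost \(B_u\) sends \((1,\sigma)\) to \(e^{\sigma u}(1,\sigma)\). The maps \(T_0\) and \(A_0\) then respectively preserve and negate the first coordinate. Hence the first coordinate of \(\Lambda(1,\sigma)\) is \(-e^{\sigma u}<0\).

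\emph{Step 4: conclude.} By Step 3, the \(x\)-coordinate \(q_x-e^{\sigma u}s\) tends to \(-\infty\) as \(s\to\infty\). This contradicts the fact from Step 1 that \(F(\ell)\) lies in \(\{x>0\}\). Therefore no such time-inverting isometry exists.

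The only delicate point is Step 1, namely that isometries preserve characteristic sets. This is immediate from the definition, so I do not expect a real obstacle in this argument.
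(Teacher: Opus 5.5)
Your proof is correct and follows the same route as the paper. Isometries preserve characteristic sets, and the source's characteristic half-line is sent to a curve with $x$-coordinate $q_x+s(\Lambda_{11}+\sign(d)\Lambda_{12})$; this is negative for large $s$, which contradicts the fact that the target's characteristic set lies in $\{x>0\}$. Your Step 3, computing the coefficient as $-e^{\sigma u}$ via $\Lambda\in\{A_0B_u,\,A_0T_0B_u\}$, simply makes explicit the paper's remark that a time-inverting Lorentz map sends future-directed causal vectors to past-directed ones.
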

\begin{proof}
    Suppose that a time-inverting isometry sends one such surface to another.
    Since isometries preserve the horizontal distribution, they send
    characteristic points to characteristic points. By
    \cref{prop:spacelike_characteristic_set}, the characteristic set of the
    source is \(\{(s,\sign(d)s,\omega):s>0\}\), and the characteristic set of the
    target is contained in \(\{x>0\}\).

    Let \(\Lambda\) be the Lorentz part of the isometry. Since \(\Lambda\) is time-inverting, any future-directed causal vector gets sent to a past-directed causal vector and in particular to a vector of negative \(x\)-coordinate.

    Applying the isometry to a point \((s,\sign(d)s,\omega)\) of the source
    characteristic set, its \(x\)-coordinate is therefore
    \(a+s(\Lambda_{11}+\sign(d)\Lambda_{12})\), where \(a\in\mathbb R\) comes
    from the translation part. This is negative for large \(s\), contradicting
    the fact that the target characteristic set lies in \(\{x>0\}\).
\end{proof}

We next classify these surfaces up to isometries. We first give a sufficient
condition for two spacelike-parameter surfaces to be isometric by a
time-preserving boost.

\begin{proposition}\label{prop:spacelike_isometric_surfaces}
    Let \(S_{(c,d)}\) and \(S_{(c',d')}\) be two surfaces of spacelike
    parameters. If \(c^2-d^2=c'^2-d'^2\) and \(dd'>0\), then they are isometric
    by a time-preserving boost.
\end{proposition}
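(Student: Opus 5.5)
The plan is to show that the two surfaces differ only by a vertical translation. The key observation is that a reparametrisation of the boost parameter moves the translation vector \((c,d)\) along its boost orbit in the Minkowski plane. A vertical translation \((x,y,z)\mapsto(0,0,\kappa)\ast(x,y,z)=(x,y,z+\kappa)\) is an isometry with Lorentz part \(\Lambda=I\in O^+(1,1)\), hence a time-preserving boost in the sense of \cref{def:HeisenbergIsometries}. So it suffices to exhibit such a \(\kappa\) with \(S_{(c',d')}=S_{(c,d)}+(0,0,\kappa)\) as sets.

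\emph{Step 1: the parameters lie in one boost orbit.} Set \(\Delta=c^2-d^2=c'^2-d'^2<0\) and \(\rho=\sqrt{-\Delta}>0\). Since \(\abs d>\abs c\), we may write
\[
(c,d)=\sign(d)\rho(\sinh a,\cosh a)
\quad\text{and}\quad
(c',d')=\sign(d')\rho(\sinh a',\cosh a')
\]
for unique \(a,a'\in\mathbb R\). The hypothesis \(dd'>0\) gives \(\sign(d)=\sign(d')\). Since \(B_v(\sinh a,\cosh a)=(\sinh(a+v),\cosh(a+v))\), taking \(v=a'-a\) yields \(B_v(c,d)=(c',d')\), where \(B_v\) acts linearly on \(\mathbb R^2\). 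This is the only place where \(dd'>0\) is used. Spacelike vectors with \(d<0\) form the other boost orbit, which is consistent with \cref{tab:nonzero_cmc_isometry_types}.

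\emph{Step 2: change of parameters.} In \cref{eq:general_expression_CMC_surface}, the horizontal part of \(S_{(c,d)}(t,u)\) equals \((\cosh(t+u),\sinh(t+u))+B_u(c,d)\). Substitute \(u=u'+v\) and \(t=t'-v\). Then \(t+u=t'+u'\) and \(B_u(c,d)=B_{u'}B_v(c,d)=B_{u'}(c',d')\). So the horizontal parts of \(S_{(c,d)}(t'-v,u'+v)\) and \(S_{(c',d')}(t',u')\) coincide.

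For the vertical part, expand with the addition formulas:
\[
c\sinh(t'-v)-d\cosh(t'-v)=\sinh t'\,(c\cosh v+d\sinh v)-\cosh t'\,(c\sinh v+d\cosh v).
\]
The right-hand side equals \(c'\sinh t'-d'\cosh t'\) by Step 1. Therefore the third coordinates differ by the constant \(\kappa=\tfrac12(-v+d-d')\), independent of \((t',u')\).

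\emph{Step 3: conclusion.} Since \((t,u)\mapsto(t'-v,u'+v)\) is a bijection of \(\mathbb R^2\), we get \(S_{(c',d')}=(0,0,\kappa')\ast S_{(c,d)}\) as sets, with \(\kappa'=-\kappa\). Note that \((0,0,\kappa')\ast(x,y,z)=(x,y,z+\kappa')\). The isometry \((0,0,\kappa')\ast(\cdot)\) has Lorentz part the identity, so it is a time-preserving boost.

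The only real obstacle is keeping the signs straight in the trigonometric identity of Step 2. There is no analytic difficulty: the whole argument is an explicit reparametrisation.
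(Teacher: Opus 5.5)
Your proof is correct and is essentially the paper's argument. Both use the reparametrisation \((t,u)\mapsto(t-v,u+v)\), where \(v\) is the rapidity of the boost carrying one parameter vector to the other, to show that the two surfaces differ by a vertical translation with identity Lorentz part; the only difference is that the paper passes both surfaces through the normal form \(S_{(0,D)}\), \(D=\sign(d)\sqrt{d^2-c^2}\), and composes the two vertical translations, whereas you connect \((c,d)\) to \((c',d')\) directly.
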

\begin{proof}
    It is enough to show that \(S_{(c,d)}\) is isometric to \(S_{(0,D)}\), by a
    time-preserving boost, where
    \(D=\sign(d)\sqrt{d^2-c^2}\). Indeed, the hypotheses give
    \(d^2-c^2=(d')^2-(c')^2\) and \(\sign(d)=\sign(d')\), so the same value of \(D\)
    is obtained from both \((c,d)\) and \((c',d')\). The desired isometry is then
    obtained by composing one reduction with the inverse of the other.

    Choose \(\alpha\in\mathbb R\) such that \(\sinh\alpha=c/D\). Since \(D\) and
    \(d\) have the same sign and \(D^2=d^2-c^2\), we also have
    \(\cosh\alpha=d/D\). Starting from \(S_{(0,D)}(t,u)=B_u\gamma_{(0,D)}(t)\),
    make the change of parameters \(s=t+\alpha\), \(v=u-\alpha\). Then
    \(t=s-\alpha\), \(u=v+\alpha\), and the addition formulae give
    \[
        S_{(0,D)}(s-\alpha,v+\alpha)
        =
        B_{v+\alpha}\gamma_{(0,D)}(s-\alpha)
        =
        B_v\bigl(B_\alpha\gamma_{(0,D)}(s-\alpha)\bigr)
    \]
    The first two components agree with those of \(S_{(c,d)}(s,v)\), while the
    third component differs from that of \(S_{(c,d)}(s,v)\) by the constant
    \(\frac12(\alpha+d-D)\), that is to say
    \[
        S_{(c,d)}(s,v)
        = \left(0,0,\frac12(\alpha+d-D)\right) \ast
        S_{(0,D)}(s-\alpha,v+\alpha)
        ,
    \]
    proving the claim.
\end{proof}

We now show that the conditions of the previous proposition are also necessary
for two such surfaces to be isometric by a time-preserving boost.

\begin{proposition}\label{coro:spacelike_non_isometric_surfaces}
    If two surfaces of spacelike parameters \(S_{(c,d)}\) and \(S_{(c',d')}\)
    are isometric by a time-preserving boost, then
    \(c^2-d^2=c'^2-d'^2\) and \(dd'>0\).
\end{proposition}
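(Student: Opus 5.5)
The plan is to use the characteristic set as an isometry invariant. Let \(F\) be a time-preserving boost with \(F(S_{(c,d)})=S_{(c',d')}\). Its Lorentz part is \(B_v\) for some \(v\), and its translation part is some \(q=(q_1,q_2,q_3)\). Since \(F\) preserves the horizontal distribution, it maps characteristic points to characteristic points. By \cref{prop:spacelike_characteristic_set}, it therefore carries the open null half-line \(\{(s,\sign(d)s,\omega):s>0\}\) onto \(\{(s,\sign(d')s,\omega'):s>0\}\).

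\textbf{Step 1: \(dd'>0\).} The image of the first half-line is
\[
q\ast(e^{\sign(d)v}s,\ \sign(d)e^{\sign(d)v}s,\ \omega),
\]
because \(B_v\) maps the null direction \((1,\sign(d))\) to a positive multiple of itself. Its projection to the plane is \((q_1,q_2)+\lambda(1,\sign(d))\) with \(\lambda>0\) ranging over \((0,\infty)\). This is a half-line in the direction \((1,\sign(d))\), which must equal the half-line \(\{(s,\sign(d')s)\}\). Comparing directions gives \(\sign(d)=\sign(d')\). Comparing endpoints gives \((q_1,q_2)=(0,0)\), so \(F\) is a vertical translation composed with \(B_v\). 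Since \(S_{(c',d')}\) is boost-invariant, we may replace \(F\) by its composition with \(B_{-v}\). This reduces to the case where \(F\) is a pure vertical translation by \(q_3\).

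\textbf{Step 2: \(c^2-d^2=c'^2-d'^2\).} Apply \cref{prop:spacelike_isometric_surfaces} to move both surfaces to the normal forms \(S_{(0,D)}\) and \(S_{(0,D')}\), where \(D=\sign(d)\sqrt{d^2-c^2}\) and \(D'\) is defined likewise. These normalisations are themselves vertical translations composed with reparametrisations. So it suffices to show that if \(S_{(0,D)}+(0,0,h)=S_{(0,D')}\) with \(DD'>0\), then \(D=D'\).

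Compare the projections to the \((x,y)\)-plane; a vertical translation leaves these unchanged. The projection of \(S_{(0,D)}\) is the union of boosts of the translated hyperbola \((\cosh t,\sinh t+D)\). A point \((x,y)\) lies in this projection iff some boost of it lies on the hyperbola. That is, iff there is a point \(P=(\cosh t,\sinh t+D)\) with \(-x^2+y^2=-P_1^2+P_2^2=2D\sinh t+D^2-1\). As \(t\) varies, this quantity covers all of \(\R\), so the projection alone gives no invariant. The projection is therefore not enough, and I will look at the \(z\)-coordinate as a function of the Minkowski norm instead.

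Write \(S_{(0,D)}\) as a multigraph \(z=f_D(\rho)\), where \(\rho=x^2-y^2\) is boost-invariant:
\[
\rho(t)=1-D^2-2D\sinh t,\qquad z(t)=\tfrac12\bigl(t-D\cosh t+D\bigr).
\]
A vertical translation equivalence means the curves \(\{(\rho(t),z(t))\}\) for \(D\) and \(D'\) differ by a shift in \(z\). Since \(\rho\) is monotone in \(t\), each curve is a graph \(z=f_D(\rho)\). The condition becomes \(f_D-f_{D'}\equiv h\), so \(f_D'\equiv f_{D'}'\). Compute
\[
\frac{dz}{d\rho}=\frac{1-D\sinh t}{-4D\cosh t}.
\]
Evaluating along the characteristic level, or checking the asymptotics as \(t\to\pm\infty\) (where the limits are \(\pm 1/4\)), is not enough to separate \(D\) from \(D'\). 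So I will compare the values at \(\rho=\rho_0\) where \(dz/d\rho=0\). This happens at \(\sinh t=1/D\), which gives \(\rho_0=-1-D^2\). Equality of the critical \(\rho\)-values forces \(D^2=D'^2\), and together with \(DD'>0\) this gives \(D=D'\).

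The main obstacle is this last step: making the graph comparison rigorous. Care is needed because the boost orbits must be matched to \(\rho\)-level sets, and each level set has two branches (\(\sign y\)). I expect these branches to be handled by the fixed half-line orientation obtained in Step 1.
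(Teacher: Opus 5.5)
Your proof is correct and is essentially the paper's argument. The characteristic half-line gives \(\sign(d)=\sign(d')\), and the invariant that separates \(S_{(0,D)}\) from \(S_{(0,D')}\) is the value \(x^2-y^2=-1-D^2\) at the critical points of \(z\): your zero of \(dz/d\rho\) is exactly the paper's critical point \(\sinh t=1/D\). Your Step~1 also usefully makes explicit why the translation part must be vertical, which the paper's proof passes over.

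The obstacle you flag is not a real one. Both \(\rho\) and \(z\) depend only on \(t\), and \(\rho\) is strictly monotone in \(t\) because \(D\neq0\). So \(z=f_D(\rho)\) is a single-valued function on all of \(\mathbb R\), and a vertical translation by \(h\) gives \(f_{D'}=f_D+h\) immediately, with no bookkeeping of the two branches.
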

\begin{proof}
    By the proof of \cref{prop:spacelike_isometric_surfaces}, after composing
    with time-preserving boosts, any
    spacelike-parameter surface can be put in the form \(S_{(0,d)}\). Since
    \(S_{(0,d)}\) is boost-invariant, it is enough to prove that two surfaces \(S_{(0,d)}\) and \(S_{(0,d')}\) cannot be related by a vertical
    translation unless \(d=d'\). Because \((0,d)\) and \((0,d')\) are
    spacelike, \(d\) and \(d'\) must be nonzero.

    A vertical translation must send the characteristic set of \(S_{(0,d)}\) to
    that of \(S_{(0,d')}\) since it preserves the horizontal distribution. By \cref{prop:spacelike_characteristic_set}, these
    characteristic sets are \(\{(s,\sign(d)s,\omega):s>0\}\) and
    \(\{(s,\sign(d')s,\omega'):s>0\}\). A vertical translation does not
    change \(x\) or \(y\), which already forces \(\sign(d)=\sign(d')\).

    Write \(S_{(0,d)}(t,u)=(x(t,u),y(t,u),z(t))\), where
    \(z(t)=\frac12(t-d\cosh t+d)\). A critical point of \(z\) satisfies
    \[
        1-d\sinh t=0,
        \qquad\text{hence}\qquad
        \sinh t=\frac1d.
    \]
    Since \(x^2-y^2\) is invariant under boosts, we may set \(u=0\). Thus
    \[
        x(t,u)^2-y(t,u)^2
        =
        x(t,0)^2-y(t,0)^2
        =
        \cosh^2t-(\sinh t+d)^2.
    \]
    Similarly, if \(S_{(0,d')}(t,u)=(x'(t,u),y'(t,u),z'(t))\), then at the
    critical points of \(z'\) one has
    \(x'(t,u)^2-y'(t,u)^2=-d'^2-1\). Since a vertical translation preserves the
    \(x,y\)-coordinates and sends critical points of \(z\) to critical points of
    \(z'\), we must have
    \[
        -d^2-1=-d'^2-1.
    \]
    Hence \(d^2=d'^2\), and together with \(\sign(d)=\sign(d')\), this gives
    \(d=d'\). Returning to the original parameters, this is exactly
    \(c^2-d^2=c'^2-d'^2\) and \(dd'>0\).
\end{proof}

We now investigate when two surfaces with spacelike parameters are isometric via a time-preserving
rotation.
\begin{proposition}\label{prop:spacelike_rotation_isometric_surfaces}
    Two surfaces of spacelike parameters \(S_{(c,d)}\) and \(S_{(c',d')}\) are
    isometric by a time-preserving rotation if and only if
    \(c^2-d^2=c'^2-d'^2\) and \(dd'<0\).
\end{proposition}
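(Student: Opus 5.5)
Both directions reduce to the boost case already settled, \cref{prop:spacelike_isometric_surfaces} and \cref{coro:spacelike_non_isometric_surfaces}, via \cref{prop:timed_rotation_behaviour}.

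\emph{Sufficiency.} Assume \(c^2-d^2=c'^2-d'^2\) and \(dd'<0\). By \cref{prop:timed_rotation_behaviour}, a timed rotation \(R\) maps \(S_{(c,d)}\) onto \(S_{(c,-d)}\). The parameter \((c,-d)\) is still spacelike, it has the same Minkowski norm \(c^2-d^2=c'^2-d'^2\), and it satisfies \((-d)d'>0\). Hence \cref{prop:spacelike_isometric_surfaces} provides a time-preserving boost \(F\) with \(F(S_{(c,-d)})=S_{(c',d')}\). Now look at the composition \(F\circ R\). By \cref{thm:sub_lorentzian_isometries}, the Lorentz part of a composition is the product of the Lorentz parts. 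So \(F\circ R\) has Lorentz part \(\Lambda_F T_0 B_v\), which has determinant \(-1\). This product lies in \(O^+(1,1)\cdot T_0 O^+(1,1)=T_0O^+(1,1)\), since \(T_0\) normalises \(O^+(1,1)\); in particular its \((1,1)\)-entry is positive. Therefore \(F\circ R\) is a time-preserving rotation mapping \(S_{(c,d)}\) onto \(S_{(c',d')}\).

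\emph{Necessity.} Let \(G\) be a time-preserving rotation with \(G(S_{(c,d)})=S_{(c',d')}\). Set \(R\) to be the timed rotation \((x,y,z)\mapsto(x,-y,-z)\), and consider \(G\circ R^{-1}\), which maps \(S_{(c,-d)}\) onto \(S_{(c',d')}\). By the same component bookkeeping in \(O(1,1)\), its Lorentz part has determinant \(+1\) and positive \((1,1)\)-entry, so \(G\circ R^{-1}\) is a time-preserving boost. Applying \cref{coro:spacelike_non_isometric_surfaces} to the pair \(S_{(c,-d)}\) and \(S_{(c',d')}\) gives \(c^2-d^2=c'^2-d'^2\) and \(-dd'>0\), which is exactly \(dd'<0\).

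\emph{Main obstacle.} The only delicate point is verifying that composing with \(T_0\) moves between the correct components of \(O(1,1)\). The determinant part is immediate. For time orientation, \(T_0\) fixes the \(x\)-axis, so it preserves future-directedness, and therefore \(\Lambda T_0\) has positive \((1,1)\)-entry exactly when \(\Lambda\) does. This matches the parametrisation of the components by \(B_u\) and \(T_0B_u\) given after \cref{def:HeisenbergIsometries}.
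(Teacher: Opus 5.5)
Your proof is correct and follows the paper's argument. Both directions are reduced to \cref{prop:spacelike_isometric_surfaces} and \cref{coro:spacelike_non_isometric_surfaces} by composing with the timed rotation \((x,y,z)\mapsto(x,-y,-z)\) of \cref{prop:timed_rotation_behaviour}; the only differences are cosmetic (you apply this reflection on the source side, passing to \(S_{(c,-d)}\), where the paper applies it on the target side, passing to \(S_{(c',-d')}\)), and you make explicit the \(O(1,1)\) component bookkeeping that the paper leaves implicit.
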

\begin{proof}
    Assume first that \(c^2-d^2=c'^2-d'^2\) and \(dd'<0\). Then \(d\) and
    \(-d'\) have the same sign, and
    \(c^2-d^2=c'^2-(-d')^2\). By
    \cref{prop:spacelike_isometric_surfaces}, \(S_{(c,d)}\) is isometric to
    \(S_{(c',-d')}\) by a time-preserving boost. By
    \cref{prop:timed_rotation_behaviour}, \(S_{(c',-d')}\) is isometric to
    \(S_{(c',d')}\) by a timed rotation. The composition has time-preserving
    rotation Lorentz part, proving one implication.

    Conversely, suppose that \(S_{(c,d)}\) is isometric to \(S_{(c',d')}\) by a
    time-preserving rotation. Composing with the timed rotation
    \((x,y,z)\mapsto(x,-y,-z)\), which sends \(S_{(c',d')}\) to
    \(S_{(c',-d')}\), gives an isometry from \(S_{(c,d)}\) to \(S_{(c',-d')}\)
    whose Lorentz part is a time-preserving boost. Hence
    \cref{coro:spacelike_non_isometric_surfaces} gives
    \[
        c^2-d^2=c'^2-(-d')^2
        \qquad\text{and}\qquad
        d(-d')>0,
    \]
    which is equivalent to \(c^2-d^2=c'^2-d'^2\) and \(dd'<0\).
\end{proof}

Having completed the isometry classification for spacelike parameters, we turn our attention to their causal behaviour.

\begin{proposition}
    Every surface of spacelike parameter is not achronal.
\end{proposition}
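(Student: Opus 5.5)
The plan is to reduce to a normal form and then exhibit two explicit points of the surface that are chronologically related. Sub-Lorentzian isometries with time-preserving Lorentz part map timelike future-directed curves to timelike future-directed curves, so achronality is invariant under them. By the proof of \cref{prop:spacelike_isometric_surfaces} we may therefore assume \(c=0\), i.e.\ work with \(S_{(0,D)}\), \(D\neq0\). By \cref{prop:timed_rotation_behaviour} we may further assume \(D>0\). By left-invariance and \cref{eq:future_origin_heisenberg}, \(q\in I^+(p)\) if and only if \(p^{-1}\ast q=(X,Y,W)\) satisfies
\[
X>0,\qquad -X^2+Y^2+4\abs{W}<0.
\]
So the task is to find \(p,q\in S_{(0,D)}\) satisfying these two inequalities.

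A first check rules out the naive choice. For two points on the same boost orbit, \(p=(a,b,z)\) and \(q=B_v(a,b,z)\), one computes
\[
-X^2+Y^2=2(b^2-a^2)(\cosh v-1),\qquad W=-\tfrac12(a^2-b^2)\sinh v.
\]
Since \(\abs{\sinh v}>\cosh v-1\) for \(v\neq0\), the quantity \(-X^2+Y^2+4\abs W\) is then nonnegative. Hence orbits alone never work, and the two points must be taken on different profiles. Moreover, off the characteristic set the horizontal tangent line is spanned by the spacelike vector \(\gamma'\). So any chronological relation must come from the non-horizontal directions of the surface, i.e.\ it is a global rather than an infinitesimal phenomenon.

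The natural place to look is the characteristic half-line \(\ell=\{(s,s,\omega):s>0\}\) of \cref{prop:spacelike_characteristic_set}. For \(p=(s,s,\omega)\) and \(q_0=(s',s',\omega)\) with \(s'>s\), one gets \(p^{-1}\ast q_0=(s'-s,s'-s,0)\). This pair is causally related along the null line but sits exactly on the boundary of \(I^+(p)\). I would then perturb \(q_0\) inside the surface along a curve \(\delta\mapsto q_\delta\) transversal to \(\ell\), for instance \(q_\delta=S_{(0,D)}(t^*+\delta,u')\). Next I would expand
\[
F(\delta)=-X(\delta)^2+Y(\delta)^2,\qquad W(\delta),
\]
to first order in \(\delta\). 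Here \(F'(0)\) is of size comparable to \((s'-s)\) times a transversal quantity, whereas \(W'(0)\) involves the cross term \(\tfrac12(x_py_q-x_qy_p)\), which is of size \(s\) times the same quantity. The goal is to choose \(s\) small (or the ratio \(s'/s\) large) and the sign of \(\delta\) so that \(F'(0)<0\) and \(\abs{F'(0)}>4\abs{W'(0)}\). Then for small \(\delta\) of the right sign the pair \(p,q_\delta\) lies in the chronological relation, while \(X>0\) persists by continuity.

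The main obstacle is this last estimate. Because \(W(0)=0\), the term \(4\abs W\) is only Lipschitz, so one really needs the strict domination \(\abs{F'(0)}>4\abs{W'(0)}\) rather than just a sign condition. If the cross term turns out to dominate for all choices along \(\ell\), the fallback is to perturb \(p\) as well, moving both endpoints off \(\ell\) in opposite transversal directions. Alternatively, one can use the explicit height function \(z(t)=\tfrac12(t-D\cosh t+D)\), which tends to \(-\infty\) at both ends. The idea there is to compare a point far down one end of a profile with a point on \(\ell\), where the large vertical gap is compensated by a large horizontal separation. I would first carry out the first-order computation along \(\ell\), since it reduces to explicit hyperbolic expressions in \(t^*=-\log D\).
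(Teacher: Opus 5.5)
Your reductions to \(S_{(0,D)}\) with \(D>0\), and the criterion via \(p^{-1}\ast q\), are fine. However, the proposal stops exactly at the decisive step, and that step is false. Carry out your own first-order computation. With \(t^*=-\log D\), \(\eta=\frac{1+D^2}{2D}\), \(p=(s,s,\omega)\), \(q_\delta=S_{(0,D)}(t^*+\delta,u')\) and \(s'=\eta e^{u'}\), you get
\[
F'(0)=\frac{2D\eta(s'-s)}{s'},\qquad W'(0)=z'(t^*)-\frac{D\eta s}{2s'}=\frac{D\eta(s'-s)}{2s'},
\]
because \(z'(t^*)=\frac{1+D^2}{4}=\frac{D\eta}{2}\). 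So \(\abs{F'(0)}=4\abs{W'(0)}\) for every \(s,s'\). The vertical velocity \(z'(t^*)\), which your heuristic leaves out, compensates exactly, and no choice of \(s\) or of \(s'/s\) gives strict domination. This is forced by the geometry. At a characteristic point the tangent plane of the surface is \(\Delta\). Along the null ray \(\{(X,X,0)\}\), the sheet \(\{-x^2+y^2-4z=0\}\) of \(\partial J^+(e)\) is itself tangent to \(\Delta\). Hence first-order information can never give \(-X^2+Y^2+4\abs{W}<0\).

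The obstruction is in fact global. For \(p=(s,s,\omega)\) the dependence on \(s\) cancels:
\[
-X^2+Y^2-4W=-x_q^2+y_q^2-4z_q+4\omega .
\]
For \(q=S_{(0,D)}(t^*+\delta,u)\) this equals \(2(e^{\delta}-1-\delta)\geq0\), independently of \(u\). Therefore \(-X^2+Y^2+4\abs{W}\geq0\) for every point \(q\) of the surface, so no point of the characteristic half-line is chronologically related to any point of \(S_{(0,D)}\). This rules out your main plan, and also your ``alternative'' of comparing a far point of a profile with a point of \(\ell\). The remaining fallback, moving both endpoints off \(\ell\), is not analysed. As it stands, the proposal exhibits no chronologically related pair.

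What you need is a pair of points that are both away from \(\ell\) and far apart. The paper takes, for \(D>0\), \(\gamma(s)=S_{(0,D)}(-s,s)=\bigl(1+D\sinh s,\,D\cosh s,\,\frac12(-s-D\cosh s+D)\bigr)\) and \(\gamma(-s)=S_{(0,D)}(s,-s)\). Then
\[
\gamma(-s)^{-1}\ast\gamma(s)=\Bigl(2D\sinh s,\;0,\;-s+\tfrac{D^2}{2}\sinh(2s)\Bigr).
\]
For large \(s\) the third coordinate is positive, so
\[
-x^2+y^2+4\abs{z}=-4D^2\sinh^2 s+2D^2\sinh(2s)-4s=2D^2\bigl(1-e^{-2s}\bigr)-4s<0,
\]
with \(x>0\), and hence \(\gamma(-s)\ll\gamma(s)\). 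The exponential growth of the horizontal separation and of the vertical gap cancel to leading order, and the linear term from the \(t/2\) part of the height wins. Your intuition that the phenomenon is global is right, but it must be sought far from the characteristic line, not on it.

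A minor point: in your orbit computation the correct formula is \(-X^2+Y^2=2(a^2-b^2)(\cosh v-1)\), not \(2(b^2-a^2)(\cosh v-1)\). Your conclusion that single orbits never work is unaffected.
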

\begin{proof}
    By the classification results above, it suffices to prove non-achronality for \(S_{(0,d)}\) with \(d\neq0\).
    We construct two points on the
    surface which are chronologically related. Setting \(u=-t\) in
    \(S_{(0,d)}(t,u)\) gives the curve
    \[
        \left(
        1-d\sinh(t),\,
        d\cosh(t),\,
        \dfrac{1}{2} \left( t - d\cosh(t) +d \right)
        \right).
    \]
    Evaluating this curve at \(t=-\sign(d)s\), we define
    \[
        \gamma(s)\coloneqq
        \left(
        1+\abs{d}\sinh(s),\,
        d\cosh(s),\,
        \dfrac{1}{2} \left( -\sign(d)s - d\cosh(s) +d \right)
        \right).
    \]
    By left-invariance, \(\gamma(-s)\ll\gamma(s)\) is equivalent to
    \(0\ll\gamma(-s)^{-1}\ast\gamma(s)\). A direct computation gives
    \begin{align*}
        \gamma(-s)^{-1}\ast\gamma(s)
         & =
        \left(
        2\abs{d}\sinh(s),\,
        0,\,
        \frac{1}{2} (-2\sign(d)s +2d\abs{d}\sinh(s)\cosh(s))
        \right) \\
         & =
        \left(
        2\abs{d}\sinh(s),\,
        0,\,
        -\frac{\sign(d)}{2} (2s -d^2\sinh(2s))
        \right).
    \end{align*}
    For \(s>0\), the \(x\)-component is positive. The condition
    \(0\ll\gamma(-s)^{-1}\ast\gamma(s)\) is therefore
    \[
        -4d^2\sinh^2(s)
        +
        2\abs{2s-d^2\sinh(2s)}
        <0.
    \]
    For \(s\) sufficiently large, \(2s-d^2\sinh(2s)<0\), and the left hand side
    becomes
    \begin{equation}\label{eq:spacelike_parameter_chronological_quantity}
        -4d^2\sinh^2(s)+2d^2\sinh(2s)-4s
        =
        -2d^2\bigl(2\sinh^2(s)-\sinh(2s)\bigr)-4s.
    \end{equation}
    The term in parentheses is bounded as \(s\to+\infty\), whereas
    \(-4s\to-\infty\). Hence \cref{eq:spacelike_parameter_chronological_quantity} is negative for all
    sufficiently large \(s\). Thus \(\gamma(-s)\ll\gamma(s)\) for large \(s\),
    so the surface contains chronologically related points and thus is not achronal.
\end{proof}

\subsection{Surfaces of timelike parameters}\label{sec:timelike_parameters}

In this section, we consider the family of surfaces \(S_{(c,d)}\) with timelike parameters, that is, the surfaces defined in \cref{eq:general_expression_CMC_surface} for which \(c^2-d^2>0\). An illustration of these surfaces can be found in \cref{fig:timelikesurface}.

\begin{proposition}\label{prop:timelike_isometric_surfaces}
    Let \(S_{(c,d)}\) and \(S_{(c',d')}\) be two surfaces of timelike
    parameters. If \(c^2-d^2=c'^2-d'^2\) and \(cc'>0\), then they are isometric
    by a time-preserving boost.
\end{proposition}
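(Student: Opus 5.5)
The plan mirrors the proof of \cref{prop:spacelike_isometric_surfaces}, with the roles of \(c\) and \(d\) exchanged. I will show that every \(S_{(c,d)}\) with \(c^2-d^2>0\) is carried, by a time-preserving boost, onto the normal form \(S_{(C,0)}\), where
\[
    C=\sign(c)\sqrt{c^2-d^2}.
\]
The hypotheses \(c^2-d^2=c'^2-d'^2\) and \(cc'>0\) give the same \(C\) for \((c,d)\) and \((c',d')\). The required isometry is then one reduction composed with the inverse of the other, and a composition of time-preserving boosts is again a time-preserving boost.

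For the reduction, choose \(\alpha\in\mathbb R\) with \(\sinh\alpha=d/C\). Since \(C\) and \(c\) have the same sign and \(C^2=c^2-d^2\), this forces \(\cosh\alpha=c/C\), so \((c,d)=C(\cosh\alpha,\sinh\alpha)=B_\alpha(C,0)\) in the Minkowski plane. Starting from \(S_{(C,0)}(t,u)=B_u\gamma_{(C,0)}(t)\), I will substitute \(t=s-\alpha\) and \(u=v+\alpha\), exactly as before. This gives
\[
    S_{(C,0)}(s-\alpha,v+\alpha)=B_v\bigl(B_\alpha\gamma_{(C,0)}(s-\alpha)\bigr).
\]
The addition formulae then show that the first two components of \(B_\alpha\gamma_{(C,0)}(s-\alpha)\) are \(\cosh s+C\cosh\alpha\) and \(\sinh s+C\sinh\alpha\), that is, \(\cosh s+c\) and \(\sinh s+d\). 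These agree with the first two components of \(\gamma_{(c,d)}(s)\).

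It remains to compare third components. The third component of \(\gamma_{(C,0)}(s-\alpha)\) is \(\frac12\bigl(s-\alpha+C\sinh(s-\alpha)\bigr)\), and \(B_\alpha\) leaves it unchanged. Expanding, \(C\sinh(s-\alpha)=c\sinh s-d\cosh s\). Hence this third component differs from that of \(\gamma_{(c,d)}(s)\), namely \(\frac12(s+c\sinh s-d\cosh s+d)\), by the constant \(\frac12(\alpha+d)\). I therefore expect
\[
    S_{(c,d)}(s,v)=\left(0,0,\tfrac12(\alpha+d)\right)\ast S_{(C,0)}(s-\alpha,v+\alpha).
\]
Left translation by \((0,0,h)\) is just \(z\mapsto z+h\), so it commutes with the boosts \(B_v\) and has trivial Lorentz part. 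The composite is therefore a time-preserving boost in the sense of \cref{def:HeisenbergIsometries}, which proves the claim.

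The only real difficulty is sign bookkeeping. The condition \(cc'>0\) is what guarantees that \(\cosh\alpha=c/C\) is positive, which is needed for such an \(\alpha\) to exist. Without it the boost parametrisation breaks down: \((c,d)\) and \((c',d')\) lie on opposite branches of the hyperbola \(c^2-d^2=\mathrm{const}\), which no boost can connect. The vertical-translation constant I will simply check by direct expansion.
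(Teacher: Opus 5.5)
Your proposal is correct and follows the paper's proof essentially verbatim. It uses the same normal form \(S_{(C,0)}\) with \(C=\sign(c)\sqrt{c^2-d^2}\), the same \(\alpha\) with \(\sinh\alpha=d/C\), the same reparametrisation \(t=s-\alpha\), \(u=v+\alpha\), and the same vertical translation by \(\frac12(\alpha+d)\). One small correction to your closing remark: \(\cosh\alpha=c/C>0\) already holds for each surface separately because of the sign chosen for \(C\); the role of \(cc'>0\) is only to make the two normal forms coincide.
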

\begin{proof}
    This follows closely the proof given in \cref{prop:spacelike_isometric_surfaces}.
    It is enough to show that \(S_{(c,d)}\) is isometric to \(S_{(C,0)}\), by a
    time-preserving boost, where
    \(C=\sign(c)\sqrt{c^2-d^2}\). Indeed, the hypotheses give
    \(c^2-d^2=(c')^2-(d')^2\) and \(\sign(c)=\sign(c')\), so the same value of
    \(C\) is obtained from both \((c,d)\) and \((c',d')\). The desired isometry is
    then obtained by composing one reduction with the inverse of the other.

    Choose \(\alpha\in\mathbb R\) such that \(\sinh\alpha=d/C\). Since \(C\) and
    \(c\) have the same sign and \(C^2=c^2-d^2\), we also have
    \(\cosh\alpha=c/C\). Starting from \(S_{(C,0)}(t,u)=B_u\gamma_{(C,0)}(t)\),
    make the change of parameters \(s=t+\alpha\), \(v=u-\alpha\). Then
    \(t=s-\alpha\), \(u=v+\alpha\), and the addition formulae give
    \[
        S_{(C,0)}(s-\alpha,v+\alpha)
        =
        B_{v+\alpha}\gamma_{(C,0)}(s-\alpha)
        =
        B_v\bigl(B_\alpha\gamma_{(C,0)}(s-\alpha)\bigr).
    \]
    The first two components agree with those of \(S_{(c,d)}(s,v)\), while the
    third component of \(S_{(c,d)}(s,v)\) is obtained by adding the constant
    \(\frac12(\alpha+d)\). Thus
    \[
        S_{(c,d)}(s,v)
        =
        \left(0,0,\frac12(\alpha+d)\right)\ast
        S_{(C,0)}(s-\alpha,v+\alpha),
    \]
    proving the claim.
\end{proof}

The next result is parallel to \cref{prop:spacelike_rotation_isometric_surfaces}.

\begin{proposition}\label{prop:timelike_positive_limit_null_surface}
    Two surfaces of timelike parameters are isometric via a time-preserving rotation if and only if they are isometric via a time-preserving boost.
\end{proposition}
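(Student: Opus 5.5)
The key observation is that, for timelike parameters, a time-preserving rotation can be converted into a time-preserving boost at no cost. By \cref{prop:timed_rotation_behaviour}, the timed rotation \(A(x,y,z)=(x,-y,-z)\) sends \(S_{(c,d)}\) to \(S_{(c,-d)}\). If \((c,d)\) is timelike, then \((c,-d)\) is timelike as well, with \(c^2-d^2=c^2-(-d)^2\) and \(c\cdot c=c^2>0\). So \cref{prop:timelike_isometric_surfaces} shows that \(S_{(c,-d)}\) and \(S_{(c,d)}\) are related by a time-preserving boost \(F\). The composition \(F\circ A\) maps \(S_{(c,d)}\) to itself, and its Lorentz part is \(T_0B_u\), a time-preserving rotation. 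In other words, every timelike-parameter surface admits a time-preserving rotation among its self-isometries. (This contrasts with the spacelike case, where the sign of \(d\) is detected through the characteristic set and blocks such a self-map.)

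Both implications then follow from composing with this self-isometry \(R:=F\circ A\). Suppose \(G\) is an isometry of some type sending \(S_{(c,d)}\) to \(S_{(c',d')}\). Then \(G\circ R\) also sends \(S_{(c,d)}\) to \(S_{(c',d')}\). The Lorentz part of \(G\circ R\) is the product of the Lorentz parts of \(G\) and \(R\). Multiplying by an element of the component \(T_0O^+(1,1)\) switches the determinant sign and preserves the sign of \(\Lambda_{11}\), since both factors lie in the orthochronous subgroup. Therefore \(G\circ R\) is a time-preserving rotation if \(G\) is a time-preserving boost, and a time-preserving boost if \(G\) is a time-preserving rotation. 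This proves the equivalence. I will record the component bookkeeping explicitly: the time-preserving isometries form the orthochronous group \(O^+(1,1)\cup T_0O^+(1,1)\), which is a subgroup, and \(O^+(1,1)\) is its index-two subgroup. Translation parts play no role, because the composition of isometries of the form in \cref{thm:sub_lorentzian_isometries} has Lorentz part equal to the product of the Lorentz parts.

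The only point needing care, which I expect to be the main (minor) obstacle, is checking that \cref{prop:timelike_isometric_surfaces} applies to the pair \((c,d)\), \((c,-d)\). Its hypotheses hold because \(c\neq0\) for timelike parameters. I will also verify that \(F\) is genuinely a map of the form in \cref{thm:sub_lorentzian_isometries}, namely a vertical translation composed with a boost \(B_\alpha\), as constructed in its proof via the reduction to \(S_{(C,0)}\). As a cross-check, one could directly exhibit the self-map: \(S_{(C,0)}\) is invariant under \(A\) after reparametrisation, since \(A\gamma_{(C,0)}(t)=\gamma_{(C,0)}(-t)\). Conjugating this by the boost reduction of \cref{prop:timelike_isometric_surfaces} yields the required time-preserving rotation of \(S_{(c,d)}\).
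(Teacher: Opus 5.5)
Your proposal is correct and is essentially the paper's argument: both combine the timed rotation \(A(x,y,z)=(x,-y,-z)\) from \cref{prop:timed_rotation_behaviour} with \cref{prop:timelike_isometric_surfaces}, applied to a pair of the form \((c,d)\), \((c,-d)\), to flip between the two time-preserving components of \(O(1,1)\). The only difference is packaging. The paper composes \(A\) with the given isometry on the target side and then undoes the sign change of \(d'\) by a boost. You instead first assemble the same two ingredients into a time-preserving-rotation self-isometry of \(S_{(c,d)}\) on the source side, which makes the \(O(1,1)\) component bookkeeping slightly more explicit.
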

\begin{proof}
    By \cref{prop:timed_rotation_behaviour}, recall that the timed rotation
    \(A(x,y,z)=(x,-y,-z)\) sends a \((c,d)\)-surface to the
    \((c,-d)\)-surface.

    Suppose first that a \((c,d)\)-surface
    is isometric to a \((c',d')\)-surface by a time-preserving
    rotation. Composing this isometry with \(A\) gives a time-preserving boost
    from the \((c,d)\)-surface to the \((c',-d')\)-surface. Moreover,
    \cref{prop:timelike_isometric_surfaces} gives a time-preserving boost
    between the \((c',-d')\)-surface and the \((c',d')\)-surface, since changing \(d'\) to
    \(-d'\) preserves \((c')^2-(d')^2\) and the sign of \(c'\). The \((c,d)\)-surface is thus isometric to the \((c',d')\)-surface by a
    time-preserving boost.

    Conversely, suppose that the two surfaces are isometric by a
    time-preserving boost. Composing with \(A\) gives a time-preserving
    rotation from the \((c,d)\)-surface to the \((c',-d')\)-surface. Again by
    \cref{prop:timelike_isometric_surfaces}, the \((c',-d')\)-surface and the
    \((c',d')\)-surface are isometric by a time-preserving boost. Composing
    these two isometries gives a time-preserving rotation from the
    \((c,d)\)-surface to the \((c',d')\)-surface.
\end{proof}
We next describe the characteristic points of these surfaces; this will allow us
to prove that the sufficient condition in the previous proposition is in fact
necessary.
\begin{proposition}\label{prop:timelike_surfaces_characteristic_set}
    The
    characteristic set of a \((c,d)\)-surface with timelike parameters is non-empty if and only if \(c<0\) and
    \(c^2-d^2\neq1\). In this case, for \(i=0,1\), set
    \[
        r=\sqrt{c^2-d^2},
        \qquad
        \alpha=\arcsinh(d/r),
        \qquad
        t_i=-\alpha+(-1)^i\log r,
        \qquad
        \omega_i=\sinh(t_i)+d.
    \]
    Then the characteristic set is the disconnected union of the two half-lines
    \[
        \left\{
        \left(
        \sign(1-c^2+d^2)s,\,
        \sign(\omega_i)s,\,
        \frac{1}{2}\left(t_i+c\sinh(t_i)-d\cosh(t_i)+d\right)
        \right):s>0
        \right\},
        \quad i=0,1.
    \]
\end{proposition}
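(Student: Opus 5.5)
The plan follows the proof of \cref{prop:spacelike_characteristic_set}. By \cref{prop:non_degeneracy_surfaces}, the vectors \(\partial_tS_{(c,d)}\) and \(\partial_uS_{(c,d)}\) span the tangent plane at every regular point, and \(\partial_tS_{(c,d)}\) is horizontal. So a point is characteristic exactly when \(\partial_uS_{(c,d)}\) is horizontal. Boosts preserve \(\Delta\), so it suffices to work at \(u=0\). There we reuse \cref{eq:general_characteristic_equation}:
\[
1+c^2-d^2+2(c\cosh t-d\sinh t)=0 .
\]
When \(c^2-d^2=1\) and \(c<0\), the rank-drop point from \cref{prop:non_degeneracy_surfaces} has to be set aside separately, since it is not a regular point.

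The key step is to put \((c,d)\) in hyperbolic polar form. Set \(r=\sqrt{c^2-d^2}>0\) and \(\alpha=\arcsinh(d/r)\), so that \(d=r\sinh\alpha\) and \(c=\sign(c)\,r\cosh\alpha\).

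\emph{Case \(c>0\).} Here \(c\cosh t-d\sinh t=r\cosh(t-\alpha)>0\), so the left-hand side is strictly positive and there are no characteristic points.

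\emph{Case \(c<0\).} Here \(c\cosh t-d\sinh t=-r\cosh(t+\alpha)\), and the equation becomes
\[
\cosh(t+\alpha)=\frac{1+r^2}{2r}=\cosh(\log r).
\]
Its solutions are \(t_i=-\alpha+(-1)^i\log r\) for \(i=0,1\). These are two distinct values precisely when \(r\neq1\). When \(r=1\) the only solution is \(t=-\alpha\). One checks that this is exactly the rank-drop parameter of \cref{eq:singular_rank_drop_condition} (there \(\cosh t=-c\), \(\sinh t=-d\)), so it is not a regular characteristic point and the characteristic set is empty. This yields the equivalence claimed in the statement.

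Next I describe the characteristic points. Write \(a_i=\cosh t_i+c\) and \(\omega_i=\sinh t_i+d\). The characteristic equation says \(a_i^2=\omega_i^2\), so \((a_i,\omega_i)\) is a null vector. Expanding \(\cosh t_i\) with the addition formula and using \(c=-r\cosh\alpha\), I expect
\[
a_i=\frac{1-r^2}{2r}\,e^{(-1)^i\alpha},
\]
which gives \(\sign(a_i)=\sign(1-c^2+d^2)\). Applying \(B_u\) to the point \((a_i,\omega_i,z(t_i))\) scales the null vector \((a_i,\omega_i)\) by \(e^{\pm u}\). As \(u\) ranges over \(\mathbb R\), the image is the open half-line
\[
\bigl\{(\sign(a_i)s,\ \sign(\omega_i)s,\ z(t_i)) : s>0\bigr\},
\]
with \(z(t_i)=\tfrac12\bigl(t_i+c\sinh t_i-d\cosh t_i+d\bigr)\), as in the statement.

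The main obstacle is proving that the union is disconnected, i.e. that the two half-lines are distinct and do not meet. I will compute \(\omega_i\) explicitly in the same way as \(a_i\). I expect to show that \(\omega_0\) and \(\omega_1\) have opposite signs, because the two null directions \((a_i,\omega_i)\) should be the two distinct null lines. If that works, the half-lines lie in different quadrants, so they are disjoint open half-lines. If the signs turn out to agree, I will instead separate them by their heights: \(t_0\neq t_1\), and I will show \(z(t_0)\neq z(t_1)\) using the monotonicity of \(z\) between the two roots.
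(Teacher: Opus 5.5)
Your proposal is correct and follows the paper's proof essentially step by step. It has the same reduction to \(u=0\), the same hyperbolic polar form of \((c,d)\), the same equation \(\cosh(t+\alpha)=\cosh(\log r)\), the same discarding of the rank-drop value \(t=-\alpha\) when \(r=1\), and the same boost orbits of the null vectors \((a_i,\omega_i)\). Your formula \(a_i=\frac{1-r^2}{2r}e^{(-1)^i\alpha}\) is correct and supplies the ``direct computation'' the paper leaves implicit.

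The one divergence is the disconnectedness step. The paper separates the half-lines by height, via \(z_0-z_1=\log r-\tfrac{r^2-1}{2}\neq0\). Your first plan already succeeds: the same expansion gives \(\omega_i=(-1)^i\frac{r^2-1}{2r}e^{(-1)^i\alpha}\), so \(\omega_0\omega_1<0\). The two half-lines are then cut out of the union by the disjoint open sets \(\{y>0\}\) and \(\{y<0\}\), which proves disconnectedness directly, so the monotonicity fallback is unnecessary.
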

\begin{proof}
    As in the proof of \cref{prop:spacelike_characteristic_set}, it is enough to
    work at \(u=0\). At points where the tangent plane is defined,
    \(\partial_tS_{(c,d)}\) is horizontal by construction. Hence the
    characteristic points are exactly those for which \(\partial_uS_{(c,d)}\) is
    horizontal, namely those satisfying \cref{eq:general_characteristic_equation}.

    Put \(r=\sqrt{c^2-d^2}\). If \(c>0\), choose \(\alpha\) such that
    \(\sinh\alpha=d/r\) and \(\cosh\alpha=c/r\). The previous equation becomes
    \(1+r^2+2r\cosh(t-\alpha)=0\), which has no solution. Hence no
    characteristic points occur when \(c>0\).

    Suppose now that \(c<0\). Choose \(\alpha\) such that
    \(\sinh\alpha=d/r\) and \(\cosh\alpha=-c/r\). The characteristic equation is
    then
    \[
        \cosh(t+\alpha)=\frac{1+r^2}{2r}
        =\cosh(\log r).
    \]
    The equation has two distinct solutions if and only if \(r\neq1\), namely
    \(t_i=-\alpha+(-1)^i\log r\), for \(i=0,1\). If \(r=1\), the only
    solution is \(t=-\alpha\), which is precisely the singular value by
    \cref{prop:non_degeneracy_surfaces}. Therefore it does not contribute to the
    characteristic set. For either solution, a direct computation gives
    \[
        \cosh(t_i)+c=\sign(1-c^2+d^2)\abs{\sinh(t_i)+d}.
    \]
    Denote \(\omega_i=\sinh(t_i)+d\). Since \(r\neq1\), we have
    \(\omega_i\neq0\). Substituting \(t=t_i\) in \(S_{(c,d)}(t,u)\) gives
    \[
        \begin{pmatrix}
            \sign(1-c^2+d^2)\abs{\omega_i}\cosh u + \omega_i \sinh u  \\
            \omega_i \cosh u + \sign(1-c^2+d^2)\abs{\omega_i} \sinh u \\
            \frac{1}{2} \left(t_i +c\sinh(t_i) -d\cosh(t_i) +d\right)
        \end{pmatrix}
        =
        \begin{pmatrix}
            \sign(1-c^2+d^2) \abs{\omega_i} e^{\sign(1-c^2+d^2)\sign(\omega_i)u} \\
            \sign(\omega_i) \abs{\omega_i} e^{\sign(1-c^2+d^2)\sign(\omega_i)u}  \\
            \frac{1}{2} \left(t_i +c\sinh(t_i) -d\cosh(t_i) +d\right)
        \end{pmatrix}
    \]
    Setting \(s=\abs{\omega_i}e^{\sign(1-c^2+d^2)\sign(\omega_i)u}\), the previous
    expression becomes
    \[
        \begin{pmatrix}
            \sign(1-c^2+d^2)s \\
            \sign(\omega_i) s \\
            \frac{1}{2} \left(t_i +c\sinh(t_i) -d\cosh(t_i) +d\right)
        \end{pmatrix}
    \]
    with \(s\in(0,+\infty)\). Hence the characteristic set consists of two null
    half-lines starting on the \(z\)-axis.
    To see that
    they are disjoint, let
    \[
        z_i=\frac{1}{2}\left(t_i+c\sinh(t_i)-d\cosh(t_i)+d\right), \quad \text{ so that } \quad
        z_0-z_1
        =
        \log r-\frac{r^2-1}{2}.
    \]
    The function \(r\mapsto \log r-\frac{r^2-1}{2}\) vanishes only at \(r=1\);
    indeed its derivative is \((1-r^2)/r\), so it has a strict maximum \(0\) at
    \(r=1\). Since \(r\neq1\), we have \(z_0\neq z_1\). Therefore the two
    half-lines lie in distinct horizontal \(z\)-planes, and are consequently
    disjoint. Their union is thus disconnected.
\end{proof}

By \cref{prop:non_degeneracy_surfaces}, the nonzero-curvature
parametrisations fail to be regular precisely when
\[
    c^2-d^2=1,
    \qquad c<0.
\]
These parameters are timelike, since \(c^2-d^2>0\). By
\cref{prop:non_degeneracy_surfaces}, the regular timelike cases are precisely
the complementary cases. Here we study the exceptional case
\(c^2-d^2=1\), \(c<0\), where the parametrisation loses rank. By \cref{prop:timelike_isometric_surfaces}, all such singular
timelike-parameter surfaces are isometric to one another by time-preserving
boosts, and it is therefore enough to study $S_{(-1, 0)}$. In this case the parametrisation is
\begin{equation}\label{eq:singular_surface_expression}
    S_{(-1,0)}(t,u)
    =
    B_u\gamma_{(-1,0)}(t)
    =
    \left(
    \cosh(t+u)-\cosh u,\;
    \sinh(t+u)-\sinh u,\;
    \frac12(t-\sinh t)
    \right).
\end{equation}
This expression has a useful interpretation: it is the expression of the
Heisenberg group's sub-Lorentzian exponential map at the origin, as written in
\cref{eq:exponentialmap}, evaluated at
\((t\sinh u,t\cosh u,t)\). Equivalently, it is the curve
\(t\mapsto\exp_e(t\sinh u,t\cosh u,t)\). This initial datum does not lie in the
standard causal domain of the exponential map, because the horizontal vector
\((\sinh u,\cosh u)\) is spacelike rather than causal.

To locate the singular set, we return to the proof of
\cref{prop:non_degeneracy_surfaces}. For \(S_{(-1,0)}\), the parametrisation is
singular at parameter values \((t,u)\) where, after reducing to \(u=0\),
\cref{eq:singular_rank_drop_condition} holds. With \(c=-1\) and \(d=0\), this
becomes
\[
    \sinh t=0,
    \qquad
    \cosh t=1,
\]
which is equivalent to \(t=0\). Substituting \(t=0\) into
\cref{eq:singular_surface_expression} gives only the origin. Therefore the
singular set in the parameter domain is $\{(0,u):u\in\mathbb R\}$.
All these parameter values have the same image
\[
    S_{(-1,0)}(0,u)
    =
    B_u\gamma_{(-1,0)}(0)
    =
    B_u(0,0,0)
    =
    (0,0,0).
\]
At this stage,
however, this only shows that the chosen parametrisation is regular everywhere except at the
origin. It does not yet rule out the possibility that the image admits another
smooth parametrisation there. We next show that the origin is a genuine
singular point of the image. We first record the projection of the surface and
a useful symmetry.

\begin{lemma}\label{lem:singular_surface_projection}
    The projection of \(S_{(-1,0)}\) to the \((x,y)\)-plane is
    \[
        \{y^2-x^2>0\}\cup\{(0,0)\}.
    \]
    Moreover, the projection to the \((x,y)\)-plane is
    injective on \(S_{(-1,0)}\).
\end{lemma}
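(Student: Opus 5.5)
From \eqref{eq:singular_surface_expression}, the first two components of \(S_{(-1,0)}(t,u)\) are
\[
\bigl(\cosh(t+u)-\cosh u,\;\sinh(t+u)-\sinh u\bigr)=2\sinh\tfrac t2\,\bigl(\sinh(u+\tfrac t2),\;\cosh(u+\tfrac t2)\bigr),
\]
by the sum-to-product formulae. This is the key observation for both claims. Writing \(\rho=2\sinh(t/2)\) and \(\theta=u+t/2\), the projection is \((x,y)=(\rho\sinh\theta,\rho\cosh\theta)\), so that
\[
y^2-x^2=\rho^2=4\sinh^2(t/2).
\]
The change of parameters \((t,u)\mapsto(t,\theta)\) is a diffeomorphism of \(\mathbb R^2\), so nothing is lost by working with \((t,\theta)\).

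For the image, take \(t\neq0\). Then \(\rho\) ranges over \(\mathbb R\setminus\{0\}\) and \(\theta\) over \(\mathbb R\). The map \((\rho,\theta)\mapsto(\rho\sinh\theta,\rho\cosh\theta)\) covers \(\{y>\abs{x}\}\) for \(\rho>0\) and \(\{y<-\abs{x}\}\) for \(\rho<0\). Together these give exactly \(\{y^2-x^2>0\}\). For \(t=0\) we get only the origin, and conversely \(y^2-x^2=0\) on the image forces \(t=0\). This establishes the first statement.

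For injectivity, fix a projected point \((x,y)\).
\begin{itemize}
\item If \((x,y)=(0,0)\), the only preimages have \(t=0\). All of them map to the single point \((0,0,0)\), since \(z=\frac12(t-\sinh t)\) vanishes there.
\item If \(y^2-x^2>0\), then \(\rho\) is determined up to sign by \(\rho^2=y^2-x^2\), and its sign is \(\sign(y)\), because \(\rho\cosh\theta=y\). Hence \(\rho=\sign(y)\sqrt{y^2-x^2}\) is determined, and so is \(t=2\arcsinh(\rho/2)\), since \(\sinh\) is injective. The value \(\theta=\arcsinh(x/\rho)\) is then determined as well. As \(z=\frac12(t-\sinh t)\) depends only on \(t\), the point of the surface over \((x,y)\) is unique.
\end{itemize}
Equivalently, \(S_{(-1,0)}\) is the graph of the function \(z=\frac12(t-\sinh t)\) with \(t=2\arcsinh\bigl(\sign(y)\sqrt{y^2-x^2}/2\bigr)\).

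I expect no real obstacle here. The only points needing care are the sign bookkeeping, namely that the sign of \(\rho\) is read off from \(y\), and the fact that the whole singular line \(t=0\) in parameter space collapses to a single point of the surface, so injectivity on the image is not violated there.
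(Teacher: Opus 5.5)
Your proof is correct and follows essentially the same route as the paper: the substitution $v=u+t/2$ (your $\theta$) rewrites the projection as $2\sinh(t/2)(\sinh v,\cosh v)$, which gives $y^2-x^2=4\sinh^2(t/2)$, and $(t,v)$ is then recovered from $(x,y)$ via $2\sinh(t/2)=\sign(y)\sqrt{y^2-x^2}$. The differences are cosmetic: you recover $\theta$ from $\sinh\theta=x/\rho$ rather than from $\tanh v=x/y$, and you spell out that the whole parameter line $t=0$ collapses to the origin, which the paper leaves implicit.
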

\begin{proof}
    Set \(v=u+t/2\). Then \cref{eq:singular_surface_expression} becomes
    \begin{equation}\label{eq:singular_surface_tv_coordinates}
        S_{(-1,0)}(t,v)
        =
        \left(
        2\sinh v\sinh(t/2),\;
        2\cosh v\sinh(t/2),\;
        \frac12(t-\sinh t)
        \right).
    \end{equation}
    Hence the projected coordinates satisfy
    \[
        y^2-x^2=4\sinh^2(t/2)\geq0.
    \]
    Equality holds if and only if \(t=0\), and then
    \(S_{(-1,0)}(0,u)=(0,0,0)\). Thus the only projected point with
    \(y^2-x^2=0\) is \((0,0)\).

    Conversely, let \((x,y)\) satisfy \(y^2-x^2>0\). Then \(y\neq0\) and
    \((t,v)\) is uniquely determined by
    \[
        2\sinh(t/2)=\sign(y)\sqrt{y^2-x^2}, \qquad \tanh v=\frac{x}{y},
    \]
    proving the claim.
\end{proof}

\begin{proposition}\label{prop:singular_surface_symmetry}
    The singular surface \(S_{(-1,0)}\) is invariant under the reflection
    \((x,y,z)\mapsto(-x,y,z)\), but it is not invariant under the map
    \((x,y,z)\mapsto(-x,y,-z)\).
\end{proposition}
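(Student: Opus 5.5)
The plan is to work in the coordinates \((t,v)\) of \cref{eq:singular_surface_tv_coordinates}, where
\[
    S_{(-1,0)}(t,v)
    =
    \left(
    2\sinh v\sinh(t/2),\;
    2\cosh v\sinh(t/2),\;
    \tfrac12(t-\sinh t)
    \right).
\]
In these coordinates both claims become direct computations, and the injectivity of the projection from \cref{lem:singular_surface_projection} lets us read off the height function.

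For invariance under \((x,y,z)\mapsto(-x,y,z)\), we replace \(v\) by \(-v\) and keep \(t\) fixed. Since \(\sinh\) is odd and \(\cosh\) is even, this sends the point with parameters \((t,v)\) to \((-x,y,z)\), which is again a point of the surface. The origin (\(t=0\)) is fixed. Hence the reflection maps \(S_{(-1,0)}\) into itself, and since it is an involution, onto itself.

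For non-invariance under \((x,y,z)\mapsto(-x,y,-z)\), we argue by contradiction. By the first part, invariance under this map would be equivalent to invariance under \((x,y,z)\mapsto(x,y,-z)\). We then use \cref{lem:singular_surface_projection}: since the projection is injective, the surface is the graph of a function \(z=f(x,y)\) over \(\{y^2-x^2>0\}\cup\{(0,0)\}\). Invariance under \(z\mapsto -z\) would therefore force \(f\equiv 0\). However, \(f=\frac12(t-\sinh t)\), which is nonzero for \(t\neq 0\). For a concrete witness, take \(t=2\), \(v=0\), giving the point \((0,2\sinh 1,\tfrac12(2-\sinh 2))\). Its image \((0,2\sinh 1,-\tfrac12(2-\sinh 2))\) projects to the same \((x,y)\) but has a different height, so it is not on the surface.

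The only potential subtlety is making sure the graph/injectivity argument is applied correctly, i.e.\ that no other parameter pair reaches the same \((x,y)\) with opposite \(z\). This is exactly what \cref{lem:singular_surface_projection} provides, so no real obstacle is expected.
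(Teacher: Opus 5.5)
Your proof is correct and follows the paper's argument: the substitution $v\mapsto -v$ in the $(t,v)$ coordinates is exactly the paper's change of parameters $(t,u)\mapsto(t,-t-u)$. Non-invariance then follows, as in the paper, by taking a point with $z\neq0$ whose image would share its $(x,y)$-projection, which contradicts the injectivity in \cref{lem:singular_surface_projection}; your witness with $x=0$ even makes the appeal to the first part unnecessary.
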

\begin{proof}
    Replacing
    \((t,u)\) by \((t,-t-u)\) in \cref{eq:singular_surface_expression} shows that
    \begin{align*}
        S_{(-1,0)}(t,-t-u)
         & =
        \left(
        \cosh u-\cosh(t+u),\;
        \sinh(t+u)-\sinh u,\;
        \frac12(t-\sinh t)
        \right)                           \\
         & =R\bigl(S_{(-1,0)}(t,u)\bigr),
    \end{align*}
    where \(R(x,y,z)=(-x,y,z)\).

    Choose a point \((x,y,z)\) on the surface with \(z\neq0\). By the first
    part, \((-x,y,z)\) is also on the surface. If the surface were invariant
    under \((x,y,z)\mapsto(-x,y,-z)\), then \((-x,y,-z)\) would also be on the
    surface. This would give two distinct points of the surface with the same
    \((x,y)\)-projection, contradicting the injectivity proved in
    \cref{lem:singular_surface_projection}.
\end{proof}

We show that the rank
loss at the origin reflects a genuine singularity of the image, rather than
being caused only by the choice of parametrisation.

\begin{proposition}\label{prop:real_singularity}
    The only point of the image \(S_{(-1,0)}\) projected to \((0,0)\) is the
    origin. In particular, the image of \(S_{(-1,0)}\) is singular at the
    origin.
\end{proposition}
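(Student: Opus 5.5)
The first claim follows from \cref{lem:singular_surface_projection} and the coordinates \cref{eq:singular_surface_tv_coordinates}. A point \(S_{(-1,0)}(t,v)\) projects to \((0,0)\) exactly when \(2\sinh v\sinh(t/2)=0\) and \(2\cosh v\sinh(t/2)=0\). Since \(\cosh v>0\), this forces \(t=0\), and then the third coordinate \(\tfrac12(t-\sinh t)\) vanishes. So the fibre of the projection over \((0,0)\) is just the origin, which also agrees with the injectivity statement of that lemma.

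For the singularity, I would argue by contradiction. Suppose the image is an embedded \(C^1\) surface near the origin, and call its tangent plane there \(P\). Away from the origin, the image is the graph \(z=f(x,y)\) over the open double cone \(\{y^2-x^2>0\}\), and it is the single point \((0,0,0)\) over the origin. The open set \(\{y^2-x^2<0\}\) contains no projected points at all. Near the origin the surface is therefore contained in the closed region \(\{y^2\geq x^2\}\times\R\).

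I would then split into two cases according to whether \(P\) is vertical.
\begin{itemize}
\item If \(P\) is not vertical, the surface is locally a \(C^1\) graph over a full neighbourhood of \((0,0)\) in the \((x,y)\)-plane. Its projection would then contain points with \(y^2<x^2\), which is a contradiction.
\item If \(P\) is vertical, the projection of \(P\) is a line \(\ell\) through the origin. A small \(C^1\) surface tangent to \(P\) projects onto a region that looks near the origin like a thin neighbourhood of \(\ell\), so it meets both sides of \(\ell\) close to \(0\). This region must lie inside \(\{y^2\geq x^2\}\). That forces \(\ell\) to be the \(y\)-axis or a line with \(\abs{y}>\abs{x}\), and then the surface is locally a graph \(x=\phi(y,z)\) or similar.
\end{itemize}

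To rule out the vertical case, I would compute the limit of tangent planes along a sequence approaching the origin, for example along the curve \(v=0\), \(t\to0\). There \(\partial_t S\approx(0,1/2\cdot 2\cdot\tfrac12,\ldots)\); more precisely, \(\partial_t S=(\sinh v\cosh(t/2),\cosh v\cosh(t/2),\tfrac12(1-\cosh t))\) and \(\partial_v S=(2\cosh v\sinh(t/2),2\sinh v\sinh(t/2),0)\). The normalised cross product tends to a limit depending on \(v\). At \(v=0\) it is \(\partial_t\to(0,1,0)\) and \(\partial_v/\abs{\partial_v}\to(1,0,0)\), so the limiting plane is \(z=0\). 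For general \(v\) one gets the plane spanned by \((\sinh v,\cosh v,0)\) and \((\cosh v,\sinh v,0)\), again \(z=0\). So \(P\) must be horizontal, which is the non-vertical case already excluded. This makes the case split essentially unnecessary: I would present the argument as computing that all limiting tangent planes are \(\{z=0\}\), and then invoking the projection obstruction.

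The main obstacle is making this limiting-tangent-plane argument rigorous. One has to show that if the image were a \(C^1\) surface, then \(P\) would equal the limit of the tangent planes of the regular part. That holds because the regular part is an open dense subset of the putative surface near the origin, and tangent planes of a \(C^1\) surface vary continuously. The contradiction then comes from the graph-over-a-neighbourhood argument.
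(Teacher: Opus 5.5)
Your proof is correct, but it takes a different route from the paper's. You show that the image cannot be an embedded $C^1$ surface near the origin. By continuity of tangent planes, the tangent plane at the origin would be the limit of those of the regular part, which is $\{z=0\}$. The inverse function theorem would then make the projection cover a full neighbourhood of $(0,0)$, contradicting \cref{lem:singular_surface_projection}.

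A small correction: in the $(t,v)$-coordinates,
$\partial_tS\times\partial_vS=2\sinh(t/2)\bigl(\sinh v\sinh^2(t/2),-\cosh v\sinh^2(t/2),-\cosh(t/2)\bigr)$.
So the unit normal tends to $\pm(0,0,1)$ along every sequence approaching the origin, not to a $v$-dependent limit. In any case a single sequence suffices, and your vertical-case bullet is unnecessary.

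The paper's argument is purely topological. The set $S_{(-1,0)}\setminus\{0\}$ projects onto the disconnected double cone $\{y^2-x^2>0\}$, hence is itself disconnected, whereas a connected $2$-manifold stays connected after removing a point. That route is shorter, needs no computation, and gives the stronger conclusion that the image is not even locally Euclidean at the origin. This stronger form is what the paper quotes later when it says the singular and regular timelike-parameter surfaces are ``not even homeomorphic''.

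Your weaker conclusion still suffices for every later use, since ambient isometries are diffeomorphisms and so preserve the points where the image is a $C^1$ surface. In exchange, your route gives the extra geometric information that the tangent planes become horizontal as one approaches the origin.
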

\begin{proof}
    Consider the surface \(S_{(-1,0)} \setminus \{(0,0,0)\}\). By \Cref{lem:singular_surface_projection} we know that its projection coincides with the set \(y^2 -x^2>0\). In particular, this set is disconnected; as the projection is a continuous map we get that the surface \(S_{(-1,0)} \setminus \{(0,0,0)\}\) is disconnected as well. Since \(S_{(-1,0)}\) is connected we can conclude that it cannot be locally Euclidean with dimension 2 at the origin, as a two dimensional locally Euclidean surface cannot become disconnected after removing one point.
\end{proof}

We start by showing that there are no time-inverting isometries between singular timelike-parameters surfaces.

\begin{corollary}\label{coro:no_time_inverting_singular_timelike}
    No two singular timelike-parameter surfaces are related by a
    time-inverting isometry.
\end{corollary}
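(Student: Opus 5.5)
By \cref{prop:timelike_isometric_surfaces}, every singular timelike-parameter surface (that is, $c^2-d^2=1$, $c<0$) is the image of $S_{(-1,0)}$ under a time-preserving boost. Composing with these boosts, it suffices to show that no time-inverting isometry $F$ maps $S_{(-1,0)}$ onto itself. Composing $F$ with a time-preserving boost leaves it time-inverting, so this reduction is harmless.

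The first step is to pin down the translation part of $F$. An isometry is a diffeomorphism of $\heis$, so it maps manifold points of the image to manifold points. By \cref{prop:real_singularity}, the origin is the unique non-manifold point of $S_{(-1,0)}$: every other point is a regular image point, and the projection to the $(x,y)$-plane is injective there by \cref{lem:singular_surface_projection}. Hence $F(0)=0$, and by \cref{thm:sub_lorentzian_isometries} we have $F(x,y,z)=(\Lambda(x,y),\det(\Lambda)z)$ with $\Lambda$ in one of the time-inverting components $A_0B_u$ or $A_0T_0B_u$.

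The second step uses symmetries to reduce to two explicit maps. $S_{(-1,0)}$ is boost invariant, so we may discard $B_u$. It is therefore enough to rule out $A(x,y,z)=(-x,y,-z)$ and $A_0T_0\colon(x,y,z)\mapsto(-x,-y,z)$.
\begin{itemize}
\item $A$ is excluded directly by \cref{prop:singular_surface_symmetry}.
\item For $A_0T_0$, note that the reflection $R(x,y,z)=(-x,y,z)$ preserves the surface by \cref{prop:singular_surface_symmetry}. Invariance under $(x,y,z)\mapsto(-x,-y,z)$ would then imply invariance under $(x,y,z)\mapsto(x,-y,z)$.
\end{itemize}

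To rule out $(x,y,z)\mapsto(x,-y,z)$, use the coordinates of \cref{eq:singular_surface_tv_coordinates}. There $y=2\cosh v\sinh(t/2)$ has the sign of $t$, while $z=\frac12(t-\sinh t)$ is odd in $t$ and nonzero for $t\neq0$. So on the surface, $\sign(z)=-\sign(y)$ whenever $t\neq 0$. Flipping $y$ while keeping $z$ violates this relation at any point with $t\neq0$, a contradiction.

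The main obstacle I expect is making the first step rigorous, namely that $F$ must fix the origin. The cleanest route is to note that $F$ restricts to a homeomorphism of the surface. By the argument of \cref{prop:real_singularity}, removing the origin disconnects the surface, while removing any other point does not, since the surface minus the origin has two components that are open disks. Hence $F$ preserves the origin.
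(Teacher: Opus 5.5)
Your proof is correct and follows the paper's route: reduce to self-isometries of $S_{(-1,0)}$, use the unique singular point to eliminate the translation part, strip off boosts, and conclude with \cref{prop:singular_surface_symmetry}. The only difference is in the time-inverting boost component: the paper composes with the timed rotation $(x,y,z)\mapsto(x,-y,-z)$, which preserves $S_{(-1,0)}$ by \cref{prop:timed_rotation_behaviour}, and so reduces everything to $A$, whereas you rule out $(x,y,z)\mapsto(-x,-y,z)$ separately using the reflection $R$ and the sign relation $\sign(z)=-\sign(y)$, which is equally valid.
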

\begin{proof}
    By \cref{prop:timelike_isometric_surfaces}, all singular
    timelike-parameter surfaces are isometric to \(S_{(-1,0)}\) by
    time-preserving boosts. It is therefore enough to rule out time-inverting
    self-isometries of \(S_{(-1,0)}\).

    By \cref{prop:real_singularity}, the
    origin is the unique singular point of \(S_{(-1,0)}\), so any time-inverting self-isometry fixes the
    origin and has no translation part. Up to composing with
    time-preserving symmetries of \(S_{(-1,0)}\), namely boosts and the timed
    rotation from \cref{prop:timed_rotation_behaviour}, the Lorentz part of this isometry can be reduced to the map \(A(x,y,z)=(-x,y,-z)\). This would
    force \(A(S_{(-1,0)})=S_{(-1,0)}\), contradicting
    \cref{prop:singular_surface_symmetry}.
\end{proof}

We also note that this singular surface has no characteristic points.

\begin{proposition}\label{prop:singular_surface_no_characteristic_points}
    The singular surface \(S_{(-1,0)}\) has no characteristic points.
\end{proposition}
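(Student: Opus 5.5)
We reuse the criterion from the proof of \cref{prop:spacelike_characteristic_set}. Away from the singular parameter line \(\{t=0\}\), the parametrisation \eqref{eq:singular_surface_expression} is regular. There \(\partial_tS_{(-1,0)}\) is horizontal by construction, so a point is characteristic if and only if \(\partial_uS_{(-1,0)}\) is also horizontal. Since boosts preserve the horizontal distribution, it suffices to work at \(u=0\). Plugging \(c=-1\), \(d=0\) into \cref{eq:general_characteristic_equation} reduces the condition to
\[
    1+1-0+2(-\cosh t)=0,
    \qquad\text{that is,}\qquad
    \cosh t=1,
\]
whose only solution is \(t=0\). So no regular point of the parametrisation, i.e. no point of \(S_{(-1,0)}\setminus\{(0,0,0)\}\), is characteristic. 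Equivalently, \((\sinh t)^2-(\cosh t-1)^2=2(\cosh t-1)\) vanishes only at \(t=0\).

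It remains to treat the origin, which is the image of the whole singular line \(\{(0,u)\}\). This is the main point requiring care. By \cref{prop:real_singularity} the image is not a smooth surface at the origin, so \(\mathrm T_0(S_{(-1,0)})\) is not defined. Since the characteristic set is only defined at points where \(\Sigma\) is an embedded \(C^1\) surface, the origin is excluded by definition. Regarded as a singular point, it is not a characteristic point in the sense of the definition.

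If one wants a more robust statement, I would also check that no tangent-plane limit makes the origin characteristic. Using the coordinates \eqref{eq:singular_surface_tv_coordinates}, \(z=\tfrac12(t-\sinh t)=O(t^3)\) while \(\sqrt{y^2-x^2}=2\abs{\sinh(t/2)}\sim\abs t\). So the surface is \(C^1\)-tangent to the plane \(z=0\) along each sheet, and this plane is horizontal at the origin. However, the origin is not a manifold point, so this does not place it in \(\mathrm S(\Sigma)\).

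The statement then follows: the regular part has no characteristic points by the computation above, and the origin is a genuine singularity excluded from the definition.
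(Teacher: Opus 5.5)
Your proof is correct and is essentially the paper's argument: you reduce to \(u=0\), specialise \cref{eq:general_characteristic_equation} to \(c=-1\), \(d=0\) to get \(\cosh t=1\), and note that \(t=0\) maps only to the origin, which is not a regular point of the surface by \cref{prop:real_singularity}. Your extra paragraph on tangent-plane limits is not needed, and it actually shows the opposite of the check you announce: the limiting tangent plane at the origin is the horizontal plane there. So the statement holds only because the origin is excluded as a non-manifold point, exactly as in the paper's proof.
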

\begin{proof}
    As in the proofs of
    \cref{prop:spacelike_characteristic_set,prop:timelike_surfaces_characteristic_set},
    it is enough to work at \(u=0\). At regular points,
    \(\partial_tS_{(-1,0)}\) is horizontal by construction, so a point is
    characteristic precisely when \(\partial_uS_{(-1,0)}\) is horizontal.
    Setting \(c=-1\) and \(d=0\) in
    \cref{eq:general_characteristic_equation}, this condition becomes
    \[
        2-2\cosh t=0,
    \]
    and thus \(t=0\), which is not a regular
    point of the surface by \cref{prop:real_singularity}.
\end{proof}

As a consequence, the sign of \(c\) is an isometry invariant for timelike-parameter surfaces.

\begin{corollary}
    Two surfaces with timelike parameters \((c,d)\) and \((c',d')\) are not
    isometric if \(c\) and \(c'\) have different signs.
\end{corollary}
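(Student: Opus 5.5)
Suppose \(c>0>c'\) and that some isometry \(F\) maps \(S_{(c,d)}\) onto \(S_{(c',d')}\). I would derive a contradiction from an isometry invariant: the characteristic set together with the singular point. Isometries are diffeomorphisms preserving \(\Delta\). They therefore map characteristic points to characteristic points, and smooth points to smooth points, so singular points go to singular points.

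I would split according to \(c'\).

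\emph{Case \(c'^2-d'^2\neq1\).} By \cref{prop:timelike_surfaces_characteristic_set}, \(S_{(c',d')}\) has a non-empty characteristic set, namely two disjoint null half-lines. The same proposition shows that \(S_{(c,d)}\) with \(c>0\) has empty characteristic set. For \(c>0\) the surface is regular by \cref{prop:non_degeneracy_surfaces}, so the characteristic set is exactly the set of points where \(\partial_u S\) is horizontal. Applying \(F^{-1}\), which also preserves horizontality, a characteristic point of the target would pull back to a characteristic point of the source. This is a contradiction.

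\emph{Case \(c'^2-d'^2=1\), \(c'<0\).} Here the target is the singular surface. By \cref{prop:timelike_isometric_surfaces} it is isometric to \(S_{(-1,0)}\), and by \cref{prop:real_singularity} the origin is a genuine singular point of it. The source \(S_{(c,d)}\) with \(c>0\) is the image of a regular parametrisation. To conclude it has no singular points, I need this parametrisation to be an embedding, or at least injective with a locally Euclidean image. I would check this via the projection \((x,y)\), in the spirit of \cref{lem:singular_surface_projection}. Reducing to \(S_{(C,0)}\) with \(C>0\) by \cref{prop:timelike_isometric_surfaces}, the boost-invariant quantity is \(x^2-y^2=1+C^2+2C\cosh t\). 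Together with the boost parameter, this determines \((t,u)\) up to \(t\mapsto -t\). That sign ambiguity is resolved by \(z=\tfrac12(t+C\sinh t)\), which is strictly monotone in \(t\). So the parametrisation is injective, and since it is a proper immersion, the image is an embedded smooth surface. The isometry \(F^{-1}\) would then carry a neighbourhood of the singular origin diffeomorphically onto a smooth surface patch, which is a contradiction.

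In both cases no isometry exists, whatever its type among time-preserving or time-inverting boosts and rotations, which proves the corollary.

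The main obstacle is the second case. The paper so far has only established regularity of the parametrisation, not that the image is an embedded surface. So the injectivity and properness check is the step I expect to require care. The first case is essentially immediate from \cref{prop:timelike_surfaces_characteristic_set}.
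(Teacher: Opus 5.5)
Your proof is correct and follows the paper's own argument. The singular case \(c'^2-d'^2=1\), \(c'<0\) is excluded because that surface is not locally Euclidean at the origin while the \(c>0\) surface is smooth, and otherwise the surfaces are distinguished by their characteristic sets (two half-lines versus empty). Your injectivity-and-properness check for \(S_{(C,0)}\), \(C>0\), shows that its image is an embedded surface and not merely the image of a regular parametrisation; this fills a step the paper passes over, and it is also what justifies reading off the characteristic set from \(\partial_u S\) in your first case.
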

\begin{proof}
    Suppose that \(c<0\) and \(c'>0\). If \(c^2-d^2=1\), then by
    \Cref{prop:non_degeneracy_surfaces} the parametrisation has a singularity.
    In \Cref{prop:real_singularity} we have shown that this surface is genuinely
    singular, as it is not locally Euclidean at that point; on the other hand,
    the \((c',d')\)-surface is smooth by \Cref{prop:non_degeneracy_surfaces}. Thus
    the two surfaces are not even homeomorphic. We may therefore assume
    \(c^2-d^2\neq1\).

    Any isometry between the two surfaces must map the characteristic set of one
    surface to the characteristic set of the other. However, by
    \Cref{prop:timelike_surfaces_characteristic_set}, the \((c,d)\)-surface has
    two half-lines as its characteristic set, whereas the \((c',d')\)-surface has
    empty characteristic set. Hence no isometry can exist.
\end{proof}

We extend \cref{coro:no_time_inverting_singular_timelike} to any surfaces of timelike parameters.

\begin{corollary}\label{coro:no_time_inverting_isometry_timelike_case}
    There is no time-inverting isometry between any pair of surfaces with timelike parameters.
\end{corollary}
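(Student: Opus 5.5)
We first reduce to a single model surface on each side. By \cref{prop:timelike_isometric_surfaces}, every timelike-parameter surface \(S_{(c,d)}\) is carried to \(S_{(C,0)}\) by a time-preserving boost, where \(C=\sign(c)\sqrt{c^2-d^2}\). Composing a time-inverting isometry with time-preserving boosts gives another time-inverting isometry. It therefore suffices to rule out a time-inverting isometry \(F\) from \(S_{(C,0)}\) to \(S_{(C',0)}\). We then split according to the signs of \(C\) and \(C'\) and whether either surface is singular.

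If exactly one of the two surfaces is singular (\(C=-1\) or \(C'=-1\)), no homeomorphism exists, by \cref{prop:real_singularity} and \cref{prop:non_degeneracy_surfaces}. If both are singular, \cref{coro:no_time_inverting_singular_timelike} applies. If \(C<0\) with \(C\neq-1\) and \(C'>0\), or the reverse, the characteristic sets differ: one consists of two half-lines and the other is empty, by \cref{prop:timelike_surfaces_characteristic_set}. So we are left with two regular surfaces whose parameters have the same sign.

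\emph{Case \(C,C'<0\), \(C,C'\neq -1\).} We argue as in \cref{coro:no_time_inverting_isometry_spacelike_surfaces}. By \cref{prop:timelike_surfaces_characteristic_set} with \(d=0\), the characteristic set of \(S_{(C,0)}\) is a union of two null half-lines. These are \(\{(\sigma s,\pm s,z_i):s>0\}\) with \(\sigma=\sign(1-C^2)\), and they lie in the half-plane \(\sigma x>0\). Both surfaces have the same \(\sigma\) only if \(\abs C,\abs{C'}\) lie on the same side of \(1\). If they lie on opposite sides, the two characteristic sets lie in \(\{x>0\}\) and \(\{x<0\}\) respectively. Even then, the asymptotic argument below should give a contradiction. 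The isometry \(F\) maps each characteristic half-line to one of the target half-lines, so the Lorentz part \(\Lambda\) must map their null directions \((\sigma,\pm1)\) to the null directions \((\sigma',\pm1)\) with positive multiples, as \(s\to\infty\). A time-inverting \(\Lambda\) sends a future-directed null vector \((1,\pm1)\) to a past-directed one, with negative \(x\)-component. So when \(\sigma=\sigma'\), comparing the sign of the \(x\)-coordinate for large \(s\) gives a contradiction. When \(\sigma\neq\sigma'\), this sign test does not conclude, and we instead compare \(z\)-levels. Since \(F_3=\det(\Lambda)z+\text{const}\), the ordered pair of heights \((z_0,z_1)\) is preserved or reversed along with the null directions. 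We expect this to match the relation \(z_0-z_1=\log r-\frac{r^2-1}{2}\) only in incompatible ways.

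\emph{Case \(C,C'>0\).} This is the main obstacle, since the characteristic set is empty and that invariant is lost. We will instead use the projection and orientation data. By \cref{eq:general_characteristic_equation}, the quantity \((\cosh t+C)^2-\sinh^2t>0\) shows that the projection of \(S_{(C,0)}\) is \(\{x>0,\ x^2-y^2\geq(C+1)^2\}\), as in the introduction. A time-inverting isometry sends the projection into a set \(\Lambda(\cdot)+\)translation contained in a region unbounded toward \(x\to-\infty\) along future cone directions. The target projection, however, is contained in \(\{x>0\}\) and contains only future-pointing asymptotic directions. Comparing the recession cones of the two projected sets forces \(\Lambda\) to preserve the future cone, which contradicts time inversion. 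We will make this precise by noting that \(F\) descends to the affine map \(\Lambda(\cdot)+(q_1,q_2)\) on projections, since \(F_1,F_2\) are independent of \(z\).
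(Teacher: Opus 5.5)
\textbf{There is a genuine gap in the subcase \(C,C'<0\) with \(\abs{C}<1<\abs{C'}\).} The rest of your proposal is sound. Your reductions are correct. Your case \(C,C'>0\) works: comparing the recession cones of the projected regions is essentially the paper's future-closed/past-closed argument. Your subcase \(C,C'<0\) with \(\sign(1-C^2)=\sign(1-C'^2)\) also works, since the sign of \(x\) far along the characteristic half-lines gives the contradiction.

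Your plan for \(\abs{C}<1<\abs{C'}\) is to compare the heights of the characteristic half-lines, and this cannot succeed. Take \(d=0\) and \(r=-C\). Then \(z_1=-z_0\) and \(2z_0=h(r)\), where \(h(r)\coloneqq\log r-\frac{r^2-1}{2}\). The function \(h\) increases from \(-\infty\) to \(0\) on \((0,1]\) and decreases from \(0\) to \(-\infty\) on \([1,\infty)\). Hence for every \(r\in(0,1)\) there is \(r'>1\) with \(h(r)=h(r')\); for example \(r=\tfrac12\), \(r'\approx1.61\).

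For such a pair, the two characteristic sets match exactly:
\begin{itemize}
\item \(S_{(-r,0)}\) has characteristic set \(\{(s,-s,z_0):s>0\}\cup\{(s,s,-z_0):s>0\}\);
\item \(S_{(-r',0)}\) has characteristic set \(\{(-s,s,z_0):s>0\}\cup\{(-s,-s,-z_0):s>0\}\);
\item the time-inverting boost \((x,y,z)\mapsto(-x,-y,z)\) maps the first exactly onto the second.
\end{itemize}
So the directions and heights of the characteristic half-lines are fully compatible, and no argument using only the characteristic set can close this case.

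Also, \(F_3=\det(\Lambda)z+\mathrm{const}\) holds only once the translation part is known to be vertical. You can obtain this: the images of the two source half-lines must lie over two distinct null rays issuing from the origin of the \((x,y)\)-plane, which forces the horizontal part of the translation to vanish.

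To close the gap you need an invariant beyond the characteristic set. Once the translation is vertical, \(z\) on the target pulls back to \(\pm z+\mathrm{const}\) on the source, so critical points of \(z\) restricted to the two surfaces correspond. On \(S_{(C,0)}\) we have \(\partial_u z=0\) and \(\partial_t z=\frac12(1+C\cosh t)\). For \(C<0\), \(C\neq-1\), the restriction of \(z\) therefore has critical points if and only if \(-1<C<0\). This forces \(\abs{C}\) and \(\abs{C'}\) to lie on the same side of \(1\), which returns you to the subcase you already handled.

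The paper proceeds in this spirit. It reduces the Lorentz part to \((x,y,z)\mapsto(-x,y,-z)\) and shows the translation is trivial. It then uses the unique critical point of \(x\) to get \(C+C'=-2\), and the existence of critical points of \(z\) to force \(C=C'=-1\), which is the excluded singular case.
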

\begin{proof}
    We prove the statement for non-singular surfaces, as the singular case is
    treated in \cref{coro:no_time_inverting_singular_timelike}. By
    \cref{prop:timelike_isometric_surfaces} and the previous corollary, it is
    enough to consider the \((c,0)\)- and \((c',0)\)-surfaces with \(cc'>0\).

    Assume first that \(c,c'>0\). Let
    \begin{equation*}\label{eq:projection1}
        P=\{x^2-y^2\geq(c+1)^2\}\cap\{x\geq c+1\},
        \qquad
        P'=\{x^2-y^2\geq(c'+1)^2\}\cap\{x\geq c'+1\}
    \end{equation*}
    be the projections of \(S_{(c,0)}\) and \(S_{(c',0)}\) onto the Minkowski
    plane, respectively.
    The sets \(P\) and \(P'\) are future-closed but nowhere past-closed: if \(p\)
    belongs to \(P\) (resp. \(P'\)), then \(J^+(p)\) is contained in
    \(P\) (res. \(P'\)), while \(J^-(p)\) is not.
    Since a time-inverting Lorentz map reverses the causal order, it sends
    future-closed sets to past-closed sets. Therefore \(P\) and \(P'\) cannot be
    mapped onto one another.

    It remains to consider \(c,c'<0\). It remains to consider \(c,c'<0\). Up to composing with time-preserving isometries of \(S_{(c,0)}\) and
    \(S_{(c',0)}\), a time-inverting isometry has Lorentz part $A(x, y, z) = (-x,y,-z)$,
    followed by a left translation. Indeed, any time-inverting boost is the
    composition of a time-preserving boost with \(A\), and the surfaces
    \(S_{(c,0)}\) and \(S_{(c',0)}\) are invariant under time-preserving boosts.
    Since \(A\) fixes the \(z\)-axis and the
    characteristic half-lines of both surfaces start on this axis, the
    translation must be vertical. The midpoint of the two starting points is the
    origin for both surfaces, so this vertical translation is trivial.

    Thus only \(A\) remains. The function \(f : S_{(c,0)}\to\mathbb R : (x, y, z) \mapsto x\) has a unique critical point on
    the \((c,0)\)-surface, namely \((c+1,0,0)\). Since \(f\circ A^{-1}=-x\), its
    image must be the unique critical point of \(-x\) on the \((c',0)\)-surface,
    namely \((c'+1,0,0)\). Therefore \(-c-1=c'+1\), hence \(c+c'=-2\).
    Applying the same argument to \(g(x, y, z)=z\), and using
    \(g\circ A^{-1}=-z\), shows that \(c\geq-1\) and \(c'\geq-1\),
    since \(z\)
    has critical points on \(S_{(c,0)}\) and \(S_{(c',0)}\) only in these cases. Together with
    \(c+c'=-2\), this forces \(c=c'=-1\). This corresponds exactly to the singular case, but we had assumed that the surfaces were not singular.
\end{proof}

The next lemma shows that a surface with timelike parameters and \(c>0\) eventually escapes the causal future of the origin.

\begin{lemma}\label{lemma:timelike_surfaces_not_in_future_of_origin}
    Consider a surface with timelike parameters and \(c>0\). Then, for \(z\) large
    enough, no point \((x,y,z)\) on the surface belongs to \(J^+(0)\). In
    particular, none of these surfaces is entirely contained in the future of the
    origin.
\end{lemma}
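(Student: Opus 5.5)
By \cref{prop:timelike_isometric_surfaces}, it is enough to treat the surfaces \(S_{(C,0)}\) with \(C>0\). Indeed, \(S_{(c,d)}=q\ast B(S_{(C,0)})\), where \(q=(0,0,\tfrac12(\alpha+d))\) is a vertical translation and \(B\) is a time-preserving boost. Boosts leave both \(z\) and the cone \(J^+(0)\) invariant, since \(J^+(0)\) is described in \cref{eq:future_origin_heisenberg} by \(x\ge 0\) and \(-x^2+y^2+4\abs z\le 0\). A vertical translation shifts \(z\) by a constant and changes the inequality \(4\abs z\le x^2-y^2\) only by a bounded amount. The asymptotic statement ``for \(z\) large enough'' therefore transfers from \(S_{(C,0)}\) to \(S_{(c,d)}\).

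For \(S_{(C,0)}\), the formula \cref{eq:general_expression_CMC_surface} gives
\[
x^2-y^2=\cosh^2 t-\sinh^2 t+2C\cosh t+C^2=1+C^2+2C\cosh t,
\]
which is boost-invariant and so can be computed at \(u=0\). The same formula gives
\[
z=\tfrac12\bigl(t+C\sinh t\bigr).
\]
This \(z\) is strictly increasing in \(t\), and \(z\to+\infty\) exactly as \(t\to+\infty\). Large \(z\) therefore means large \(t\).

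The key step is to compare \(4\abs z\) with \(x^2-y^2\) as \(t\to+\infty\):
\[
4z-(x^2-y^2)=2t+2C\sinh t-1-C^2-2C\cosh t=2t-1-C^2-2Ce^{-t}.
\]
This tends to \(+\infty\), so for \(t\) larger than some \(t_0\) we get \(-x^2+y^2+4\abs z>0\). By \cref{eq:future_origin_heisenberg}, such points are not in \(J^+(0)\). Since \(t\) is uniquely determined by \(z\), this holds for every \(u\), and hence for all points with \(z>z_0:=\tfrac12(t_0+C\sinh t_0)\). The second claim follows immediately, because the surface contains points of arbitrarily large \(z\).

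The step needing most care is the transfer through the vertical translation in the reduction. After the translation, \(z\) is replaced by \(z+\kappa\) for a constant \(\kappa\), while \(x^2-y^2\) is unchanged. The expression above becomes \(2t+4\kappa-1-C^2-2Ce^{-t}\), which still tends to \(+\infty\), so the shift does no harm. Alternatively, one can avoid the reduction altogether: compute directly with general \((c,d)\), \(c>0\), where the dominant terms are \(c\sinh t\) in \(2z\) and \(c\cosh t\) in \(x^2-y^2\) again cancel up to \(O(e^{-t})\), leaving a linear \(2t\) growth.
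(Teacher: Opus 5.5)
Your proof is correct and is essentially the paper's argument. The paper computes directly with general $(c,d)$, writing $z=\frac12\bigl(t+\sqrt{c^2-d^2}\sinh(t-\alpha)+d\bigr)$ with $\alpha=\arcsinh\bigl(d/\sqrt{c^2-d^2}\bigr)$; this is exactly your reduction to $S_{(C,0)}$ followed by the vertical shift $\frac12(\alpha+d)$, and the paper then uses the same cancellation of hyperbolic terms, $-x^2+y^2+4\abs{z}=2t+(\text{const})-2\sqrt{c^2-d^2}\,e^{-t+\alpha}\to+\infty$, together with the monotonicity of $z$ in $t$.
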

\begin{proof}
    With respect to the standard parametrisation
    \cref{eq:general_expression_CMC_surface}, the \(z\)-component depends only
    on the variable \(t\). Denote
    \(\alpha=\arcsinh(d/\sqrt{c^2-d^2})\). Since \(c>0\), we can write it as
    \[
        z(t) = \frac{1}{2} \left( t + \sqrt{c^2-d^2}\sinh(t -\alpha) +d\right).
    \]
    The function \(z(t)\) is strictly increasing, hence the parameter \(t\) is
    determined by the \(z\)-coordinate. For a point on the surface we compute the quantity \(-x^2 +y^2 +4\abs{z}\):
    \[
        -1 -c^2 + d^2 - 2\sqrt{c^2-d^2}\cosh(t - \alpha)+ 2\abs{t +\sqrt{c^2-d^2}\sinh(t-\alpha) +d}.
    \]
    The function inside the absolute value tends to \(+\infty\) as
    \(t\to+\infty\). Hence there exists \(M'>0\) such that, for \(t>M'\), the
    absolute value can be removed and the previous expression becomes
    \[
        -1 -c^2 + d^2 + 2t -2\sqrt{c^2-d^2}e^{-t + \alpha} + 2d.
    \]
    This tends to \(+\infty\) as \(t\to+\infty\). Thus there exists
    \(M''>M'\) such that, for \(t>M''\), the quantity
    \(-x^2+y^2+4\abs{z}\) is positive, independently of \(u\). By
    \cref{eq:future_origin_heisenberg}, such points do not belong to \(J^+(0)\). Since
    \(z(t)\) is strictly increasing, the condition \(t>M''\) is equivalent to
    \(z>M\) for some \(M>0\).
\end{proof}

The following proposition is the missing necessity statement for the timelike classification in \cref{prop:timelike_isometric_surfaces}.

\begin{proposition}
    If two surfaces with timelike parameters \((c,d)\) and \((c',d')\) are such
    that \(c\) and \(c'\) have the same sign but
    \(c^2-d^2\neq(c')^2-(d')^2\), then they are not isometric.
\end{proposition}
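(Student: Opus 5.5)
By \cref{prop:timelike_isometric_surfaces}, I would first reduce to the normal forms \(S_{(C,0)}\) and \(S_{(C',0)}\), where \(C=\sign(c)\sqrt{c^2-d^2}\) and \(C'=\sign(c')\sqrt{c'^2-d'^2}\). By hypothesis \(CC'>0\) and \(\abs{C}\neq\abs{C'}\). By \cref{coro:no_time_inverting_isometry_timelike_case} any isometry is time-preserving. By \cref{prop:timelike_positive_limit_null_surface}, and after composing with the timed rotation \((x,y,z)\mapsto(x,-y,-z)\), which maps \(S_{(C,0)}\) to itself, I may assume the isometry is a time-preserving boost. Since both surfaces are boost-invariant, I can further compose with a boost and take the Lorentz part to be the identity. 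The problem then reduces to showing that no left translation \(q\ast\cdot\) maps \(S_{(C,0)}\) onto \(S_{(C',0)}\) when \(C\neq C'\).

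In the case \(C,C'>0\), I would use the projections \(P=\{x^2-y^2\geq(C+1)^2,\ x\geq C+1\}\) from the proof of \cref{coro:no_time_inverting_isometry_timelike_case}. A left translation by \(q=(a,b,e)\) acts on the projection as the Euclidean translation by \((a,b)\). One then needs the translated region \(P+(a,b)\) to equal \(P'\). Both regions are bounded by a branch of a hyperbola centred at the origin; the radii are \(C+1\) and \(C'+1\), and both have the null lines \(y=\pm x\) as asymptotes. Comparing the asymptotes forces \((a,b)=(0,0)\), and then comparing the vertices \(C+1\) and \(C'+1\) forces \(C=C'\).

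In the case \(C,C'<0\), \(C\neq-1\), I would use the characteristic sets from \cref{prop:timelike_surfaces_characteristic_set}. Both consist of two null half-lines emanating from points on the \(z\)-axis, and the translation must map one set onto the other. Their projected vertex is \((0,0)\), so the translation is vertical. A vertical translation preserves differences of \(z\)-values, so I would compare the gap between the two half-lines, \(\abs{z_0-z_1}=\abs{\log r-\tfrac{r^2-1}{2}}\) with \(r=\abs{C}\). This function does not distinguish \(r\) from every other value, however, so I would add a second invariant. The cleanest choice is to argue as in the proof of \cref{coro:no_time_inverting_isometry_timelike_case}: the function \(x\) restricted to \(S_{(C,0)}\) has a unique critical point \((C+1,0,0)\). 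A vertical translation preserves \(x\) and its critical points, which gives \(C+1=C'+1\). This actually handles every sign case at once, provided the critical-point claim for \(x\) holds. If \(C=-1\) or \(C'=-1\), exactly one of the surfaces is singular by \cref{prop:real_singularity}, so the two are not even homeomorphic.

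The main obstacle is verifying that \(x|_{S_{(C,0)}}\) has exactly one critical point, or at least that the set of critical values of \(x\) is \(\{C+1\}\). With \(d=0\), the function \(x=\cosh(t+u)+C\cosh u\) has gradient zero exactly when \(\sinh(t+u)=0\) and \(C\sinh u=0\), which gives \(t=u=0\) when \(C\neq0\). I would check this directly in the parametrisation, which is regular when \(C\neq-1\) by \cref{prop:non_degeneracy_surfaces}. The critical value is then \(1+C\), and the conclusion follows from the injectivity of \(C\mapsto C+1\).
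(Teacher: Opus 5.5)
Your proof is correct. It shares the paper's skeleton: reduction to $S_{(C,0)}$, exclusion of time-inverting maps, reduction to a left translation, then a split by the sign of $C$. The two cases, however, are closed differently.

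For $C,C'>0$, the paper uses the unique minimum of $x$ to pin the translation down to $(c'-c,0,0)$. It then argues causally: the translated curve $\{t=0\}$ lies in $J^+(0)$ with $z\to+\infty$, which contradicts \cref{lemma:timelike_surfaces_not_in_future_of_origin}. Your argument never leaves the Minkowski plane. Since $L_q$ acts on projections as the Euclidean translation by $(a,b)$, you need $P_C+(a,b)=P_{C'}$. Comparing the asymptotes and the vertex of the bounding hyperbola branches then gives $(a,b)=(0,0)$ and $C=C'$. This is shorter and needs no causal input. Indeed, the paper could have finished the same way once it had the translation $(c'-c,0,0)$.

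For $C,C'<0$, both proofs get a vertical translation from the characteristic half-lines. The paper then combines the sign $\sign(1-c^2)$ of the $x$-coordinates of those half-lines with the maximum $(c+1)^2$ of $x^2-y^2$, which determines $c$ only up to that sign information. You instead use the single critical value $1+C$ of $x$, a fact the paper itself invokes in the proof of \cref{coro:no_time_inverting_isometry_timelike_case}. This gives $C+1=C'+1$ directly, without the sign bookkeeping.

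One small correction: your remark that the critical-value argument "handles every sign case at once" overstates things. A general left translation shifts $x$ by $a$, so that argument needs verticality first. For $C>0$ the characteristic set is empty, so verticality, and in fact the whole conclusion, comes from your projection argument.
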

\begin{proof}
    By \Cref{prop:timelike_isometric_surfaces}, it is enough to prove the
    statement for \(S_{(c,0)}\) and \(S_{(c',0)}\), with \(cc'>0\) and
    \(c \neq c'\). By
    \Cref{coro:no_time_inverting_isometry_timelike_case}, any isometry between
    them is time-preserving. Since \(S_{(c,0)}\) and \(S_{(c',0)}\) are
    invariant under boosts and, by \Cref{prop:timed_rotation_behaviour}, under
    timed rotations, we may assume that this isometry is a left translation.

    Assume first that \(c,c'<0\). If exactly one of the two surfaces is singular,
    then no isometry exists by \Cref{prop:real_singularity}. If both are
    singular, then \(c=c'=-1\), contrary to \(c^2\neq(c')^2\). Thus we may
    assume that both are non-singular.

    By \Cref{prop:timelike_surfaces_characteristic_set}, each characteristic set
    consists of two null half-lines starting on the \(z\)-axis. A left
    translation preserving these characteristic sets must be vertical because the starting points of the characteristic half-lines lie on the \(z\)-axis, and a left translation moves the \(z\)-axis to a vertical line. Moreover,
    the \(x\)-coordinates of the characteristic half-lines have sign
    \(\sign(1-c^2)\) for \(S_{(c,0)}\), and \(\sign(1-(c')^2)\) for
    \(S_{(c',0)}\). Since vertical translations preserve \((x,y)\), we get
    \(\sign(1-c^2)=\sign(1-(c')^2)\), and therefore
    \(\sign(1+c)=\sign(1+c')\) as \(c,c'<0\).

    Writing \(S_{(c,0)}(t,u)=(x(t,u),y(t,u),z(t))\), we have
    \[
        x(t,u)^2-y(t,u)^2=1+c^2+2c\cosh t.
    \]
    Since \(c<0\), the maximum value of $x^2 - y^2$ is \((c+1)^2\), attained at \(t=0\). Similarly, on
    \(S_{(c',0)}\) the maximum is \((c'+1)^2\). A vertical translation preserves \(x^2-y^2\), so
    \((c+1)^2=(c'+1)^2\).
    Together with \(\sign(1+c)=\sign(1+c')\), this gives \(c=c'\).

    It remains to consider \(c,c'>0\). Up to exchanging the two surfaces, assume
    \(c'\geq c\). The \(x\)-coordinate has a unique minimum at
    \((c+1,0,0)\) on \(S_{(c,0)}\), and at \((c'+1,0,0)\) on
    \(S_{(c',0)}\). Hence the left translation must be by \((c'-c,0,0)\). The
    \(t=0\) curve on \(S_{(c,0)}\) is
    \[
        B_u\gamma_{(c,0)}(0)=\bigl((c+1)\cosh u,(c+1)\sinh u,0\bigr).
    \]
    Its image under the left translation by \((c'-c,0,0)\) is
    \[
        \biggl((c+1)\cosh u+c'-c,\,
        (c+1)\sinh u,\,
        \frac{1}{2}(c'-c)(c+1)\sinh u\biggr),
    \]
    and this curve lies on \(S_{(c',0)}\). Along it,
    \[
        \begin{split}
            -x^2+y^2+4\abs{z}
             & =-(c+1)^2-(c'-c)^2
            -2(c'-c)(c+1)\cosh u +2(c'-c)(c+1)\abs{\sinh u}   \\
             & =-(c+1)^2-(c'-c)^2-2(c'-c)(c+1)e^{-\abs{u}}<0.
        \end{split}
    \]
    Since also \(x>0\), \cref{eq:future_origin_heisenberg} implies that the
    translated curve is contained in \(J^+(0)\). If \(c'>c\), its
    \(z\)-coordinate tends to \(+\infty\) as \(u\to+\infty\), contradicting
    \Cref{lemma:timelike_surfaces_not_in_future_of_origin} applied to
    \(S_{(c',0)}\).

    In both cases, an isometry forces \(c=c'\), contradicting
    \(c^2\neq(c')^2\). Therefore the two surfaces are not isometric.
\end{proof}

The next lemma identifies two full null lines in the \((c,0)\)-surface with \(c<0\)
which are obtained as limits of points on the surface.

\begin{lemma}\label{lemma:timelike_surfaces_lines_closure}
    Consider the \((c,0)\)-surface with \(c<0\) and \(c\neq-1\). Let
    \(t_i=(-1)^i\log(-c)\), for \(i=0,1\), as in
    \Cref{prop:timelike_surfaces_characteristic_set}. Then the lines
    \[
        \left\{
        \left(
        s,\,
        (-1)^i s,\,
        \frac{1}{2}\left(t_i+c\sinh(t_i)\right)
        \right):s\in\mathbb R
        \right\},
        \quad i=0,1,
    \]
    are contained in the closure of \(S_{(c,0)}\).
\end{lemma}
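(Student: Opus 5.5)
The plan is to realise every point of each line as a limit $\lim_{n\to\infty} S_{(c,0)}(t_n,u_n)$, with $t_n\to t_i$ and $\abs{u_n}\to\infty$. The idea is to exploit how boosts act on the two null components of the projected profile point.

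Write $r=-c>0$, $r\neq 1$, so that $\alpha=0$ and $t_i=(-1)^i\log r$ in \cref{prop:timelike_surfaces_characteristic_set}. The projected profile is $p(t)=(\cosh t+c,\sinh t)$. Decompose it in the null frame as
\[
p(t)=\tfrac12 a(t)\,(1,1)+\tfrac12 b(t)\,(1,-1),
\qquad a(t)=\cosh t+\sinh t+c=e^t+c,\qquad b(t)=\cosh t-\sinh t+c=e^{-t}+c .
\]
The boost then acts diagonally:
\[
B_u\,p(t)=\tfrac12 e^{u}a(t)\,(1,1)+\tfrac12 e^{-u}b(t)\,(1,-1),
\]
while the $z$-coordinate $z(t)=\tfrac12(t+c\sinh t)$ does not depend on $u$.

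First I will check directly that exactly one of $a,b$ vanishes at $t_i$. Indeed $a(t_0)=r-r=0$ and $b(t_1)=r-r=0$, while the other component equals $\pm(1/r-r)\neq0$. This identifies the null direction of the $i$-th line, and I will match it against the indexing $(s,(-1)^is)$ of the statement; if the labels turn out to be swapped, I will simply relabel the two cases. Also $z(t_i)=\tfrac12(t_i+c\sinh t_i)$ is exactly the stated height.

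Consider the case $i=0$; the case $i=1$ is symmetric, with $u\to-\infty$. Since $a'(t)=e^t\neq 0$, the zero of $a$ at $t_0$ is simple, so $a$ changes sign there.
\begin{enumerate}[label=\normalfont(\roman*)]
\item Given any $s\neq0$, choose $u_n\to+\infty$ and $t_n\to t_0$ with $e^{u_n}a(t_n)=2s$. This is possible by the intermediate value theorem, since $a$ takes all small values of both signs near $t_0$.
\item Along this sequence the $(1,1)$ component equals $s$, and the $(1,-1)$ component $\tfrac12 e^{-u_n}b(t_n)$ tends to $0$ because $b$ stays bounded.
\item By continuity $z(t_n)\to z(t_0)$.
\end{enumerate}
Hence $(s,s,z(t_0))$ lies in the closure of $S_{(c,0)}$. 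The point $s=0$ is obtained by taking $t_n=t_0$ and $u_n\to+\infty$, or it follows from closedness of the closure. This argument covers both half-lines at once, and incidentally recovers the characteristic half-line already found in \cref{prop:timelike_surfaces_characteristic_set}.

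The main obstacle is bookkeeping rather than analysis. The delicate points are to pair the correct null direction with the correct index $i$ and sign of the boost parameter $u$, and to make the sign-change argument for $a$ (resp. $b$) explicit, so that both signs of $s$ are attained. The hypothesis $c\neq-1$ is used precisely to ensure that the non-vanishing component is nonzero and that $t_0\neq t_1$.
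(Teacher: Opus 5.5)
Your argument is correct and is essentially the paper's: both send $u\to\pm\infty$ while choosing $t(u)\to t_i$ so that one coordinate is pinned. You pin the null coordinate $x+y$ (resp.\ $x-y$), on which $B_u$ acts by $e^{\pm u}$, whereas the paper pins $x$ via $t(u)=-u\pm\arccosh(s-c\cosh u)$ and then needs an asymptotic expansion, so your bookkeeping is slightly cleaner. Two asides are inaccurate but harmless: the characteristic half-line at $t_0$ is the boost orbit of $p(t_0)=\tfrac12 b(t_0)(1,-1)$, so it points along $(1,-1)$, transversally to the line you construct, and your argument does not recover it; and the hypothesis $c\neq-1$ is never used, since the construction only needs $a$ to vanish at $t_0$ and $b$ to be bounded near $t_0$.
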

\begin{proof}
    We use the parametrisation in \cref{eq:general_expression_CMC_surface}. Fix
    \(s\in\mathbb R\). We first approach the point
    \[
        \left(s,s,\frac{1}{2}\left(t_0+c\sinh(t_0)\right)\right).
    \]
    For \(u\to+\infty\), set
    \[
        t(u)=-u+\arccosh(s-c\cosh u),
    \]
    which is well defined for all sufficiently large \(u\), since \(c<0\). Then
    the point \(S_{(c,0)}(t(u),u)\) has \(x\)-coordinate equal to \(s\). Moreover,
    using \(\arccosh \rho=\log(\rho+\sqrt{\rho^2-1})\), one obtains
    \(t(u)\to\log(-c)=t_0\). Its \(y\)-coordinate is
    \[
        \sqrt{(s-c\cosh u)^2-1}+c\sinh u
        =
        s-c\cosh u+c\sinh u+o(1),
    \]
    and therefore tends to \(s\). The \(z\)-coordinate tends to
    \(\frac12(t_0+c\sinh t_0)\).

    Similarly, as \(u\to-\infty\), set
    \[
        t(u)=-u-\arccosh(s-c\cosh u).
    \]
    Then \(S_{(c,0)}(t(u),u)\) again has \(x\)-coordinate \(s\), while
    \(t(u)\to-\log(-c)=t_1\). Its \(y\)-coordinate is
    \[
        -\sqrt{(s-c\cosh u)^2-1}+c\sinh u
        =
        -s+c\cosh u+c\sinh u+o(1),
    \]
    and therefore tends to \(-s\). The \(z\)-coordinate tends to
    \(\frac12(t_1+c\sinh t_1)\). Since \(s\in\mathbb R\) was arbitrary, both
    lines are contained in the closure of \(S_{(c,0)}\).
\end{proof}

The previous lemma is the key ingredient in proving the non-achronality of
non-singular timelike-parameter surfaces with \(c<0\).

\begin{proposition}
    The non-singular surfaces with timelike parameters and \(c<0\) are not
    achronal.
\end{proposition}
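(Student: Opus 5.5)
The plan is to exploit the two null lines of \Cref{lemma:timelike_surfaces_lines_closure}, which lie in the closure of the surface. I will find one point on each line such that the two points are chronologically related. Then I will use that the chronological relation is open to transfer this to genuine points of the surface.

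First, reduce to \(S_{(c,0)}\) with \(c<0\) and \(c\neq -1\). By \cref{prop:timelike_isometric_surfaces}, every non-singular timelike-parameter surface with \(c<0\) is the image of such a surface under a time-preserving boost composed with a left translation. Such isometries map future-directed timelike curves to future-directed timelike curves, so they preserve \(\ll\) and hence achronality. Set \(z_i=\frac12(t_i+c\sinh t_i)\). Since \(r=-c\neq1\), the proof of \cref{prop:timelike_surfaces_characteristic_set} gives \(z_0\neq z_1\). Let \(\delta=z_1-z_0\).

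Next, take \(p=(s,s,z_0)\) on the first line and \(q=(s',-s',z_1)\) on the second. Using the group law, a direct computation gives
\[
    p^{-1}\ast q=\bigl(s'-s,\,-s-s',\,\delta+ss'\bigr).
\]
By \cref{eq:future_origin_heisenberg}, the relation \(p\ll q\) holds iff \(s'-s>0\) and
\[
    -(s'-s)^2+(s+s')^2+4\abs{\delta+ss'}=4ss'+4\abs{\delta+ss'}<0 .
\]
If \(\delta>0\), choose \(s<0<s'\) with \(ss'=-\delta\). Then the absolute value vanishes, the expression equals \(-4\delta<0\), and \(s'-s>0\), so \(p\ll q\). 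If \(\delta<0\), I swap the roles of the two lines and compute \(q^{-1}\ast p\) instead, choosing signs of \(s,s'\) so that the \(x\)-component is positive. The same computation then yields \(q\ll p\) with \(ss'=\delta\) of the appropriate sign. Checking the signs in this swapped case is the only step I expect to need care.

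Finally, approximate. By \Cref{lemma:timelike_surfaces_lines_closure}, there are points \(p_n,q_n\in S_{(c,0)}\) with \(p_n\to p\) and \(q_n\to q\). The set \(I^+(e)\) is open by \cref{eq:future_origin_heisenberg}, and \((a,b)\mapsto a^{-1}\ast b\) is continuous. Hence \(p_n^{-1}\ast q_n\in I^+(e)\) for \(n\) large, that is, \(p_n\ll q_n\) by left-invariance. So the surface contains two chronologically related points and is not achronal.

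The main obstacle is making sure that the limit points \(p,q\) are strictly chronologically related rather than merely causally related. This is exactly why I choose \(ss'=-\delta\), which puts \(p^{-1}\ast q\) in the open set \(I^+(e)\) rather than on its boundary.
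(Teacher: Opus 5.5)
Your strategy works, but the branch \(\delta<0\) is wrong as stated, and the proof is complete only once you show that this branch never occurs. Swapping the roles of the two lines does not help. Indeed \(q^{-1}\ast p=(s-s',\,s+s',\,-\delta-ss')\), so the relevant quantity is again
\[
    -(s-s')^2+(s+s')^2+4\abs{\delta+ss'}
    =4ss'+4\abs{\delta+ss'}
    \geq 4ss'-4(\delta+ss')
    =-4\delta ,
\]
which is strictly positive when \(\delta<0\). So for \(\delta<0\) no point of one line is even causally related to a point of the other, in either order, and no choice of signs of \(s,s'\) rescues that case.

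What is missing is the sign of \(\delta\). With \(r=-c\) one has \(t_1=-t_0=-\log r\) and \(c\sinh t_0=-\frac{r^2-1}{2}=-c\sinh t_1\), hence
\[
    \delta=z_1-z_0=\frac{r^2-1}{2}-\log r=\frac12\bigl(r^2-1-\log r^2\bigr)>0
    \qquad (r\neq1).
\]
This is exactly the strict-maximum computation in the proof of \cref{prop:timelike_surfaces_characteristic_set}, which you quoted only to get \(z_0\neq z_1\). Once \(\delta>0\) is established, your first branch covers every non-singular case. For example, \(s=-1\), \(s'=\delta\) gives \(p^{-1}\ast q=(1+\delta,1-\delta,0)\in I^+(e)\). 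The approximation step is then fine: it uses \cref{lemma:timelike_surfaces_lines_closure}, openness of \(I^+(e)\), and continuity of \((a,b)\mapsto a^{-1}\ast b\).

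Once repaired, your argument takes a different route from the paper. The paper pairs the genuine surface point \((c+1,0,0)\) with a single closure line, approximating only one point. It therefore has to split into two cases:
\begin{itemize}
\item \(c<-1\), using line \(i=0\) and showing \(q(s)\in I^+(p)\);
\item \(-1<c<0\), using line \(i=1\) and showing \(q(s)\in I^-(p)\).
\end{itemize}
You pair the two closure lines with each other. This removes the case split at the cost of approximating both endpoints. It also makes visible why \(c=-1\) must be excluded: there \(\delta=0\), and points of the two lines are at best causally, never chronologically, related. Both versions ultimately rest on the same inequality \(\log x<x-1\) for \(x\neq1\).
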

\begin{proof}
    Since achronality is invariant under isometries, \Cref{prop:timelike_isometric_surfaces}
    allows us to prove the claim for \(S_{(c,0)}\). Since the surface is
    non-singular, \(c\neq-1\). Let
    \(p=S_{(c,0)}(0,0)=(c+1,0,0)\), and put
    \(t_i=(-1)^i\log(-c)\), as in
    \Cref{prop:timelike_surfaces_characteristic_set,lemma:timelike_surfaces_lines_closure}.

    Assume first that \(c<-1\). Then \(t_0=\log(-c)\) and
    \(c\sinh(t_0)=-(c^2-1)/2\). By
    \Cref{lemma:timelike_surfaces_lines_closure}, the line
    \[
        q(s)=\left(s,s,\frac12\left(t_0-\frac12(c^2-1)\right)\right),
        \qquad s\in\mathbb R,
    \]
    is contained in the closure of \(S_{(c,0)}\). We compute
    \[
        p^{-1}\ast q(s)=
        \left(
        s-c-1,\,
        s,\,
        \frac12\left(t_0-\frac12(c^2-1)-(c+1)s\right)
        \right),
    \]
    whose \(x\)-coordinate is positive for all large \(s\). Since \(c+1<0\), the \(z\)-coordinate of \(p^{-1}\ast q(s)\) is positive for all large \(s\).
    Thus, for large \(s\),
    \[
        -x^2+y^2+4\abs{z}
        =
        -2c^2-2c+2t_0
        =
        -2c^2-2c+2\log(-c).
    \]
    Since \(\log x\leq x-1\) for \(x>0\), this is at most
    \(-2(c+1)^2<0\). By left invariance and
    \cref{eq:future_origin_heisenberg}, \(q(s)\in I^+(p)\) for large \(s\). As
    \(I^+(p)\) is open and the line is contained in the closure of
    \(S_{(c,0)}\), the surface itself intersects \(I^+(p)\).

    Now assume that \(-1<c<0\). Then \(t_1=-\log(-c)\) and
    \(c\sinh(t_1)=(c^2-1)/2\). This time
    \Cref{lemma:timelike_surfaces_lines_closure} gives the line
    \[
        q(s)=\left(s,-s,\frac12\left(t_1+\frac12(c^2-1)\right)\right),
        \qquad s\in\mathbb R,
    \]
    in the closure of \(S_{(c,0)}\). We compute
    \[
        q(s)^{-1}\ast p=
        \left(
        c+1-s,\,
        s,\,
        -\frac12\left(t_1+\frac12(c^2-1)\right)-\frac12(c+1)s
        \right).
    \]
    Its \(x\)-coordinate is positive for all sufficiently negative \(s\). Since
    \(c+1>0\), the term inside the absolute value is also positive for all
    sufficiently negative \(s\), and therefore
    \[
        -x^2+y^2+4\abs{z}
        =
        -2c^2-2c-2t_1
        =
        -2c^2-2c+2\log(-c)
        \leq -2(c+1)^2<0.
    \]
    Hence, by left invariance and \cref{eq:future_origin_heisenberg},
    \(q(s)\in I^-(p)\) for all sufficiently negative \(s\). Again, since
    \(I^-(p)\) is open and the line lies in the closure of the surface,
    \(S_{(c,0)}\) contains a point in \(I^-(p)\).

    In either case, \(S_{(c,0)}\) contains two chronologically related points,
    so it is not achronal.
\end{proof}

In contrast with the case \(c<0\), timelike-parameter surfaces with \(c>0\) are acausal.

\begin{proposition}
    The surfaces with timelike parameters and \(c>0\) are acausal. In
    particular, they are achronal.
\end{proposition}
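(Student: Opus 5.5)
The plan is to find a function on the region \(\{x>\abs{y}\}\) that vanishes on the surface and is strictly increasing along every future-directed causal curve. By \cref{prop:timelike_isometric_surfaces}, and since acausality is preserved by time-preserving isometries, it suffices to treat \(S_{(C,0)}\) with \(C>0\). Here
\[
x^2-y^2=1+C^2+2C\cosh t,
\qquad
z=\tfrac12(t+C\sinh t).
\]
The map \(t\mapsto z(t)\) is a strictly increasing diffeomorphism of \(\mathbb R\); let \(h\) denote its inverse. Set
\[
G(z)\coloneqq 1+C^2+2C\cosh h(z),
\qquad
F(x,y,z)\coloneqq x^2-y^2-G(z).
\]
Then \(S_{(C,0)}\subset\{F=0\}\cap\{x>\abs{y}\}\), since on the surface \(x^2-y^2\geq(C+1)^2>0\) and \(x>0\).

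\textbf{Key estimate.} By the chain rule,
\[
G'(z)=2C\sinh t\cdot\frac{dt}{dz}=\frac{4C\sinh t}{1+C\cosh t},
\qquad\text{so}\qquad
\abs{G'}<4
\]
everywhere, because \(C\abs{\sinh t}<C\cosh t<1+C\cosh t\).

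\textbf{Derivative of \(F\) along causal curves.} Let \(\gamma=(x,y,z)\) be a future-directed causal curve with \(\gamma(0)\) in the region \(x>\abs{y}\). Its velocity satisfies \(\dot x\geq\abs{\dot y}\), with \(\dot\gamma\neq0\) a.e., and the horizontality condition \cref{eq:horizontallift} gives \(\dot z=\frac12(x\dot y-y\dot x)\). Write \(a\coloneqq x\dot x-y\dot y\) and \(b\coloneqq x\dot y-y\dot x\). We have the identity
\[
a^2-b^2=(x^2-y^2)(\dot x^2-\dot y^2)\geq0,
\]
and \(a\geq0\) by the reverse Cauchy--Schwarz inequality for two future vectors in the Minkowski plane. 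Hence \(a\geq\abs b\). In particular \(x^2-y^2\) is nondecreasing, so the curve never leaves \(\{x>\abs{y}\}\). Then
\[
\frac{d}{ds}F(\gamma(s))=2a-\tfrac12G'(z)\,b\geq2\abs b\Bigl(1-\tfrac{\abs{G'}}4\Bigr)\geq0.
\]

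\textbf{Strict positivity.} If \(b\neq0\), the middle bound is strictly positive. If \(b=0\), then \((\dot x,\dot y)\) is parallel to the timelike vector \((x,y)\), so it is itself timelike and future-directed. Then \(a\geq\sqrt{x^2-y^2}\sqrt{\dot x^2-\dot y^2}>0\). Thus \(\frac{d}{ds}F\circ\gamma>0\) for a.e.\ \(s\).

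\textbf{Conclusion.} If two distinct points of \(S_{(C,0)}\) were joined by a future-directed causal curve, \(F\) would strictly increase from \(0\) to \(0\), a contradiction. Hence the surface is acausal, and in particular achronal.

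The main obstacle is the key estimate \(\abs{G'}<4\): it is exactly what lets the radial growth of \(x^2-y^2\) dominate the vertical drift, and it is sharp in the limit \(t\to\pm\infty\). The case \(c<0\) fails precisely because the corresponding bound breaks down.
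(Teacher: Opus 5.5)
Your proof is correct, and it reaches the conclusion by a genuinely different and somewhat cleaner route than the paper.

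The paper writes \(S_{(c,0)}\) as the two-sheeted graph \(\abs{z}=\Phi_c(x^2-y^2)\), so \(z\) is a two-valued function of \(r=x^2-y^2\). This forces it to do three things. First, it works with two functions \(f_\pm=\pm z-\Phi_c(r)\), which are smooth only on the open set \(r>(c+1)^2\); the \(\arccosh\) and the square root in \(\Phi_c\) degenerate on \(r=(c+1)^2\), i.e.\ along the curve \(t=0\) of the surface. Second, it checks that the horizontal gradients of \(f_\pm\) are timelike. Third, it supplies a separate argument for causal curves crossing \(\{z=0\}\), by boosting the crossing point to \((\rho,0,0)\) and comparing with \(p_0=(c+1,0,0)\).

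You instead use that \(t\mapsto z(t)\) is a diffeomorphism of \(\mathbb R\) when \(C>0\). Then \(r\) is a single-valued smooth function \(G(z)\), and \(F=x^2-y^2-G(z)\) is one smooth defining function on all of \(\heis\). The underlying mechanism is the same as the paper's. Your bound \(\abs{G'}<4\) is exactly the statement that \(\operatorname{grad}_{\heis}F\) is timelike on \(\{x>\abs y\}\), since \(g(\operatorname{grad}_{\heis}F,\operatorname{grad}_{\heis}F)=-(x^2-y^2)\bigl(4-\tfrac14G'(z)^2\bigr)\). Your inequality \(a\geq\abs b\) is the reverse Cauchy--Schwarz step done by hand. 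The better choice of defining function, however, removes the case split, the non-smooth boundary, and the crossing argument altogether.

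What the paper's formulation buys in return is the explicit closed-form description \(\abs{z}=\Phi_c(x^2-y^2)\), which it reuses in the introduction to present the conjectured maximiser.

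As a minor simplification, \(a\geq(x-\abs y)\dot x>0\) holds directly because \(\abs{\dot y}\leq\dot x\). Hence \(\frac{d}{ds}F\circ\gamma\geq 2a\bigl(1-\tfrac14\abs{G'}\bigr)>0\), and the case \(b=0\) needs no separate treatment.
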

\begin{proof}
    By \Cref{prop:timelike_isometric_surfaces}, it is enough to prove the
    result for \(S_{(c,0)}\). Put \(r=x^2-y^2\) and define
    \[
        \Phi_c(r)=\frac12\left(
        \arccosh\left(\frac{r-c^2-1}{2c}\right)
        +\sqrt{\left(\frac{r-c^2-1}{2}\right)^2-c^2}
        \right),
    \]
    on the set \(r\geq(c+1)^2\). By
    \cref{eq:general_expression_CMC_surface}, the surface \(S_{(c,0)}\) is
    contained in the zero level set of
    \[
        f(x,y,z)=\abs{z}-\Phi_c(x^2-y^2).
    \]
    Indeed, on \(S_{(c,0)}\) we have
    \(x^2-y^2=1+c^2+2c\cosh t\) and
    \(z=\frac12(t+c\sinh t)\), while \(z\) has the same sign as \(t\).

    On the open set \(x^2-y^2>(c+1)^2\), set
    \(f_\pm(x,y,z)=\pm z-\Phi_c(x^2-y^2)\). A direct computation gives
    \[
        \begin{split}
            \nabla^\heis f_\pm
                                ={} & -\frac12\left(
            \pm y+x
            \frac{\frac{x^2-y^2-c^2+1}{2}}
            {\sqrt{\left(\frac{x^2-y^2-c^2-1}{2}\right)^2-c^2}}
            \right)X +\frac12\left(
            \pm x+y
            \frac{\frac{x^2-y^2-c^2+1}{2}}
            {\sqrt{\left(\frac{x^2-y^2-c^2-1}{2}\right)^2-c^2}}
            \right)Y,
        \end{split}
    \]
    and thus
    \[
        g(\nabla^\heis f_\pm,\nabla^\heis f_\pm)
        =
        -\frac{x^2-y^2}
        {4\left(\left(\frac{x^2-y^2-c^2-1}{2}\right)^2-c^2\right)}
        <0.
    \]
    Therefore \(\nabla^\heis f_\pm\) is timelike and has fixed time orientation on \(x>|y|\).

    We first rule out causal relations inside
    \(S_{(c,0)}\cap\{z\geq0\}\) and \(S_{(c,0)}\cap\{z\leq0\}\). Suppose that
    \(p,q\in S_{(c,0)}\cap\{z\geq0\}\) and \(p\leq q\). Let \(\gamma\) be a
    future-directed causal curve from \(p\) to \(q\). The projection of
    \(\gamma\) to the Minkowski plane is
    future-directed causal. The projected region
    \[
        P_c\coloneqq\{x^2-y^2\geq(c+1)^2\}\cap\{x>0\}
    \]
    is future-closed in the Minkowski plane. Indeed, along future directed causal curves in the region \(x > \abs{y}\) we have
    \(\frac{d}{dt}(x^2-y^2)=2(x\dot x-y\dot y)>0\) by the reverse Cauchy-Schwarz inequality in Minkowski plane. Since \(S_{(c,0)}\) projects to
    \(P_c\), the projected curve stays in the domain of \(\Phi_c\). Therefore
    \(f_+\circ\gamma\) is defined, with
    \(f_+(p)=f_+(q)=0\). Away from the boundary
    \(x^2-y^2=(c+1)^2\), we have
    \[
        \frac{d}{dt}(f_+\circ\gamma)(t)
        =
        g\bigl(\nabla^\heis f_+,\dot\gamma(t)\bigr).
    \]
    Since \(\dot\gamma\) is future-directed causal and
    \(\nabla^\heis f_+\) is timelike with fixed time orientation, this
    derivative has a fixed non-zero sign. Notice that the curve can be on the boundary set \(x^2-y^2 = (c+1)^2\) at most at its starting point. Therefore \(f_+\circ\gamma\) is strictly monotone,
    contradicting \(f_+(p)=f_+(q)=0\). The same argument applies to
    \(S_{(c,0)}\cap\{z\leq0\}\), using \(f_-\).

    It remains to exclude the case where a causal curve joins points on
    different parts of the decomposition
    \(S_{(c,0)}=(S_{(c,0)}\cap\{z\geq0\})\cup(S_{(c,0)}\cap\{z\leq0\})\). Such
    a curve meets the plane \(\{z=0\}\) at some point \(r\). By the
    future-closed property of the projected region, the projection of \(r\)
    satisfies \(x^2-y^2\geq(c+1)^2\) and \(x>0\). After applying a
    boost, we may assume \(r=(\rho,0,0)\) with \(\rho\geq c+1\). Since
    \(p_0=(c+1,0,0)\) belongs to \(S_{(c,0)}\) and the \(x\)-axis is
    future-directed timelike, we have \(p_0\leq r\). If the original causal
    curve is parametrised from \(p\) to \(q\), then \(r\leq q\), hence
    \(p_0\leq q\). The points \(p_0\) and \(q\) have \(z\)-coordinates of the
    same sign, because \(z(p_0)=0\). But this contradicts the previous argument.
\end{proof}

The next result shows that all singular surfaces with nonzero mean
curvature are acausal.

\begin{proposition}\label{prop:singular_surface_acausal}
    Every singular surface \(S_{(c,d)}\), i.e. with \(c^2-d^2=1\) and \(c<0\), is acausal.
\end{proposition}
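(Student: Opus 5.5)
Boosts preserve acausality, so by \cref{prop:timelike_isometric_surfaces} it is enough to treat \(S_{(-1,0)}\). I will work with the coordinates \eqref{eq:singular_surface_tv_coordinates}, in which \(x=2\sinh v\sinh(t/2)\), \(y=2\cosh v\sinh(t/2)\) and \(z=\frac12(t-\sinh t)\). By \cref{lem:singular_surface_projection}, the surface is a graph over \(\{y^2>x^2\}\cup\{(0,0)\}\). Writing \(w=y^2-x^2\), on the right wedge \(W_+=\{y>\abs{x}\}\) we have \(t=2\arcsinh(\sqrt w/2)\). Since \(z\) is odd in \(t\), on the left wedge \(W_-=\{y<-\abs{x}\}\) we get \(z=-\Psi(w)\), where
\[
\Psi(w)=\arcsinh\bigl(\sqrt w/2\bigr)-\tfrac12\sinh\bigl(2\arcsinh(\sqrt w/2)\bigr).
\]
Accordingly, \(S_{(-1,0)}\) is the union of the origin with the zero sets of \(f_\pm=\mp z-\Psi(y^2-x^2)\) over \(W_\pm\), by analogy with the proof for timelike parameters with \(c>0\).

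The first step is planar. Suppose \(\gamma\) is a future-directed causal curve joining two points of the surface. Then its projection \(\sigma\) is a future-directed causal curve in the Minkowski plane, and the endpoints of \(\sigma\) lie in \(\overline{W_+}\cup\overline{W_-}\) and are causally related. If one endpoint is in \(W_+\) and the other in \(W_-\), the difference \((\Delta x,\Delta y)\) satisfies \(\abs{\Delta y}\geq \abs{y_p}+\abs{y_q}>\abs{x_p}+\abs{x_q}\geq\abs{\Delta x}\). It is therefore spacelike, which is impossible. Likewise, the origin is spacelike-separated from every point of \(W_\pm\).

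Hence both endpoints lie in the same wedge, say \(W_+\). The cone \(\{x>\abs y\}\) is future-closed and \(\{x<-\abs y\}\) is past-closed. So once \(\sigma\) leaves \(\overline{W_+}\) it can never come back, and therefore \(\sigma\) stays in \(\overline{W_+}\) throughout. (This uses the monotonicity of \(x^2-y^2\) along causal curves in the cones, as in the earlier proof.) It can touch \(\partial W_+\) only in a set of times on which it moves along a null line. I would check that this set contributes nothing, or else handle it by continuity.

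The second step is the key computation. On \(W_+\) I compute
\[
\nabla^\heis f_+=-\bigl(\tfrac{y}{2}+2x\Psi'(w)\bigr)X+\bigl(\tfrac{x}{2}\cdot(-1)\cdot(-1)+\dots\bigr)Y,
\]
that is, I use \(X(z)=-y/2\), \(Y(z)=x/2\), \(X(w)=-2x\) and \(Y(w)=2y\) to obtain the explicit expression. I then check that \(g(\nabla^\heis f_+,\nabla^\heis f_+)<0\), so that the gradient is timelike with a fixed time orientation on \(W_+\). Since \(\Psi\) is explicit, this reduces to a one-variable inequality in \(t\).

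Granting this, \(\frac{d}{ds}f_+(\gamma(s))=g(\nabla^\heis f_+,\dot\gamma)\) has a strict fixed sign wherever \(\sigma\in W_+\). Consequently \(f_+\circ\gamma\) is strictly monotone, which contradicts \(f_+=0\) at both endpoints. The case \(W_-\) is symmetric, using \(f_-\).

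I expect the main obstacle to be this gradient computation together with the behaviour of \(\Psi\) near \(w=0\), where \(\Psi'\) blows up like \(w^{-1/2}\). If the gradient fails to be timelike on all of \(W_+\), I would fall back on a direct argument. That argument would bound \(z(q)-z(p)\) along horizontal causal curves via \(\dot z=\frac12(x\dot y-y\dot x)\), and compare it with the graph function, using the characterisation \eqref{eq:future_origin_heisenberg} of \(J^+\) after left translation.
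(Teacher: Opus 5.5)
Your route differs from the paper's, and it works. However, the step that carries all the weight, namely that \(\operatorname{grad}_{\heis} f_\pm\) is timelike with fixed time orientation, is only announced, and the obstruction you anticipate does not exist.

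First correct a sign slip. On \(W_+\) one has \(t>0\) and \(z=\Psi(w)\), so the two sheets are the zero sets of \(f_\pm=\pm z-\Psi(y^2-x^2)\) over \(W_\pm\), not of \(\mp z-\Psi\).

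Next, from \(w=2(\cosh t-1)\) and \(\Psi=\frac12(t-\sinh t)\) you get \(\Psi'(w)=\frac{1-\cosh t}{4\sinh t}=-\frac14\tanh(t/2)\). This tends to \(0\) at the boundary of the wedge rather than blowing up; indeed \(\Psi(w)\sim-w^{3/2}/12\). Using \(X(z)=-y/2\), \(Y(z)=x/2\), \(X(w)=-2x\) and \(Y(w)=2y\), with \(t=2\arcsinh(\sqrt w/2)\), you obtain on \(W_+\times\mathbb R\)
\[
\operatorname{grad}_{\heis} f_+=\frac12\bigl(y+x\tanh(t/2)\bigr)X+\frac12\bigl(x+y\tanh(t/2)\bigr)Y,
\qquad
g\bigl(\operatorname{grad}_{\heis} f_+,\operatorname{grad}_{\heis} f_+\bigr)=-\tanh^2(t/2)<0 .
\]
The \(X\)-component is positive because \(y>\abs{x}\), so the gradient is future-directed timelike. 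The same holds for \(f_-\) on \(W_-\).

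Your planar step should also use the closed cones. The boundary \(\partial W_+\) is contained in \(\{x\geq\abs{y}\}\cup\{x\leq-\abs{y}\}\). The first set is future-closed and the second past-closed in the Minkowski plane, and neither meets \(W_+\). Hence a future-directed causal curve whose projected endpoints lie in \(W_+\) never touches \(\partial W_+\), so no continuity argument is needed. Then \(f_+\circ\gamma\) is strictly decreasing, which finishes the proof.

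The paper proceeds differently. It boosts to \(v_1=0\), computes \(p_1^{-1}\ast p_2\) explicitly, and substitutes it into the closed form \cref{eq:future_origin_heisenberg} of \(J^+(e)\). It then forces equality through elementary estimates: in its notation, the bound \(e^{-v}\leq\min\{a_1,a_2\}/\max\{a_1,a_2\}\) and the monotonicity of \(t\mapsto t+e^{-t}\). That argument needs no analysis of causal curves, only the description of the causal future of the identity.

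Your argument instead exhibits each sheet as a level set of a horizontal time function. It reduces acausality to the planar fact that future-directed causal curves cannot leave a spacelike wedge of the Minkowski plane and return. This is the same mechanism the paper uses for timelike parameters with \(c>0\) and null parameters with \(c\geq0\), so it unifies those proofs. The value \(-\tanh^2(t/2)\) also shows that the time functions degenerate exactly along the null boundary of the wedges, where the two sheets meet at the singular point.
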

\begin{proof}
    By \cref{prop:timelike_isometric_surfaces}, it is enough to prove the result for
    \(S_{(-1,0)}\). Let \(p_i=S_{(-1,0)}(t_i,u_i)\), \(i=1,2\), and suppose that \(p_1\leq p_2\). We will show that \(p_1=p_2\). Set
    \[
        a_i=\sinh(t_i/2),
        \qquad
        F_i=\frac12(\sinh t_i-t_i),
        \qquad i=1,2.
    \]
    If one of the \(t_i\) is zero, then the causal condition
    \cref{eq:future_origin_heisenberg}, together with
    \cref{lem:singular_surface_projection}, forces both points to be the
    origin. We may therefore assume that \(t_1,t_2\neq0\).

    Using the change of variable \(v=u+t/2\), set
    \[
        \widetilde S(t,v)
        =
        S_{(-1,0)}\left(t,v-\frac{t}{2}\right).
    \]
    In other words, \(\widetilde S\) is the parametrisation given in
    \cref{eq:singular_surface_tv_coordinates}. If \(p_i=\widetilde S(t_i,v_i)\), then applying the boost \(B_{-v_1}\) to both
    points gives
    \[
        B_{-v_1}p_1=\widetilde S(t_1,0),
        \qquad
        B_{-v_1}p_2=\widetilde S(t_2,v_2-v_1).
    \]
    Since boosts preserve the causal order, we may replace \(p_1,p_2\) by these
    boosted points. Thus we assume \(v_1=0\) and write \(v=v_2-v_1\). Then
    \[
        p_1=(0,2a_1,-F_1),
        \qquad
        p_2=(2a_2\sinh v,2a_2\cosh v,-F_2).
    \]
    A direct computation gives
    \begin{equation}\label{eq:singular_acausal_difference}
        p_1^{-1}\ast p_2
        =
        \left(
        2a_2\sinh v,\;
        2(a_2\cosh v-a_1),\;
        F_1-F_2+2a_1a_2\sinh v
        \right) =: (x, y, z).
    \end{equation}
    By \cref{eq:future_origin_heisenberg}, the inequality \(p_1\leq p_2\)
    implies
    \begin{equation}\label{eq:singular_acausal_necessary_inequality}
        a_1^2+a_2^2-2a_1a_2\cosh v\leq0.
    \end{equation}
    Hence \(a_1a_2>0\), so \(t_1\) and \(t_2\) have the same sign. The timed
    rotation \((x,y,z)\mapsto(x,-y,-z)\) preserves the causal order and sends
    \(t\) to \(-t\), so we may assume \(t_1,t_2>0\). Then \(a_1,a_2>0\), and
    the first coordinate of \(p_1^{-1}\ast p_2\) gives \(v\geq0\). We now use \cref{eq:singular_acausal_necessary_inequality} once more. Since
    \(a_1,a_2>0\), it gives
    \[
        \cosh v\geq\frac{a_1^2+a_2^2}{2a_1a_2}
        =
        \frac12\left(
        \frac{\max\{a_1,a_2\}}{\min\{a_1,a_2\}}
        +
        \frac{\min\{a_1,a_2\}}{\max\{a_1,a_2\}}
        \right) \geq 1.
    \]
    As \(v\geq0\) and since $s \mapsto s - \sqrt{s^2 - 1}$ is decreasing on $\rinterval{1}{+\infty}$, this implies
    \begin{equation}\label{eq:singular_acausal_exp_bound}
        \begin{aligned}
            e^{-v}
             & =\cosh v-\sqrt{\cosh^2 v-1}               \\
             & \leq
            \frac12\left(
            \frac{\max\{a_1,a_2\}}{\min\{a_1,a_2\}}
            +
            \frac{\min\{a_1,a_2\}}{\max\{a_1,a_2\}}
            \right)
            -\sqrt{
                 \frac14\left(
                 \frac{\max\{a_1,a_2\}}{\min\{a_1,a_2\}}
                 +
                 \frac{\min\{a_1,a_2\}}{\max\{a_1,a_2\}}
                 \right)^2-1}  \\
             & =\frac{\min\{a_1,a_2\}}{\max\{a_1,a_2\}}.
        \end{aligned}
    \end{equation}
    Moreover,
    \begin{equation}\label{eq:singular_acausal_F_bound}
        \begin{aligned}
            F_2-F_1
             & =
            \frac12(\sinh t_2-t_2-\sinh t_1+t_1) =a_2^2-a_1^2
            -\frac12\bigl(t_2+e^{-t_2}-(t_1+e^{-t_1})\bigr) \\
             & \leq a_2^2-a_1^2
            \quad\text{whenever } t_1\leq t_2,
        \end{aligned}
    \end{equation}
    because
    \(t\mapsto t+e^{-t}\) is increasing on \([0,+\infty)\). By
    \cref{eq:singular_acausal_difference,eq:future_origin_heisenberg}, we find
    that
    \begin{equation}\label{eq:singular_acausal_main_chain}
        \begin{aligned}
            0  \geq \frac14(-x^2+y^2+4\abs z) & = a_1^2+a_2^2-2a_1a_2\cosh v
            +
            \abs{F_1-F_2+2a_1a_2\sinh v}                                              \\
                                              & \geq a_1^2+a_2^2-2a_1a_2\cosh v
            +F_1-F_2+2a_1a_2\sinh v                                                   \\
                                              & =a_1^2+a_2^2-2a_1a_2(\cosh v-\sinh v)
            +F_1-F_2                                                                  \\
                                              & =a_1^2+a_2^2-2a_1a_2e^{-v}+F_1-F_2.
        \end{aligned}
    \end{equation}

    Using \cref{eq:singular_acausal_exp_bound,eq:singular_acausal_F_bound}, if
    \(t_1\leq t_2\), the last term is at least
    \(a_2^2-a_1^2+F_1-F_2\geq0\). If \(t_2\leq t_1\), it is at least
    \(a_1^2-a_2^2+F_1-F_2\geq0\).
    Therefore, we have \( -x^2+y^2+4\abs z=0\). Every inequality in
    \cref{eq:singular_acausal_main_chain} is actually an equality, and therefore
    equality in the estimates above gives \(a_1=a_2\) and \(v=0\). Hence
    \(t_1=t_2\) and \(p_1=p_2\), so no two distinct points of \(S_{(-1,0)}\) are
    causally related.
\end{proof}

All the surfaces considered here are non-compact and so unbounded. For compact
sets, one can always find a point whose future contains the whole set. It is
therefore natural to ask whether an entire timelike-parameter surface with
\(c>0\) can be contained in the future of a single point, as happens for
hyperboloids in Minkowski space. The answer is negative.

\begin{proposition}
    Let \(S_{(c,d)}\) be a surface with timelike parameters and \(c>0\). Then
    there is no point \(p\in\heis\) such that \(S_{(c,d)}\subset J^+(p)\).
\end{proposition}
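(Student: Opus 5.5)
We first reduce to the normal form \(S_{(c,0)}\). By \cref{prop:timelike_isometric_surfaces} there is a time-preserving boost \(F\) with \(F(S_{(c,d)})=S_{(C,0)}\), where \(C=\sqrt{c^2-d^2}>0\). A time-preserving isometry maps future-directed causal curves to future-directed causal curves, so \(F(J^+(p))=J^+(F(p))\). Hence it suffices to treat \(S_{(c,0)}\) with \(c>0\). This is the same reduction as in \cref{lemma:timelike_surfaces_not_in_future_of_origin}, which is exactly the case \(p=0\). The plan is to redo that argument for an arbitrary pole, choosing the escaping points carefully so that the terms coming from \(p\) stay bounded.

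Suppose, for contradiction, that \(p=(a,b,e)\) satisfies \(S_{(c,0)}\subset J^+(p)\). For a point \(q=(x,y,z)\) we have
\[
p^{-1}\ast q=\Bigl(x-a,\;y-b,\;z-e+\tfrac12(bx-ay)\Bigr).
\]
By left-invariance and \cref{eq:future_origin_heisenberg}, \(q\in J^+(p)\) forces
\[
Q(q)\coloneqq-(x-a)^2+(y-b)^2+4\Bigl|z-e+\tfrac12(bx-ay)\Bigr|\leq0 .
\]
We bound the absolute value from below by the signed quantity, \(4\abs{w}\geq 4w\). Expanding the squares, the cross terms combine as \(2(ax-by)+2(bx-ay)=2(a+b)(x-y)\). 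This gives
\[
Q(q)\geq-(x^2-y^2)+2(a+b)(x-y)+4z-a^2+b^2-4e .
\]

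The key step is the choice of points on which this lower bound blows up. Two terms could grow: \(x-y\) and \(x^2-y^2\). Take \(u=0\) and \(t\to+\infty\) in \cref{eq:general_expression_CMC_surface}, so that \(x=\cosh t+c\), \(y=\sinh t\) and \(z=\tfrac12(t+c\sinh t)\). Then \(x-y=e^{-t}+c\) stays bounded, and \(x^2-y^2=1+c^2+2c\cosh t\). The exponential growth therefore cancels:
\[
-(x^2-y^2)+4z=-1-c^2-2c\cosh t+2t+2c\sinh t=-1-c^2-2ce^{-t}+2t\longrightarrow+\infty .
\]
Consequently \(Q(S_{(c,0)}(t,0))\to+\infty\), which contradicts \(Q\leq0\) along the surface. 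No \(p\) with \(S_{(c,d)}\subset J^+(p)\) can exist.

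The main obstacle is the linear terms in \(x,y\) that the translation introduces. Along a general escape direction they are of order \(e^t\) and would swamp the logarithmic-type gain \(2t\). The step that makes the argument work is pairing the absolute value with its sign, so that only the combination \(x-y\) survives, and then moving along the \(u=0\) profile, on which \(x-y\) stays bounded. Once that is done, the rest is the computation already carried out in \cref{lemma:timelike_surfaces_not_in_future_of_origin}.
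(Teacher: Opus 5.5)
Your proof is correct, and it takes a more direct route than the paper's. The paper argues in two steps. First it shows \(S_{(c,0)}\not\subset J^+(p(a))\) for the special poles \(p(a)=(a,0,0)\) with \(a<0\), by computing \(-x^2+y^2+4\abs{z}=-(c-a)^2-1-2(c-a)e^{-t}+2t\) for \(p(a)^{-1}\ast S_{(c,0)}(t,0)\). Then it observes that an arbitrary pole \(p'\) lies in \(I^+(p(a))\) for \(a\ll0\), so \(J^+(p')\subset J^+(p(a))\) by transitivity, and containment in \(J^+(p')\) would contradict the first step.

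You instead handle an arbitrary pole \(p\) in one estimate. Replacing \(4\abs{w}\) by \(4w\) makes the translation's linear terms collapse to \(2(a+b)(x-y)\), which stays bounded along the same profile \(u=0\), \(t\to+\infty\). The cancellation \(-(x^2-y^2)+4z=-1-c^2-2ce^{-t}+2t\) then finishes the argument. With \(b=0\) and third coordinate \(0\), your lower bound reproduces the paper's expression exactly.

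What your version buys is a single explicit inequality in terms of the coordinates of \(p\), with no appeal to transitivity of the causal relation or to the auxiliary family \(p(a)\). The paper's version only computes for a one-parameter family of poles, where the absolute value can be removed exactly, at the cost of the extra exhaustion step.

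Two cosmetic points. \cref{lemma:timelike_surfaces_not_in_future_of_origin} does not actually reduce to \((c,0)\); it works with general \((c,d)\) through \(\arcsinh(d/\sqrt{c^2-d^2})\). And calling the third coordinate of \(p\) by the letter \(e\) clashes with both the identity element and the exponential in \(e^{-t}\).
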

\begin{proof}
    Since isometries preserve causal futures, \Cref{prop:timelike_isometric_surfaces}
    allows us to prove the statement for \(S_{(c,0)}\). We first show that
    \(S_{(c,0)}\) is not contained in \(J^+(p(a))\), where \(p(a)=(a,0,0)\) and
    \(a<0\). We consider the curve \(q(t)=S_{(c,0)}(t,0)\), and compute
    \[
        p(a)^{-1}\ast q(t)
        =
        \left(\cosh t+c-a,\sinh t,\frac12(t+(c-a)\sinh t)\right).
    \]
    Since \(c-a>0\), and we find that
    \[
        \begin{split}
            -x^2+y^2+4\abs{z}
             & =-(\cosh t+c-a)^2+\sinh^2t
            +2\abs{t+(c-a)\sinh t}          \\
             & =-(c-a)^2-1-2(c-a)e^{-t}+2t.
        \end{split}
    \]
    This tends to \(+\infty\) as \(t\to+\infty\). By
    \cref{eq:future_origin_heisenberg}, \(q(t)\notin J^+(p(a))\) for all large
    \(t\), and \(S_{(c,0)}\not\subset J^+(p(a))\).

    Now suppose, for contradiction, that \(S_{(c,0)}\subset J^+(p')\) for some
    \(p'=(p_1,p_2,p_3)\). For \(a\to-\infty\), we have
    \[
        p(a)^{-1}\ast p'
        =
        \left(p_1-a,p_2,p_3-\frac12ap_2\right),
    \]
    and
    \[
        -(p_1-a)^2+p_2^2+2\abs{2p_3-ap_2}\to-\infty.
    \]
    Also \(p_1-a>0\) for \(a\ll0\), so \cref{eq:future_origin_heisenberg}
    gives \(p'\in I^+(p(a))\) for \(a\ll0\). Therefore
    \(J^+(p')\subset J^+(p(a))\), and hence
    \(S_{(c,0)}\subset J^+(p(a))\), contradicting the first part of the proof.
\end{proof}

We end this subsection by recording the limiting behaviour as \(k \to +\infty\) of
\(S_{(c,0)}\) with \(c>0\), when the curvature parameter \(k\) is restored.

\begin{proposition}
    After the change of variable \(s=\sinh(kt)/k\), the surfaces \(S_{(c,0)}\),
    with \(c>0\), are parametrized by
    \begin{equation}\label{eq:timelike_k_rescaled_surfaces}
        B_u\left(
        \frac{\sqrt{1+k^2s^2}}{k}+c,\,
        s,\,
        \frac{1}{2k^2}\arcsinh(ks)+\frac{cs}{2}
        \right).
    \end{equation}
    As \(k\to+\infty\), the parametrisations in
    \cref{eq:timelike_k_rescaled_surfaces} converge to
    \begin{equation}\label{eq:timelike_k_limit_surface}
        B_u\left(
        \abs{s}+c,\,
        s,\,
        \frac{cs}{2}
        \right).
    \end{equation}
    Moreover, as \(c\to0^+\), the hypersurfaces in
    \cref{eq:timelike_k_limit_surface} converge to
    \(J^+(0)\setminus I^+(0)\).
\end{proposition}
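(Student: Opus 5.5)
The plan is to restore the curvature parameter \(k\) in the profile \cref{eq:profilenot0} with \(\varepsilon=1\), \(d=0\), and then carry out three steps: a change of variables, a pointwise limit, and a set-level limit.

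\emph{Change of variables.} With \(\varepsilon=1\) and \(d=0\) the profile is
\[
\left(\tfrac1k\cosh(kt)+c,\ \tfrac1k\sinh(kt),\ \tfrac1{2k}\bigl(t+c\sinh(kt)\bigr)\right).
\]
Since \(t\mapsto \sinh(kt)/k\) is a diffeomorphism of \(\mathbb R\), the substitution \(s=\sinh(kt)/k\) is a legitimate reparametrisation, with \(kt=\arcsinh(ks)\) and \(\cosh(kt)=\sqrt{1+k^2s^2}\). Substituting:
\begin{itemize}
\item the first component becomes \(\sqrt{1+k^2s^2}/k+c\);
\item the second component becomes \(s\);
\item the third component becomes \(\frac{1}{2k}\bigl(\arcsinh(ks)/k+cks\bigr)=\frac{1}{2k^2}\arcsinh(ks)+\frac{cs}{2}\).
\end{itemize}
Applying \(B_u\) then gives \cref{eq:timelike_k_rescaled_surfaces}. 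The only subtlety is the choice of normalisation: the third component must be \(\frac{1}{2k}(\cdot)\) as in \cref{eq:profilenot0}, not the dilated \(k=1\) form. I will state this explicitly so that the family in the statement is the undilated one.

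\emph{Limit as \(k\to+\infty\).} The convergence is pointwise, and in fact locally uniform in \((s,u)\).
\begin{itemize}
\item First component: \(\bigl|\sqrt{1+k^2s^2}/k-|s|\bigr|=\dfrac{1/k^2}{\sqrt{1/k^2+s^2}+|s|}\le 1/k\).
\item Third component: \(\bigl|\arcsinh(ks)\bigr|\le \log(1+2k|s|)+\log 2\), so \(\arcsinh(ks)/(2k^2)\to0\) locally uniformly.
\end{itemize}
Since \(B_u\) is linear and continuous in \(u\), this yields \cref{eq:timelike_k_limit_surface}.

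\emph{Limit as \(c\to0^+\).} This is the step I expect to be the main obstacle, since it is the only set-level (not parametric) statement. At \(c=0\) the parametrisation becomes \(B_u(|s|,s,0)\).
\begin{itemize}
\item For \(s>0\): \(B_u(s,s,0)=(se^u,se^u,0)\).
\item For \(s<0\): \(B_u(-s,s,0)=(|s|e^{-u},-|s|e^{-u},0)\).
\item At \(s=0\): the origin.
\end{itemize}
So the limit set is \(\{(x,y,0):x=|y|\}\), the two future null half-lines together with the origin.

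I then identify \(J^+(0)\setminus I^+(0)\) using \cref{eq:future_origin_heisenberg}. It consists of the points with \(x\ge0\) and \(-x^2+y^2+4|z|=0\), together with the points with \(x=0\) satisfying the closed inequality; the latter force \(y=z=0\). Hence
\[
J^+(0)\setminus I^+(0)=\{x\ge0,\ x^2=y^2+4|z|\},
\]
which is the full boundary cone. It contains more than the \(z=0\) slice, so I cannot conclude by matching parametrisations pointwise, and the sense of convergence matters.

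To handle this, I will rewrite the limit surfaces for \(c>0\) implicitly. For \(s>0\),
\[
x^2-y^2=(s+c)^2e^{0}-\dots=c^2+2cs,
\]
computed at \(u=0\) and invariant under boosts, while \(z=cs/2\). Eliminating \(s\) gives
\[
x^2-y^2=c^2+4|z|,\qquad x>0,
\]
and the case \(s<0\) gives the same relation with \(z<0\). Thus each surface in \cref{eq:timelike_k_limit_surface} is exactly the set \(\{x>0,\ x^2-y^2-4|z|=c^2\}\), up to checking that every such point is attained, which follows from the explicit parametrisation.

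Letting \(c\to0^+\), these level sets converge, for instance in the local Hausdorff sense, to \(\{x\ge0,\ x^2=y^2+4|z|\}=J^+(0)\setminus I^+(0)\). This is the interpretation of convergence I will use.
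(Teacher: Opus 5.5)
Your proposal is correct. The change of variables and the limit $k\to+\infty$ match the paper's computation; you only make the locally uniform convergence quantitative.

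The last step takes a different route. The paper stays parametric. By boost invariance it looks at the profile in $\{y=0\}$, where the limit surface has $x=\sqrt{c^2+2c\abs{s}}$. It then reparametrises by $r=2cs$, so that the height $z=r/4$ no longer depends on $c$. The surfaces become $B_u\bigl(\sqrt{c^2+\abs{r}},0,r/4\bigr)$, which converge locally uniformly in $(r,u)$ to $B_u\bigl(\sqrt{\abs{r}},0,r/4\bigr)$. This is an explicit parametrisation of $\{x\geq0,\ -x^2+y^2+4\abs{z}=0\}$.

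You instead eliminate the parameter and describe the limit surfaces implicitly as $\{x>0,\ x^2-y^2-4\abs{z}=c^2\}$. Both arguments rest on the same identity $x^2-y^2=c^2+4\abs{z}$. The paper's rescaling $r=2cs$ is exactly what your remark shows to be necessary: with $s$ fixed, the limit collapses onto $\{z=0\}$. Your version gives a parametrisation-free notion of convergence and explains why the naive limit fails. The paper's version gives an explicit parametrisation of the limiting cone, in the same style as the first two claims.

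The local Hausdorff convergence you assert follows in one line. Your sets are the graphs $x=\sqrt{y^2+4\abs{z}+c^2}$ over the $(y,z)$-plane, and $0\leq\sqrt{a+c^2}-\sqrt{a}\leq c$. So they lie within Hausdorff distance $c$ of the graph $x=\sqrt{y^2+4\abs{z}}$, which is $J^+(0)\setminus I^+(0)$.

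Surjectivity onto the level set also follows directly. Given a point on it, take $s=2z/c$. Then $(\abs{s}+c,s)$ and $(x,y)$ are future timelike vectors of equal Minkowski norm, hence related by a boost.
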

\begin{proof}
    With the curvature parameter \(k\), the \((c,0)\)-surface is parametrized by
    \[
        B_u\left(
        \frac{\cosh(kt)}{k}+c,\,
        \frac{\sinh(kt)}{k},\,
        \frac{1}{2k}\left(t+c\sinh(kt)\right)
        \right).
    \]
    Setting \(s=\sinh(kt)/k\), we have
    \(t=k^{-1}\arcsinh(ks)\) and
    \(\cosh(kt)/k=\sqrt{1+k^2s^2}/k\), which gives
    \cref{eq:timelike_k_rescaled_surfaces}. Since
    \(\sqrt{1+k^2s^2}/k\to\abs{s}\) and
    \(\arcsinh(ks)/(2k^2)\to0\) \textcolor{blue}{as \(k \to +\infty\)}, the parametrisations converge to
    \cref{eq:timelike_k_limit_surface}.
    For every \(u\), the curve
    \(s\mapsto B_u\left(\abs{s}+c,s,cs/2\right)\) is horizontal and null on
    each of the intervals \(s>0\) and \(s<0\), and it is not smooth at \(s=0\).
    Thus the limit in \cref{eq:timelike_k_limit_surface} is a null
    hypersurface.

    For fixed \(c>0\), the profile curve of
    \cref{eq:timelike_k_limit_surface} in the plane \(y=0\) is obtained from
    \(s\cosh u+(\abs{s}+c)\sinh u=0\). Hence
    \(\tanh u=-s/(\abs{s}+c)\) and with \(r=2cs\), the corresponding profile is
    \[
        \left(\sqrt{c^2+2c\abs{s}},0,\frac{cs}{2}\right)
        =
        \left(\sqrt{c^2+\abs{r}},0,\frac{r}{4}\right)
        \longrightarrow
        \left(\sqrt{\abs{r}},0,\frac{r}{4}\right)
    \]
    as \(c\to0^+\).
    Boosting this limiting profile gives
    \[
        B_u\left(\sqrt{\abs{r}},0,\frac{r}{4}\right)
        =
        \left(
        \sqrt{\abs{r}}\cosh u,\,
        \sqrt{\abs{r}}\sinh u,\,
        \frac{r}{4}
        \right).
    \]
    This parametrizes the set \(\{-x^2+y^2+4\abs{z}=0,\ x\geq0\}\), which is
    \(J^+(0)\setminus I^+(0)\) by \cref{eq:future_origin_heisenberg}.
\end{proof}

\subsection{Surfaces of null parameters}

We next consider the family of surfaces \(S_{(c,d)}\) with null parameters, that
is, those for which \(c^2-d^2=0\). Equivalently, apart from the degenerate case
\((c,d)=(0,0)\), we may write \(c=\omega d\), where
\(\omega\in\{-1,1\}\). In this case the parametrisation
\cref{eq:general_expression_CMC_surface} reduces to
\begin{equation}\label{eq:null_parameter_surface_parametrization}
    S_{(c,\omega c)}(t,u)
    =
    B_u\left(
    \cosh(t)+c,\;
    \sinh(t)+\omega c,\;
    \frac{1}{2}\left(t-\omega c e^{-\omega t}+\omega c\right)
    \right).
\end{equation}
We have illustrated a surface of null parameter in \cref{fig:nullsurface}.

\begin{figure}[ht]
    \centering
    \includegraphics[scale = 0.6]{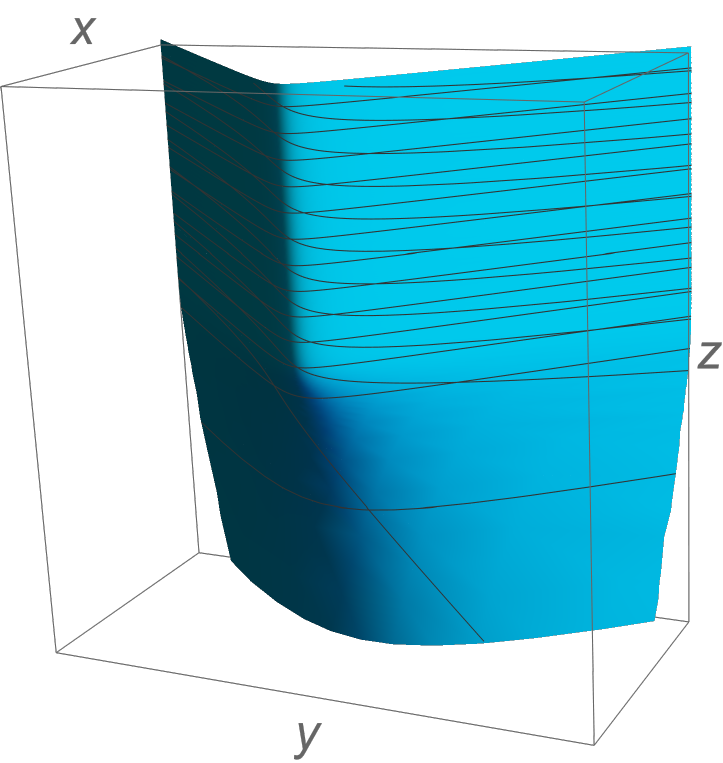}
    \caption{A surface \(S_{(c,d)}\) with null parameter.}
    \label{fig:nullsurface}
\end{figure}

As in \cref{prop:spacelike_isometric_surfaces,prop:timelike_isometric_surfaces},
we first give sufficient conditions for two null-parameter surfaces to be
isometric by either a time-preserving boost or a time-preserving rotation.

\begin{proposition}\label{prop:null_surfaces_isometric}
    Let \(S_{(c,\omega c)}\) and \(S_{(c',\omega' c')}\) be two surfaces with
    nonzero null parameters. Then the following hold:
    \begin{enumerate}[label=\normalfont(\roman*)]
        \item If \(cc'>0\) and \(\omega=\omega'\), then the two surfaces are
              isometric by a time-preserving boost.
        \item\label{prop:null_surfaces_timed_rotations_isometric} If \(cc'>0\)
              and \(\omega=-\omega'\), then the two surfaces are isometric by a
              time-preserving rotation.
    \end{enumerate}
\end{proposition}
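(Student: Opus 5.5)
Following \cref{prop:spacelike_isometric_surfaces,prop:timelike_isometric_surfaces}, the plan is to reduce every nonzero null-parameter surface to a canonical representative by reparametrising with \(t=s-\alpha\), \(u=v+\alpha\), combined with a vertical translation. Null vectors cannot be normalised by a boost to an axis. Instead, a boost rescales \((c,\omega c)\) along its null line: \(B_\alpha(c,\omega c)=ce^{\omega\alpha}(1,\omega)\). So the canonical choice will be \((\sign(c),\omega\sign(c))\), i.e. \(|c|\) normalised to \(1\).

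For (i), I would start from \(S_{(c,\omega c)}(t,u)=B_u\gamma_{(c,\omega c)}(t)\) and substitute \(t=s-\alpha\), \(u=v+\alpha\), so that \(B_{v+\alpha}\gamma_{(c,\omega c)}(s-\alpha)=B_v\bigl(B_\alpha\gamma_{(c,\omega c)}(s-\alpha)\bigr)\). The addition formulae give
\[
B_\alpha\bigl(\cosh(s-\alpha),\sinh(s-\alpha)\bigr)=(\cosh s,\sinh s)
\qquad\text{and}\qquad
B_\alpha(c,\omega c)=ce^{\omega\alpha}(1,\omega).
\]
Hence the first two components agree with those of \(S_{(c',\omega c')}(s,v)\), where \(c'=ce^{\omega\alpha}\). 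This hits every \(c'\) with \(cc'>0\).

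For the third component, I would compare \(\frac12\bigl(s-\alpha-\omega ce^{-\omega(s-\alpha)}+\omega c\bigr)\) with \(\frac12\bigl(s-\omega c'e^{-\omega s}+\omega c'\bigr)\). Since \(ce^{\omega\alpha}=c'\), the exponential terms coincide exactly. The difference is therefore the constant \(\frac12(-\alpha+\omega c-\omega c')\), which is absorbed by left translation by \((0,0,\text{const})\). That translation is central and commutes with boosts. This gives a time-preserving boost isometry and proves (i).

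For (ii), I would compose (i) with \cref{prop:timed_rotation_behaviour}. That proposition states that \(S_{(c',d')}\) is isometric to \(S_{(c',-d')}\) by a timed rotation, and that statement holds for all parameters. So when \(\omega'=-\omega\), first apply (i) to send \(S_{(c,\omega c)}\) to \(S_{(c',\omega c')}\), using \(cc'>0\). Then apply the timed rotation to reach \(S_{(c',-\omega c')}=S_{(c',\omega' c')}\). The composite Lorentz part is \(T_0B_u\), which is a time-preserving rotation.

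The only delicate step is the third-coordinate bookkeeping in (i). One has to check that the \(z\)-coordinate really changes by a constant, i.e. that the parameter-dependent terms cancel. This relies on the specific form \(-\omega ce^{-\omega t}\), which is where nullity is used: the \(\cosh\) and \(\sinh\) terms of the general formula collapse to a single exponential that scales like \(c\).
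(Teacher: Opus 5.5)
Your proof is correct and is essentially the paper's argument. In (i) you use the same reparametrisation \(t=s-\alpha\), \(u=v+\alpha\) with \(ce^{\omega\alpha}=c'\), match the horizontal components, and absorb the constant \(z\)-discrepancy by a central vertical translation; in (ii) you compose with the timed rotation of \cref{prop:timed_rotation_behaviour}, which the paper applies before (i) rather than after, and the order makes no difference.
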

\begin{proof}
    We first prove \normalfont{(i)}.
    Since \(cc'>0\), choose
    \(\alpha\in\mathbb R\) such that \(c e^{\omega\alpha}=c'\), that is,
    \(\alpha=\omega\log(c'/c)\). Starting from
    \(S_{(c,\omega c)}(t,u)=B_u\gamma_{(c,\omega c)}(t)\), make the change of
    parameters \(s=t+\alpha\), \(v=u-\alpha\). Then \(t=s-\alpha\),
    \(u=v+\alpha\), and
    \begin{align*}
        S_{(c,\omega c)} & (s-\alpha,v+\alpha)
        =
        B_{v+\alpha}\gamma_{(c,\omega c)}(s-\alpha) \\
                         & =
        \left(
        \cosh(s+v)+c'e^{\omega v},\;
        \sinh(s+v)+\omega c'e^{\omega v},\;
        \frac{1}{2}\left(s-\alpha-\omega c'e^{-\omega s}+\omega c\right)
        \right).
    \end{align*}
    Therefore, we have that
    \[
        S_{(c',\omega c')}(s,v)
        =
        \left(0,0,\frac12(\alpha+\omega c'-\omega c)\right)
        \ast S_{(c,\omega c)}(s-\alpha,v+\alpha),
    \]
    proving \normalfont{(i)}.

    We now prove \normalfont{(ii)}.
    By \cref{prop:timed_rotation_behaviour}, a timed rotation sends
    \(S_{(c,\omega c)}\) to \(S_{(c,-\omega c)}\). Since
    \(-\omega=\omega'\) and \(cc'>0\), part \normalfont{(i)}
    gives a time-preserving boost from \(S_{(c,-\omega c)}\) to
    \(S_{(c',\omega' c')}\). The composition of these two isometries has
    time-preserving rotation Lorentz part.
\end{proof}

\begin{lemma}\label{lemma:null_surface_vertical_line_intersection}
    Every surface \(S_{(c,\omega c)}\) with \(c\neq0\) is intersected at most
    once by any vertical line.
\end{lemma}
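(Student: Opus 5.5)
The plan is to show that on \(S_{(c,\omega c)}\) the \(z\)-coordinate is a function of \(x^2-y^2\) alone. Then two points of the surface with the same \((x,y)\)-projection must coincide, which is exactly the statement that a vertical line meets the surface at most once. We do not need to decide whether \((t,u)\) is recovered uniquely from the point. It is enough that \(t\), and hence \(z\), is recovered from \(x^2-y^2\).

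First we expand the parametrisation \cref{eq:null_parameter_surface_parametrization}. Using the addition formulae and \(\cosh u+\omega\sinh u=e^{\omega u}\), valid since \(\omega^2=1\), we obtain
\[
    x=\cosh(t+u)+c\,e^{\omega u},
    \qquad
    y=\sinh(t+u)+\omega c\,e^{\omega u},
    \qquad
    z=\frac12\bigl(t-\omega c\,e^{-\omega t}+\omega c\bigr).
\]
Since \(x^2-y^2\) is boost-invariant, we could equally evaluate it at \(u=0\). We then compute it directly, writing \(w=t+u\):
\[
    x^2-y^2
    =1+2c\,e^{\omega u}\bigl(\cosh w-\omega\sinh w\bigr)
    =1+2c\,e^{\omega u}e^{-\omega w}
    =1+2c\,e^{-\omega t}.
\]
The cross term vanishes because \((c,\omega c)\) is null.

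Since \(c\neq0\), the map \(t\mapsto 1+2ce^{-\omega t}\) is strictly monotone. Hence \(t\) is uniquely determined by the value of \(x^2-y^2\) at a point of the surface, through \(e^{-\omega t}=(x^2-y^2-1)/(2c)\). The \(z\)-coordinate depends only on \(t\), so we can substitute \(\omega c\,e^{-\omega t}=\omega(x^2-y^2-1)/2\). This gives the explicit graph description
\[
    z=\frac12\left(-\omega\log\frac{x^2-y^2-1}{2c}-\frac{\omega}{2}\bigl(x^2-y^2-1\bigr)+\omega c\right)
\]
on \(S_{(c,\omega c)}\).

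Finally, suppose two points of the surface lie on the same vertical line. They share \(x\) and \(y\), hence share \(x^2-y^2\), hence share \(t\) and therefore \(z\), so they coincide. The only step needing care is the algebraic simplification of \(x^2-y^2\), namely that the \(e^{2\omega u}\) terms cancel through \(c^2-(\omega c)^2=0\). Everything else is immediate.
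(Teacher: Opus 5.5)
Your proof is correct, but it takes a different route from the paper's.

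The paper inverts the planar map \((t,u)\mapsto(x,y)\) completely. The null combination \(x-\omega y=e^{-\omega(t+u)}\) recovers \(t+u\), and then \(ce^{\omega u}=x-\cosh(t+u)\) recovers \(u\); this is where \(c\neq0\) is used. So both parameters, and hence \(z\), are functions of \((x,y)\).

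You instead use only the boost-invariant quantity \(x^2-y^2=1+2ce^{-\omega t}\). It determines \(t\) alone, with \(c\neq0\) entering through strict monotonicity. You then note that this suffices because \(z\) depends only on \(t\).

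The paper's argument gives slightly more: the parametrisation \(S_{(c,\omega c)}\) is itself injective. Yours gives the explicit graph description \(z=z(x^2-y^2)\) over the projected region. The paper derives that description separately (for \(c=1\)) and relies on it later in \cref{prop:null_surfaces_boost_classification,prop:null_surfaces_nonnegative_c_acausal}. In the context of this section, your route is therefore the more economical one.
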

\begin{proof}
    The first two components of the
    parametrisation of  \(S_{(c,\omega c)}(t,u)\) are
    \[
        x=\cosh(t+u)+c e^{\omega u},
        \qquad
        y=\sinh(t+u)+\omega c e^{\omega u}.
    \]
    Therefore, we get that
    \[
        x-\omega y
        =
        \cosh(t+u)-\omega\sinh(t+u)
        =
        e^{-\omega(t+u)}.
    \]
    Thus the sum \(t+u\) is uniquely determined by the \((x,y)\)-coordinates.
    Once \(t+u\) is known, the identity
    \[
        c e^{\omega u}=x-\cosh(t+u)
    \]
    determines \(u\), because \(c\neq0\), and \(t\) is also determined.
    The parameters \((t,u)\), and hence the \(z\)-coordinate of the point on the
    surface, are uniquely determined by \((x,y)\). Thus no vertical line can
    meet the surface more than once.
\end{proof}

The presence of vertical lines gives an isometry invariant that distinguishes
the degenerate null surface from all other null-parameter surfaces.

\begin{corollary}\label{coro:surface_null_with_vanishing_parameters_no_isometry}
    The surface \(S_{(0,0)}\) is not isometric to any surface
    \(S_{(c,\omega c)}\) with \(c\neq0\).
\end{corollary}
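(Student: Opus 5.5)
The plan is to find an isometry invariant that tells \(S_{(0,0)}\) apart from the surfaces \(S_{(c,\omega c)}\) with \(c\neq0\). The sentence before the corollary suggests using vertical lines: we will show that \(S_{(0,0)}\) meets some vertical line at least twice. By \cref{lemma:null_surface_vertical_line_intersection}, no surface \(S_{(c,\omega c)}\) with \(c\neq0\) has this property. It then remains to check that the property is preserved by every sub-Lorentzian isometry.

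First, write out \(S_{(0,0)}\). From \cref{eq:general_expression_CMC_surface} with \(c=d=0\),
\[
S_{(0,0)}(t,u)=\bigl(\cosh(t+u),\,\sinh(t+u),\,\tfrac12 t\bigr).
\]
The projection \((x,y)\) depends only on \(t+u\). Fixing \(t+u=\theta\) and letting \(t\) vary gives the entire vertical line through \((\cosh\theta,\sinh\theta)\). So \(S_{(0,0)}\) is a union of full vertical lines over the hyperbola branch \(x^2-y^2=1\), \(x>0\), and in particular it meets a vertical line in infinitely many points.

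Second, show that isometries map vertical lines to vertical lines. By \cref{thm:sub_lorentzian_isometries}, every isometry has the form \(F(p)=q\ast L(p)\), where \(L(x,y,z)=(\Lambda(x,y),\det(\Lambda)z)\). The linear part \(L\) sends the vertical line \(\{(a,b,z):z\in\mathbb R\}\) to the vertical line over \(\Lambda(a,b)\). Left translation by \(q=(q_1,q_2,q_3)\) sends \((x,y,z)\) to \((x+q_1,y+q_2,z+q_3+\tfrac12(q_1y-q_2x))\). For fixed \((x,y)\), the third coordinate is \(z\) plus a constant, so vertical lines go to vertical lines bijectively. Since \(F\) is a bijection, a surface meeting a vertical line at two or more points is mapped to a surface meeting the image vertical line at two or more points.

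Combining the two steps: if an isometry sent \(S_{(0,0)}\) onto some \(S_{(c,\omega c)}\) with \(c\neq0\), the image surface would meet a vertical line infinitely often. This contradicts \cref{lemma:null_surface_vertical_line_intersection}. None of the steps is a real obstacle. The only point needing care is the explicit check that left translations preserve vertical lines, which the group law makes clear.
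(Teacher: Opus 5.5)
Your proposal is correct and follows the paper's own proof: the paper likewise observes that \(S_{(0,0)}\) is the vertical cylinder over the branch \(x^2-y^2=1\), \(x>0\), uses \cref{thm:sub_lorentzian_isometries} to see that isometries send vertical lines to vertical lines, and derives a contradiction with \cref{lemma:null_surface_vertical_line_intersection}. The only difference is that you check explicitly that left translations shift the \(z\)-coordinate by a constant along each vertical line. The paper instead just notes that the first two components of an isometry do not depend on \(z\).
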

\begin{proof}
    When \(c=0\), the parametrisation reduces to
    \[
        S_{(0,0)}(t,u)
        =
        \left(
        \cosh(t+u),\;
        \sinh(t+u),\;
        \frac{t}{2}
        \right).
    \]
    Hence \(S_{(0,0)}\) is the vertical cylinder over the hyperbola
    \(x^2-y^2=1\), \(x>0\). In particular, it contains, for every \(a\in\mathbb R\), the vertical line
    \[
        \{(\cosh a,\sinh a,z):z\in\mathbb R\}.
    \]

    By \cref{thm:sub_lorentzian_isometries}, the first two components of every
    isometry of \(\heis\) are independent of the \(z\)-coordinate. Thus
    vertically aligned points are sent to vertically aligned points. If
    \(S_{(0,0)}\) were isometric to a surface \(S_{(c,\omega c)}\) with
    \(c\neq0\), the image surface would contain a vertical line. This
    contradicts \cref{lemma:null_surface_vertical_line_intersection}.
\end{proof}

We now describe the characteristic set of the null-parameter surfaces, in
parallel with
\cref{prop:spacelike_characteristic_set,prop:timelike_surfaces_characteristic_set}.

\begin{proposition}\label{prop:null_surfaces_characteristic_set}
    The characteristic set of a surface \(S_{(c,\omega c)}\) with null
    parameters is non-empty if and only if \(c<0\). In this case, set
    \[
        t^*=\omega\log(-2c),
        \qquad
        z^*=\frac{1}{2}\left(t^*-\omega c e^{-\omega t^*}+\omega c\right).
    \]
    Then the characteristic set is the open half-line
    \[
        \{(s,-\omega s,z^*):s>0\}.
    \]
    In particular, it is a horizontal null curve.
\end{proposition}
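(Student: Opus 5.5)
As in the proofs of \cref{prop:spacelike_characteristic_set,prop:timelike_surfaces_characteristic_set}, we will work at \(u=0\). By \cref{prop:non_degeneracy_surfaces} the parametrisation is regular whenever \(c^2-d^2=0\), so a point is characteristic exactly when \(\partial_uS\) is horizontal. That is, \((t,0)\) must solve \cref{eq:general_characteristic_equation} with \(d=\omega c\). Since \(c^2-d^2=0\), that equation reduces to
\[
    1+2c(\cosh t-\omega\sinh t)=0,
    \qquad\text{that is,}\qquad
    1+2c\,e^{-\omega t}=0.
\]
If \(c>0\) there is no solution, since the left-hand side is positive. If \(c=0\) the left-hand side equals \(1\), so again there is no solution. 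If \(c<0\) there is exactly one solution, \(e^{-\omega t}=-1/(2c)\), namely \(t^*=\omega\log(-2c)\). This already proves the equivalence ``non-empty iff \(c<0\)''.

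Next we will evaluate the profile at \(t^*\). From \(e^{-\omega t^*}=-1/(2c)\) we get
\[
    \cosh t^*+c=\tfrac12\bigl(e^{t^*}+e^{-t^*}\bigr)+c,
    \qquad
    \sinh t^*+\omega c .
\]
We then split into the cases \(\omega=\pm1\). For \(\omega=1\) we have \(e^{-t^*}=-1/(2c)\) and \(e^{t^*}=-2c\). This gives \(\cosh t^*+c=-\tfrac{1}{4c}\) and \(\sinh t^*+c=\tfrac{1}{4c}\), so the point is \((\eta,-\omega\eta,z^*)\) with \(\eta=-1/(4c)>0\). The case \(\omega=-1\) is symmetric and gives \((\eta,\eta,z^*)\). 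In both cases the projected vector \((\eta,-\omega\eta)\) is null and future-directed. The \(z\)-coordinate is the third component of \(\gamma_{(c,\omega c)}(t^*)\) from \cref{eq:null_parameter_surface_parametrization}, which is exactly \(z^*\). It is unaffected by boosts.

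Finally, we apply \(B_u\). As in the spacelike case,
\[
    B_u(\eta,-\omega\eta,z^*)=\bigl(\eta e^{-\omega u},\,-\omega\eta e^{-\omega u},\,z^*\bigr),
\]
and \(s=\eta e^{-\omega u}\) ranges over \((0,+\infty)\). This gives the half-line \(\{(s,-\omega s,z^*):s>0\}\). The curve \(s\mapsto(s,-\omega s,z^*)\) satisfies \cref{eq:horizontallift}, since \(x\dot y-y\dot x=-\omega s+\omega s=0\), and its velocity is null. So it is a horizontal null curve.

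The only delicate step is the sign bookkeeping in the second paragraph: one must check that \(\cosh t^*+c>0\) and that \(\sinh t^*+\omega c=-\omega(\cosh t^*+c)\), so that the direction is indeed \(-\omega\). Everything else is routine.
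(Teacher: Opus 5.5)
Your proposal is correct and follows the paper's proof essentially step for step. You reduce to \(u=0\), specialise the characteristic equation to \(1+2ce^{-\omega t}=0\), solve for \(t^*=\omega\log(-2c)\), check \(\eta=\cosh t^*+c=-1/(4c)>0\) with \(\sinh t^*+\omega c=-\omega\eta\), and then boost. The only cosmetic differences are that you verify the signs by splitting into \(\omega=\pm1\) rather than directly, and that you treat \(c=0\) explicitly.
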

\begin{proof}
    As in the proofs of
    \cref{prop:spacelike_characteristic_set,prop:timelike_surfaces_characteristic_set},
    it is enough to work at \(u=0\). Since \(\partial_tS_{(c,\omega c)}\) is
    horizontal by construction, a point is characteristic precisely when
    \(\partial_uS_{(c,\omega c)}\) is also horizontal. Setting \(d=\omega c\)
    in \cref{eq:general_characteristic_equation}, this
    condition becomes
    \[
        1+2c\bigl(\cosh t-\omega\sinh t\bigr)=0,
        \qquad\text{that is,}\qquad
        1+2c e^{-\omega t}=0.
    \]
    This equation has a solution if and only if \(c<0\), and in that case the
    unique solution is \(t^*=\omega\log(-2c)\). Set \(\eta=\cosh(t^*)+c\). Then
    \[
        \eta=\frac{1}{-4c}>0,
        \qquad
        \sinh(t^*)+\omega c=-\omega\eta.
    \]
    Substituting \(t=t^*\) in the parametrisation gives
    \[
        S_{(c,\omega c)}(t^*,u)
        =
        B_u(\eta,-\omega\eta,z^*)
        =
        \bigl(\eta e^{-\omega u},-\omega\eta e^{-\omega u},z^*\bigr).
    \]
    Since \(s=\eta e^{-\omega u}\) ranges over \((0,+\infty)\), the
    characteristic set is exactly \(\{(s,-\omega s,z^*):s>0\}\). This curve is
    horizontal and its horizontal velocity \((1,-\omega)\) is null.
\end{proof}

Once again, the characteristic set provides a useful isometry invariant.

\begin{corollary}\label{coro:null_surfaces_opposite_c_non_isometric}
    Let \(S_{(c,\omega c)}\) and \(S_{(c',\omega' c')}\) be two surfaces with
    null parameters. If \(c<0<c'\), then the two surfaces are not
    isometric.
\end{corollary}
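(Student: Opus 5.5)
The plan is to use the characteristic set as an isometry invariant, exactly as in the analogous corollaries for spacelike and timelike parameters. By \cref{prop:null_surfaces_characteristic_set}, the surface \(S_{(c,\omega c)}\) with \(c<0\) has non-empty characteristic set, namely the open null half-line \(\{(s,-\omega s,z^*):s>0\}\). The surface \(S_{(c',\omega' c')}\) with \(c'>0\) has empty characteristic set.

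Suppose, for contradiction, that \(F\) is an isometry of \(\heis\) with \(F(S_{(c,\omega c)})=S_{(c',\omega' c')}\). By \cref{thm:sub_lorentzian_isometries}, \(F\) is a diffeomorphism satisfying \(d_pF(\Delta_p)=\Delta_{F(p)}\). It also maps the surface diffeomorphically onto its image, so \(d_pF(\T_p S_{(c,\omega c)})=\T_{F(p)}S_{(c',\omega'c')}\).

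Hence a point \(p\) with \(\T_pS\subset\Delta_p\) is sent to a point \(F(p)\) with \(\T_{F(p)}F(S)\subset\Delta_{F(p)}\). In other words, \(F\) maps characteristic points to characteristic points, and the same holds for \(F^{-1}\). So \(F\) restricts to a bijection between the two characteristic sets. One set is non-empty and the other is empty, which is a contradiction.

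Both surfaces must be smooth for tangent planes to make sense. By \cref{prop:non_degeneracy_surfaces}, the parametrisations are regular here, since \(c^2-d^2=0\neq1\). Their images are embedded enough, since \cref{lemma:null_surface_vertical_line_intersection} shows each is a graph over its projection. So the tangent-plane argument applies.

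The only mildly delicate point is confirming that the characteristic computation of \cref{prop:null_surfaces_characteristic_set} describes the characteristic set of the image surface, not just of the parametrisation. This follows from the regularity and injectivity just noted. No case split on time orientation of \(F\) is needed, since the invariance argument is insensitive to the Lorentz part.
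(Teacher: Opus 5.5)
Your proposal is correct and is essentially the paper's proof: an isometry preserves the horizontal distribution and hence maps characteristic points to characteristic points, while by \cref{prop:null_surfaces_characteristic_set} the surface with \(c<0\) has a non-empty characteristic set and the one with \(c'>0\) has none. Your additional checks make explicit what the paper leaves implicit, namely that the parametric computation describes the characteristic set of the image surface: regularity follows from \cref{prop:non_degeneracy_surfaces}, and the graph property from \cref{lemma:null_surface_vertical_line_intersection}.
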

\begin{proof}
    By \cref{prop:null_surfaces_characteristic_set}, the surface
    \(S_{(c,\omega c)}\) has non-empty characteristic set, while
    \(S_{(c',\omega' c')}\) has empty characteristic set because \(c'>0\).
    Since isometries preserve the horizontal distribution, they preserve
    characteristic points. Hence no isometry between the two surfaces can exist.
\end{proof}

We now rule out time-inverting isometries between null-parameter surfaces.

\begin{proposition}\label{coro:no_time_inverting_isometry_null_case}
    There is no time-inverting isometry between any two surfaces
    \(S_{(c,\omega c)}\) and \(S_{(c',\omega' c')}\) with null parameters.
\end{proposition}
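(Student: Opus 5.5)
Suppose a time-inverting isometry \(F\) maps \(S_{(c,\omega c)}\) onto \(S_{(c',\omega' c')}\); we derive a contradiction. First we reduce the cases. Isometries preserve characteristic sets, and the degenerate surface \(S_{(0,0)}\) is only isometric to itself (\cref{coro:surface_null_with_vanishing_parameters_no_isometry}). So \cref{coro:null_surfaces_opposite_c_non_isometric} leaves three cases: \(c,c'>0\), \(c,c'<0\), and \(c=c'=0\). Composing with the time-preserving isometries of \cref{prop:null_surfaces_isometric} and with boosts, which preserve each surface, we may also fix \(\omega=\omega'=1\) and, when convenient, \(c=\pm c'\). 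We then argue as in \cref{coro:no_time_inverting_isometry_spacelike_surfaces,coro:no_time_inverting_isometry_timelike_case}.

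\emph{Case \(c,c'<0\).} By \cref{prop:null_surfaces_characteristic_set} the characteristic sets are open half-lines \(\{(s,-\omega s,z^*):s>0\}\), lying in \(\{x>0\}\) and going to infinity in a future-directed null direction. The isometry \(F\) must map one half-line onto the other. Its \(x\)-coordinate along the image is \(a+s(\Lambda_{11}-\omega\Lambda_{12})\), where \(\Lambda\) is the Lorentz part. Since \(\Lambda\) is time-inverting, it sends the future null vector \((1,-\omega)\) to a past null vector, so the coefficient of \(s\) is negative. Hence the image has negative \(x\) for large \(s\), contradicting that the target half-line lies in \(\{x>0\}\). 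This is the argument of \cref{coro:no_time_inverting_isometry_spacelike_surfaces}, essentially verbatim.

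\emph{Cases \(c,c'\geq0\).} Here the characteristic set is empty, so we use the projection argument of \cref{coro:no_time_inverting_isometry_timelike_case}. By \cref{thm:sub_lorentzian_isometries}, the first two components of \(F\) are an affine Lorentz map \(f\) of the Minkowski plane, independent of \(z\). So \(f\) maps the projection \(P\) of \(S_{(c,\omega c)}\) onto the projection \(P'\) of the target. Unboosted, the projection is the profile \(\{(\cosh t+c,\sinh t+\omega c)\}\), and boosting it gives
\[
P=\{B_u(\cosh t+c,\sinh t+\omega c)\}=\{(x,y): x-\omega y>0\}\cap\{\text{suitable region}\}.
\]
I would show that \(P\) is future-closed but not past-closed, meaning \(J^+(p)\subset P\) for every \(p\in P\) while \(J^-(p)\not\subset P\). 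The key computation is the monotonicity of the invariants along future causal curves. We have \(x-\omega y=e^{-\omega(t+u)}\) and \(x+\omega y=e^{\omega(t+u)}+2ce^{\omega u}\). The second identity lets us describe \(P\) by an inequality of the form \(x+\omega y\ge \phi(x-\omega y)\), and I would check that both null coordinates \(x\pm y\) increase along future-directed causal curves. A time-inverting affine Lorentz map sends future-closed sets to past-closed sets, so \(f(P)=P'\) is impossible.

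The main obstacle is describing \(P\) exactly for \(c>0\) and verifying future-closedness. For \(c=0\) it is simply \(x^2-y^2\ge1\), \(x>0\), which is clearly future-closed. For \(c>0\) I would compute the envelope in the null coordinates \(a=x-\omega y\), \(b=x+\omega y\): for fixed \(a\), the coordinate \(b\) ranges over \(\{1/a+2c\,e^{\omega u}\}\), which is the open ray \(b>1/a\) (with \(u\) free and \(e^{\omega u}\) filling \((0,\infty)\)). This gives \(P=\{a>0,\ ab>1\}\). Because \(a\) and \(b\) both increase along future causal curves, \(P\) is future-closed but not past-closed.
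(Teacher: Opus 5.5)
Your cases \(c,c'<0\) and \(c,c'>0\) are fine. The first is the paper's characteristic half-line argument. In the second, your null-coordinate computation \(a=x-\omega y=e^{-\omega(t+u)}\), \(b=x+\omega y=1/a+2c\,e^{\omega u}\) correctly gives the projection \(\{a>0,\ ab>1\}=\{x^2-y^2>1,\ x>0\}\). Your future-closed versus not-past-closed argument there is a valid variant of the paper's observation that this region escapes to infinity only with \(x\to+\infty\).

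\textbf{The gap is the case \(c=c'=0\).} The statement includes it, and you dispatch it by claiming the projection is \(\{x^2-y^2\ge1,\ x>0\}\). That is false. \(S_{(0,0)}(t,u)=(\cosh(t+u),\sinh(t+u),t/2)\) is the vertical cylinder over the hyperbola branch \(H=\{x^2-y^2=1,\ x>0\}\), so its projection is the curve \(H\) itself. Your own formula shows this: for \(c=0\) the term \(2c\,e^{\omega u}\) vanishes, so \(b=1/a\), i.e.\ \(ab=1\), rather than the ray \(b>1/a\). Now \(H\) is an acausal spacelike curve in the Minkowski plane. It is neither future-closed nor past-closed: for \(p\in H\) and small \(\delta>0\), both \(p\pm(\delta,0)\) lie off \(H\). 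So the principle ``time-inverting maps send future-closed sets to past-closed sets'' has nothing to act on, and as written nothing excludes \(f(H)=H\).

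The repair is the asymptotic-direction argument the paper uses. Let \(f(v)=q+\Lambda v\) with \(\Lambda\) time-inverting. Then \(\Lambda_{11}<0\) and \(\Lambda_{11}^2=1+\Lambda_{12}^2\), so \(\lvert\Lambda_{11}\rvert>\lvert\Lambda_{12}\rvert\). Hence the \(x\)-coordinate of \(f(\cosh s,\sinh s)\) is at most \(q_1-(\lvert\Lambda_{11}\rvert-\lvert\Lambda_{12}\rvert)\cosh s\), which tends to \(-\infty\). So \(f(H)\) contains points with negative \(x\)-coordinate and cannot equal \(H\subset\{x>0\}\). The same estimate, applied to the points \((n,0)\in P\), also settles the case \(c,c'>0\). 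The whole case \(c,c'\ge0\) can therefore be handled uniformly this way.
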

\begin{proof}
    By
    \cref{coro:surface_null_with_vanishing_parameters_no_isometry,coro:null_surfaces_opposite_c_non_isometric},
    it is enough to consider the cases where either \(c,c'<0\), or \(c,c'>0\),
    or \(c=c'=0\).

    When \(c,c'<0\), the argument is the same as in
    \cref{coro:no_time_inverting_isometry_spacelike_surfaces}: the characteristic
    set is, by
    \cref{prop:null_surfaces_characteristic_set}, a null half-line contained in \(\{x>0\}\), while a time-inverting
    isometry sends such a half-line eventually into \(\{x<0\}\).

    Suppose now that \(c,c'>0\). By \cref{prop:null_surfaces_isometric}, up to
    composing with time-preserving boosts, we may reduce to the case
    \(S_{(1,\omega)}\) and \(S_{(1,\omega')}\). Both surfaces project to the same region of the Minkowski plane,
    namely
    \[
        P=\{x^2-y^2>1\}\cap\{x>0\}.
    \]
    The region \(P\) is unbounded only in future timelike directions: along every
    sequence in \(P\) escaping to infinity, the \(x\)-coordinate tends to \(+\infty\).
    A time-inverting Lorentz map sends future timelike directions to past timelike
    directions, so its image of \(P\), even after a translation, is unbounded in the
    negative \(x\)-direction. This cannot equal \(P\), which is contained in
    \(\{x>0\}\).

    Finally, when \(c=c'=0\), the surface \(S_{(0,0)}\) is the vertical cylinder
    over the future branch \(\{x^2-y^2=1,\ x>0\}\). The same asymptotic-direction
    argument shows that this branch cannot be mapped onto itself by a
    time-inverting affine Lorentz transformation of the Minkowski plane. Thus no
    time-inverting isometry exists in this case either.
\end{proof}

We are able to complete the boost-isometry classification for surfaces with
null parameters.

\begin{proposition}\label{prop:null_surfaces_boost_classification}
    Two surfaces \(S_{(c,\omega c)}\) and \(S_{(c',\omega' c')}\) with null
    parameters are isometric by a time-preserving boost if and only if either
    \(cc'>0\) and \(\omega=\omega'\), or \(c=c'=0\).
\end{proposition}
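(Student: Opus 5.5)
Sufficiency is already in hand: if \(cc'>0\) and \(\omega=\omega'\), \cref{prop:null_surfaces_isometric}(i) gives a time-preserving boost, and if \(c=c'=0\) the identity works. The plan is therefore to prove necessity. By \cref{coro:surface_null_with_vanishing_parameters_no_isometry,coro:null_surfaces_opposite_c_non_isometric}, a surface with \(c=0\) is never isometric to one with \(c\neq0\), and surfaces with \(c<0<c'\) are never isometric. So it remains to show that when \(cc'>0\), a time-preserving boost between \(S_{(c,\omega c)}\) and \(S_{(c',\omega'c')}\) forces \(\omega=\omega'\).

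\emph{Reduction to a left translation.} Using \cref{prop:null_surfaces_isometric}(i), we may reduce to \(S_{(c,\omega c)}\) and \(S_{(c,\omega' c)}\) with the same \(c\), after renormalising so that \(\abs c=1\). Both surfaces are invariant under boosts, so the isometry may be taken to be a left translation \(L_q\). Assume \(\omega'=-\omega\) and aim for a contradiction.

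\emph{Case \(c<0\).} By \cref{prop:null_surfaces_characteristic_set}, the characteristic set of \(S_{(c,\omega c)}\) is the half-line \(\{(s,-\omega s,z^*):s>0\}\). The characteristic set of the other surface is \(\{(s,\omega s,z'^*):s>0\}\). A left translation \(L_q\) acts on the \((x,y)\)-projection as a Euclidean translation, so it preserves directions of projected half-lines. The first half-line projects with direction \((1,-\omega)\) and the second with direction \((1,\omega)\), so \(L_q\) cannot map one onto the other. This contradicts the fact that isometries preserve characteristic sets.

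\emph{Case \(c>0\).} Both surfaces project onto \(P=\{x^2-y^2>1,\ x>0\}\), as computed in the proof of \cref{coro:no_time_inverting_isometry_null_case}, but this time I would compare the actual surfaces rather than their projections. First, the translation must preserve \(P\). The only translations of the plane preserving \(P\) are trivial, since its boundary hyperbola is not invariant under any nonzero translation. Hence \(q=(0,0,q_3)\) is vertical. By \cref{lemma:null_surface_vertical_line_intersection}, each surface is a graph \(z=h_\omega(x,y)\) over \(P\), so we would need \(h_{-\omega}=h_\omega+q_3\) on \(P\).

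To test this, I would write the height explicitly. From the proof of \cref{lemma:null_surface_vertical_line_intersection}, \(e^{-\omega(t+u)}=x-\omega y\) and \(ce^{\omega u}=x-\cosh(t+u)\), which gives \(h_\omega\) in closed form. The cleanest check is along the boost orbit \(t=0\), that is, the hyperbola \(x^2-y^2=(1+c)^2\)-type curve where \(h_\omega=0\). I expect the corresponding points on \(S_{(c,-\omega c)}\) to have nonconstant \(z\) along it, which would contradict \(h_{-\omega}-h_\omega\equiv q_3\). Alternatively, the asymptotics of \(h_\omega\) along the directions \(x=\pm y\to\infty\) differ between \(\omega\) and \(-\omega\), since \(h_\omega\) is governed by \(x-\omega y\); this would also rule out a constant difference.

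The main obstacle is this last computation, namely showing that \(h_\omega-h_{-\omega}\) is not constant on \(P\). I would attempt it first by evaluating both heights at a couple of explicit points, for instance at \(u=0\) with suitable \(t\).
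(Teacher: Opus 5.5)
Your skeleton coincides with the paper's. You get sufficiency from \cref{prop:null_surfaces_isometric}, use the two corollaries to force \(cc'>0\) or \(c=c'=0\), and reduce to a left translation by boost invariance. You then use the direction of the characteristic half-line when \(c<0\). When \(c>0\), you use that only the trivial translation preserves \(P\), which leaves a vertical translation between two graphs \(z=h_{\pm\omega}\) over \(P\). All of that is correct.

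The gap is the decisive last step, which you do not carry out, and the check you call the cleanest cannot succeed. On \(S_{(c,\omega c)}\) one has \(x^2-y^2=1+2ce^{-\omega t}\), so the \(t=0\) orbit is \(x^2-y^2=1+2c\), not \((1+c)^2\). On \(S_{(c,-\omega c)}\) one has \(x^2-y^2=1+2ce^{\omega t}\). Over that hyperbola both surfaces therefore sit at height \(z=0\), so \(h_{-\omega}-h_\omega\equiv0\) there, which is consistent with \(q_3=0\) and gives no contradiction.

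This is structural. Boosts fix \(z\) and preserve both surfaces, so \(h_{\pm\omega}\) are boost-invariant, i.e.\ functions of \(r=x^2-y^2\) alone. Their difference is therefore constant on every single boost orbit. For the same reason your heuristic that \(h_\omega\) is ``governed by \(x-\omega y\)'' is misleading: \(x-\omega y\) determines \(t+u\), but the height depends only on \(t\), which is determined by \(r\).

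What is missing is the observation the paper uses. Solving \(e^{-\omega t}=(r-1)/(2c)\) gives
\[
h_\omega=\frac{\omega}{2}\left(-\log\frac{r-1}{2c}-\frac{r-1}{2}+c\right).
\]
Hence \(h_{-\omega}=-h_\omega\); equivalently, the timed rotation \((x,y,z)\mapsto(x,-y,-z)\) exchanges the two surfaces and preserves \(r\). So \(h_{-\omega}-h_\omega=-2h_\omega\), which is strictly monotone in \(r\) and hence not constant on \(P\).

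Any test must therefore compare different boost orbits. Your fallback of evaluating at explicit points works if you take, e.g., \(u=0\) and two different values of \(t\), since \(z(t)=\frac12(t-\omega ce^{-\omega t}+\omega c)\) is strictly increasing. Your asymptotic alternative also works, but only because \(\abs{h_\omega}\to\infty\) as \(r\to\infty\). With this inserted, your argument is complete and agrees with the paper's proof.
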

\begin{proof}
    The implication from right to left follows from
    \cref{prop:null_surfaces_isometric} when \(cc'>0\), and is trivial when
    \(c=c'=0\), since then both parametrisations give the same surface
    \(S_{(0,0)}\).

    Conversely, suppose that the two surfaces are isometric by a time-preserving
    boost. By
    \cref{coro:surface_null_with_vanishing_parameters_no_isometry,coro:null_surfaces_opposite_c_non_isometric},
    and after possibly exchanging the two surfaces, either \(c=c'=0\), or
    \(cc'>0\). It remains to prove, in the second case, that
    \(\omega=\omega'\).

    Assume first that \(c,c'<0\). By
    \cref{prop:null_surfaces_characteristic_set}, the characteristic set of
    \(S_{(c,\omega c)}\) is a null half-line with direction \((1,-\omega)\) in
    the Minkowski plane, and similarly for \(S_{(c',\omega' c')}\). A
    time-preserving boost preserves each future null direction. Since isometries
    preserve characteristic points, the two directions must agree, and therefore
    \(\omega=\omega'\).

    Assume now that \(c,c'>0\). By \cref{prop:null_surfaces_isometric}, after
    composing with time-preserving boosts we may reduce to the case of
    \(S_{(1,\omega)}\) and \(S_{(1,\omega')}\). Since the surfaces are
    boost-invariant, any time-preserving boost between them can then be reduced
    to a left translation. Both surfaces project to
    \[
        P=\{x^2-y^2>1\}\cap\{x>0\}.
    \]
    The only translation of the Minkowski plane preserving \(P\) is the trivial
    one and therefore the left translation must be
    vertical.

    On \(S_{(1,\omega)}\), the quantity \(r=x^2-y^2\) satisfies
    \(r=1+2e^{-\omega t}\). The \(z\)-coordinate, as a function of
    \(r>1\), is
    \[
        z_\omega(r)
        =
        \frac{\omega}{2}
        \left(
        -\log\left(\frac{r-1}{2}\right)-\frac{r-1}{2}+1
        \right).
    \]
    In other words, \(S_{(1,\omega)}\) is the graph over \(P\) given by
    \(S_{(1,\omega)}=\{(x,y,z_\omega(x,y)):(x,y)\in P\}\), and similarly for
    \(S_{(1,\omega')}\). If a vertical translation \(T_\lambda(x,y,z)=(x,y,z+\lambda)\) sent
    \(S_{(1,\omega)}\) to \(S_{(1,\omega')}\), then
    \(z_\omega(x,y)+\lambda=z_{\omega'}(x,y)\) for every \((x,y)\in P\), so
    \(z_{\omega'}-z_\omega\) would be constant on \(P\). This is impossible when \(\omega'=-\omega\), since then
    \(z_{\omega'}=-z_\omega\) and \(z_\omega\) is not constant. Thus
    \(\omega=\omega'\).
\end{proof}

We also obtain the corresponding classification under time-preserving rotations.

\begin{proposition}\label{prop:null_surfaces_rotation_classification}
    Two surfaces \(S_{(c,\omega c)}\) and \(S_{(c',\omega' c')}\) with null
    parameters are isometric by a time-preserving rotation if and only if either
    \(cc'>0\) and \(\omega=-\omega'\), or \(c=c'=0\).
\end{proposition}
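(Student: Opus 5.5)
The plan is to reduce everything to the boost classification in \cref{prop:null_surfaces_boost_classification}, composing with the timed rotation \(A(x,y,z)=(x,-y,-z)\), exactly as in the proof of \cref{prop:spacelike_rotation_isometric_surfaces}. By \cref{prop:timed_rotation_behaviour}, \(A\) maps \(S_{(c,\omega c)}\) onto \(S_{(c,-\omega c)}\). The composition of a time-preserving rotation with \(A\) has Lorentz part in \(O^+(1,1)\), so it is a time-preserving boost, and conversely.

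For the implication from right to left, the case \(cc'>0\) and \(\omega=-\omega'\) is \cref{prop:null_surfaces_isometric}~\ref{prop:null_surfaces_timed_rotations_isometric}. For \(c=c'=0\), it suffices to check directly that \(A\) preserves \(S_{(0,0)}\). This surface is the vertical cylinder over \(\{x^2-y^2=1,\ x>0\}\). The map \(A\) preserves this hyperbola branch and acts on \(z\) by \(z\mapsto -z\), so it maps the cylinder to itself. Alternatively, \cref{prop:timed_rotation_behaviour} with \(c=d=0\) gives the same conclusion.

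For the converse, suppose \(F\) is a time-preserving rotation with \(F(S_{(c,\omega c)})=S_{(c',\omega'c')}\). Then \(A\circ F\) is a time-preserving boost sending \(S_{(c,\omega c)}\) to \(A(S_{(c',\omega'c')})=S_{(c',-\omega'c')}\). We write the latter as \(S_{(c',\tilde\omega c')}\) with \(\tilde\omega=-\omega'\) when \(c'\neq0\); when \(c'=0\) it is still \(S_{(0,0)}\). Applying \cref{prop:null_surfaces_boost_classification} to this pair gives two possibilities. Either \(c=c'=0\), or \(cc'>0\) and \(\omega=\tilde\omega=-\omega'\). This is exactly the claim.

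The only delicate point is bookkeeping the \(\omega\) label when a parameter vanishes. For \((c,d)=(0,0)\) the sign \(\omega\) is meaningless. The boost classification already treats \(c=c'=0\) as a separate alternative, and \cref{coro:surface_null_with_vanishing_parameters_no_isometry} excludes mixed cases where exactly one parameter is zero. So no real obstacle remains beyond checking that \(A\circ F\) indeed has Lorentz part in \(O^+(1,1)\). This holds because \(\det\) multiplies to \(1\) and \(T_0\) preserves the sign of the \((1,1)\) entry.
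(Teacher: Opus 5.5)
Your proposal is correct and follows essentially the same route as the paper: compose with the timed rotation \(A(x,y,z)=(x,-y,-z)\) to turn a time-preserving rotation into a time-preserving boost, apply \cref{prop:null_surfaces_boost_classification}, and handle \(c=c'=0\) via the \(A\)-invariance of the cylinder \(S_{(0,0)}\). The only cosmetic difference is that for \(cc'>0\), \(\omega=-\omega'\) you cite part (ii) of \cref{prop:null_surfaces_isometric} directly, whereas the paper re-derives it from \cref{prop:timed_rotation_behaviour} and the boost classification.
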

\begin{proof}
    Assume first that \(cc'>0\) and \(\omega=-\omega'\). By
    \cref{prop:timed_rotation_behaviour}, a timed rotation sends
    \(S_{(c,\omega c)}\) to \(S_{(c,-\omega c)}\). Since
    \(-\omega=\omega'\), \cref{prop:null_surfaces_boost_classification} gives a
    time-preserving boost from \(S_{(c,-\omega c)}\) to
    \(S_{(c',\omega' c')}\). The composition is a time-preserving rotation. If
    \(c=c'=0\), then both parametrisations give the same surface \(S_{(0,0)}\),
    and this surface is invariant under the timed rotation.

    Conversely, suppose that \(S_{(c,\omega c)}\) and
    \(S_{(c',\omega' c')}\) are isometric by a time-preserving rotation.
    Composing this isometry with the timed rotation
    \((x,y,z)\mapsto(x,-y,-z)\), which sends
    \(S_{(c',\omega' c')}\) to \(S_{(c',-\omega' c')}\), gives a
    time-preserving boost from \(S_{(c,\omega c)}\) to
    \(S_{(c',-\omega' c')}\). By
    \cref{prop:null_surfaces_boost_classification}, either \(c=c'=0\), or
    \(cc'>0\) and \(\omega=-\omega'\).
\end{proof}

We now discuss the causal behaviour of null-parameter surfaces, starting with the case \(c\geq0\).

\begin{proposition}\label{prop:null_surfaces_nonnegative_c_acausal}
    Every surface \(S_{(c,\omega c)}\) with \(c\geq0\) is acausal. In
    particular, it is achronal.
\end{proposition}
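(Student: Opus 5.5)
The plan is to reduce to a single normal form and then argue as in the proof for timelike-parameter surfaces with \(c>0\): exhibit the surface as the zero set of a function whose horizontal gradient is timelike, so that the function is strictly monotone along every causal curve. I split into \(c=0\) and \(c>0\).

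\emph{Case \(c=0\).} By the proof of \cref{coro:surface_null_with_vanishing_parameters_no_isometry}, \(S_{(0,0)}\) is the vertical cylinder over the branch \(\{x^2-y^2=1,\ x>0\}\). Suppose \(p\leq q\) are two points of the cylinder joined by a future-directed causal curve \(\gamma\). Its projection \(\pi\circ\gamma\) is a future-directed causal curve in the Minkowski plane. Its velocity is nonzero almost everywhere, because horizontal causal vectors are nonzero by definition.

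In the region \(x>\abs{y}\), the reverse Cauchy--Schwarz inequality gives \(\frac{d}{dt}(x^2-y^2)=2(x\dot x-y\dot y)>0\), exactly as in the proof for timelike parameters with \(c>0\). Since \(\{x^2-y^2\geq1,\ x>0\}\) is future-closed, the projected curve stays in this region. So \(x^2-y^2\) is strictly increasing along \(\pi\circ\gamma\) and cannot take the value \(1\) at both endpoints, a contradiction. Hence \(S_{(0,0)}\) is acausal.

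\emph{Case \(c>0\).} By \cref{prop:null_surfaces_isometric}, and since causal relations are invariant under isometries, it suffices to treat \(S_{(1,\omega)}\). The proof of \cref{prop:null_surfaces_boost_classification} shows that \(S_{(1,\omega)}\) is the graph \(z=z_\omega(x^2-y^2)\) over \(P=\{x^2-y^2>1,\ x>0\}\), where
\[
z_\omega(r)=\frac{\omega}{2}\left(-\log\left(\frac{r-1}{2}\right)-\frac{r-1}{2}+1\right).
\]
Set \(f(x,y,z)=z-z_\omega(x^2-y^2)\) on \(\pi^{-1}(P)\). A direct computation gives \(X f=-\tfrac y2-2x z_\omega'\) and \(Y f=\tfrac x2+2yz_\omega'\), and hence
\[
g\bigl(\operatorname{grad}_{\heis}f,\operatorname{grad}_{\heis}f\bigr)=-(Xf)^2+(Yf)^2=(x^2-y^2)\left(\tfrac14-4(z_\omega')^2\right).
\]
Here \(4(z_\omega')^2=\bigl(\frac1{r-1}+\frac12\bigr)^2>\frac14\) for \(r>1\), so the horizontal gradient of \(f\) is timelike on the connected set \(\pi^{-1}(P)\). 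By continuity it therefore has a fixed time orientation there.

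Now let \(p\leq q\) be points of \(S_{(1,\omega)}\) joined by a future-directed causal curve \(\gamma\). The set \(P\) is future-closed in the Minkowski plane, by the same monotonicity of \(x^2-y^2\), so \(\gamma\) stays in \(\pi^{-1}(P)\). Along \(\gamma\), the derivative \(\frac{d}{dt}f(\gamma(t))=g(\operatorname{grad}_{\heis}f,\dot\gamma)\) has a fixed nonzero sign almost everywhere. This contradicts \(f(p)=f(q)=0\). So the surface is acausal, and in particular achronal.

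The main point to verify is the gradient computation, namely that \(\abs{z_\omega'}\) stays strictly above \(1/4\) on all of \(P\). Unlike the timelike case, no boundary of the domain needs separate care, because \(P\) is open and contains the whole projection of the surface.
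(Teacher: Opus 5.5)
Your proof is correct and follows essentially the paper's argument. For \(c=0\) you project the vertical cylinder to the Minkowski plane and use acausality of the future hyperbola branch; you justify this directly by strict monotonicity of \(x^2-y^2\) along future-directed causal curves, where the paper simply invokes it. For \(c>0\) you reduce to \(S_{(1,\omega)}\) and use the same function \(z-z_\omega(x^2-y^2)\); your timelike-gradient expression \((x^2-y^2)\bigl(\tfrac14-4(z_\omega')^2\bigr)\) simplifies exactly to the paper's \(-r^2/(r-1)^2\).
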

\begin{proof}
    We first consider \(c=0\). In this case
    \[
        S_{(0,0)}(t,u)
        =
        \left(
        \cosh(t+u),\;
        \sinh(t+u),\;
        \frac{t}{2}
        \right),
    \]
    so the surface is the vertical cylinder over the future branch
    \(\{x^2-y^2=1,\ x>0\}\). If two points of this surface were joined by a
    causal curve, then their projections would be joined by a causal curve in
    the Minkowski plane with endpoints on this hyperbola. Since the future branch is acausal in the Minkowski plane, the projected curve
    must be constant. But a future-directed causal curve with constant
    \(x\)-coordinate is constant, so the two points coincide.

    Now let \(c>0\). By \cref{prop:null_surfaces_isometric}, it is enough to
    prove the claim for \(S_{(1,\omega)}\). Put \(r=x^2-y^2\) and define, on
    \(P=\{r>1,\ x>0\}\),
    \[
        F_\omega(x,y,z)
        =
        z-\frac{\omega}{2}
        \left(
        -\log\left(\frac{r-1}{2}\right)-\frac{r-1}{2}+1
        \right).
    \]
    The surface \(S_{(1,\omega)}\) is contained in the zero level set of
    \(F_\omega\). A direct computation gives
    \[
        X(F_\omega)
        =
        -\frac{1}{2}
        \left(
        y-\omega x\frac{r+1}{r-1}
        \right),
        \qquad
        Y(F_\omega)
        =
        \frac{1}{2}
        \left(
        x-\omega y\frac{r+1}{r-1}
        \right),
    \]
    and therefore
    \[
        g\bigl(\operatorname{grad}_{\heis}F_\omega,
        \operatorname{grad}_{\heis}F_\omega\bigr)
        =
        -\frac{r^2}{(r-1)^2}<0.
    \]
    Thus \(\operatorname{grad}_{\heis}F_\omega\) is timelike and has fixed time
    orientation on \(P\).

    Suppose, for contradiction, that two points of \(S_{(1,\omega)}\) are
    joined by a non-constant future-directed causal curve \(\gamma\). Its
    projection to the Minkowski plane is future-directed causal. Since
    \(P=\{r>1,\ x>0\}\) is future-closed in the Minkowski plane, the projected
    curve stays in \(P\), so \(F_\omega\circ\gamma\) is defined. Moreover
    \[
        \frac{d}{dt}(F_\omega\circ\gamma)(t)
        =
        g\bigl(\operatorname{grad}_{\heis}F_\omega,\dot\gamma(t)\bigr)
    \]
    has a fixed non-zero sign wherever \(\dot\gamma\neq0\), because both
    \(\operatorname{grad}_{\heis}F_\omega\) and \(\dot\gamma\) are causal and
    \(\operatorname{grad}_{\heis}F_\omega\) is timelike. Hence
    \(F_\omega\circ\gamma\) is strictly monotone. This contradicts the fact that
    it vanishes at both endpoints. Therefore no two distinct points of
    \(S_{(1,\omega)}\) are causally related.
\end{proof}

In contrast, the null-parameter surfaces with \(c < 0\) contain
chronologically related points.

\begin{proposition}\label{prop:null_surfaces_negative_c_not_achronal}
    Every surface \(S_{(c,\omega c)}\) with \(c<0\) is not achronal.
\end{proposition}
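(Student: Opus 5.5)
Since achronality is invariant under isometries, I would first reduce the statement to a single model surface. By \cref{prop:null_surfaces_isometric}, every \(S_{(c,\omega c)}\) with \(c<0\) is isometric, by a time-preserving boost or rotation, to \(S_{(c_0,\omega_0 c_0)}\) for any fixed \(c_0<0\) and either sign \(\omega_0\). I would therefore fix \(\omega=1\) and keep \(c<0\) free, to be chosen later if convenient. The plan mirrors the spacelike-parameter case. I will exhibit explicit points \(p,q\) on the surface, or on its closure together with openness of \(I^\pm\), as in the timelike case with \(c<0\). I then compute \(p^{-1}\ast q=(x,y,z)\) and verify \(x>0\) and \(-x^2+y^2+4\abs z<0\), so that \(p\ll q\) by \cref{eq:future_origin_heisenberg}.

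The natural anchor is the characteristic half-line \(\{(s,-s,z^*):s>0\}\) from \cref{prop:null_surfaces_characteristic_set}, where \(t^*=\log(-2c)\), together with the curve \(q(t)=S_{(c,c)}(t,0)=(\cosh t+c,\sinh t+c,\tfrac12(t-ce^{-t}+c))\). A quick check shows that the diagonal \(u=-t\), which worked for spacelike parameters, is not suitable here. On that diagonal the differences \(p_1^{-1}\ast p_2\) all have \(x=y\), so they are null and at best causal, never chronological. I will therefore use the characteristic point \(p_s=(s,-s,z^*)\) with \(s>0\) free, and the point \(q(t)\) with \(t\to\pm\infty\). I will compute \(p_s^{-1}\ast q(t)\). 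Its horizontal part is \((\cosh t+c-s,\ \sinh t+c+s)\), so
\[
-x^2+y^2=2(s+c)(\sinh t-\cosh t)+4cs-\ldots,
\]
an expression that is linear in \(s\) up to bounded terms. This suggests choosing \(s\) proportional to a growing exponential in \(t\), so that the horizontal part acquires a dominant negative term. That term must beat the vertical part \(4\abs{z}\), which grows only like \(\abs t\) plus terms linear in \(s\) coming from the group law \(\tfrac12(x_1y_2-x_2y_1)\).

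The main obstacle is exactly this balancing: the \(s\)-linear contribution to \(z\) from the group law competes with the \(s\)-linear negative contribution to \(-x^2+y^2\). I must pick the direction of \(t\) (probably \(t\to-\infty\), where \(ce^{-t}\) dominates), the sign of the comparison (\(p\ll q\) or \(q\ll p\)), and the scaling of \(s\) so that the negative term wins strictly. If this fails for every choice, I have a fallback. Points far out along \(u\to\pm\infty\) accumulate on null lines, as in \cref{lemma:timelike_surfaces_lines_closure}. I would then compare a point of such a limiting line, or of the characteristic half-line, with the vertex-type point \(S_{(c,c)}(0,0)=(1+c,c,0)\). I would choose \(c\) (for instance \(c=-1\) or \(c\to0^-\)) so that the resulting constant, analogous to \(-2c^2-2c+2\log(-c)\) in the timelike case, is negative. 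Openness of \(I^\pm(p)\) then transfers the relation to genuine points of the surface.
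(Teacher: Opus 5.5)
There is a genuine gap: the proposal never produces a chronologically related pair, and the pair you make your main route provably cannot work. Take \(\omega=1\), \(p_s=(s,-s,z^*)\), \(q(t)=S_{(c,c)}(t,0)\), and write \(z(t)=\frac12(t-ce^{-t}+c)\). Then
\[
p_s^{-1}\ast q(t)=\Bigl(\cosh t+c-s,\ \sinh t+c+s,\ z(t)-z^*-\tfrac{s}{2}\bigl(e^t+2c\bigr)\Bigr).
\]
Hence \(-x^2+y^2=(2s-e^{-t})(e^t+2c)\), which is not the expression you wrote. The \(s\)-linear terms \(\pm2s(e^t+2c)\), one from the horizontal part and one from the group law, cancel exactly in \(-x^2+y^2+4z\). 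Using \(e^{t^*}=-2c\) and \(4z^*=2t^*+1+2c\), you get
\[
-x^2+y^2+4\abs{z}\ \geq\ -x^2+y^2+4z\ =\ 2\bigl(\tau+e^{-\tau}-1\bigr)\ \geq\ 0,
\qquad \tau=t-t^*,
\]
for all \(s>0\) and \(t\in\mathbb R\). So the balancing you call the main obstacle cannot be won by any choice of scaling, direction of \(t\), or order of the pair. Boosts are group automorphisms preserving both the surface and the characteristic half-line. This computation therefore shows that no point of the characteristic half-line is chronologically related to any point of \(S_{(c,c)}\). That also rules out the ``or of the characteristic half-line'' option in your fallback.

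The remaining option of the fallback does work, but only once the postponed computation is done and \(c\) is chosen correctly. For \(\omega=1\), let \(u\to+\infty\) with \(e^t=-2c+2ae^{-u}\); this gives \(S_{(c,c)}(t,u)\to(a,a,z^*)\), so this line lies in the closure. With \(p_0=S_{(c,c)}(0,0)=(1+c,c,0)\), one finds \((a,a,z^*)^{-1}\ast p_0=\bigl(1+c-a,\ c-a,\ \tfrac a2-z^*\bigr)\). For \(a\) sufficiently negative, \(-x^2+y^2+4\abs{z}=4z^*-1-2c=2\log(-2c)\). This is negative exactly for \(-\tfrac12<c<0\); for your candidate \(c=-1\) it equals \(2\log 2>0\). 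Take, say, \(c=-\tfrac14\), which is allowed by \cref{prop:null_surfaces_isometric}. This gives \((a,a,z^*)\ll p_0\), and openness of \(I^-(p_0)\) then produces a point of the surface in \(I^-(p_0)\).

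Carried out like this, your fallback is a valid proof, but it differs from the paper's. The paper fixes \(c=-1\) and compares two genuine surface points: \(\gamma(a)=(-\sinh a,-\cosh a,0)\) on the \(t=0\) orbit, and a high point \(\sigma(s)\) of the surface in \(\{y=0\}\). It shows \(\gamma(a)\ll\sigma(s)\) because \(1-A_s^2-2B_s\to-\infty\) as \(s\to0^+\), so no closure or openness argument is needed. As written, though, your proposal leaves the decisive step undecided between a route that fails and one whose success hinges on a sign you did not check.
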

\begin{proof}
    Since achronality is invariant under isometries,
    \cref{prop:null_surfaces_isometric} allows us to prove the claim for
    \(S_{(-1,-1)}\). By \cref{eq:null_parameter_surface_parametrization}, its
    parametrisation is
    \[
        \left(
        \cosh(t+u)-e^u,\;
        \sinh(t+u)-e^u,\;
        \frac{1}{2}\left(t+e^{-t}-1\right)
        \right).
    \]
    The curve obtained by taking \(t=0\) is
    \[
        \gamma(a)=(-\sinh a,-\cosh a,0).
    \]
    We also consider the curve on the surface with \(y=0\). Writing
    \(s=e^u\), it is parametrised by
    \[
        \sigma(s)
        =
        \left(
        A_s,\,
        0,\,
        \frac{1}{2}B_s
        \right),
        \qquad s>0,
    \]
    where
    \[
        A_s=\sqrt{1+s^2}-s,
        \qquad
        B_s=\log\left(1+\sqrt{1+s^{-2}}\right)
        +\frac{1}{1+\sqrt{1+s^{-2}}}-1.
    \]
    A direct computation gives
    \[
        \gamma(a)^{-1}\ast\sigma(s)
        =
        \left(
        \sinh a+A_s,\,
        \cosh a,\,
        \frac{1}{2}\left(B_s-A_s\cosh a\right)
        \right).
    \]
    For fixed \(s>0\) and \(a\) sufficiently large, the \(x\)-coordinate is
    positive and \(B_s-A_s\cosh a<0\). Therefore, using
    \cref{eq:future_origin_heisenberg},
    \[
        -x^2+y^2+4\abs{z}
        =
        1-A_s^2+2A_se^{-a}-2B_s
        \longrightarrow
        1-A_s^2-2B_s
        \qquad\text{as } a\to+\infty
    \]
    As \(s\to0^+\), we have \(A_s\to1\) and \(B_s\to+\infty\), so the last
    limit tends to \(-\infty\). Hence we may choose \(s>0\) sufficiently small,
    and then \(a>0\) sufficiently large, so that
    \[
        \gamma(a)^{-1}\ast\sigma(s)\in I^+(0).
    \]
    By left-invariance, this means \(\gamma(a)\ll\sigma(s)\). Thus the surface
    contains chronologically related points, and is not achronal.
\end{proof}

We conclude the classification of the surfaces with non-zero horizontal mean
curvature by showing that surfaces whose parameter vectors have different causal
characters are not isometric.

\begin{proposition}\label{prop:different_causal_character_not_isometric}
    Let \(S_{(c,d)}\) and \(S_{(c',d')}\) be two surfaces with non-zero
    horizontal mean curvature. If the parameter vectors \((c,d)\) and
    \((c',d')\) have different causal characters, then the two surfaces are not
    isometric.
\end{proposition}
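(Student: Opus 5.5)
Since every isometry preserves the horizontal distribution, it maps characteristic points to characteristic points, genuinely singular points to genuinely singular points, and vertical lines to vertical lines (\cref{thm:sub_lorentzian_isometries}). The plan is to separate the three causal types by these invariants, pair by pair.

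\emph{Singular and degenerate cases.} The singular surfaces (\(c^2-d^2=1\), \(c<0\)) are timelike-parameter. By \cref{prop:real_singularity} they are not homeomorphic to any other surface, and all spacelike- and null-parameter surfaces are regular by \cref{prop:non_degeneracy_surfaces}. The degenerate null surface \(S_{(0,0)}\) is the vertical cylinder and contains vertical lines. By Lemma~\ref{lemma:null_surface_vertical_line_intersection}, nonzero null surfaces meet each vertical line at most once. For the other families, the same computation applies: on \(S_{(c,d)}\) we have \(x-y=e^{-(t+u)}+(c-d)e^{-u}\) and \(x+y=e^{t+u}+(c+d)e^{u}\). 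For fixed \((x,y)\) this leaves finitely many solutions \((t,u)\), so no vertical line lies in the surface. Hence \(S_{(0,0)}\) is isolated, and we may assume all surfaces are regular with \((c,d)\neq(0,0)\).

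\emph{Characteristic sets.} Next compare the characteristic sets.
\begin{itemize}
\item Spacelike parameters: one open null half-line (\cref{prop:spacelike_characteristic_set}).
\item Timelike parameters: empty if \(c>0\); two disjoint half-lines if \(c<0\) (\cref{prop:timelike_surfaces_characteristic_set}).
\item Nonzero null parameters: empty if \(c>0\); one half-line if \(c<0\) (\cref{prop:null_surfaces_characteristic_set}).
\end{itemize}
Counting connected components of the characteristic set separates timelike \(c<0\) from everything else. It also separates spacelike surfaces and null \(c<0\) surfaces from timelike \(c>0\) and null \(c>0\) surfaces. Two comparisons remain: spacelike versus null with \(c<0\) (one half-line each), and timelike \(c>0\) versus null \(c>0\) (both empty).

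\emph{Timelike \(c>0\) versus null \(c>0\).} Reduce to \(S_{(C,0)}\) and \(S_{(1,\omega)}\). Their projections to the Minkowski plane are \(\{x^2-y^2\ge(C+1)^2,\ x>0\}\), which is closed, and \(\{x^2-y^2>1,\ x>0\}\), which is open. Any isometry induces an affine Lorentz map of the plane carrying one projection onto the other, which is impossible. Alternatively, use the height function: \(z\) is bounded below on the null surface as a function over \(P\) (via \(z_\omega\)), but takes both signs unboundedly on the timelike one. The projection argument is simpler.

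\emph{Spacelike versus null \(c<0\) (main obstacle).} Both characteristic sets are single null half-lines, and both surfaces are regular and not achronal, so a finer invariant is needed. First try the projection again. On \(S_{(0,D)}\), \(x^2-y^2=\cosh^2t-(\sinh t+D)^2\) after boosting. On the null surface \(S_{(-1,-1)}\), \(x^2-y^2=1-2e^{t}\)-type (from \(1+2c e^{-\omega t}\)). Compare the ranges of the boost-invariant \(x^2-y^2\), or whether the projection covers the whole plane. A translation does not preserve \(x^2-y^2\), so the more robust route is to normalise the isometry first. Since the characteristic half-lines must correspond and both start on the \(z\)-axis, a left translation with Lorentz part preserving the pair forces a vertical translation, up to the boost/timed-rotation symmetries. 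Vertical translations do preserve \(x^2-y^2\). Then compare the sets of values of \(x^2-y^2\) at critical points of \(z\), as in \cref{coro:spacelike_non_isometric_surfaces}:
\begin{itemize}
\item On \(S_{(0,D)}\) this value is \(-D^2-1<-1\).
\item On the null surface, \(z=\tfrac12(t+e^{-t}-1)\) has its critical point at \(t=0\), giving \(x^2-y^2=1+2c\) with \(c=-1\), i.e.\ the value \(-1\).
\end{itemize}
Since these differ, no isometry exists. I would check carefully that time-inverting isometries are excluded first; here \cref{coro:no_time_inverting_isometry_spacelike_surfaces}'s half-line argument applies verbatim to both families.
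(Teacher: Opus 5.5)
Your proof is correct and follows the paper's strategy. The singular surfaces are handled by topology, the families are separated by counting components of the characteristic set, timelike \(c>0\) and null \(c>0\) are separated by the closed-versus-open projection, and for spacelike versus null \(c<0\) the translation part is forced to be vertical via the endpoints of the characteristic half-lines before an \(x^2-y^2\) invariant is applied. The differences are minor: the paper compares the full range of \(x^2-y^2\) (all of \(\mathbb R\) versus \((-\infty,1)\)), while you compare its value \(-1-D^2\) versus \(-1\) on the critical set of \(z\), and your vertical-line argument explicitly disposes of \(S_{(0,0)}\), which the paper's reduction to \(S_{(1,\omega)}\) leaves implicit; your side remark that \(z_\omega\) is bounded below is false (it is unbounded in both directions), but you do not use it.
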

\begin{proof}
    Isometries preserve regularity and characteristic sets. In particular, a
    singular surface of timelike parameters cannot be isometric to a regular surface of spacelike or null
    parameters, by \cref{prop:real_singularity}. We may therefore restrict to
    regular surfaces.

    First suppose that one surface has spacelike parameters. By
    \cref{prop:spacelike_characteristic_set}, its characteristic set is one
    horizontal null half-line. A regular surface of timelike parameters has characteristic set
    either empty or equal to two disjoint half-lines, by
    \cref{prop:timelike_surfaces_characteristic_set}. Hence a
    surface of spacelike parameters cannot be isometric to a surface of timelike ones.

    It remains, in the spacelike case, to compare with null parameters. If the
    surface of null parameters has empty characteristic set, then the characteristic set
    already distinguishes the two surfaces. Otherwise both characteristic sets
    are single half-lines starting from the \(z\)-axis. Then any isometry
    between them must have vertical translation part, and the Lorentz part
    preserves the function
    \[
        f(x,y,z)=x^2-y^2.
    \]
    Thus the image of each surface under \(f\) would have to be the same. This
    is impossible: after reducing the surface of spacelike parameters to \(S_{(0,d)}\), we
    have
    \[
        f(S_{(0,d)}(t,u))=1-d^2-2d\sinh t,
    \]
    which ranges over all of \(\mathbb R\), while for a nonzero surface of null parameters
    \(S_{(c,\omega c)}\),
    \[
        f(S_{(c,\omega c)}(t,u))=1+2c e^{-\omega t},
    \]
    whose image is bounded on one side by \(1\).

    Finally consider a regular surface of timelike parameters and a surface of null ones. If their
    characteristic sets are not both empty, they are distinguished by
    \cref{prop:timelike_surfaces_characteristic_set,prop:null_surfaces_characteristic_set}.
    Thus we only have to consider the case in which both are empty. By
    \cref{prop:timelike_isometric_surfaces,prop:null_surfaces_isometric}, we may
    reduce to a surface of timelike parameters \(S_{(c,0)}\) with \(c>0\) and to
    \(S_{(1,\omega)}\). Their projections to the \((x,y)\)-plane are,
    respectively,
    \[
        \{x^2-y^2\geq(c+1)^2\}\cap\{x>0\},
        \qquad
        \{x^2-y^2>1\}\cap\{x>0\}.
    \]
    A sub-Lorentzian isometry acts on the \((x,y)\)-plane by a Lorentz map
    followed by a translation. The Lorentz part preserves \(x^2-y^2\), and a
    translation is a homeomorphism. It cannot send a projected region with
    boundary to one without boundary. Hence the surface of timelike and null parameters are
    not isometric.
\end{proof}

\subsection{Maximal surfaces}

We now consider the zero-curvature case \(k=0\), which gives the
boost-invariant maximal surfaces. Recall that, by
\cref{prop:planar_constant_curvature_curves}, these surfaces are obtained by
horizontally lifting spacelike lines in the Minkowski plane and then applying
the boost action, and their parametrisation \(S_{(\varepsilon,\alpha,\beta)}\) was given in
\cref{eq:maximal_surface_parametrization}. A plot of a typical maximal surface is given in \cref{fig:maximal}.

\begin{figure}[h]
    \centering
    \includegraphics[scale = 0.6]{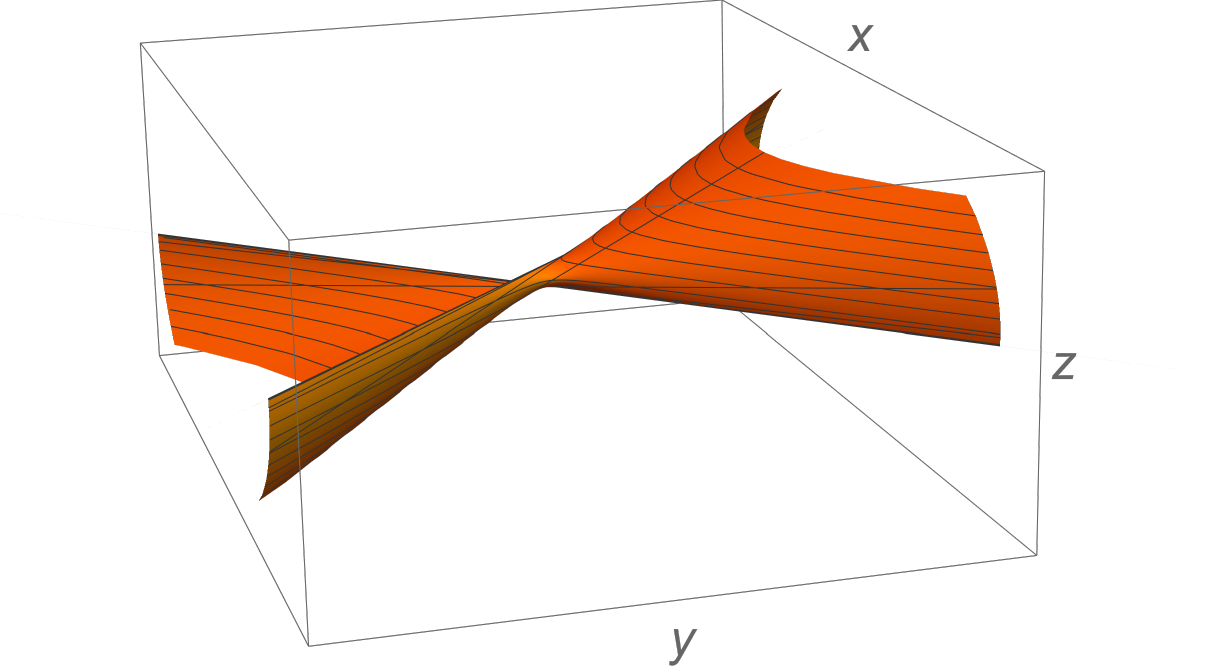}
    \caption{A maximal surface.}
    \label{fig:maximal}
\end{figure}

We first show that it is not restrictive to assume that \(\varepsilon=1\), since
changing the sign of \(\varepsilon\) produces an isometric surface via a timed
rotation.

\begin{proposition}\label{prop:maximal_surfaces_epsilon_sign_isometric}
    For every \(\alpha,\beta\in\mathbb R\), the maximal surfaces
    \(S_{(1,\alpha,\beta)}\) and \(S_{(-1,\alpha,\beta)}\) are isometric. More
    precisely, they are isometric both by a time-preserving rotation and by a
    vertical translation.
\end{proposition}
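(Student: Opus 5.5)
The plan has two parts: first write down an explicit timed rotation, then look for a vertical translation by comparing the two surfaces as subsets of \(\heis\) rather than as parametrisations.

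\emph{The timed rotation.} I would take \(A(x,y,z)=(x,-y,-z)\), as in \cref{prop:timed_rotation_behaviour}. Using the explicit form \cref{eq:maximal_surface_parametrization}, I would compute \(A\bigl(S_{(1,\alpha,\beta)}(t,u)\bigr)\) and compare it with \(S_{(-1,\alpha,\beta)}(t',u')\). The natural substitution is \(t'=t\), \(u'=-u\). Then
\[
-t\sinh(-u-\beta)=t\sinh(u+\beta),
\qquad
-t\cosh(-u-\beta)=-t\cosh(u+\beta),
\qquad
\alpha\sinh(-u)=-\alpha\sinh u,
\]
and the third components are \(\mp\frac{\alpha}{2}\cosh(\beta)\,t\). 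So the identity
\[
A\circ S_{(1,\alpha,\beta)}(t,u)=S_{(-1,\alpha,\beta)}(t,-u)
\]
should hold. Since \((t,u)\mapsto(t,-u)\) is a bijection of the parameter domain, this gives the isometry by a time-preserving rotation. The argument is the same as in \cref{prop:timed_rotation_behaviour}: commute \(A\) past the boost via \(A\circ B_u=B_{-u}\circ A\), and compute the lifted line \(A\gamma_{(1,\alpha,\beta)}\).

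\emph{The vertical translation.} Vertical translations commute with boosts, and the first two coordinates of an isometry do not depend on \(z\). So it suffices to show two things: that both surfaces have the same projection to the \((x,y)\)-plane, and that over each projected point their \(z\)-values differ by a fixed constant. For this I would use the boost-invariant coordinates coming from the parametrisation. For \(\varepsilon=1\) one has \(x\sinh u-y\cosh u=-t\cosh\beta\) and \(x\cosh u-y\sinh u=t\sinh\beta+\alpha\). Hence \(z\) can be expressed through \(x\), \(y\) and the line parameter \(u\), and similarly for \(\varepsilon=-1\).

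I would then look for a reparametrisation \((t,u)\mapsto(t',u')\), presumably of the form \(t'=\pm t+t_0\), \(u'=\pm u+u_0\) with shifts depending on \(\beta\). It should map each horizontal line \(B_u\gamma_{(1,\alpha,\beta)}\) onto a line \(B_{u'}\gamma_{(-1,\alpha,\beta)}\) with the same projection, with a shift in \(z\) that does not depend on \((t,u)\). The left translation by \((0,0,\lambda)\) would then be read off from the constant in the third component, exactly as in the proofs of \cref{prop:spacelike_isometric_surfaces,prop:null_surfaces_isometric}.

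\emph{Expected obstacle.} The main difficulty is this second step. For \(\beta\neq0\), the projected lines for \(\varepsilon=\pm1\) have directions \(B_u(\sinh\beta,\cosh\beta)\) and \(B_{u'}(-\sinh\beta,-\cosh\beta)\), and a naive matching of individual lines fails. So the matching may only hold at the level of images, with a point being reached by different rulings. My first attempt would therefore avoid parametrisations altogether: I would eliminate \(t\) and \(u\), describe each surface as a set \(\{(x,y,z): z=\Phi_\varepsilon(x,y)\}\) over its projection, and check that \(\Phi_{1}-\Phi_{-1}\) is constant. If that computation shows the difference is not constant, the fallback is to compose the rotation from the first step with a symmetry of \(S_{(-1,\alpha,\beta)}\). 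Any such composition has Lorentz part \(T_0B_v\), so it cannot be a pure translation. In that case the vertical-translation part of the statement would need a separate argument, or it would only hold under additional restrictions on \(\beta\).
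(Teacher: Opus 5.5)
Your treatment of the time-preserving rotation is correct and is the paper's argument: \(A\circ B_u=B_{-u}\circ A\) and \(A\gamma_{(1,\alpha,\beta)}(t)=\gamma_{(-1,\alpha,\beta)}(t)\) give \(A\bigl(S_{(1,\alpha,\beta)}(t,u)\bigr)=S_{(-1,\alpha,\beta)}(t,-u)\). The gap is the other half of the statement. You never produce the vertical translation, and the obstacle you anticipate for \(\beta\neq0\) does not exist. The direction you quote for the rulings of \(S_{(-1,\alpha,\beta)}\) is also off by a sign: it is \(B_{u'}(\sinh\beta,-\cosh\beta)\).

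Use your own boost-invariant coordinates. The ruling \(B_u\gamma_{(1,\alpha,\beta)}\) projects onto the line \(x\cosh(u+\beta)-y\sinh(u+\beta)=\alpha\cosh\beta\). The ruling \(B_{u'}\gamma_{(-1,\alpha,\beta)}\) projects onto \(x\cosh(u'-\beta)-y\sinh(u'-\beta)=\alpha\cosh\beta\). So individual rulings do match, with \(u'=u+2\beta\) and opposite orientation. This is an instance of the ansatz \(t'=\pm t+t_0\), \(u'=\pm u+u_0\) that you wrote down yourself.

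Concretely, take \(s=-t+2\alpha\sinh\beta\) and \(v=u+2\beta\). The addition formulae show that \(S_{(1,\alpha,\beta)}(t,u)\) and \(S_{(-1,\alpha,\beta)}(s,v)\) have the same first two components. For the third, \(-\frac{\alpha}{2}\cosh\beta\,s=\frac{\alpha}{2}\cosh\beta\,t-\alpha^2\cosh\beta\sinh\beta\). Hence \(S_{(-1,\alpha,\beta)}(s,v)=(0,0,-\alpha^2\cosh\beta\sinh\beta)\ast S_{(1,\alpha,\beta)}(t,u)\), which is exactly the paper's proof. There is also a structural reason this works. Two horizontal lifts of the same planar line differ by a constant vertical translation, and vertical translations commute with boosts. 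So it is enough to compare the single ruling \(u=0\) with the ruling \(u'=2\beta\).

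Your fallback plans would have led you astray. The vertical-translation claim holds for every \(\beta\), so there is no restriction on \(\beta\) to be found. The graph approach \(z=\Phi_\varepsilon(x,y)\) also fails as stated, because for \(\alpha\neq0\) these surfaces are not graphs over their projections. A point \((x,y)\) with \(0<x^2-y^2<\alpha^2\cosh^2\beta\) and \(x\) of the same sign as \(\alpha\) lies on two rulings of \(S_{(1,\alpha,\beta)}\). Their parameters are \(t=\alpha\sinh\beta\pm\sqrt{\alpha^2\cosh^2\beta-x^2+y^2}\), so the two points have different \(z\)-values and the surface is two-valued over that region. Matching ruling by ruling, as above, avoids having to pair up these two sheets by hand.
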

\begin{proof}
    By \cref{thm:sub_lorentzian_isometries}, the map \(A(x,y,z)=(x,-y,-z)\)
    is a time-preserving rotation. Since \(A\circ B_u=B_{-u}\circ A\), and
    \[
        A\bigl(\gamma_{(1,\alpha,\beta)}(t)\bigr)
        =
        \left(
        \sinh(\beta)t+\alpha,\;
        -\cosh(\beta)t,\;
        -\frac{\alpha}{2}\cosh(\beta)t
        \right)
        =
        \gamma_{(-1,\alpha,\beta)}(t),
    \]
    we get
    \[
        A\bigl(S_{(1,\alpha,\beta)}(t,u)\bigr)
        =
        B_{-u}\gamma_{(-1,\alpha,\beta)}(t)
        =
        S_{(-1,\alpha,\beta)}(t,-u).
    \]
    Thus \(A\bigl(S_{(1,\alpha,\beta)}\bigr)=S_{(-1,\alpha,\beta)}\).

    As for the vertical-translation statement, consider the vector
    \(p=(0,0,-\alpha^2\cosh(\beta)\sinh(\beta))\). Since \(p\) is vertical,
    left translation by \(p\) only changes the \(z\)-coordinate and commutes
    with boosts. Thus
    \begin{equation}\label{eq:translated_maximal_epsilon_positive}
        p\ast S_{(1,\alpha,\beta)}(t,u)
        =
        B_u\left(
        \sinh(\beta)t+\alpha,\;
        \cosh(\beta)t,\;
        \frac{\alpha}{2}\cosh(\beta)\bigl(t-2\alpha\sinh(\beta)\bigr)
        \right).
    \end{equation}
    Making the change of parameters
    \[
        s=-t+2\alpha\sinh(\beta),
        \qquad
        v=u+2\beta,
    \]
    so that \(t=-s+2\alpha\sinh(\beta)\), \(u=v-2\beta\),
    \cref{eq:translated_maximal_epsilon_positive} becomes
    \[
        B_{v-2\beta}\left(
        \alpha\cosh(2\beta)-s\sinh(\beta),\;
        \alpha\sinh(2\beta)-s\cosh(\beta),\;
        -\frac{\alpha}{2}\cosh(\beta)s
        \right).
    \]
    Since \(B_{v-2\beta}=B_vB_{-2\beta}\), the addition formulae give
    \[
        p\ast S_{(1,\alpha,\beta)}(t,u)
        =
        B_v\left(
        \alpha+s\sinh(\beta),\;
        -s\cosh(\beta),\;
        -\frac{\alpha}{2}\cosh(\beta)s
        \right)
        =
        S_{(-1,\alpha,\beta)}(s,v).
    \]
    Therefore \(p\ast S_{(1,\alpha,\beta)}=S_{(-1,\alpha,\beta)}\).
\end{proof}

In \Cref{coro:maximal_surface_boost_isometric_iff_rotation_isometric}, we will
see that changing the sign of \(\varepsilon\) does not affect whether an
isometry between two maximal surfaces can be chosen to be a boost or a
rotation. We therefore work from now on with \(\varepsilon=1\). We next describe
the characteristic sets of these surfaces.

\begin{proposition}\label{prop:maximal_surfaces_characteristic_set}
    If \(\alpha=0\), the regular part of \(S_{(1,\alpha,\beta)}\) has empty
    characteristic set. If \(\alpha\neq0\), then the characteristic set is the
    disconnected union of the two half-lines
    \begin{equation}\label{eq:maximal_surface_characteristic_half_lines}
        \left\{
        \left(
        \sign(\alpha)s,\,
        \omega\sign(\alpha)s,\,
        \frac{\omega\alpha^2}{4}\left(e^{2\omega\beta}+1\right)
        \right):s>0
        \right\},
        \qquad \omega\in\{-1,1\}.
    \end{equation}
    With the parameter \(s\) increasing, these half-lines are future-directed if
    \(\alpha>0\) and past-directed if \(\alpha<0\).
\end{proposition}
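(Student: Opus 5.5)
The plan is to follow the strategy of \cref{prop:spacelike_characteristic_set,prop:timelike_surfaces_characteristic_set}. By \cref{prop:non_degeneracy_surfaces}, for \(\alpha\neq0\) the vectors \(\partial_tS_{(1,\alpha,\beta)}\) and \(\partial_uS_{(1,\alpha,\beta)}\) span the tangent plane. The vector \(\partial_tS\) is horizontal by construction, and boosts preserve \(\Delta\). So a point is characteristic exactly when \(\partial_uS\) is horizontal, and it suffices to test this at \(u=0\).

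At \(u=0\) the base point is \(\gamma_{(1,\alpha,\beta)}(t)=(t\sinh\beta+\alpha,\,t\cosh\beta,\,\tfrac{\alpha}{2}t\cosh\beta)\), and
\[
\partial_uS(t,0)=(t\cosh\beta,\;t\sinh\beta+\alpha,\;0).
\]
I will apply the form \(Z^*=dz+\frac y2dx-\frac x2dy\) to this vector. The result is
\[
Z^*(\partial_uS)=\tfrac12\bigl(t^2\cosh^2\beta-(t\sinh\beta+\alpha)^2\bigr)=\tfrac12\bigl(t^2-2\alpha t\sinh\beta-\alpha^2\bigr).
\]
The characteristic condition is therefore this quadratic in \(t\), whose roots are
\[
t_\omega=\alpha(\sinh\beta+\omega\cosh\beta)=\omega\alpha e^{\omega\beta},\qquad \omega\in\{-1,1\}.
\]
If \(\alpha=0\), both roots equal \(t=0\). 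By \cref{prop:non_degeneracy_surfaces} this is precisely the singular point, namely the origin. Hence the regular part has no characteristic points, which settles the first claim.

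For \(\alpha\neq0\), I will substitute \(t_\omega\) into \(\gamma\) and use \(\sinh\beta\,e^{\omega\beta}=\omega(e^{2\omega\beta}-1)/2\). This gives \(x=\eta\), \(y=\omega\eta\) and \(z=\frac{\omega\alpha^2}{4}(e^{2\omega\beta}+1)\), where \(\eta=\frac{\alpha}{2}(e^{2\omega\beta}+1)\) has the sign of \(\alpha\). Applying \(B_u\) to this point gives
\[
\bigl(\eta e^{\omega u},\;\omega\eta e^{\omega u},\;z\bigr).
\]
Setting \(s=\abs{\eta}e^{\omega u}\), which ranges over \((0,\infty)\), yields exactly the half-lines in \cref{eq:maximal_surface_characteristic_half_lines}. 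The two \(z\)-levels are nonzero and of opposite sign \(\omega\), so the half-lines lie in distinct horizontal planes and their union is disconnected.

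Finally, each half-line is horizontal because it is a straight line through points of the form \((a,\pm a,\mathrm{const})\), whose \(z\)-velocity vanishes in accordance with \cref{eq:horizontallift}. Its velocity \(\sign(\alpha)(1,\omega)\) is null, and it is future-directed, i.e.\ has positive \(x\)-component, exactly when \(\alpha>0\), and past-directed when \(\alpha<0\). The only step needing care is the algebraic simplification of \(t_\omega\) and the resulting coordinates; the rest is routine.
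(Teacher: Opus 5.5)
Your proposal is correct and follows essentially the same route as the paper: reduce to $u=0$ and impose horizontality of $\partial_u S$. Your quadratic $t^2-2\alpha t\sinh\beta-\alpha^2=0$ is just the paper's condition $t\cosh\beta=\omega(\alpha+t\sinh\beta)$ in expanded form; from there you solve for $t_\omega=\omega\alpha e^{\omega\beta}$, boost the resulting points, and add a welcome explicit check that the two half-lines lie at $z$-levels of opposite sign, which justifies the disconnectedness the paper leaves implicit.
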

\begin{proof}
    As in
    \cref{prop:spacelike_characteristic_set,prop:timelike_surfaces_characteristic_set,prop:null_surfaces_characteristic_set},
    it is enough to work at \(u=0\). At points where the tangent plane is
    defined, \(\partial_tS_{(1,\alpha,\beta)}\) is horizontal by construction and thus characteristic points are exactly those for which
    \(\partial_uS_{(1,\alpha,\beta)}\) is horizontal. By
    \cref{eq:maximal_surface_parametrization}, we have that
    \[
        \partial_uS_{(1,\alpha,\beta)}(t,0)
        =
        \left(t\cosh\beta,\alpha+t\sinh\beta,0\right).
    \]
    Thus, as in \cref{eq:general_characteristic_equation}, the horizontality
    condition is
    \[
        t\cosh\beta=\omega\bigl(\alpha+t\sinh\beta\bigr),
        \qquad \omega\in\{-1,1\},
    \]
    and hence
    \[
        t_\omega=\omega\alpha e^{\omega\beta}.
    \]
    If \(\alpha=0\), the only solution is \(t=0\), and
    \cref{eq:maximal_surface_parametrization} gives
    \(S_{(1,0,\beta)}(0,u)=(0,0,0)\) for every \(u\). This is a singular point
    of the parametrisation, see the proof of
    \cref{prop:non_degeneracy_surfaces}, and there are no characteristic
    points.

    Suppose now that \(\alpha\neq0\). Substituting \(t=t_\omega\) into
    \cref{eq:maximal_surface_parametrization} gives
    \begin{equation}\label{eq:maximal_surface_characteristic_parametrized}
        \begin{aligned}
            S_{(1,\alpha,\beta)}(t_\omega,u)
             & =
            B_u\left(
            \frac{\alpha}{2}\left(e^{2\omega\beta}+1\right),\,
            \frac{\omega\alpha}{2}\left(e^{2\omega\beta}+1\right),\,
            \frac{\omega\alpha^2}{4}\left(e^{2\omega\beta}+1\right)
            \right) \\
             & =
            \left(
            \frac{\alpha}{2}\left(e^{2\omega\beta}+1\right)e^{\omega u},\,
            \frac{\omega\alpha}{2}\left(e^{2\omega\beta}+1\right)e^{\omega u},\,
            \frac{\omega\alpha^2}{4}\left(e^{2\omega\beta}+1\right)
            \right),
        \end{aligned}
    \end{equation}
    where in the second equality we used
    \(B_u(a,\omega a,z)=(ae^{\omega u},\omega ae^{\omega u},z)\).
    Since
    \[
        s=\frac{\abs{\alpha}}{2}\left(e^{2\omega\beta}+1\right)e^{\omega u}
    \]
    ranges over \((0,+\infty)\) as \(u\) ranges over \(\mathbb R\),
    \cref{eq:maximal_surface_characteristic_parametrized} is exactly the
    half-line in \cref{eq:maximal_surface_characteristic_half_lines}.
    Its horizontal velocity is \(\sign(\alpha)(1,\omega)\), which is
    future-directed when \(\alpha>0\) and past-directed when \(\alpha<0\).
\end{proof}

We now give the isometry classification for maximal surfaces.

\begin{proposition}\label{prop:maximal_surfaces_isometric}
    Two maximal surfaces \(S_{(1,\alpha,\beta)}\) and
    \(S_{(1,\alpha',\beta')}\) are isometric if and only if
    \[
        \abs{\alpha}\cosh\beta=\abs{\alpha'}\cosh\beta'.
    \]
    Moreover, if \(\alpha\alpha'\neq0\), then any isometry between them is
    time-preserving when \(\alpha\alpha'>0\), and time-inverting when
    \(\alpha\alpha'<0\).
\end{proposition}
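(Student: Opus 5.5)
Both directions go through the characteristic set in \cref{prop:maximal_surfaces_characteristic_set}, plus the fact that the surface is ruled by horizontal lines. \emph{Sufficiency.} We reduce each surface to a normal form with a boost, just as in \cref{prop:spacelike_isometric_surfaces}. The profile line \(\gamma_{(1,\alpha,\beta)}\) projects to the Minkowski line \(t\mapsto(\alpha+t\sinh\beta,t\cosh\beta)\). Its Minkowski distance from the origin along the timelike normal is \(\abs{\alpha}\cosh\beta\): the normal is \((\cosh\beta,\sinh\beta)\), and \(\langle(\alpha,0),n\rangle_{\mathbb M}=-\alpha\cosh\beta\).

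We reparametrise exactly as in the proof of \cref{prop:spacelike_isometric_surfaces}. Set \(s=t+a\) and \(v=u-\beta\), and use \(B_{v+\beta}=B_vB_\beta\). The boosted line \(B_\beta\gamma\) becomes a line through the point \(\alpha\cosh\beta\,(\cosh\beta,\sinh\beta)\) boosted by \(-\beta\), that is, \((\alpha\cosh\beta,0)\), with direction \((0,1)\). Hence \(S_{(1,\alpha,\beta)}\) coincides, up to a vertical left translation that fixes the additive constant in \(z\), with \(S_{(1,\alpha\cosh\beta,0)}\).

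It then remains to relate \(S_{(1,A,0)}\) and \(S_{(1,-A,0)}\). The time-inverting boost \((x,y,z)\mapsto(-x,-y,z)\) commutes with boosts and sends \(\gamma_{(1,A,0)}(t)=(A,t,At/2)\) to \((-A,-t,At/2)=\gamma_{(1,-A,0)}(-t)\). So \(S_{(1,A,0)}\) and \(S_{(1,-A,0)}\) are isometric by a time-inverting boost. Composing with the timed rotation of \cref{prop:maximal_surfaces_epsilon_sign_isometric}, together with its vertical-translation variant, produces all four types listed in \cref{tab:maximal_isometry_types}. For \(\alpha=0\) the image is independent of \(\beta\) by \cref{prop:non_degeneracy_surfaces}.

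\emph{Necessity.} The singular surface \(\alpha=0\) is distinguished from all the others by regularity, so we assume \(\alpha\alpha'\neq0\). By the reduction above we may work with \(S_{(1,A,0)}\) and \(S_{(1,A',0)}\), where \(A=\alpha\cosh\beta\) and \(A'=\alpha'\cosh\beta'\). In this normal form the characteristic set consists of the two null half-lines \((\sign(A)s,\pm\sign(A)s,\pm A^2/2)\), \(s>0\). They emanate from the \(z\)-axis at heights \(\pm A^2/2\), so the vertical gap between the two starting points is \(A^2\).

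An isometry maps characteristic sets to characteristic sets. It must therefore send the pair of starting points \(\{(0,0,\pm A^2/2)\}\) to the pair \(\{(0,0,\pm A'^2/2)\}\). Since the horizontal projection of an isometry is an affine Lorentz map, and the starting points are the only points whose projection is the common endpoint of the two projected half-lines, the translation part sends the \(z\)-axis to itself. It is therefore vertical, and so are the preimages. Vertical distances are preserved, because \(F_3\) is \(\pm z\) plus a function of \((x,y)\). Hence \(A^2=A'^2\), which gives \(\abs{\alpha}\cosh\beta=\abs{\alpha'}\cosh\beta'\).

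For the time orientation, \cref{prop:maximal_surfaces_characteristic_set} says the half-lines are future-directed exactly when \(\alpha>0\). Their image under a time-preserving isometry keeps the same time orientation, while a time-inverting one reverses it. Since the isometry maps characteristic half-lines onto characteristic half-lines and preserves the increasing-\(s\) direction (away from the \(z\)-axis), it is time-preserving exactly when \(\alpha\alpha'>0\).

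The main obstacle is making the "endpoint of the characteristic half-lines" argument rigorous. The half-lines are open, so the starting points are not on the surface. The argument has to use the closure, where isometries extend continuously, or alternatively the unique point of \(\mathbb R^2\) in the closure of both projected half-lines. The remaining risk is the bookkeeping of the additive \(z\)-constant in the reduction to normal form.
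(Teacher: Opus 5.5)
Your proof is correct. The sufficiency half and the time-orientation step coincide with the paper's. A vertical translation by \((0,0,-\tfrac12\alpha^2\cosh\beta\sinh\beta)\) plus a reparametrisation carries \(S_{(1,\alpha,\beta)}\) to \(S_{(1,\alpha\cosh\beta,0)}\), and \((x,y,z)\mapsto(-x,-y,z)\) flips the sign of \(\alpha\).

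There is one sign slip in your bookkeeping. With \(v=u-\beta\) you get \(B_u=B_vB_\beta\), which is the wrong boost. You need \(v=u+\beta\) and \(s=t-\alpha\sinh\beta\), so that the profile is acted on by \(B_{-\beta}\), as your own description of the normal foot point already says.

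The genuine difference is the invariant that closes the necessity step. The paper uses the endpoints of the characteristic half-lines only to force a vertical translation part. It then compares the maximum of the Lorentz-invariant quantity \(x^2-y^2\) on the two surfaces, namely \(\alpha^2\cosh^2\beta\). This is exactly the square of the Minkowski distance \(\abs{\alpha}\cosh\beta\) you computed for sufficiency.

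You instead use the endpoints themselves. Since \(F_3=\pm z+f(x,y)\), an isometry preserves the height gap between vertically aligned points, so the gap between the two endpoints is an invariant. It can be read off \cref{prop:maximal_surfaces_characteristic_set} even without normalising: it equals \(\frac{\alpha^2}{4}\bigl((e^{2\beta}+1)+(e^{-2\beta}+1)\bigr)=\alpha^2\cosh^2\beta\).

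What each route buys:
\begin{itemize}
\item Your route needs no further computation on the surface, and strictly speaking does not even need the deduction that the translation is vertical.
\item Your endpoint matching also makes explicit why each half-line's orientation ``away from the axis'' is preserved, which the paper's time-orientation step uses tacitly.
\item The paper's route ties necessity to the same geometric quantity used for sufficiency.
\end{itemize}

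The ``obstacle'' you flag at the end is not one. Isometries here are ambient diffeomorphisms of \(\heis\), so \(F\) maps the closure of the characteristic set onto the closure of the target's. Hence it maps the two missing endpoints onto the two missing endpoints.
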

\begin{proof}
    We first treat the singular case. If \(\alpha=0\), then the image of
    \(S_{(1,0,\beta)}\) is independent of \(\beta\) by
    \cref{prop:non_degeneracy_surfaces}. If \(\alpha'\neq0\), then
    \(S_{(1,\alpha',\beta')}\) is regular by the same proposition, and it
    cannot be isometric to the singular surface.

    Assume from now on that \(\alpha\alpha'\neq0\). Suppose first that the two
    surfaces are isometric. The isometry must send the characteristic set of the
    first surface to the characteristic set of the second one. By
    \cref{prop:maximal_surfaces_characteristic_set}, the characteristic
    half-lines are future-directed if \(\alpha>0\) and past-directed if
    \(\alpha<0\), and similarly for \(\alpha'\). Therefore the isometry is
    time-preserving when \(\alpha\alpha'>0\), and time-inverting when
    \(\alpha\alpha'<0\).

    The closures of the two characteristic half-lines of each surface start on
    the \(z\)-axis. Therefore the translation part of the isometry must be vertical.
    Since Lorentz maps and vertical translations preserve \(x^2-y^2\), the
    maximum value of \(x^2-y^2\) on the two surfaces must agree. On
    \(S_{(1,\alpha,\beta)}\), this quantity can be computed at \(u=0\):
    \[
        x^2-y^2
        =
        \bigl(\alpha+t\sinh\beta\bigr)^2-t^2\cosh^2\beta
        =
        -\bigl(t-\alpha\sinh\beta\bigr)^2+\alpha^2\cosh^2\beta.
    \]
    Its maximum is \(\alpha^2\cosh^2\beta\). The same argument for
    \(S_{(1,\alpha',\beta')}\) gives the maximum
    \((\alpha')^2\cosh^2\beta'\). Hence
    \[
        \abs{\alpha}\cosh\beta=\abs{\alpha'}\cosh\beta'.
    \]

    Conversely, assume
    \(\abs{\alpha}\cosh\beta=\abs{\alpha'}\cosh\beta'\). Since \((x,y,z)\mapsto(-x,-y,z)\) commutes with boosts and sends
    \(\gamma_{(1,\alpha',\beta')}(t)\) to
    \(\gamma_{(1,-\alpha',\beta')}(-t)\), it maps
    \(S_{(1,\alpha',\beta')}\) onto \(S_{(1,-\alpha',\beta')}\). Therefore, after possibly applying this
    isometry, it is enough to prove the claim when
    \[
        \alpha\cosh\beta=\alpha'\cosh\beta'.
    \]
    A vertical translation by $(0,0,-(\alpha^2\cosh\beta\sinh\beta)/2)$
    followed by the change of variables
    \[
        s=t-\alpha\sinh\beta,
        \qquad
        v=u+\beta,
    \]
    sends \(S_{(1,\alpha,\beta)}\) to
    \(S_{(1,\alpha\cosh\beta,0)}\):
    \[
        \left(0,0,-\frac{\alpha^2\cosh\beta\sinh\beta}{2}\right)
        \ast S_{(1,\alpha,\beta)}(t,u)
        =
        S_{(1,\alpha\cosh\beta,0)}(s,v),
    \]
    where the first two components are simplified using the addition formulae
    for \(\cosh(v-\beta)\) and \(\sinh(v-\beta)\).
    Applying the same reduction to \(S_{(1,\alpha',\beta')}\) gives the same
    surface, because \(\alpha'\cosh\beta'=\alpha\cosh\beta\).
\end{proof}

For maximal surfaces, fixing the time orientation also determines when boost
and rotation isometries can occur.

\begin{corollary}\label{coro:maximal_surface_boost_isometric_iff_rotation_isometric}
    Let \(S_{(\varepsilon,\alpha,\beta)}\) and
    \(S_{(\varepsilon',\alpha',\beta')}\) be two maximal surfaces. They are
    isometric by a time-preserving (respectively, time-inverting) boost if and
    only if they are isometric by a time-preserving (respectively,
    time-inverting) rotation.
\end{corollary}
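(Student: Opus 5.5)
The plan is to reduce the statement to the existence of a single self-isometry of each maximal surface that reverses the determinant of the Lorentz part but keeps its time orientation. Composing with such a map converts a boost into a rotation of the same time type, and conversely.

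The natural candidate comes from \cref{prop:maximal_surfaces_epsilon_sign_isometric}. It gives two isometries from \(S_{(1,\alpha,\beta)}\) onto \(S_{(-1,\alpha,\beta)}\): the timed rotation \(A(x,y,z)=(x,-y,-z)\), and a vertical translation by \(p=(0,0,-\alpha^2\cosh\beta\sinh\beta)\). Hence the composition \(\Phi\coloneqq L_p^{-1}\circ A\) maps \(S_{(1,\alpha,\beta)}\) onto itself. By \cref{thm:sub_lorentzian_isometries}, its Lorentz part is that of \(A\), namely \(T_0\), which has determinant \(-1\) and \(\Lambda_{11}=1>0\). So \(\Phi\) is a time-preserving rotation and a self-isometry of \(S_{(1,\alpha,\beta)}\). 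Applying \(A\) once more shows that \(A\circ\Phi\circ A^{-1}\) is a time-preserving rotation that is a self-isometry of \(S_{(-1,\alpha,\beta)}\). Thus every maximal surface \(S_{(\varepsilon,\alpha,\beta)}\) admits a self-isometry \(\Phi_{(\varepsilon,\alpha,\beta)}\) whose Lorentz part lies in the component \(T_0B_u\) with \(u=0\).

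Now let \(F\colon S_{(\varepsilon,\alpha,\beta)}\to S_{(\varepsilon',\alpha',\beta')}\) be an isometry with Lorentz part \(\Lambda\), and consider \(F\circ\Phi_{(\varepsilon,\alpha,\beta)}\). This is again an isometry between the same two surfaces, and its Lorentz part is \(\Lambda T_0\). Two facts about \(\Lambda T_0\) finish the argument:
\begin{itemize}
\item \(\det(\Lambda T_0)=-\det\Lambda\);
\item the first column of \(\Lambda T_0\) equals the first column of \(\Lambda\), so \((\Lambda T_0)_{11}=\Lambda_{11}\).
\end{itemize}
Hence the composition has the same time character as \(F\) and the opposite determinant. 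A time-preserving boost therefore becomes a time-preserving rotation, and a time-inverting boost becomes a time-inverting rotation. The converse directions follow from the same computation, since \(\Phi\circ\Phi\) has Lorentz part \(T_0^2=I\).

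The only step that needs care is confirming that \(\Phi\) really maps each surface onto itself rather than merely onto some parametrisation of it. Both maps from \cref{prop:maximal_surfaces_epsilon_sign_isometric} are stated as equalities of images, so this follows directly. The singular case \(\alpha=0\) must also be covered, but there \(p=0\) and \(A\) directly preserves the image \(\{(x,y,0):\abs{y}>\abs{x}\}\cup\{0\}\) from \cref{prop:non_degeneracy_surfaces}. So the singular case is immediate, and I expect no serious obstacle anywhere in the argument.
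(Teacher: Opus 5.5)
Your proposal is correct and is essentially the paper's argument. Both proofs combine the time-preserving rotation and the vertical translation from \cref{prop:maximal_surfaces_epsilon_sign_isometric} into a self-isometry of one maximal surface whose Lorentz part is $T_0$, and then compose a given boost or rotation with it. The only differences are that the paper post-composes on the target side as $V\circ R\circ F$, while you pre-compose on the source side, and that you check explicitly that $\Lambda T_0$ keeps $\Lambda_{11}$ and reverses $\det\Lambda$.
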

\begin{proof}
    By \cref{prop:maximal_surfaces_epsilon_sign_isometric}, and possibly taking
    inverse isometries,
    \(S_{(\varepsilon,\alpha,\beta)}\) can be mapped onto
    \(S_{(-\varepsilon,\alpha,\beta)}\) both by a time-preserving rotation and
    by a vertical translation, the latter being a time-preserving boost.

    Suppose, for instance, that
    \(S_{(\varepsilon,\alpha,\beta)}\) is isometric to
    \(S_{(\varepsilon',\alpha',\beta')}\) by a time-preserving boost \(F\). Let
    \(R\) be a time-preserving rotation sending
    \(S_{(\varepsilon',\alpha',\beta')}\) to
    \(S_{(-\varepsilon',\alpha',\beta')}\), and let \(V\) be a vertical
    translation sending \(S_{(-\varepsilon',\alpha',\beta')}\) back to
    \(S_{(\varepsilon',\alpha',\beta')}\). Then \(V\circ R\circ F\) is a
    time-preserving rotation from \(S_{(\varepsilon,\alpha,\beta)}\) to
    \(S_{(\varepsilon',\alpha',\beta')}\).

    The converse is identical, with boost and rotation interchanged. The same
    argument applies to time-inverting isometries, since the auxiliary maps
    \(R\) and \(V\) are time-preserving and therefore do not change whether the
    resulting isometry preserves or reverses time orientation.
\end{proof}

In the following proposition, we show that all regular maximal surfaces are not achronal.

\begin{proposition}\label{prop:maximal_surfaces_non_achronal}
    For every \(\alpha\neq0\), the maximal surface \(S_{(\varepsilon,\alpha,\beta)}\)
    is not achronal.
\end{proposition}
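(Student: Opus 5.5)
The plan is to reduce to a normal form by isometries and then exhibit an explicit pair of chronologically related points, testing them with the description \cref{eq:future_origin_heisenberg} of \(I^+(e)\) and left-invariance, as in the earlier non-achronality proofs.

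\textbf{Reduction.} Achronality is preserved by every isometry. A time-preserving one preserves \(\ll\). A time-inverting one reverses \(\ll\), so it still maps two chronologically related points to two chronologically related points. By \cref{prop:maximal_surfaces_epsilon_sign_isometric} we may take \(\varepsilon=1\). By \cref{prop:maximal_surfaces_isometric} and its proof, \(S_{(1,\alpha,\beta)}\) is isometric to \(S_{(1,a,0)}\) with \(a=\abs{\alpha}\cosh\beta>0\). So it suffices to treat
\[
S_{(1,a,0)}(t,u)=\Bigl(a\cosh u+t\sinh u,\; a\sinh u+t\cosh u,\; \tfrac{a t}{2}\Bigr),\qquad a>0.
\]

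\textbf{Computation.} Fix \(p=S_{(1,a,0)}(0,0)=(a,0,0)\) and let \(q=S_{(1,a,0)}(t,u)=(X,Y,at/2)\). The group law gives
\[
p^{-1}\ast q=\Bigl(X-a,\;Y,\;\tfrac a2\,(t-Y)\Bigr).
\]
Since \(X^2-Y^2=a^2-t^2\), we have
\[
-(X-a)^2+Y^2=t^2-2a^2+2aX.
\]
Note that \(q^{-1}\ast p=(p^{-1}\ast q)^{-1}\) has the same value of \(-x^2+y^2+4\abs z\) and opposite \(x\)-coordinate. It is therefore enough to find \(q\) with \(X<a\) and \(t^2-2a^2+2aX+2a\abs{t-Y}<0\); then \(q\ll p\) by \cref{eq:future_origin_heisenberg}.

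\textbf{Choice of \(q\).} Fix \(K>0\) and, for \(u\) large, choose \(t\) so that \(X=-K\), namely
\[
t=-\frac{a\cosh u+K}{\sinh u}.
\]
Then \(t\to-a\) as \(u\to+\infty\). Moreover
\[
Y=a\sinh u+t\cosh u=\frac{-a-K\cosh u}{\sinh u}\to-K,
\]
so \(t-Y\to K-a\). The quantity above therefore tends to
\[
a^2-2a^2-2aK+2a\abs{K-a}.
\]
For \(K\le a\) this equals \(a^2-4aK-a^2\cdot 0\ldots\), more precisely \(-a^2-2aK+2a(a-K)=a^2-4aK\), which is negative once \(K>a/4\). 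For \(K\ge a\) it equals \(-3a^2<0\). Taking for instance \(K=a\), the limit is \(-3a^2<0\), and the \(x\)-coordinate of \(q^{-1}\ast p\) is \(a-X=a+K>0\).

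\textbf{Conclusion.} Hence, for \(u\) large enough, \(q^{-1}\ast p\in I^+(e)\), so \(q\ll p\) by left-invariance, and \(S_{(1,a,0)}\) is not achronal.

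\textbf{Main obstacle.} The hard step is finding the right pair of points. Naive choices fail: taking \(y=0\), or two points on the same \(t\)-line, makes \(4\abs z\) dominate \(-x^2+y^2\). The fix is to send \(u\to\infty\) while keeping \(X\) bounded. This makes the \(z\)-difference \(\tfrac a2(t-Y)\) stay bounded, while the term \(-2a^2\) coming from the hyperbola relation provides the negativity. If \(K=a\) turns out to be delicate in the details, I would take \(K\) larger and check the leading-order asymptotics directly.
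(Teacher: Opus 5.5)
Your proof is correct and follows essentially the paper's route. You reduce to \(S_{(1,a,0)}\) with \(a>0\), take \(p=(a,0,0)\), fix the \(x\)-coordinate of a surface point at a negative value and send the boost parameter to infinity, and obtain \(-x^2+y^2+4\abs{z}\to-3a^2\) in the chronological test. The differences from the paper are cosmetic: you send \(u\to+\infty\) instead of \(u\to-\infty\), and you apply continuity of the limiting expression directly at actual surface points rather than passing to a limit point \(q(c)\) in the closure and invoking openness of \(I^-(p)\). You should also tidy the garbled fragment in the \(K\le a\) case.
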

\begin{proof}
    By
    \cref{prop:maximal_surfaces_epsilon_sign_isometric,prop:maximal_surfaces_isometric},
    and since achronality is invariant under isometries, it is enough to prove
    the claim for \(S_{(1,\alpha,0)}\) with \(\alpha>0\), using
    \cref{eq:maximal_surface_parametrization}.
    Setting \(p=S_{(1,\alpha,0)}(0,0)=(\alpha,0,0)\), it is enough to show that the surface intersects \(I^-(p)\), since then it
    contains a point chronologically preceding \(p\). Fix \(c<0\) and, for \(u\neq0\), choose
    \[
        t(u)=\frac{c-\alpha\cosh u}{\sinh u},
    \]
    so that the \(x\)-coordinate of \(S_{(1,\alpha,0)}(t(u),u)\) is \(c\). Then
    \[
        S_{(1,\alpha,0)}(t(u),u)
        =
        \left(
        c,\,
        \frac{c\cosh u-\alpha}{\sinh u},\,
        \frac{\alpha}{2}\frac{c-\alpha\cosh u}{\sinh u}
        \right).
    \]
    Letting \(u\to-\infty\), these points converge to
    \[
        q(c)=\left(c,-c,\frac{\alpha^2}{2}\right),
    \]
    so \(q(c)\) belongs to the closure of the surface.

    We now check that \(q(c)\in I^-(p)\) for \(|c|>0\) sufficiently large. By left-invariance, this is equivalent to
    \(q(c)^{-1}\ast p\in I^+(0)\). A direct computation gives
    \[
        q(c)^{-1}\ast p
        =
        \left(
        \alpha-c,\,
        c,\,
        -\frac{\alpha^2+\alpha c}{2}
        \right).
    \]
    Its \(x\)-coordinate is positive because \(c<0\). Moreover, for
    \(c<-\alpha\), we have \(\alpha^2+\alpha c<0\), and
    \[
        -x^2+y^2+4\abs{z}
        =
        -(\alpha-c)^2+c^2-2(\alpha^2+\alpha c)
        =
        -3\alpha^2<0.
    \]
    By \cref{eq:future_origin_heisenberg}, \(q(c)^{-1}\ast p\in I^+(0)\), and
    therefore \(q(c)\in I^-(p)\).

    Since \(I^-(p)\) is open and \(q(c)\) lies in the closure of the surface,
    the surface itself intersects \(I^-(p)\). Thus it contains a point
    chronologically preceding \(p\), and is not achronal.
\end{proof}

Having studied the singular surfaces with positive mean curvature, we now return to the singular maximal surface. By
\cref{prop:non_degeneracy_surfaces}, all maximal surfaces with parameter
\(\alpha=0\) have the same image,
\[
    \Sigma_0=\{(x,y,0):\abs y>\abs x\}\cup\{(0,0,0)\}.
\]
This image is singular at the origin, for the same topological reason as in
\cref{prop:real_singularity}: removing the origin separates the two components
\(\{y>\abs x\}\) and \(\{y<-\abs x\}\). One could ask whether the singularity can be removed by extending the image near
the origin. However, any smooth extension connecting the two components
\(\{y>\abs x\}\) and \(\{y<-\abs x\}\) would have to contain points in the
missing boundary directions \(\{\abs y=\abs x\}\). These directions are null in
the Minkowski \((x,y)\)-plane, since \(x^2-y^2=0\). Thus such an extension would
have null tangent directions and would no longer be spacelike.

Regular maximal surfaces are never achronal by
\cref{prop:maximal_surfaces_non_achronal}. In contrast, the singular maximal
surface is acausal.

\begin{proposition}\label{prop:singular_maximal_surface_acausal}
    The singular maximal surface \(\Sigma_0\) is acausal.
\end{proposition}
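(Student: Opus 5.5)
Take two points \(p_1,p_2\in\Sigma_0\) with \(p_1\leq p_2\); the plan is to show that \(p_1=p_2\), working directly with the description \eqref{eq:future_origin_heisenberg} of \(J^+(e)\). Both points have \(z=0\), so write \(p_i=(x_i,y_i,0)\). A direct computation gives
\[
    p_1^{-1}\ast p_2=\left(x_2-x_1,\;y_2-y_1,\;\tfrac12(x_1y_2-x_2y_1)\right)=:(x,y,z).
\]
By left-invariance and \eqref{eq:future_origin_heisenberg}, \(p_1\leq p_2\) means
\[
    x\geq0
    \qquad\text{and}\qquad
    -x^2+y^2+2\abs{x_1y_2-x_2y_1}\leq0 .
\]
I will show that this system forces \(p_1=p_2\). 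The causal case already contains the chronological one, so acausality follows.

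First I dispose of the origin. If \(p_1=0\), then \(p_2=(x_2,y_2,0)\) must satisfy \(-x_2^2+y_2^2\leq0\). Every point of \(\Sigma_0\) has \(y^2-x^2>0\) unless it is the origin, so \(p_2=0\). The case \(p_2=0\) is symmetric, so I may assume both points lie in \(\{\abs y>\abs x\}\).

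Next I show that the two points lie in the same component. If \(y_1>\abs{x_1}\) and \(y_2<-\abs{x_2}\), then \(\abs{y_2-y_1}=\abs{y_1}+\abs{y_2}>\abs{x_1}+\abs{x_2}\geq\abs{x_2-x_1}\). Hence \(-x^2+y^2>0\), which is incompatible with the inequality above. The reverse assignment is handled the same way. Using the timed rotation \((x,y,z)\mapsto(x,-y,-z)\), which preserves \(\Sigma_0\) and the causal order, I may then assume \(y_1,y_2>0\).

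The key step is then a Minkowski-type estimate showing
\[
    x^2-y^2\leq 2\abs{x_1y_2-x_2y_1},
\]
with equality only when \(p_1=p_2\). Expanding, and using \(2\abs{x_1y_2-x_2y_1}\geq 2(x_2y_1-x_1y_2)\), it suffices to show that
\[
    (x_2-x_1)^2-(y_2-y_1)^2-2(x_2y_1-x_1y_2)
\]
is negative unless the points coincide. For this I would pass to null coordinates \(a_i=y_i+x_i>0\) and \(b_i=y_i-x_i>0\), which are positive because \(y_i>\abs{x_i}\). In these coordinates \(-x^2+y^2=(a_2-a_1)(b_2-b_1)\) and \(2(x_1y_2-x_2y_1)=a_1b_2-a_2b_1\). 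I also have the freedom to use the other sign of the absolute value, i.e. \(+2(x_1y_2-x_2y_1)\). Choosing whichever sign is favourable, the sum \(-x^2+y^2\pm(a_1b_2-a_2b_1)\) becomes either \(a_2b_2-2a_2b_1+a_1b_1\), after expanding with the appropriate sign, or its mirror with the roles of \(a\) and \(b\) exchanged.

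The main obstacle is closing this final inequality, namely proving that the maximum over the two signs is \(\geq0\), with equality only when \(a_1=a_2\) and \(b_1=b_2\). I would handle this by splitting into cases according to the signs of \(a_2-a_1\) and \(b_2-b_1\). When the two differences have the same sign, the term \((a_2-a_1)(b_2-b_1)\) is already \(\geq0\) and the absolute value is \(\geq0\), so equality forces both to vanish. When they have opposite signs, I would use the constraint \(x\geq0\), i.e. \(a_2-a_1\geq b_2-b_1\), together with positivity of all the \(a_i,b_i\), to bound \(\abs{a_1b_2-a_2b_1}\) from below by \(\abs{(a_2-a_1)(b_2-b_1)}\). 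This is a product inequality for positive numbers, to be verified by cases. If that direct estimate proves messy, the fallback is a curve argument in the spirit of the proof for \(S_{(0,0)}\): along a future-directed causal curve, \(z\) changes at the rate \(\frac12(x\dot y-y\dot x)\), and in the region \(y>\abs x\) this rate has a sign that prevents the curve from returning to \(\{z=0\}\) within \(\{y>\abs x\}\).
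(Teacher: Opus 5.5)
Your proposal is correct and is essentially the paper's proof. It uses the same computation of $p_1^{-1}\ast p_2$, the same handling of the origin and of points in different components, and the same null coordinates, which turn the causal quantity into $(a_2-a_1)(b_2-b_1)+\abs{a_1b_2-a_2b_1}=a_1b_1+a_2b_2-2\min\{a_1b_2,a_2b_1\}$. (Your third coordinate has the wrong sign: it should be $\frac12(x_2y_1-x_1y_2)$. This is harmless, since only its absolute value enters.)

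The paper closes the final inequality with $\min\{a_1b_2,a_2b_1\}\leq\sqrt{a_1a_2b_1b_2}\leq\frac12(a_1b_1+a_2b_2)$, which also settles the equality case at once. Your sign case split works too. In the opposite-sign case, $\abs{a_1b_2-a_2b_1}-\abs{(a_2-a_1)(b_2-b_1)}$ equals $b_2(a_2-a_1)+a_1(b_1-b_2)>0$, or its mirror image in the other sub-case. So the constraint $x\geq0$ is not actually needed.
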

\begin{proof}
    Let \(p=(a,b,0)\) and \(q=(c,d,0)\) be two points of \(\Sigma_0\), and
    suppose that \(p\leq q\), i.e. \(p^{-1}\ast q\in J^+(e)\). Since
    \[
        p^{-1}\ast q
        =
        \left(c-a,\ d-b,\ \frac12(bc-ad)\right),
    \]
    \cref{eq:future_origin_heisenberg} gives
    \begin{equation}\label{eq:singular_maximal_future_inequality}
        -(c-a)^2+(d-b)^2+2\abs{ad-bc}\leq0.
    \end{equation}
    We prove that \(p=q\). If \(p=e\), then
    \cref{eq:future_origin_heisenberg} applied to \(q\) forces
    \(\abs c\geq\abs d\), so \(q=e\). The same argument holds if \(q=e\), so
    we may assume that both points lie in
    \(\{(x,y,0):\abs y>\abs x\}\).

    The two points cannot lie in different components of this set. Indeed,
    suppose without loss of generality that \(b>\abs a\) and \(d<-\abs c\). Then
    \[
        \abs{d-b}=b-d>\abs a+\abs c\geq\abs{c-a}, \qquad \text{ and } \qquad -(c-a)^2+(d-b)^2+2\abs{ad-bc}>0,
    \]
    contradicting \cref{eq:singular_maximal_future_inequality}.

    We therefore assume that \(p\) and \(q\) lie in the same connected
    component.
    Applying the timed rotation \((x,y,z)\mapsto(x,-y,-z)\) if
    necessary, we may assume \(b>\abs a\) and \(d>\abs c\). Set
    \[
        R=b+a,\qquad S=b-a,\qquad R'=d+c,\qquad S'=d-c.
    \]
    Then \(R,S,R',S'>0\), and a computation gives
    \begin{equation}\label{eq:singular_maximal_causal_quantity}
        \begin{aligned}
            -(c-a)^2+(d-b)^2+2\abs{ad-bc}
             & =
            (R'-R)(S'-S)+\abs{RS'-SR'} \\
             & =
            RS+R'S'-2\min\{RS',SR'\}.
        \end{aligned}
    \end{equation}
    Since
    \[
        \min\{RS',SR'\}\leq\sqrt{RSR'S'}\leq\frac12(RS+R'S'),
    \]
    the quantity in \cref{eq:singular_maximal_causal_quantity} is
    non-negative, and equality holds only when
    \(R=R'\) and \(S=S'\). Together with
    \cref{eq:singular_maximal_future_inequality}, this forces equality in
    \cref{eq:singular_maximal_causal_quantity}. Thus \(R=R'\) and \(S=S'\), and
    so \(a=c\) and \(b=d\).
\end{proof}

\printbibliography[heading=bibintoc]

\end{document}